\def\newaliasedtheorem#1[#2]#3{
	\newaliascnt{#1@alt}{#2}
	\newtheorem{#1}[#1@alt]{#3}
	\expandafter\newcommand\csname #1@altname\endcsname{#3}
}
\theoremstyle{plain}
\newtheorem{theorem}{Theorem}[section]
\theoremstyle{definition}
\theoremstyle{remark}
\newcommand{\R}{\mathbb{R}}
\newcommand{\W}{\mathbb{W}}
\newcommand{\LL}{\mathbb{L}}
\newcommand{\T}{\mathbb{T}}
\newcommand{\G}{\mathbb{G}}
\newcommand{\sW}{\mathbb{W}}
\newcommand{\sL}{\mathbb{L}}
\newcommand{\eps}{\varepsilon}
\let\altphi\phi
\let\phi\varphi
\let\varphi\altphi
\let\altphi\undefined
\newcommand{\graph}[1]{\mathrm{graph}\,(#1)}
\DeclareMathOperator{\de}{\mathrm d}
\newcommand{\average}{{\mathchoice {\kern1ex\vcenter{\hrule height.4pt
width 6pt
depth0pt} \kern-9.7pt} {\kern1ex\vcenter{\hrule height.4pt width 4.3pt
depth0pt}
\kern-7pt} {} {} }}
\newcommand{\res}{\mathop{\hbox{\vrule height 7pt width .5pt depth 0pt
\vrule height .5pt width 6pt depth 0pt}}\nolimits}
\address{\textsc{Gioacchino Antonelli}: 
	Scuola Normale Superiore, Piazza dei Cavalieri, 7, 56126 Pisa, Italy}
\email{gioacchino.antonelli@sns.it}
\address{\textsc{Daniela Di Donato}: 
Department of Mathematics and Statistics, P.O.\ Box 35 (MaD), FI--40014, University of Jyv\"askyl\"a, Finland.}
\email{daniela.d.didonato@jyu.fi}
\address{\textsc{Sebastiano Don}:
Department of Mathematics and Statistics, P.O.\ Box 35 (MaD), FI--40014, University of Jyv\"askyl\"a, Finland.}
\email{sedon@jyu.fi}
\address{\textsc{Enrico Le Donne}: 
	Dipartimento di Matematica, Universit\`a di Pisa, Largo B. Pontecorvo 5, 56127 Pisa, Italy \\
	\& \\
	University of Jyv\"askyl\"a, Department of Mathematics and Statistics, P.O. Box (MaD), FI-40014, Finland}
\email{enrico.ledonne@unipi.it}
\subjclass[]{ 
	53C17, %   Sub-Riemannian geometry
	%53C60,   % Finsler spaces and generalizations 
	% 53C30,  % Homogeneous manifolds
	22E25, % Nilpotent and solvable Lie groups
	28A75,  %  Length, area, volume, other geometric measure theory
	49N60, % Regularity of solutions 
	49Q15, %  Geometric measure and integration theory, integral and normal currents
	%53C38% Calibrations and calibrated geometries
	%58C35 % Integration on manifolds; measures on manifolds
	26A16.  % Lipschitz (Hlder) classes
	%26B20 Integral formulas (Stokes, Gauss, Green, etc.)
	%54Exx, % Spaces with richer structures 
	%37L40 %Invariant measures
	%58D05, %Groups of diffeomorphisms and homeomorphisms as manifolds
	%22F50, %Groups as automorphisms of other structures
	% 22DXX % Locally compact groups and their algebras
	% 22F30. % Homogeneous spaces
	%14M17. %Homogeneous spaces and generalizations (within Algebraic geometry)
	% 53C30 % Homogeneous manifolds
	% 58D19 % Group actions and symmetry properties
	% 58C25 % Differentiable maps
}
\keywords{Carnot groups, intrinsically $C^1$ surfaces, co-horizontal surfaces, area formula, intrinsically differentiable functions, little H\"older functions, broad solutions.}
\thanks{
	G.A., D.D.D., S.D. and E.L.D. are partially supported by the Academy of Finland (grant
	288501
	`\emph{Geometry of subRiemannian groups}' and by grant
	322898
	`\emph{Sub-Riemannian Geometry via Metric-geometry and Lie-group Theory}')
	and by the European Research Council
	(ERC Starting Grant 713998 GeoMeG `\emph{Geometry of Metric Groups}').
}
\title{Characterizations of uniformly differentiable co-horizontal intrinsic graphs in Carnot groups}
\date{\today}
\author{Gioacchino Antonelli, Daniela Di Donato, \\ Sebastiano Don and Enrico Le Donne}
\begin{document}

\begin{abstract}
	In arbitrary Carnot groups we study intrinsic graphs of maps with horizontal target. These graphs are $C^1_{\rm H}$ regular exactly when the map is uniformly intrinsically differentiable. Our first main result characterizes the uniformly intrinsic differentiability by means of H\"older properties along the projections of left-invariant vector fields on the graph. 
	
	We strengthen the result in step-2 Carnot groups for intrinsic real-valued maps by only requiring horizontal regularity. We remark that such a refinement is not possible already in the easiest step-3 group. 
	
	As a by-product of independent interest, in every Carnot group we prove an area-formula for uniformly intrinsically differentiable real-valued maps. We also explicitly write the area element in terms of the intrinsic derivatives of the map.
\end{abstract}

\maketitle 
\tableofcontents

\section{Notation}
\begin{tabular}{c p{0.7\textwidth} p{\textwidth}}	
	$(\phi,\widetilde\phi)$, $(\gamma,\widetilde\gamma)$, $\dots$ & $\square$ is the representation of $\widetilde{\square}$ in exponential coordinates. See \cref{def:coordinateconletilde}. & \\
	
	$L_g,R_g$ & Left-translation and right-translation by $g\in \mathbb G$. & \\
	
	$\deg $ & Holonomic degree. See \cref{sec:Prel}. & \\
		
	$\|\cdot\|,\|\cdot\|_{\mathbb G}$ & Homogeneous norm on $\mathbb G$. See \eqref{eqn:HomogeneousNorm}. & \\
	
	$\mathbb V_1$ & Horizontal bundle of $\mathbb G$. See \eqref{eqn:HorizontalBundle}. & \\
	
	$\pi_{\mathbb V_1}$ & Projection on the horizontal bundle. See \eqref{eqn:ProjectionOnV1}. & \\
	
	$(\mathbb W,\mathbb L)$ &  Complementary subgroups. See \cref{def:ComplementarySubgroups}. & \\
	
	$\pi_{\mathbb W}(g),g_{\mathbb W}$ & Projection of $g\in\mathbb G$ onto the homogeneous subgroup $\mathbb W$, given the splitting $\mathbb G=\mathbb W\cdot \mathbb L$. See \eqref{eqn:ComponentsSplitting}. & \\
	
	$h^\alpha (U;\R^k)$ & $\mathbb R^k$-valued $\alpha$-little H\"older functions defined on $U\subseteq \mathbb R^n$. See \cref{big3.3.11}. & \\
	
	$\widetilde\Phi(\widetilde U), \mathrm{graph}(\widetilde\phi)$ & Intrinsic graph of $\widetilde\phi:\widetilde U\subseteq \mathbb W\to\mathbb L$, given the splitting $\mathbb G=\mathbb W\cdot \mathbb L$. See \cref{def:IntrinsicGraph}. & \\
	
	$\widetilde\phi_q:\widetilde U_q\subseteq \mathbb W\to \mathbb L$ &  Intrinsic $q$-translation, with $q\in \mathbb G$, of the function $\widetilde\phi:\widetilde U\subseteq \mathbb W\to\mathbb L$, given the splitting $\mathbb G=\mathbb W\cdot \mathbb L$. See \cref{def:PhiQ}. &\\
	
	$\de_{\rm P}\! f$ & Pansu differential of the $C^1_{\rm H}$ function $f$. See \cref{def:PansuDifferentiability}. &\\
	
	$\de^{\phi}\!\phi_{a_0},\de^{\phi}\!\phi(a_0):\mathbb W\to\mathbb L$ & Intrinsic differential of the function $\widetilde\phi:\widetilde U\subseteq \mathbb W\to \mathbb L$, at a point $a_0\in\widetilde U$, given the splitting $\mathbb G=\mathbb W\cdot \mathbb L$. See  \cref{defiintrinsicdiff}. &
	\\
	
	${\rm (U)ID}(\widetilde U,\sW;\sL)$ & Set of (uniformly) intrinsically differentiable functions $\widetilde\phi:\widetilde U\subseteq \mathbb W\to\mathbb L$, given a splitting $\mathbb G=\mathbb W\cdot\mathbb L$. See \cref{defiintrinsicdiff}. &\\
	
	$\nabla^{\phi}\phi_{a_0}, \nabla^{\phi}\phi(a_0)$ & When $\mathbb L$ is horizontal, the map, identified with a $k\times (m-k)$-matrix, in $\mathrm{Lin}(\mathrm{Lie}(\mathbb W)\cap V_1,\mathrm{Lie}(\mathbb L))$, corresponding to the intrinsic differential $\de^{\phi}\!\phi_{a_0}$. See \cref{def:NablaPhiPhi} and \cref{rem:IntrinsicGradientInCoordinates}. &\\
	
	$\nabla^{\phi}_j\phi, \nabla^{\phi}\phi(X_j)$ & The $j$-th component, either a number or a vector, of the intrinsic gradient $\nabla^{\phi}\phi$. &\\ %See \eqref{eqn:DistributionalEquation}.  

	$\nabla _{\rm H} f = \left(
	\nabla_{\mathbb L}f \;|\;  \nabla_{\mathbb W}f 
	\right)$ & Pansu differential of $f\in C^1_{\rm H}(\widetilde V;\mathbb R^k)$ in coordinates adapted to the splitting $\mathbb G=\mathbb W\cdot\mathbb L$. See \cref{remarkDPHI2Vectorial}. &\\
	
	$D^{\phi}_W$ & Intrinsic projected vector field on $\widetilde U\subseteq \mathbb W$, relative to the vector field $W\in\mathrm{Lie}(\mathbb W)$, and to the function $\widetilde\phi:\widetilde U\subseteq\mathbb W\to\mathbb L$, given the splitting $\mathbb G=\mathbb W\cdot\mathbb L$. See \cref{def:PhiJ}.   &\\
	
	$D^{\phi}_j$ & Intrinsic projected vector field when $W=X_j$, with $X_j\in \mathrm{Lie}(\mathbb W)$. See \cref{prop:CompatibleCoordinates}. & \\
	
	$D^{\phi}_{\square}\phi$ & The vector field $D^{\phi}_{\square}$ acting on the function $\phi$. &\\ % See \cref{prop2.22}. 
	
	$\mathscr{L}^{n}$ &  $n$-dimensional Lebesgue measure. &\\ % See \eqref{eqn:PerimeterSubgraph}. 
	
	$\mathscr{S}^n$ & $n$-dimensional spherical Hausdorff measure. &\\ %See \eqref{eqn:AreaFormula2}.  
\end{tabular}
 \section{Introduction}
\subsection{An historical account of the notion of $C^1_{\rm H}$-surface in Carnot groups} 
	 In these last twenty years there has been an increasing interest in a fine study of parametrized intrinsically regular surfaces in sub-Riemannian settings. The search for a good such notion was motivated by a negative result obtained in \cite{AK99}. Indeed, in the reference the authors show that the sub-Riemannian Heisenberg group $\mathbb H^1$ is not $k$-rectifiable in Federer's sense \cite{Fed69}, for every $k\geq 2$.
	
	A notion of intrinsic $C^1$ regular surface was firstly introduced and studied in \cite{FSSC01}, and then in \cite{FSSC03a} in arbitrary Carnot groups $\mathbb G$. Initially, the authors only took $C^1_{\rm H}$-hypersurfaces into account. A first step toward a general definition of $C^1_{\rm H}$-surfaces in arbitrary codimensions was done in \cite[Definition~3.1, Definition~3.2]{FSSC07} in the setting of Heisenberg groups $\mathbb H^n$. Then a general notion of  $(\mathbb G,\mathbb M)$ regular surface, where $\mathbb G$ and $\mathbb M$ are Carnot groups, was proposed by Magnani in \cite[Definition~3.5]{Mag06}. According to the latter definition, a $(\mathbb G,\mathbb M)$ regular surface is locally the zero level set of an $\mathbb M$-valued $C^1_{\rm H}$-function defined on an open subset of $\mathbb G$ and whose intrinsic Pansu differential $\de_{\rm P}\!f$ is surjective. 
	
	A first natural question one could try to answer is whether it is possible to (locally) write a $C^1_{\rm H}$-surface as an intrinsic graph of a function. An intrinsic graph in a Carnot group $\G$ is the set of points of the form $p\cdot\phi(p)$, given a function $ \phi\colon  U\subseteq \sW\to \sL$, where $\sW$ and $\sL$ are homogeneous and complementary subgroups, namely $\G=\sW\cdot\sL$, and $\sW\cap\sL=\{e\}$.
	The answer to this question is affirmative for $C^1_{\rm H}$-hypersurfaces. Moreover the graphing function is intrinsically Lipschitz according to the definition of \cite{FSSC06, FS16}, while it is in general neither Euclidean Lipschitz nor Lipschitz with respect to any sub-Riemannian distance, see \cite[Example~3.3 \& Proposition~3.4]{FSSC06}.
	
	A more general implicit function theorem was proved by Magnani in \cite[Theorem~1.4]{Mag13}. This theorem holds for arbitrary $(\mathbb G,\mathbb M)$ regular surfaces with the additional property that ${\rm Ker}(\de_{\rm P}\!f(x))$ has a complementary subgroup in $\mathbb G$, where $x$ is the point around which we want to parametrize the surface. From \cite[Eq.\,(1.8)]{Mag13} it follows that this parametrization is intrinsically Lipschitz. The validity of the implicit function theorem leads the way to a very general definition of $(\mathbb G,\mathbb M)$ regular sets for $\mathbb G$, where $\mathbb M$ is just a homogeneous group, given in \cite[Definition~10.2]{Mag13}. We will not deal with objects at this level of generality, but we refer the interested reader to \cite[Sections~10,11,12]{Mag13}. The class of intrinsically regular surfaces is also studied in \cite{JNGV20}, where area and coarea formulae are proved. For an alternative proof of the implicit function theorem, one can also see \cite[Section~2.5]{JNGV20}. 
	
	We will mainly deal with co-horizontal $C^1_{\rm H}$-surfaces, that have been studied in \cite{Koz15, DiDonato18, Cor19}, see \cref{def:CoAbelian}. 
	\begin{defi}[Co-horizontal $C^1_{\rm H}$-surface]\label{def:CoAbelianIntro}
		Let $\G$ be a Carnot group, and let $k\in \mathbb N$. We say that $\Sigma\subset \G$ is a \emph{co-horizontal $C^1_{\rm H}$-surface of codimension $k$} if, for any $p\in \Sigma$, there exist a neighborhood $ U$ of $p$ and a map $f\in C^1_{\rm H}( U;\R^k)$ such that
		\begin{equation*}
		\Sigma\cap  U=\{g\in U: f(g)=0\},
		\end{equation*}
		and the Pansu differential $\de_{\rm P}\!f(p)\colon \mathbb G\to \R^k$ is surjective.
		
		If, morevoer, the subgroup $\mathrm{Ker}(\de_{\rm P}\! f(p))$ admits a complementary subgroup, that is horizontal, we say that $\Sigma$ is a {\em co-horizontal $C^1_{\rm H}$-surface with complemented tangents}. We call $\mathrm{Ker}(\de_{\rm P}\! f(p))$ the {\em homogeneous tangent space to $\Sigma$ at $p$}. If the previous holds, the homogeneous subgroup at $p$ is independent of the choice of $f$, see \cite[Theorem 1.7]{Mag13}.
	\end{defi}
	\noindent\textbf{Uniform intrinsic differentiability of the parametrizing function}.  A fine study on the regularity of the parametrizing function of a $C^1_{\rm H}$-surface has been initiated in \cite{ASCV06} in the setting of Heisenberg groups $\mathbb H^n$, for the class of $C^1_{\rm H}$-hypersurfaces. For this study in arbitrary ${\rm CC}$-spaces, see also \cite{CM06}. In \cite{ASCV06}, the authors introduced the notion of uniform intrinsic differentiability. In this paper we abbreviate ``intrinsically differentiable'' and ``uniformly intrinsically differentiable'' with ID and UID, respectively. 
	
	From the analytic viewpoint, the  notion of (U)ID is defined in a translation invariant way, mimicking the Euclidean notion of derivative. For the sake of exposition, we here recall the definition of ID at the identity. Then, building on this definition, one can define the notion of (U)ID at any point by means of translations, see \cref{def:PhiQ}, and \cref{defiintrinsicdiff}. In the following, $\|\cdot\|$ is a homogeneous norm on $\mathbb G$.
	
	\begin{defi}[Intrinsic differentiability]\label{defiintrinsicdiffIntro}
		Let $\mathbb G$ be a Carnot group, with identity $e$, and a splitting $\mathbb G=\mathbb W\cdot\mathbb L$. Let $e\in U\subseteq \mathbb W$ be a relatively open subset, and $\phi\colon U\subseteq \mathbb W\to\mathbb L$ a function with $\phi(e)=e$. 
		
		We say that ${\phi}$ is {\em intrinsically differentiable at $e$} if there exists an intrinsically linear map $\de^{\phi}\!\phi_{e}\colon\mathbb W\to\mathbb L$ such that
		\begin{equation*}
		\lim_{\varrho\to 0}\left(\sup\left\{\frac{\|\de^{\phi}\!\phi_{e}(b)^{-1}\cdot{\phi}(b)\|}{\|b\|}: b\in U, 0<\|b\|<\varrho\right\}\right)= 0,
		\end{equation*}
		where we say that a function is {\em intrinsically linear} if its intrinsic graph is a homogeneous subgroup. The function $\de^{\phi}\!\phi_{e}$ is called {\em intrinsic differential} of $\phi$ at $e$.
	\end{defi}
	Building upon the implicit function theorem, the authors in \cite{ASCV06} prove that in $\mathbb H^n$ the graphing map $\phi$ for a $C^1_{\rm H}$-hypersurface is UID. The idea behind this implication is the following: a function $f\in C^1_{\rm H}$ not only has continuous derivatives, but also its horizontal gradient $\nabla_{\rm H}f$ uniformly approximates $f$ at first order, see \cite[Theorem 1.2]{Mag13}, and \cite[Proposition~2.4]{JNGV20}. This notion is often referred to as strict differentiability. This fact has a strong analogy with the Euclidean setting. Indeed, in the Euclidean framework, a function $f$ with continuous partial derivatives is Fréchet-differentiable, and the proof relies on a use of a mean value inequality, that is exactly what one finds in \cite[Theorem 1.2]{Mag13}, and \cite[Lemma 4.2]{ASCV06}. Eventually, the uniform differentiability of $f$ translates into the uniform intrinsic differentiability of $\phi$.
	
	The fact that the graphing function is {\rm UID} was proved in the case of co-horizontal $C^1_{\rm H}$-surfaces in $\mathbb H^n$ \cite{AS09}, and more in general for co-horizontal $C^1_{\rm H}$-surfaces with complemented tangents in any Carnot groups, in \cite{DiDonato18}. The inverse implication, i.e., the fact that the graph of a UID function is a $C^1_{\rm H}$-surface, was firstly shown to be true in \cite{ASCV06, AS09} in the setting of $\mathbb H^n$, and lately generalized in \cite{DiDonato18} for arbitrary Carnot groups $\mathbb G$ to functions with horizontal target, see our \cref{prop:UidC1H} for a precise statement. Notice that the lack of generality in the statement, namely, the fact that one restricts the target to be horizontal, is due to the fact that a generalized version of Whitney's extension theorem is not known to be true.

\subsection{Main theorems}

\textbf{Definitions \& statements}. For the notation we refer to \cref{sec:Prel}. Given an arbitrary Carnot group $\mathbb G$ of step $s$, with layers $V_i$, with $1\leq i\leq s$, we fix a splitting $\mathbb G=\mathbb W\cdot\mathbb L$, where $\mathbb L$ is horizontal. We denote by $\|\cdot\|$ a (fixed) homogeneous norm on $\mathbb G$. 

We first aim at characterizing the uniform intrinsic differentiability of a map ${\phi}\colon U\subseteq \mathbb W\to\mathbb L$ defined on a relatively open subset $ U\subseteq \mathbb W$. This is done by means of a correct Lipschitz/H\"older property of $\phi$ along the integral curves of projected vector fields on $ U\subseteq\mathbb W$, that we define here. See also \cref{def:PhiJ}.
\begin{defi}[Projected vector fields]\label{def:PhiJIntro}
	Let $U$ be a relatively open subset of $\mathbb W$. Given a continuous function ${\phi}\colon U\subseteq \mathbb W\to \mathbb L$, for every $W\in \mathrm{Lie}(\mathbb W)$, we take $D^{\phi}_W$ as the continuous vector field on $ U$ defined by
	\begin{equation}\label{eqn:DefinitionOfDjIntro}
	D^{\phi}_W(p)\coloneqq\left(\de\!\pi_{\mathbb W}\right)_{p\cdot\phi(p)}W_{p\cdot\phi(p)}. \qquad \forall p\in U,
	\end{equation}
	where $\pi_{\mathbb W}$ is the projection on $\mathbb W$ associated to the splitting $\mathbb G=\mathbb W\cdot \mathbb L$. Notice that if $\phi$ is $C^1$, then $D^{\phi}$ is a $C^1$ vector field.
\end{defi}

%GA: Ridondante. Now we describe what we mean by correct Lipschitz/H\"older property of $\phi$ along integral curves of $D^{\phi}$. 
We define the notion of broad* regularity, mimicking the definition in \cite{ASCV06, BSC10b}, and we then introduce the notion of vertically broad* h\"older regularity, see \cref{defbroad*}, and \cref{def:vertically*Holder}, respectively. 

\begin{defi}[Broad* regularity]\label{defbroad*Intro}
	Let $\mathbb G$ be a Carnot group, with splitting $\mathbb G=\mathbb W\cdot\mathbb L$, and $\mathbb L$ horizontal. Let $U$ be a relatively open subset in $\mathbb W$,  $\phi\colon U\subseteq \mathbb W\to \mathbb L$ be a continuous function, and let $\omega\colon U\subseteq \mathbb W\to \mathrm{Lin}({\rm Lie}(\mathbb W)\cap V_1; \mathbb L)$ be a continuous function with values in the space of linear maps.
	
	We say that $D^{\phi}\phi=\omega$ {\em in the broad* sense on $ U$} if the following holds: for every $a_0\in U$, there exist a neighborhood $ U_{a_0}\Subset U$ of $a_0$ and $T>0$ such that, for every $a\in  U_{a_0}$ and every $W\in {\rm Lie}(\mathbb W)\cap V_1$ with $\|W\|\leq 1$, there exists
	an integral curve $\gamma\colon[-T,T]\to U$ of $D^{\phi}_W$ such that  $\gamma(0)=a$ and
	\[
	\phi(\gamma(s))^{-1}\cdot \phi(\gamma(t)) = \int_s^t \omega(\gamma(r))(W)\de\!r, \qquad \forall t,s\in [-T,T].
	\]
\end{defi}
\begin{defi}[Vertically broad* h\"older regularity]\label{def:vertically*HolderIntro}
	Let $\mathbb G$ be a Carnot group, with splitting $\mathbb G=\mathbb W\cdot\mathbb L$, and $\mathbb L$ horizontal. Let $U$ be a relatively open subset in $\mathbb W$, and let $\phi\colon U\subseteq \mathbb W\to \mathbb L$ be a continuous function.
	
	We say that $\phi$ is {\em vertically broad* h\"older on $ U$} if the following holds: for every $a_0\in U$ there exist a neighborhood $ U_{a_0}\Subset U$ of $a_0$ and $T>0$ such that for every $a\in U_{a_0}$ and every $W\in {\rm Lie}(\mathbb W)\cap V_d$, with $d>1$ and $\|W\|\leq 1$, there exists
	an integral curve $\gamma\colon [-T,T]\to U$ of $D_W^{\phi}$ such that $\gamma(0)=a$ and
	\[
	\lim_{\varrho\to 0}\left(\sup \left\{\frac{\|\phi(\gamma(s))^{-1}\cdot \phi(\gamma(t))\|}{|t-s|^{1/d}}:t,s\in [-T,T],|t-s|\leq \varrho\right\}\right) = 0,
	\]
	and the limit is uniform on $a\in {U}_{a_0}$ and on $W\in {\rm Lie}(\mathbb W)\cap V_d$, with $d>1$, and $\|W\|\leq 1$.
\end{defi}

We next state our first main result in a free-coordinate fashion. We refer the reader to \cref{thm:MainTheorem} for a coordinate-dependent, though equivalent, statement. We remark that the equivalence (a)$\Leftrightarrow$(b) of the forthcoming \cref{thm:MainTheoremIntro2} is not in the statement of \cref{thm:MainTheorem}, but it is an outcome of \cref{prop:UidC1H}, and the implicit function theorem \cite[Theorem~1.4]{Mag13}.

\begin{theorem}\label{thm:MainTheoremIntro2}
	Let $\mathbb G$ be a Carnot group with splitting $\mathbb G=\mathbb W\cdot\mathbb L$, and $\mathbb L$ horizontal. Let $U$ be a relatively open subset in $\mathbb W$, and let $\phi\colon U\subseteq\mathbb W\to\mathbb L$ be a continuous function. The following facts are equivalent.
	\begin{itemize}
		\item[(a)] ${\rm graph}(\phi)$ is a co-horizontal $C^1_{\rm H}$-surface with homogeneous tangent spaces complemented by $\mathbb L$ (see \cref{def:CoAbelianIntro}). 
		\item[(b)] $\phi$ is uniformly intrinsically differentiable on $ U$ (see \cref{defiintrinsicdiffIntro}).
		\item[(c)] $\phi$ is \textbf{vertically broad* h\"older} on $ U$ (see \cref{def:vertically*HolderIntro}), and there exists a continu\-ous function $\omega\colon U\to \mathrm{Lin}({\rm Lie}(\mathbb W)\cap V_1; \mathbb L)$, such that, for every $a\in  U$, there exist $\delta>0$ and a family of functions $\{\phi_\eps\in C^1( B(a,\delta);\mathbb L):\eps\in (0,1)\}$ such that, for every  $W\in {\rm Lie}(\mathbb W)\cap V_1$, one has
		\begin{equation}\label{eqn:ApproximatingIntro}
		\lim_{\eps\to0}\phi_\eps(p)=\phi(p) \quad\text{and}\quad\lim_{\eps\to0}(D_W^{\phi_\eps}\phi_\eps)(p)=\omega(p)(W) \quad \mbox{uniformly in $p\in B(a,\delta)$}.
		\end{equation}
		\item[(d)] $\phi$ is \textbf{vertically broad* h\"older} on $ U$ (see \cref{def:vertically*HolderIntro}) and there exists a continuous function $\omega\colon U\to \mathrm{Lin}({\rm Lie}(\mathbb W)\cap V_1; \mathbb L)$ such that $D^{\phi}\phi=\omega$ in the broad* sense on $U$ (see \cref{defbroad*Intro}). 
	\end{itemize}
\end{theorem}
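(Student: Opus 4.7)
The plan is to prove the cycle (b)$\Rightarrow$(c)$\Rightarrow$(d)$\Rightarrow$(b) as the core of the theorem, and to obtain (a)$\Leftrightarrow$(b) as a by-product: for (b)$\Rightarrow$(a) I would invoke \cref{prop:UidC1H}, and for (a)$\Rightarrow$(b) I would apply Magnani's implicit function theorem \cite{Mag13} to a local defining $C^1_{\rm H}$ function of $\Sigma$ and use that the resulting graphing map is automatically UID under the complemented-tangents hypothesis.

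For (b)$\Rightarrow$(c), I would convolve $\phi$ with a standard mollifier in exponential coordinates on $\mathbb{W}$, getting a family $\phi_\eps \in C^1$ with $\phi_\eps\to\phi$ uniformly on compacts. Set $\omega(a):=\de^{\phi}\!\phi_{a}$: by UID this is well-defined, intrinsically linear in its second argument, and depends continuously on $a$. The uniform first-order estimate from UID forces $(D^{\phi_\eps}_W\phi_\eps)(p)\to\omega(p)(W)$ uniformly on compacts for $W\in\mathrm{Lie}(\mathbb{W})\cap V_1$, because $D^{\phi_\eps}_W$ only captures horizontal directions and UID gives first-order control precisely there. The vertically broad* h\"older property is then extracted from the same UID estimate applied along integral curves of $D^{\phi}_W$ with $W\in\mathrm{Lie}(\mathbb{W})\cap V_d$, $d>1$: since $\omega$ only reads first-layer data, the growth of $\phi$ along such curves is of little-h\"older type with exponent $1/d$, matching the homogeneity of $V_d$.

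For (c)$\Rightarrow$(d), fix $a_0$, $W\in\mathrm{Lie}(\mathbb{W})\cap V_1$ with $\|W\|\le 1$, $a\in U_{a_0}$, and let $\gamma_\eps\colon[-T,T]\to U$ be the integral curve of the $C^1$ vector field $D^{\phi_\eps}_W$ starting at $a$. Uniform convergence $D^{\phi_\eps}_W\to D^{\phi}_W$ on compacts (a consequence of \eqref{eqn:ApproximatingIntro}) makes the curves $\gamma_\eps$ equi-Lipschitz, so up to subsequence they converge uniformly to an integral curve $\gamma$ of $D^{\phi}_W$. The smooth identity
\[
\phi_\eps(\gamma_\eps(s))^{-1}\cdot\phi_\eps(\gamma_\eps(t))=\int_s^t(D^{\phi_\eps}_W\phi_\eps)(\gamma_\eps(r))\de\!r
\]
then passes to the limit using both convergences in \eqref{eqn:ApproximatingIntro}, yielding the broad* identity with integrand $\omega(\gamma(r))(W)$.

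The implication (d)$\Rightarrow$(b) is the technical heart. The candidate intrinsic differential at $a_0$ is the intrinsically linear map whose restriction to $\mathrm{Lie}(\mathbb{W})\cap V_1$ is $\omega(a_0)$. Given $b\in U$ close to $a_0$, I would connect $a_0$ to $b$ inside $\mathbb{W}$ by a chain of integral curves of $D^{\phi}_{X_j}$, running through a basis $\{X_j\}$ of $\mathrm{Lie}(\mathbb{W})$ adapted to the stratification, with the time-lengths dictated by the exponential coordinates of $a_0^{-1}\cdot b$. On each horizontal segment of time $t_j$, the broad* identity contributes $t_j\,\omega(a_0)(X_j)+t_j\cdot o(1)$ by continuity of $\omega$; on each vertical segment along $V_d$ of time $t_j$, the vertically broad* h\"older condition contributes $o(|t_j|^{1/d})$. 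Since a displacement of time $t_j$ along $V_d$ contributes $|t_j|^{1/d}$ to the homogeneous norm, each error term is $o(\|a_0^{-1}\cdot b\|)$, and summing them yields UID. The main obstacle is exactly this path decomposition together with the homogeneity bookkeeping, and controlling the accumulated error along the non-commutative concatenation; in particular, one must ensure that shifting the base point of $\omega$-evaluations across successive segments produces only a $o(\|a_0^{-1}\cdot b\|)$ contribution, uniformly in $a_0$.
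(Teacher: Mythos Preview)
Your outline for (c)$\Rightarrow$(d) and (d)$\Rightarrow$(b) matches the paper's route (these are \cref{prop4.12} and \cref{prop:DPhiPhi=wBroadImpliesUid}), and your treatment of (a)$\Leftrightarrow$(b) via \cref{prop:UidC1H} and Magnani's implicit function theorem is exactly what the paper does.

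The genuine gap is in (b)$\Rightarrow$(c). You propose to build the approximating family by a \emph{Euclidean} mollification of $\phi$ in exponential coordinates on $\mathbb W$, and then claim that UID forces $D^{\phi_\eps}_W\phi_\eps\to\omega(\cdot)(W)$ uniformly. This step is not justified. In coordinates (cf.\ \cref{prop:CompatibleCoordinates}), $D^{\phi_\eps}_j\phi_\eps$ is a sum of terms of the form $P(\phi_\eps,x)\,\partial_{x_i}\phi_\eps$ with polynomial coefficients in $\phi_\eps$. A UID function is only $1/s$--little H\"older in the Euclidean variables (\cref{prop3.6cont}), so each $\partial_{x_i}\phi_\eps$ will in general blow up as $\eps\to 0$; UID controls $\phi$ along integral curves of the \emph{intrinsic} fields $D^{\phi}_j$, not along the Euclidean coordinate directions that mollification sees. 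There is no cancellation mechanism available here that would make the nonlinear combination converge, and for $k\ge 2$ you cannot even rewrite these terms as total derivatives to pass through distributions. The paper avoids this obstacle entirely: it first uses (b)$\Rightarrow$(a) to produce a local $C^1_{\rm H}$ defining function $f$ for $\mathrm{graph}(\phi)$, mollifies $f$ (whose horizontal derivatives \emph{do} converge uniformly by $C^1_{\rm H}$ regularity), and obtains $\phi_\eps$ from $f_\eps$ via the classical implicit function theorem; the convergence $D^{\phi_\eps}_j\phi_\eps\to\nabla^{\phi}_j\phi$ then follows from the identity $D^{\phi_\eps}_j\phi_\eps=-(\nabla_{\mathbb L}f_\eps)^{-1}\nabla_{\mathbb W}f_\eps\circ\Phi_\eps$ (see item~(b) of \cref{prop2.22} and \cref{remprop2.22}). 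You should replace your mollification-of-$\phi$ step by this mollification-of-$f$ argument.

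A minor further point on (d)$\Rightarrow$(b): your sketch omits the translation device $\phi\mapsto\phi_q$ (\cref{lem:Invariance1DPhi}, \cref{lem:Invariance2IntegralCurve}) and the a~priori length bound on the concatenated curves (\cref{prop:InverseLpischitz}), which are what actually make the ``accumulated error along the non-commutative concatenation'' controllable; without them the $o(\|a_0^{-1}b\|)$ bookkeeping does not close.
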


Our second main result is an improvement of \cref{thm:MainTheoremIntro2} for step-2 Carnot groups, in the case $\mathbb L$ is one-dimensional. Again, we here give a coordinate-free statement. We refer the reader to \cref{thm:MainTheorem.0} for a coordinate-dependent, though equivalent, statement. We stress that in the forthcoming theorem we are removing the \textbf{vertically broad* h\"older} condition of \cref{thm:MainTheoremIntro2}, and we work with $\mathbb L$ that is \textbf{one-dimensional}. We also stress that in general in the statement of \cref{thm:MainTheoremIntro2} one cannot remove the vertically broad* h\"older condition, see \cref{rem:VerticallyBroadHolderNonSiToglie}, and \cite[Example 4.5.1]{Koz15} for a counterexample in the easiest step-3 group, namely the Engel group. 	

\begin{theorem}\label{thm:MainTheorem.0Intro2}
	Let $\G$ be a Carnot group of step $2$ with a splitting $\mathbb G=\mathbb W\cdot \mathbb L$, and $\mathbb L$ \textbf{one-dimensional horizontal}. Let $U$ be a relatively open subset in $\mathbb W$, and let $\phi\colon U\subseteq\mathbb W\to\mathbb L$ be a continuous function. The following facts are equivalent.
	\begin{itemize}
		\item[(a)] ${\rm graph}(\phi)$ is a $C^1_{\rm H}$-hypersurface with homogeneous tangent spaces complemented by $\mathbb L$ (see \cref{def:CoAbelianIntro}). 
		\item[(b)] $\phi$ is uniformly intrinsically differentiable on $U$ (see \cref{defiintrinsicdiffIntro}).
		\item[(c)] There exists a continuous map $\omega\colon U\to \mathrm{Lin}({\rm Lie}(\mathbb W)\cap V_1; \mathbb L)$, such that, for every $a\in  U$, there exist $\delta>0$ and a family of functions $\{\phi_\eps\in C^1(B(a,\delta);\mathbb L):\eps\in (0,1)\}$ such that, for every  $W\in {\rm Lie}(\mathbb W)\cap V_1$, one has
		\begin{equation*}
		\lim_{\eps\to0}\phi_\eps(p)=\phi(p) \quad\text{and}\quad\lim_{\eps\to0}(D_W^{\phi_\eps}\phi_\eps)(p)=\omega(p)(W) \qquad \mbox{uniformly in $p\in B(a,\delta)$}.
		\end{equation*}
		\item[(d)] There exists a continuous function $\omega\colon U\to \mathrm{Lin}({\rm Lie}(\mathbb W)\cap V_1; \mathbb L)$ such that $D^{\phi} {\phi}=\omega$ in the broad* sense on $U$ (see \cref{defbroad*Intro}).
	\end{itemize}
\end{theorem}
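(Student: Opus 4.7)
The plan is to deduce Theorem~\ref{thm:MainTheorem.0Intro2} from Theorem~\ref{thm:MainTheoremIntro2} by showing that, in the step-$2$ regime with $\mathbb L$ one-dimensional, the \emph{vertically broad* h\"older} condition is no longer an independent hypothesis but a consequence of the broad* condition alone. First, (a)$\Leftrightarrow$(b) is not specific to step~$2$ or to $\dim\mathbb L=1$: the forward direction is the implicit function theorem~\cite[Theorem~1.4]{Mag13}, and the reverse is \cref{prop:UidC1H}. Next, (b)$\Rightarrow$(c) follows at once from the corresponding implication of Theorem~\ref{thm:MainTheoremIntro2}. Finally, (c)$\Rightarrow$(d) is a soft ODE argument: uniform convergence of $\phi_\eps$ and of $D^{\phi_\eps}_W\phi_\eps$ gives local equiboundedness of the $C^1$ fields $D^{\phi_\eps}_W$, so Arzel\`a--Ascoli extracts a subsequential limit integral curve of $D^{\phi}_W$, and the fundamental theorem of calculus applied to $\phi_\eps\circ\gamma_\eps$ passes to the limit and yields the broad* integral identity for $\phi$.

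The bulk of the theorem is (d)$\Rightarrow$(b). The strategy is to prove the following \emph{key lemma}: in every step-$2$ Carnot group with $\mathbb L$ one-dimensional horizontal, any continuous $\phi\colon U\subseteq\mathbb W\to\mathbb L$ that is a broad* solution of $D^{\phi}\phi=\omega$ with $\omega$ continuous is automatically vertically broad* h\"older on $U$. Granting the lemma, the implication (d)$\Rightarrow$(b) of Theorem~\ref{thm:MainTheoremIntro2} concludes. Two structural features of the step-$2$ scalar setting are decisive. First, since the step is $2$, the layer $V_2$ is central and contained in $\mathrm{Lie}(\mathbb W)$; hence for every $W\in V_2$ the projected field $D^{\phi}_W$ coincides with the constant field $W$ on $U$, and its integral curves are straight lines $s\mapsto a\cdot\exp(sW)$. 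Second, $\dim\mathbb L=1$ makes $\phi$ essentially scalar, so along each integral curve $\gamma$ of $D^{\phi}_X$ with $X\in\mathrm{Lie}(\mathbb W)\cap V_1$ the broad* condition reduces to the genuine scalar ODE $\tfrac{\de}{\de r}(\phi\circ\gamma)(r)=\omega(\gamma(r))(X)$.

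For the key lemma I would use a commutator/zigzag argument. In step~$2$ one has $V_2=[V_1,V_1]$, so each $W\in V_2$ is a finite sum of brackets $[X_i,Y_i]$ with $X_i,Y_i\in V_1$. Concatenating short characteristic legs of $D^{\phi}_{X_i}$ and $D^{\phi}_{Y_i}$ of duration of order $\sqrt{t}$ produces, via Baker--Campbell--Hausdorff in a step-$2$ group (where only a single bracket term appears), a closed zigzag whose endpoint differs from $a\cdot\exp(tW)$ by terms of order $o(t)$. On each leg the scalar broad* ODE provides an increment of $\phi$ of order $\sqrt{t}$; because $\mathbb L$ is one-dimensional and abelian, these scalar increments add linearly, and telescoping yields the desired $o(|t|^{1/2})$ bound in the direction of $W$. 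When $W$ fails to lie in $[V_1\cap\mathrm{Lie}(\mathbb W),V_1\cap\mathrm{Lie}(\mathbb W)]$---as already happens in $\mathbb H^1$---one replaces the zigzag by the analogous one built from the horizontal tangent fields on $\graph{\phi}$ of the form $X+c_X L$, where $L$ spans $\mathbb L$ and $c_X$ is a scalar determined by $\phi$; these fields are tangent to the graph by construction, their brackets land in $V_2$, and their projections under $\pi_{\mathbb W}$ give projected fields that reach the missing $V_2$ directions.

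The hard part will be carrying out the zigzag step rigorously at the low regularity available: we only have $C^{0}$ control on the flows and on $\omega$, so the smooth Baker--Campbell--Hausdorff expansion must be justified with care. The cleanest route is to run the argument first on $C^1$ approximants $\phi_\eps$---obtained by mollification of $\phi$ in coordinates compatible with the splitting $\mathbb G=\mathbb W\cdot\mathbb L$---where the smooth commutator identity for $[D^{\phi_\eps}_X,D^{\phi_\eps}_Y]$ produces a quantitative estimate in terms of $\|D^{\phi_\eps}\phi_\eps-\omega\|_\infty$, and then to let $\eps\to 0$ using the broad* assumption to control this norm on the zigzag scale. That the step is exactly $2$ is used twice and is critical at both points: in BCH only one bracket term contributes, so the endpoint error is genuinely $o(t)$; and in the structural identity for $[D^{\phi_\eps}_X,D^{\phi_\eps}_Y]$ no higher-order derivatives of $\phi_\eps$ appear, which is exactly the obstruction that makes the step-$3$ Engel counterexample of~\cite[Example~4.5.1]{Koz15} possible.
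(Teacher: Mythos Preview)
Your overall reduction is exactly right: (a)$\Leftrightarrow$(b), (b)$\Rightarrow$(c), (c)$\Rightarrow$(d) go through as you say, and the substance is indeed the key lemma that, in step~$2$ with $\dim\mathbb L=1$, broad* implies vertically broad* h\"older. Your zigzag idea is also the correct mechanism for the ``easy'' vertical directions, i.e., those in $[V_1\cap\mathrm{Lie}(\mathbb W),\,V_1\cap\mathrm{Lie}(\mathbb W)]$. This is precisely what the paper does in Step~3 of \cref{big3.3.12}: a four-leg concatenation of integral curves of $D^\phi_\ell$ and $D^\phi_s$ (both with $\ell,s\ge 2$) moves in the $Y_{\ell s}$ direction, but inevitably creates errors in the $Y_{\ell 1}$ and $Y_{s1}$ directions as well, because $[D^\phi_\ell,D^\phi_s]$ formally equals $Y_{\ell s}$ plus terms in $Y_{\ell 1}$ and $Y_{s1}$.

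The genuine gap in your plan is the treatment of the ``hard'' vertical directions $Y_{j1}=[X_j,X_1]$, where $X_1$ spans $\mathbb L$. Here the zigzag cannot be carried out on $\mathbb W$ because $X_1\notin\mathrm{Lie}(\mathbb W)$ and there is no projected field $D^\phi_{X_1}$. Your suggestion to use graph-tangent fields of the form $X+c_X L$ does not rescue this: such fields live on $\mathbb G$, not on $\mathbb W$, and their projections to $\mathbb W$ just give back the $D^\phi_X$ you already had. The paper handles the $Y_{j1}$ directions by a \emph{completely different} argument (Step~2 of \cref{big3.3.12}, adapted from \cite{ASCV06}): along an integral curve of $D^\phi_j$, the $y_{j1}$-component satisfies the second-order ODE $\ddot y_{j1}(t)=-\omega_j$, and a maximum-principle/contradiction argument forces the $1/2$-little-H\"older bound in that coordinate. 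Only \emph{after} this is established can the zigzag estimate for $Y_{\ell s}$ with $s\ge 2$ be closed, since the residual $(\ell,1)$ and $(s,1)$ errors are absorbed by the already-proved Step~2 bound.

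Two further points. First, your ``cleanest route'' via Euclidean mollifications $\phi_\eps$ is circular: the broad* hypothesis gives no control on $\|D^{\phi_\eps}\phi_\eps-\omega\|_\infty$ for a generic mollification, and obtaining approximants with that property is essentially item~(c), which in the paper is derived \emph{from} UID, not used to prove it. Second, the paper does not work directly in an arbitrary step-$2$ group~$\mathbb G$: it first proves the key lemma in the \emph{free} step-$2$ group $\mathbb F$ of the same rank (where the projected fields have the clean form $D^\psi_j=X_j-\psi Y_{j1}$), then shows that broad* lifts along the quotient $\pi\colon\mathbb F\to\mathbb G$ (\cref{broadin GimplicabroadinF}) and that the vertically broad* h\"older property descends (\cref{prop2}). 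This lifting is not strictly necessary if one carries all the estimates in $\mathbb G$, but it considerably declutters the computations.
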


The last main result that we state is an area-formula for uniformly differentiable intrinsic real-valued maps, which we believe has its independent interest. We here give a coordinate-free statement. We refer the reader to \cref{prop2.22}, and \cref{rem:AreaFormula} for a coordinate-dependent statement. 

\begin{prop}\label{prop2.22Intro} 
	Let $\mathbb G$ be a Carnot group of homogeneous dimension $Q$ with a splitting $\mathbb G=\mathbb W\cdot \mathbb L$, with $\mathbb L$ horizontal and one-dimensional. Let $\langle \cdot,\cdot\rangle$ denote a scalar product on the first layer, and let $d$ be a homogeneous and left-invariant distance on $\G$. Consider $U$ a relatively open subset in $\mathbb W$, and a uniformly intrinsically differentiable function $\phi\colon U\subseteq\mathbb W\to\mathbb L$. 
	
	Then, the subgraph $E_{\phi}$ of $\phi$ has locally finite $\G$-perimeter\footnote{The $\mathbb G$-perimeter is defined with respect to the scalar product $\langle\cdot,\cdot\rangle$.} in $ U\cdot \mathbb L$ and its $\G$-perimeter measure $|\partial E_{\phi}|_{\G}$ is given by
	\begin{equation}\label{eqn:PerimeterSubgraphIntro}
	|\partial E_{\phi}|_{\G}( V) =\int _{\Phi^{-1}( V)} \sqrt{1+  |\de^{\phi}\!\phi|^2} \,  \de\!\mathscr{S}^{Q-1}\res \mathbb W, \qquad \mbox{for every Borel set $ V\subseteq U\cdot \mathbb L$},
	\end{equation}
	where $\Phi$ denotes the graph map of $\phi$, $\de^{\phi}\!\phi$ is the intrinsic differential of $\phi$, and $\mathscr S^{Q-1}\res \mathbb W$ is the spherical Hausdorff measure of dimension $Q-1$ restricted to $\mathbb W$. 
	
	Moreover, the reduced boundary of $ E_{\phi}$ coincides with ${\rm graph}(\phi)$, it is a $C^1_{\rm H}$-hypersurface, and there exists a positive measurable function $\beta$ on ${\rm graph}(\phi)$, only depending on the tangent to the hypersurface and on the homogeneous distance $d$, such that 
	\begin{equation}\label{eqn:SpiritIntegration}
	\int_{ V}\beta\de\! \mathscr S^{Q-1}\res {\rm graph}(\phi) = \int _{\Phi^{-1}( V)} \sqrt{1+  |\de^{\phi}\!\phi|^2} \,  \de\!\mathscr{S}^{Q-1}\res \mathbb W, \quad \mbox{for every Borel set $ V\subseteq U\cdot \mathbb L$}.
	\end{equation}
\end{prop}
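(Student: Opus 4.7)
The plan is to approximate $\phi$ by smooth functions, establish the area formula in the smooth case by direct computation, and then pass to the limit. The smooth approximation is supplied by (c) of \cref{thm:MainTheoremIntro2}: on each ball $B(a,\delta)\Subset U$ there exists a family $\phi_\eps\in C^1(B(a,\delta);\mathbb{L})$ with $\phi_\eps\to\phi$ and $D^{\phi_\eps}\phi_\eps\to \omega=\de^{\phi}\!\phi$ uniformly. Each subgraph $E_{\phi_\eps}$ is a Euclidean $C^1$-domain, so it has locally finite $\mathbb{G}$-perimeter and the Gauss--Green identity applies. Note also that, since $\phi$ is UID, \cref{thm:MainTheoremIntro2} already guarantees that $\mathrm{graph}(\phi)$ is a $C^1_{\rm H}$-hypersurface with continuous horizontal normal, so the final assertion about the reduced boundary will be essentially free once the area formula is proved.

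For each smooth $\phi_\eps$ I would write the defining function in exponential coordinates adapted to the splitting $\mathbb{G}=\mathbb{W}\cdot\mathbb{L}$, with $\mathbb{L}$ spanned by a horizontal vector $X_1$ and a complementary horizontal frame $X_2,\ldots,X_m\in\mathrm{Lie}(\mathbb{W})\cap V_1$. Setting $f_\eps(p)=p_{\mathbb{L}}-\phi_\eps(p_{\mathbb{W}})$, a direct computation using the chain-rule characterization of projected vector fields (\cref{def:PhiJIntro}) yields $X_1 f_\eps=1$ and $X_j f_\eps=-D^{\phi_\eps}_j\phi_\eps$ for $j\geq 2$ at graph points, so $|\nabla_{\mathrm H}f_\eps|^2=1+|\nabla^{\phi_\eps}\phi_\eps|^2$ there. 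Combining the standard representation of the $\mathbb{G}$-perimeter of a smooth sublevel set with the parametrization $\Phi_\eps(w)=w\cdot\phi_\eps(w)$, and using that the Haar measure of $\mathbb{W}$ coincides (up to the normalization absorbed in the setup) with $\mathscr{S}^{Q-1}\res\mathbb{W}$ because $\mathbb{W}$ is a homogeneous subgroup of homogeneous dimension $Q-1$, one obtains \eqref{eqn:PerimeterSubgraphIntro} for each $\phi_\eps$.

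The limit step is the delicate one. Uniform convergence $\phi_\eps\to\phi$ gives $\chi_{E_{\phi_\eps}}\to\chi_{E_\phi}$ in $L^1_{\mathrm{loc}}(U\cdot\mathbb{L})$, while the uniform convergence $D^{\phi_\eps}\phi_\eps\to\de^{\phi}\!\phi$ forces the right-hand side of the area formula for $\phi_\eps$ to converge to the one for $\phi$. Lower semicontinuity of perimeter then yields $|\partial E_\phi|_{\mathbb{G}}(V)\leq\int_{\Phi^{-1}(V)}\sqrt{1+|\de^{\phi}\!\phi|^2}\,\de\!\mathscr{S}^{Q-1}\res\mathbb{W}$ for open $V$; for the reverse inequality I would test the distributional divergence against any compactly supported horizontal $\xi$ with $|\xi|\leq 1$, invoke the smooth Gauss--Green identity on $E_{\phi_\eps}$ reducing it to an integral on $\mathrm{graph}(\phi_\eps)$, and pass to the limit using both convergences. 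Finally, formula \eqref{eqn:SpiritIntegration} follows by invoking the known representation of the $\mathbb{G}$-perimeter measure on a $C^1_{\rm H}$-hypersurface (\cite{Mag13,JNGV20}), which produces the continuous density $\beta$ depending only on the horizontal tangent and on $d$. The main obstacle is precisely closing the gap between lower semicontinuity and equality in the perimeter identity: it is essential here that (c) of \cref{thm:MainTheoremIntro2} provides uniform convergence of \emph{both} $\phi_\eps$ and $D^{\phi_\eps}\phi_\eps$, so that the integrated Gauss--Green identity passes to the limit on both sides simultaneously.
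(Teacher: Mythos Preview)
Your approach is workable but differs from the paper's, and it is worth noting where the two diverge. The paper does \emph{not} prove the area formula by approximation and a limit argument. Instead, it observes that since $\phi$ is UID, $\mathrm{graph}(\phi)$ is a $C^1_{\rm H}$-hypersurface (\cref{prop:UidC1H}), and then quotes directly the perimeter representation from \cite[Theorem~2.1]{FSSC03b}: for every $p\in\mathrm{graph}(\phi)$ there is a neighborhood and $f\in C^1_{\rm H}$ with $X_1f>0$ such that
\[
|\partial E_\phi|_\G(V)=\int_{\Phi^{-1}(V)}\frac{|\nabla_\G f|}{X_1 f}\circ\Phi\,\de\mathscr L^{n-1}.
\]
The identity \eqref{DPHI2} then rewrites $\frac{|\nabla_\G f|}{X_1 f}\circ\Phi=\sqrt{1+|\nabla^\phi\phi|^2}$, and a covering argument globalizes. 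The second formula \eqref{eqn:SpiritIntegration} is obtained, as you suggest, by comparing with Magnani's area formula \cite[Theorem~1.2]{Mag17}; see \cref{rem:AreaFormula}.

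Your route through smooth approximation is longer and the limit step is slightly more delicate than you indicate. In fact you do not need lower semicontinuity at all: once you have shown that for every test vector field $\xi$ the Gauss--Green integral $\int_{E_{\phi_\eps}}\mathrm{div}_\G\xi$ converges to $-\int_{\Phi^{-1}(V)}\langle\xi\circ\Phi,\nu\rangle\sqrt{1+|\de^\phi\phi|^2}\,\de\mathscr S^{Q-1}$ (which follows from the uniform convergence of both $\phi_\eps$ and $D^{\phi_\eps}\phi_\eps$, hence of $\nu_\eps$), Riesz representation identifies the vector measure $\nabla_\G\chi_{E_\phi}$ directly, and since $|\nu|=1$ one reads off the total variation. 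Splitting into ``LSC for one inequality, Gauss--Green for the other'' obscures this. Note also that the approximating family you invoke from (c) of \cref{thm:MainTheoremIntro2} is itself constructed in the paper (item (b) of \cref{prop2.22}) via \cite{FSSC03b}, so both arguments ultimately rest on the same implicit-function machinery; the paper simply uses it at a higher level and avoids the limit passage.
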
 

	\textbf{Comments on the statements}. We point out that in \cref{defbroad*Intro}, and \cref{def:vertically*HolderIntro} we give coordinate-free definitions of the broad* conditions, while in \cref{defbroad*}, and \cref{def:vertically*Holder}, we will give apparently weaker definitions, choosing an adapted basis. Nevertheless the broad* condition and the vertically broad* h\"older condition, when coupled together, are independent on the choice of the basis, see \cref{rem:Broad*InCoordinates}, and \cref{rem:VerticallyBroad*InCoordinates}.
	
	We comment on the statement of \cref{thm:MainTheoremIntro2}, and we refer the reader to the introduction of \cref{sec:Theorems} for a more detailed discussion. We notice that (a)$\Leftrightarrow$(d) is
	\cite[Theorem 4.3.1]{Koz15}. 
	In \cite{Koz15} the proof of this latter fact is heavily based on the characterization of co-horizontal $C^1_{\rm H}$-surfaces in terms of uniform convergence to Hausdorff tangents, see \cite[Theorem 3.1.12]{Koz15}. We give a self-contained different proof of this equivalence, with analytic flavor. Namely, we first show (b)$\Leftrightarrow$(d) in \cref{thm:MainTheoremIntro2}, whose proof is based on ideas coming from \cite{ASCV06} and \cite{DiDonato18}, and thus, as a corollary, we eventually recover \cite[Theorem 4.3.1]{Koz15} by using the latter equivalence and (a)$\Leftrightarrow$(b). 
	
	The approximating condition in \eqref{eqn:ApproximatingIntro} of item (c) of \cref{thm:MainTheoremIntro2} could be interpreted as a weak formulation of the equality $D^{\phi}\phi=\omega$ on $ U$, and it is the one that was first proposed and studied in \cite{ASCV06}, see \cref{rem:PointwiseAndLocalApproximability} for a detailed discussion about this condition. Indeed, in \cite{ASCV06}, in the case $\mathbb G=\mathbb H^n$ and $\mathbb L$ one-dimensional, the equivalence (b)$\Leftrightarrow$(c) of \cref{thm:MainTheoremIntro2} has been proved, even in the stronger version obtained by removing the vertically broad* h\"older regularity, see \cite[Theorem 5.1]{ASCV06}. We stress that the fact that $D^{\phi}\phi=\omega$ holds in the sense of distributions on $U$, that is part of \cite[Theorem 5.1]{ASCV06}, follows in general from the argument of item (c) of our \cref{prop2.22}.
%	GA: Ho tolto questa frase per accorciare. 
%The item (c) can also be seen as a criterion to construct uniformly intrinsically differentiable functions, as it is, in many cases, the simplest to check. This is exactly in the spirit of the criterion shown in \cite[Corollary 5.11]{ASCV06} in the setting of Heisenberg groups $\mathbb H^n$. Indeed, we actually apply (c) $\Rightarrow$ (b) to build a family of uniformly intrinsically differentiable functions in the Engel group in \cref{exa:UidInEngel}. 

	\vspace{0.2cm}
	We comment on the statement of \cref{thm:MainTheorem.0Intro2}, and we refer the reader to the introduction of \cref{sec:Step2} for a more detailed discussion on the idea behind the proof. First, we notice that the main difference between this statement and the one in \cref{thm:MainTheoremIntro2} is in the fact that, in all the equivalences, we are able to drop the vertically broad* h\"older regularity on $\phi$. The equivalence (b)$\Leftrightarrow$(c) of \cref{thm:MainTheorem.0Intro2} results in being a generalization of \cite[Theorem 5.1]{ASCV06} to all step-2 Carnot groups. %We stress that in \cite[Theorem 5.1]{ASCV06} the authors also proved that $D^{\phi}\phi=\omega$ holds in the distributional sense. We recover also this conclusion in arbitrary Carnot groups, see item (c) of \cref{prop2.22}. 
	
	The equivalence (b)$\Leftrightarrow$(d) generalizes the result in \cite[Theorem 1.2]{BSC10b}, if one also takes item (d) of \cref{prop2.22} into account, in order to explicitly write the intrinsic normal of ${\rm graph}(\phi)$ in terms of the intrinsic derivatives of $\phi$.

	\vspace{0.2cm}
	We comment on the statement of \cref{prop2.22Intro}. First of all, let us notice that \eqref{eqn:SpiritIntegration} simply comes from \eqref{eqn:PerimeterSubgraphIntro} and the general area formula in \cite{Mag17}, see \cref{rem:AreaFormula} for details. Let us notice that for some particular choices of the homogeneous distance on $\mathbb G$, i.e., when it is vertically symmetric, the function $\beta$ is constant, thus simplifying \eqref{eqn:SpiritIntegration}, see again \cref{rem:AreaFormula}. 
	
	We also mention that in orthonormal coordinates we can explicitly compute the intrinsic normal, see item (d) of \cref{prop2.22}, thus generalizing the formulas already proved in Heisenberg groups and in Carnot groups of step 2 in \cite{ASCV06, DiDonato18}.
	
	We finally remark that a formula in the spirit of \eqref{eqn:SpiritIntegration} has been recently obtained in \cite{CM20} for parametrized co-horizontal $C^1_{\rm H}$-surfaces with complemented tangents of arbitrary codimensions in $\mathbb H^n$ (see \cite[Theorem 4.2]{CM20}), building on an upper blow-up thorem, see \cite[Theorem 1.1]{CM20}. Very recently, in \cite[Theorem 1.1]{JNGV20}, a general area formula for $C^1_{\rm H}$-surfaces has been proved in great generality. In \cite{JNGV20} the area element is left implicit, depending only on the tangent to the surface and on the homogeneous distance on the group. We stress that, with our formula \eqref{eqn:SpiritIntegration}, we explicitly write the area element in terms of the intrinsic derivatives of $\phi$, in the case the target is one-dimensional. For an area-formula for intrinsically Lipschitz functions in $\mathbb H^n$, one can also see \cite{CMPSC14}.
	
	\vspace{0.2cm}
	\noindent\textbf{Geometric characterizations of intrinsic differentiability}. The notion of (U)ID has a geometric meaning. Indeed, given $U\subseteq \mathbb W$ open, a function $\phi\colon U\subseteq\mathbb W\to\mathbb L$ is ID at $w_0\in U$ if and only if the Hausdorff tangent to ${\rm graph}({\phi})$ at $w_{0}\cdot{\phi}(w_0)$ is a homogeneous subgroup that is complementary to $\mathbb L$, see \cref{rem:InvarianceOfIdByTranslations}. 
	
	We stress that, at least in the case $\mathbb L$ is horizontal, the previous convergence to the tangent is uniform if and only if $\phi$ is UID, see \cite[Theorem~3.1.1]{Koz15}, and \cite[Theorem~3.1.12]{Koz15}. The proof of these statements are rather involved and based on the so-called four cones Theorem, see \cite{BK14}. We remark that we will not use this particular uniformity result throughout the paper. 
	
	We point out that the mere existence of Hausdorff tangents for $C^1_{\rm H}$ regular surfaces - without any information on the uniformity of convergence -  has been proved in great generality also in \cite[Theorem 1.7]{Mag13}, and in \cite[Lemma 2.14]{JNGV20}.
	
	Now a natural question can be raised. \textbf{Is it true that an ID function $\phi$ with continuous intrinsic gradient $\de^{\phi}\!\phi$ is UID?} Taking the geometric interpretation into account, the question can be reformulated, at least in the category of co-horizontal $C^1_{\rm H}$-surfaces: \textbf{is it true that, if a co-horizontal ${\rm graph}({\phi})$ has continuously varying Hausdorff tangents, then it is a co-horizontal $C^1_{\rm H}$-surface}? If true, this would be the counterpart of an already known result in the Euclidean setting that goes back to the beginning of twentieth century. We refer the reader to \cite[Proposition 2.1]{BNG14} and references therein for an historical account of the problem.
	
	The answer to the previous question is affirmative in Heisenberg groups $\mathbb H^n$, see \cite[Theorem 4.95]{SC16}, and \cite[Theorem 1.4]{Cor19}. In this paper we obtain a new result in this direction. We prove that the answer is affirmative also for hypersurfaces in every step-2 Carnot group, see (b)$\Rightarrow$(a) in \cref{thm:MainTheorem.0}, thus generalizing \cite[Theorem 4.95]{SC16}.
	
	We give also a partial affirmative answer for arbitrary Carnot groups, by requiring the additional hypothesis of the vertically broad* h\"older condition on $\phi$, see \cref{coroll:IDContinuousImpliesUid}. This weaker implication might not be so satisfactory. Indeed, the intrinsic differentiability (even if it is not uniform) by itself already implies a $1/d$-little H\"older continuity on integral curves of the vector fields $D^{\phi}_W$, with $W\in {\rm Lie}(\mathbb W)\cap V_d$. Nevertheless, this little H\"older continuity, a priori, might not be uniform, see \cref{prop:DerivativeOfIntegralCurves}, \cref{rem:StrategyIdGradContinuousUid}, and \cref{exa:Serapioni}.

	\subsection{Structure of the paper} In \cref{sec:Prel} we introduce the common terminology and notation we use throughout the paper. We introduce Carnot groups, little H\"older functions, intrinsic submanifolds, intrinsically Lipschitz functions, (uniformly) intrinsically differentiable functions and we describe their basic properties and relations.
	
	\noindent In \cref{sec:3} we introduce the projected vector fields and we study their basic properties: in particular, we show some invariance properties that will be crucial in the proof of the main theorems. We also show how the (uniformly) intrinsic differentiability affects metric properties along integral curves of the projected vector fields.
	
	\noindent In \cref{sec:Theorems} we prove the main results \cref{thm:MainTheoremIntro2} and \cref{prop2.22Intro} we discussed above.
	
	\noindent In \cref{sec:Applications} we construct examples and apply our results also to obtain different proofs of particular cases of theorems already contained in the literature. We refer the reader to the beginning of \cref{sec:Applications} for a more detailed discussion.
	
	\noindent In \cref{sec:Step2} we give the proof of \cref{thm:MainTheorem.0Intro2}. 
	
	\vspace{0.5cm}
	\textbf{Acknowledgments}. We are grateful to Katrin F\"assler for several discussions around the topic of the paper. We warmly thank Raul Serapioni for several stimulating discussions and for having shared and discussed with us the examples in \cref{exa:Serapioni} and \cref{rem:0ischaracteristic}. We also thank Sebastiano Nicolussi Golo for stimulating discussions, in particular regarding the last part of \cref{rem:InvarianceOfIdByTranslations}. We finally thank Francesco Serra Cassano and Davide Vittone for having discussed with us the content of \cref{rem:PointwiseAndLocalApproximability}.

\section{Preliminaries}\label{sec:Prel}

\subsection{Carnot groups}
A Carnot group $\mathbb G$ is a connected and simply connected Lie group, whose Lie algebra $\mathfrak g$ is stratified. Namely, there exist subspaces $V_1,\dots, V_s$ of the Lie algebra $\mathfrak g$ such that
\[
\mathfrak g = V_1\oplus \dots\oplus V_s,\qquad [V_1,V_j]=V_{j+1} \quad\forall j=1,\dots,s-1,\qquad [V_1,V_s]=\{0\}.
\]
 The integer $s$ is called {\em step} of the group $\G$, while $m\coloneqq\dim (V_1)$ is called \emph{rank} of $\G$. We call $n\coloneqq \mathrm{dim}\mathbb G$ the topological dimension of $\mathbb G$. We denote by $e$ the identity element of $\G$. 
  
 It is well known that the exponential map $\exp\colon\mathfrak g\to \G$ is a diffeomorphism. We call 
 \begin{equation}\label{eqn:HorizontalBundle}
 \mathbb V_1\coloneqq \exp(V_1),
 \end{equation}
 the {\em horizontal bundle} of $\mathbb G$.
 We write
 \begin{equation}\label{eqn:ProjectionOnV1}
 \pi_{\mathbb V_1}\coloneqq \exp\circ \pi_{V_1}\circ\exp^{-1},
 \end{equation}
 to denote the {\em projection on the horizontal bundle $\mathbb V_1$}, where $\pi_{V_1}$ is the linear projection in $\mathfrak g$ onto $V_1$.
 
 Every Carnot group has a one-parameter family of {\em dilations} that we denote by $\{\delta_\lambda: \lambda >0\}$. These dilations act on $\mathfrak g$ as 
 $$
 (\delta_{\lambda})_{|_{V_i}}=\lambda^{i}(\mathrm{id})_{|_{V_i}}, \qquad \forall \lambda>0, \quad \forall 1\leq i\leq s,
 $$
 and are extended linearly. We will indicate with $\delta_{\lambda}$ both the dilations on $\mathfrak g$ and the group automorphisms corresponding to them via the exponential map. 
 
 We fix a scalar product $\langle\cdot,\cdot\rangle$ in $V_1$, that can be extended left-invariantly on the horizontal bundle $\mathbb V_1= \exp(V_1)$, and a homogeneous norm $\|\cdot\|$ on $\G$. We recall that $\|\cdot\|$ is a {\em homogeneous norm} on $\mathbb G$ if 
 \begin{equation}\label{eqn:HomogeneousNorm}
 \begin{split}
 \|g\|&=0 \qquad \mbox{if and only if $g=0$}, \\
 \|\delta_{\lambda}g\|&=\lambda\|g\|, \qquad \forall \lambda>0, \quad \forall g\in\mathbb G,\\
 \|g\|&=\|g^{-1}\|, \qquad \forall g\in\mathbb G.
 \end{split}
 \end{equation}
 Sometimes we will also call the homogeneous norm $\|\cdot\|_{\mathbb G}$. We also fix on $\mathbb G$ a left-invariant $\delta_{\lambda}$-homogeneous distance $d$ and we denote by $B(g,r)$ (respectively $\overline{B(g,r)}$) the open (respectively closed) balls of center $g\in\mathbb G$ and radius $r>0$ according to this distance. We next give the definition of complementary subgroups. 
 
\begin{defi}[Complementary subgroups]\label{def:ComplementarySubgroups}
Given a Carnot group $\mathbb G$, we say that two subgroups $\mathbb W$ and $\mathbb L$ are \emph{complementary subgroups} in $\G$ if they are {\em homogeneous}, i.e., closed under the action of $\delta_{\lambda}$ for every $\lambda>0$, $\G=\mathbb W\cdot \mathbb L$ and $\mathbb W\cap \mathbb L=\{e\}$. 
\end{defi}

We say that the subgroup $\mathbb L$ is {\em horizontal and $k$-dimensional}, if there exist linearly independent $X_1,\dots$, $X_k \in V_1$ such that $\mathbb L=\exp({\rm span}\{X_1,\dots, X_k\})$.
Given $\mathbb W$ and $\mathbb L$ two complementary subgroups, we denote the {\em projection maps} from $\G$ onto $\mathbb W$ and onto $\mathbb L$ by $\pi_{\W}$ and $\pi_{\mathbb L}$, respectively. Defining $g_\sW\coloneqq\pi_\sW g$ and $g_\sL \coloneqq \pi_\sL g$ for any $g\in \G$, one has
\begin{equation}\label{eqn:ComponentsSplitting}
g=(\pi_{\W} g)\cdot(\pi_{\mathbb L} g)= g_{\sW}\cdot g_{\sL},
\end{equation}
and, whenever $\mathbb W$ is normal (for example this is true when $\mathbb L$ is horizontal), we have
\[
(g\cdot h)_{\sL}=g_{\sL}\cdot h_{\sL}, \qquad (g\cdot h)_{\sW}=g_{\sW}\cdot\left(g_{\sL}\cdot h_{\sW}\cdot\left(g_{\sL}\right)^{-1}\right),\qquad \forall g,h\in \G.
\]
\begin{rem} If $\sW$ and $\sL$ are complementary subgroups of $\G$ and $\sL$ is one-dimensional, then it is easy to see that $\sL$ is horizontal. For the sake of clarity, we will always write $\sL$ horizontal and one-dimensional even if one-dimensional is technically sufficient.
\end{rem}

Let us set $m_0\coloneqq 0$ and $m_j\coloneqq \dim{V_j}$ for any $j=1,\dots,s$. We recall that $m=m_1$. Let us define $n_0\coloneqq 0$, and $n_j\coloneqq \sum_{\ell=1}^j m_{\ell}$. The ordered set $(X_1,\dots, X_n)$ is an {\em adapted basis} for $\mathfrak g$ if the following facts hold. 
\begin{itemize}
	\item[(i)] The vector fields $X_{n_j+1},\dots,X_{n_{j+1}}$ are chosen among the iterated commutators of order $j$ of the vector fields $X_1,\dots, X_m$, for every $j=0,\dots,s-1$.
	\item[(ii)] The set $\{X_{n_j+1},\dots,X_{n_{j+1}}\}$ is a basis for $V_{j+1}$ for every $j=0,\dots, s-1$.
\end{itemize} 
If we fix an adapted basis $(X_1,\dots,X_n)$, and $\ell \in \{1,\dots,n\}$, we define the {\em holonomic degree of $\ell$} to be the unique $j^*\in\{1,\dots,s\}$ such that $n_{j^*-1}+1\leq \ell \leq n_{j^*}$. We denote $\deg\ell \coloneqq j^*$ and we also say that $j^*$ is the {\em holonomic degree of $X_{\ell}$}, i.e., $\deg(X_{\ell})\coloneqq j^*$.

\begin{defi}[Exponential coordinates]\label{def:coordinateconletilde}
	Let $\G$ be a Carnot group of dimension $n$ and let $(X_1,\dots,X_n)$ be an adapted basis of its Lie algebra. We define the {\em exponential coordinates of the first kind} associated with $(X_1,\dots,X_n)$ by the map $F\colon \R^n\to \G$ defined by
	\[
	F(x_1,\dots,x_n)\coloneqq\exp\left( x_1X_1+\ldots+x_nX_n\right).
	\]
	It is well known that $F$ is a diffeomorphism from $\R^n$ to $\G$. We will often need to consider maps in exponential coordinates. To avoid inconvenient notation we will use the following conventions.
	\begin{itemize}
		\item If $\widetilde U\subset\G$, then $U\coloneqq F^{-1}(\widetilde U)$.
		\item If $U\subseteq \R^n$, then $\widetilde U\coloneqq F(U)$.
		\item If $\sW$ and $\sL$ are complementary subgroups of $\G$ and $\mathbb L$ is horizontal and $k$-dimensional, we may assume that $\sL=\exp({\rm span}\{X_1,\dots,X_k\})$ and $\sW=\exp({\rm span}\{X_{k+1},\dots,X_n\})$ for an adapted basis $(X_1,\dots,X_n)$. Therefore $F$ is one-to-one from $\R^k\times\{0_{\R^{n-k}}\}$ onto $\sL$ and also from $\{0_{\R^k}\}\times \R^{n-k}$ onto $\sW$.
		\item If $\widetilde U\subseteq \mathbb W$ and $\widetilde\phi\colon\widetilde U\to \mathbb L$ is a function, then $\phi\colon U\subseteq \R^{n-k}\to\R^k$ denotes the composition of $\widetilde \phi$ with $F$, namely $\phi\coloneqq F^{-1}\circ \widetilde \phi\circ F$.
		\item If $\phi\colon U\subseteq \R^{n-k}\to \R^k$ is a function, then we denote by $\widetilde \phi\colon \widetilde U\subseteq \mathbb W\to \mathbb L$ the map defined by $\widetilde \phi\coloneqq F\circ\phi\circ F^{-1}$.
		\item If $p\in \G$ and $j=1,\dots, s$, then $p^j\in \R^{m_j}$ is the vector of the coordinates of $p$ in the $j^{\rm th}$ layer, namely $p^j\coloneqq (F^{-1}(p)_{n_{j-1}+1},\dots, F^{-1}(p)_{n_j})$.  
		\item If $p\in \G$ and $j=1,\dots, s$, then $\|p^j\|_{m_j}$ denotes the Euclidean norm of $p^j$ in $\R^{m_j}$.
	\end{itemize}
\end{defi}

 It is well known that all the homogeneous norms on $\mathbb G$ are bi-Lipschitz equivalent. Thus, when it will be convenient in the proofs, we work with the {\em anisotropic norm} that in exponential coordinates reads as $$\|(x_1,\dots,x_n)\|_{\mathbb G}=\sum_{\ell=1}^n |x_{\ell}|^{1/\mathrm{deg} \ell}.$$
 We remark that a slight variation of the previous homogeneous norm gives rise to a homogeneous norm that induces a left-invariant homogeneous distance, see \cite[Theorem~5.1]{FSSC03a}.

 We recall that the {\em homogeneous degree} of the monomial $x_1^{a_1}\cdot\dots\cdot x_n^{a_n}$ in exponential coordinates, is $\sum_{\ell=1}^n a_{\ell}\cdot \deg \ell $. 
  
 For the expression of the operation on the group $\mathbb G$ in exponential coordinates we refer to \cite[Proposition 2.1]{FSSC03a}. In the following result we point out a useful estimate for the norm of the conjugate. 
 
\begin{prop}[{\cite[Lemma 3.12]{FS16}}]\label{lemma333FS}
	There exist $\mathcal P= (\mathcal P^1,\dots,\mathcal P^s)\colon \G\times \G\to \R^{m_1}\times\cdots\times\R^{m_s}$ such that, for every $p,q \in \G$, one has
	\begin{equation}\label{eq:polinomi}
	F^{-1}(p^{-1}q p)= F^{-1}(q)+\mathcal{P} (p,q),
	\end{equation}
	where $\mathcal{P} ^1=0$ and, for each $i=2,\dots s$, $\mathcal{P} ^i $ is a vector valued $\delta_\lambda $-homogeneous polynomial of degree $i$. Moreover, for any bounded set $B  \subset \G$, there exists $C\coloneqq C (B,\G)>0$ such that 
	\begin{equation*}
	|\mathcal{P} ^i (p,q) | \leq C \bigl( \|q^1\|_{m_1}+\dots + \|q^{i-1}\|_{m_{i-1}} \bigl),
	\end{equation*}
	for every $p, q \in B$ and every $i=2,\dots , s$.
\end{prop}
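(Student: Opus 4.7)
The plan is to derive the formula from the Baker--Campbell--Hausdorff identity, exploiting the fact that $\mathfrak{g}$ is nilpotent of step $s$. For $p = \exp(X)$ and $q = \exp(Y)$ with $X, Y \in \mathfrak{g}$, the group-theoretic identity $\exp(X)^{-1} \exp(Y) \exp(X) = \exp(e^{-\mathrm{ad}(X)} Y)$ gives, using nilpotency to truncate the exponential series,
\begin{equation*}
F^{-1}(p^{-1} q p) \;=\; e^{-\mathrm{ad}(X)} Y \;=\; Y + \sum_{k=1}^{s-1} \frac{(-1)^k}{k!} \,\mathrm{ad}(X)^k Y.
\end{equation*}
I would therefore define
\begin{equation*}
\mathcal{P}(p,q) \;:=\; \sum_{k=1}^{s-1} \frac{(-1)^k}{k!} \,\mathrm{ad}(X)^k Y,
\end{equation*}
so that $F^{-1}(p^{-1} q p) = F^{-1}(q) + \mathcal{P}(p,q)$ holds by construction. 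That each component $\mathcal{P}^i$ is polynomial in the exponential coordinates of $p$ and $q$ then follows because each $\mathrm{ad}(X)^k Y$ is a polynomial expression in the components of $X$ and $Y$ when written in any fixed basis of $\mathfrak{g}$.

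Next, I would check the three asserted properties. For $\mathcal{P}^1 = 0$: any iterated bracket $[X_{\ell_1}, [\ldots, [X_{\ell_k}, Y_j] \ldots]]$ with $k \geq 1$ belongs to $V_{\ell_1 + \cdots + \ell_k + j}$, which has degree at least $2$, so projecting onto $V_1$ kills every term in the sum. For the $\delta_\lambda$-homogeneity of $\mathcal{P}^i$ of degree $i$: expand $X = \sum_\ell X_\ell$, $Y = \sum_j Y_j$ along the layers; each $X_\ell \in V_\ell$ scales by $\lambda^\ell$, each $Y_j \in V_j$ scales by $\lambda^j$, and the bracket $[X_{\ell_1}, [\ldots, [X_{\ell_k}, Y_j] \ldots]]$ lies in $V_{\ell_1 + \cdots + \ell_k + j}$ and scales by $\lambda^{\ell_1 + \cdots + \ell_k + j}$; summing only the pieces landing in $V_i$ gives a polynomial that is $\delta_\lambda$-homogeneous of degree exactly $i$ in the sense of the grading.

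For the final quantitative bound I would proceed by a multilinearity argument. Fix the projection onto $V_i$: the contributions are finite linear combinations (with coefficients determined by the structure constants of the adapted basis) of terms of the form $[X_{\ell_1}, [\ldots, [X_{\ell_k}, Y_j] \ldots]]$ with $k \geq 1$, $\ell_1, \ldots, \ell_k \geq 1$, and $\ell_1 + \cdots + \ell_k + j = i$. Since $k \geq 1$ and each $\ell_m \geq 1$, one has $j \leq i-1$, so only the coordinates $q^1, \ldots, q^{i-1}$ of $q$ enter. Each such term is linear in the single factor $Y_j$ and polynomial in the coordinates of $X$; when $p, q \in B$ for a bounded set $B$, the coordinates of $X = F^{-1}(p)$ lie in a compact set, so the polynomial coefficient in $X$ is bounded by some $C = C(B, \mathbb{G})$, yielding
\begin{equation*}
|\mathcal{P}^i(p,q)| \;\leq\; C \bigl(\|q^1\|_{m_1} + \cdots + \|q^{i-1}\|_{m_{i-1}}\bigr).
\end{equation*}

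The only subtlety worth being careful about is bookkeeping the identification $\mathfrak{g} \cong \mathbb{R}^n$ induced by the adapted basis so that the abstract BCH expression $e^{-\mathrm{ad}(X)} Y$ becomes genuinely polynomial in the coordinates; this is where the choice of exponential coordinates of the first kind is essential. The rest of the proof is bookkeeping with the grading, and I would expect the degree-counting for the homogeneity and for isolating the index range $j \leq i - 1$ to be the only step requiring care.
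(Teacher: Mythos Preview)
The paper does not supply its own proof of this proposition; it simply quotes the result from \cite[Lemma 3.12]{FS16}. So there is no argument in the paper to compare against.

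Your proof is correct and is essentially the standard one. The identity $p^{-1}qp=\exp\bigl(\mathrm{Ad}(p^{-1})Y\bigr)=\exp\bigl(e^{-\mathrm{ad}(X)}Y\bigr)$ for $p=\exp X$, $q=\exp Y$ is exactly the right starting point, and nilpotency of step $s$ truncates the series at $k=s-1$ as you say. The three claims then follow by the layer bookkeeping you describe: $\mathcal P^1=0$ because every term $\mathrm{ad}(X)^kY$ with $k\ge1$ lands in $V_{\ge2}$; the $\delta_\lambda$-homogeneity of $\mathcal P^i$ follows since each bracket $[X_{\ell_1},[\dots,[X_{\ell_k},Y_j]\dots]]$ contributing to the $V_i$-component satisfies $\ell_1+\cdots+\ell_k+j=i$ and hence scales by $\lambda^i$; and the bound comes from the observation that $k\ge1$ forces $j\le i-1$, together with linearity in $Y_j$ and boundedness of the $X$-polynomial factors on $B$. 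Your closing remark about the identification $\mathfrak g\cong\mathbb R^n$ via the adapted basis is the only point where one has to be slightly careful, and you have flagged it correctly.
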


\begin{rem}
	With a little abuse of notation, \eqref{eq:polinomi} will be always written as
	\begin{equation}\label{eq:abusodinotazione}
	p^{-1}qp=q+\mathcal P(p,q),
	\end{equation}
	where the identification of $\G$ with $\R^n$ has to be understood via exponential coordinates. Notice, however, that if one chooses a different diffeomorphism between $\R^n$ and $\G$, such as exponential coordinates of the second kind or of mixed type, the polynomial $\mathcal P$, the constant $C$ and the components $p^j$ have to be changed accordingly.
\end{rem}

\subsection{Little H\"older continuous functions}
We introduce and discuss the notion of $\alpha$-little H\"older continuous function.
\begin{defi}[little H\"older functions, \cite{Lunardi}]\label{big3.3.11}
	Let $U\subseteq\R^n$ be an open set. We denote by $h^\alpha (U;\R^k)$ the set of all  \emph{$\alpha$-little  H\"older continuous} functions of order $0<\alpha<1$, i.e., the set of maps  $\phi \in C(U;\R^k)$ satisfying
	\begin{equation}\label{equationluna}
	\lim_{r\to 0} \left(\sup \Biggl\{ \, \frac{|\phi (b')-\phi (b)|}{|b'-b|^{\alpha }} : \, b,b'\in U \, , \, 0<|b'-b| <r \, \Biggl\}\right)=0.
	\end{equation}
	We also define $h^\alpha _{{\rm loc}} (U;\R^k)$ the set of all functions $\phi \in C(U;\R^k)$ such that $\phi \in h^\alpha (U';\R^k)$ for any open set $U' \Subset  U $.
\end{defi}

The following example is, in some sense, ``pathological''. As it will be clear during the paper, it gives a flavor of the difference between intrinsically differentiable functions and uniformly intrinsically differentiable functions, see \cref{rem:StrategyIdGradContinuousUid}. We thank R.\ Serapioni for having shared this example with us. 
\begin{exa}\label{exa:Serapioni}
	We are going to construct a real-valued function $\phi\colon\mathbb R\to\mathbb R$ such that $\phi\in h^{1/2}_{\rm loc}(\mathbb R\setminus\{0\})$, $\phi\notin h^{1/2}_{\rm loc}(\mathbb R)$, but still it holds 
	\begin{equation}\label{limit}
	\lim_{x\to 0}\frac{|\phi(x)-\phi(0)|}{|x|^{1/2}} = 0.
	\end{equation}
	
	Let us first notice that, for $n\geq 2$, the intervals $I_n\coloneqq [1/n-1/n^3,1/n+1/n^3]$ are mutually disjoint. Let us define, for $n\geq 2$, the functions $\phi_n\colon\mathbb R\to\mathbb R$ as
	\begin{equation}
	\phi_n(x)\coloneqq \begin{cases*}
	n^3\left|x-\frac{1}{n}\right| & if $x\in I_n$, \\
	1        & otherwise.
	\end{cases*}
	\end{equation}
	Notice that for each $n\geq 2$, the function $\phi_n$ is globally Lipschitz. Define $\phi\colon\mathbb R\to\mathbb R$ as
	\[
	\phi(x)\coloneqq |x|\cdot \prod_{n=2}^{+\infty} \phi_n(x).
	\]
	Notice that, being $I_n$ pairwise disjoint for $n\geq 2$, the infinite product is well-defined, since, if $x\notin I_n$, then $\phi_n(x)=1$. Moreover, being each $\phi_n$ globally Lipschitz,  we get that $\phi\in {\rm Lip}_{\rm loc}(\mathbb R\setminus\{0\})$ and thus $\phi \in h^{1/2}_{\rm loc}(\mathbb R\setminus \{0\})$. We now prove that $\phi\notin h^{1/2}_{\rm loc}(\mathbb R)$. In particular this will follow from the fact that, for every compact neighborhood $U$ of the origin, $\phi \notin h^{1/2}(U)$. 
	
	Indeed, by definition of $\phi$, we get  the following equalities
	\begin{equation}\label{eqn:Contr}
	\frac{\left|\phi\left(\frac{1}{n}+\frac{1}{n^3}\right)-\phi\left(\frac{1}{n}\right)\right|}{\left(\frac{1}{n^3}\right)^{1/2}}=\frac{\frac{1}{n}+\frac{1}{n^3}}{\frac{1}{n^{3/2}}}=\frac{n^2+1}{n^{3/2}}, \qquad \forall n\geq 2.
	\end{equation}
	Thus, if $U$ is an arbitrary compact neighborhood of 0, we get that, for every $n$ large enough, one has $[1/n,1/n+1/n^3]\subset U$, and thus  \eqref{eqn:Contr} implies that \eqref{equationluna} cannot hold, because $1/n^3=\left(1/n+1/n^3\right)-1/n\to 0$ but $(n^2+1)/(n^{3/2})\to +\infty$ as $n\to +\infty$. Thus $\phi\not\in h^{1/2}(U)$. 
	
	Finally, by definition of $\phi$, we get that, for $x\neq 0$,  
	\[
	\frac{|\phi(x)-\phi(0)|}{|x|^{1/2}}=|x|^{1/2}\cdot \prod_{n=2}^{+\infty}\phi_n(x),
	\]
	and thus \eqref{limit} holds, because $\prod_{n=2}^{+\infty}\phi_n$ is bounded. 
	
	We remark that, by a little modification of this example, one can replace $1/2$ with any $0<\alpha<1$. 
\end{exa}

\subsection{Intrinsic surfaces, Intrinsically Lipschitz and Intrinsically differentiable functions} In this section we recall the notion of intrinsic graph of a function, and see what happens to the defining map if we translate the graph. Then we recall the definitions of intrinsically Lipschitz and intrinsically differentiable maps. Finally we discuss the notion of co-horizontal $C^1_{\rm H}$-surface. 

 \begin{defi}[Intrinsic graph of a function]\label{def:IntrinsicGraph}
 	Given $\mathbb W$ and $\mathbb L$ two complementary subgroups in $\mathbb G$, and $\widetilde\phi:\widetilde U\subseteq \mathbb W\to\mathbb L$ a function, we denote
 	$$
 	\widetilde\Phi(\widetilde U)=\mathrm{graph}(\widetilde\phi)\coloneqq\{\widetilde\Phi(w):=w\cdot\widetilde\phi( w): w\in\widetilde U\}.
 	$$
 \end{defi}
 \begin{defi}[Intrinsic translation of a function]\label{def:PhiQ}
 	Given $\mathbb W$ and $\mathbb L$ two complementary subgroups of a Carnot group $\mathbb G$ and a map $\widetilde{\phi}\colon\widetilde{U}\subseteq\mathbb W\to\mathbb L$, we define, for every $q\in\mathbb G$,
 	\[
 	\widetilde{U}_q\coloneqq\{a\in\mathbb W: \pi_{\sW}(q^{-1}\cdot a)\in \widetilde{U}\},
 	\]
 	and $\widetilde{\phi}_q\colon\widetilde{U}_q\subseteq \mathbb W\to\mathbb L$ by setting
 	\begin{equation}\label{eqn:Phiq}
 	\widetilde{\phi}_q(a)\coloneqq\left(\pi_{\mathbb L}(q^{-1}\cdot a)\right)^{-1}\cdot \widetilde{\phi}\left(\pi_{\mathbb W}(q^{-1}\cdot a)\right).
 	\end{equation}
 \end{defi}
 \begin{prop}\label{prop:PropertiesOfIntrinsicTranslation}
 	Let $\mathbb W$ and $\mathbb L$ be two complementary subgroups of a Carnot group $\mathbb G$ and let $\widetilde{\phi}\colon\widetilde{U}\subseteq\mathbb W\to\mathbb L$ be a function. Then, for every $q\in\mathbb G$, the following facts hold.
 	\begin{itemize}
 		\item[(a)] ${\rm graph}(\widetilde{\phi}_q)=q\cdot{\rm graph}(\widetilde{\phi})$;
 		\item[(b)] $(\widetilde{\phi}_q)_{q^{-1}}=\widetilde{\phi}$;
 		\item[(c)] If $\mathbb W$ is normal, then $\widetilde U_q=q_{\mathbb W}\cdot\left(q_{\mathbb L}\cdot \widetilde U\cdot q_{\mathbb L}^{-1}\right)$ and
 		\begin{equation*}
 		\widetilde{\phi}_q(a)=q_{\mathbb L}\cdot\widetilde{\phi}(q_{\mathbb L}^{-1}\cdot q_{\mathbb W}^{-1}\cdot a\cdot q_{\mathbb L}),
 		\end{equation*}
 		for any $a\in \widetilde U_q$;
 		\item[(d)] If $q=\widetilde{\phi}(a)^{-1}\cdot a^{-1}$ for some $a\in \widetilde{U}$, then 
 		\begin{equation*}
 		\widetilde{\phi}_q(e)=e.
 		\end{equation*}
 	\end{itemize}
 \end{prop}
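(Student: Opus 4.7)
My plan is to verify all four assertions by direct computation, in each case exploiting the uniqueness of the decomposition $g = \pi_{\mathbb W}(g)\cdot \pi_{\mathbb L}(g)$ granted by $\mathbb G = \mathbb W\cdot \mathbb L$ and $\mathbb W\cap \mathbb L = \{e\}$. This uniqueness is the thread that ties the whole proof together: whenever I factor a product as (element of $\mathbb W$)$\cdot$(element of $\mathbb L$), that factorization must agree with the one given by $\pi_{\mathbb W}$ and $\pi_{\mathbb L}$.

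For (a), I would fix $a\in \widetilde U_q$ and write $q^{-1}\cdot a = \pi_{\mathbb W}(q^{-1}\cdot a)\cdot \pi_{\mathbb L}(q^{-1}\cdot a)$, so that $a = q\cdot \pi_{\mathbb W}(q^{-1}\cdot a)\cdot \pi_{\mathbb L}(q^{-1}\cdot a)$. Plugging this into $a\cdot \widetilde\phi_q(a)$ and using \eqref{eqn:Phiq}, the two $\mathbb L$-factors telescope and we get $q\cdot w\cdot \widetilde\phi(w)$ with $w\coloneqq \pi_{\mathbb W}(q^{-1}\cdot a)\in \widetilde U$. This shows $\mathrm{graph}(\widetilde\phi_q)\subseteq q\cdot \mathrm{graph}(\widetilde\phi)$. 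For the reverse inclusion I take $g\coloneqq q\cdot w\cdot \widetilde\phi(w)$ with $w\in \widetilde U$, set $a\coloneqq \pi_{\mathbb W}(g)$ and $\ell\coloneqq \pi_{\mathbb L}(g)$; then $q^{-1}\cdot a = w\cdot \widetilde\phi(w)\cdot \ell^{-1}$, whose unique splitting has $\mathbb W$-part equal to $w$ and $\mathbb L$-part equal to $\widetilde\phi(w)\cdot \ell^{-1}$. This simultaneously gives $a\in \widetilde U_q$ and $\widetilde\phi_q(a) = \ell$, hence $a\cdot \widetilde\phi_q(a) = g$.

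Item (b) then follows formally from (a): applying (a) twice,
\[
\mathrm{graph}((\widetilde\phi_q)_{q^{-1}}) = q^{-1}\cdot \mathrm{graph}(\widetilde\phi_q) = q^{-1}\cdot q\cdot \mathrm{graph}(\widetilde\phi) = \mathrm{graph}(\widetilde\phi).
\]
Because of the uniqueness of the splitting, a set which is an intrinsic graph determines both its domain in $\mathbb W$ and its graphing function uniquely, so equality of graphs forces $(\widetilde\phi_q)_{q^{-1}}=\widetilde\phi$. For (c), I factor $q = q_{\mathbb W}\cdot q_{\mathbb L}$ and compute
\[
q^{-1}\cdot a \;=\; q_{\mathbb L}^{-1}\cdot q_{\mathbb W}^{-1}\cdot a \;=\; \bigl(q_{\mathbb L}^{-1}\cdot q_{\mathbb W}^{-1}\cdot a\cdot q_{\mathbb L}\bigr)\cdot q_{\mathbb L}^{-1}.
\]
Normality of $\mathbb W$ ensures the parenthesized factor lies in $\mathbb W$ (since $q_{\mathbb W}^{-1}\cdot a\in \mathbb W$ and conjugation by $q_{\mathbb L}$ preserves $\mathbb W$), while the trailing $q_{\mathbb L}^{-1}$ lies in $\mathbb L$; by uniqueness this is the canonical splitting of $q^{-1}\cdot a$. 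Reading off the $\mathbb W$- and $\mathbb L$-components and invoking \eqref{eqn:Phiq} yields at once both the claimed description of $\widetilde U_q$ and the formula for $\widetilde\phi_q(a)$.

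Finally, (d) is a one-line verification: for $q=\widetilde\phi(a)^{-1}\cdot a^{-1}$ one has $q^{-1}\cdot e = a\cdot \widetilde\phi(a)$, whose canonical splitting has $\mathbb W$-part $a$ and $\mathbb L$-part $\widetilde\phi(a)$, so \eqref{eqn:Phiq} gives $\widetilde\phi_q(e) = \widetilde\phi(a)^{-1}\cdot \widetilde\phi(a) = e$. The only delicate point in the whole argument, and the reason I would treat (c) last among the computational items, is the use of normality of $\mathbb W$ to guarantee that $q_{\mathbb L}^{-1}\cdot q_{\mathbb W}^{-1}\cdot a\cdot q_{\mathbb L}$ actually belongs to $\mathbb W$; without this hypothesis one cannot identify the factorization displayed above with $(\pi_{\mathbb W}(q^{-1}\cdot a),\pi_{\mathbb L}(q^{-1}\cdot a))$, and the clean explicit formula for $\widetilde\phi_q$ breaks down.
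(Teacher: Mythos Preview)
Your proof is correct and follows essentially the same approach as the paper: both arguments unwind the definition \eqref{eqn:Phiq} using the uniqueness of the $\mathbb W\cdot\mathbb L$ decomposition, with the key identity $q^{-1}\cdot a=(q_{\mathbb L}^{-1}\cdot q_{\mathbb W}^{-1}\cdot a\cdot q_{\mathbb L})\cdot q_{\mathbb L}^{-1}$ for (c). The only cosmetic differences are that you deduce (b) from (a) via uniqueness of the graphing function whereas the paper applies \eqref{eqn:Phiq} twice directly, and you verify (d) straight from \eqref{eqn:Phiq} whereas the paper reads it off the graph identity established in (a).
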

 \begin{proof}
 	The proof of (a), directly follows from \eqref{eqn:Phiq}, which yields
 	\begin{equation}\label{eqn:GraphOfPhiQEqualsQGraphOfPhi}
 	a\cdot\widetilde{\phi}_q(a)=q\cdot\pi_{\sW}(q^{-1}\cdot a)\cdot \widetilde{\phi}\left(\pi_{\mathbb W}(q^{-1}\cdot a)\right),\qquad \forall a\in \widetilde U_q.
 	\end{equation}
 	To prove (b), it is enough to apply twice \eqref{eqn:Phiq}.
 	For the proof of (c), decompose $q=q_{\sW}\cdot q_{\mathbb L}$. Then, for every $a\in\widetilde U_q$,
 	\[
 	q^{-1}\cdot a = (q_{\mathbb L}^{-1}\cdot q_{\sW}^{-1}\cdot a\cdot q_{\mathbb L})\cdot q_{\mathbb L}^{-1},
 	\]
 	and whenever $\mathbb W$ is normal one gets 
 	\begin{equation}\label{eqn:ProjectionOfq-1a}
 	\pi_{\mathbb L}(q^{-1}\cdot a)=q_{\mathbb L}^{-1}, \qquad \pi_{\sW}(q^{-1}\cdot a)=q_{\mathbb L}^{-1}\cdot q_{\sW}^{-1}\cdot a\cdot q_{\mathbb L}.
 	\end{equation}
 	As a consequence we get $\widetilde{U}_q=q_{\mathbb W}\cdot\left(q_{\mathbb L}\cdot \widetilde U\cdot q_{\mathbb L}^{-1}\right)$ and, using again \eqref{eqn:Phiq}, we obtain (c).
 	
 	To prove (d), it is enough to evaluate \eqref{eqn:GraphOfPhiQEqualsQGraphOfPhi} in $a=e$ and $q=\widetilde{\phi}(a)^{-1}\cdot a^{-1}$.
 \end{proof}

 We introduce the notion of intrinsically Lipschitz function and state some properties. See \cite[Section 3]{FS16}. 

\begin{defi}[Intrinsic Cone]\label{def:Cone}
	Let $\mathbb W$ and $\mathbb L$ be two complementary subgroups of a Carnot group $\mathbb G$. The {\em intrinsic cone} $C_{\mathbb W,\mathbb L}(q,\alpha)$ of basis $\mathbb W$ and axis $\mathbb L$, centered at $q$ and of opening $\alpha\geq 0$, is defined by
	\[
	C_{\mathbb W,\mathbb L}(q,\alpha) \coloneqq q\cdot \{p\in\mathbb G:\|p_{\mathbb W}\|\leq \alpha \|p_{\mathbb L}\| \}.
	\]
\end{defi}

\begin{defi}[Intrinsically Lipschitz function]\label{def:IntrinsicLipschitz}
	Let $\mathbb W$ and $\mathbb L$ be complementary subgroups of a Carnot group $\mathbb G$. We say that a function $\widetilde\phi\colon\widetilde U \subseteq \mathbb W\to \mathbb L$ is {\em intrinsically $L$-Lipschitz} in $\widetilde U$, with $L>0$, if 
	\[
	C_{\mathbb W,\mathbb L}(p,L^{-1}) \cap \mbox{graph}(\widetilde\phi)=\{p\}, \qquad \forall p\in\mbox{graph}(\widetilde\phi).
	\] 
\end{defi}
\begin{prop}[{\cite[Theorem~3.2 \& Proposition~3.3]{FS16}}]\label{prop:IntrinsicLipschitz}
	Let $\mathbb W$ and $\mathbb L$ be two complementary subgroups in a Carnot group $\mathbb G$ and let $\widetilde\phi\colon\widetilde U\subseteq\mathbb W\to\mathbb L$ be a function. Then the following facts are equivalent.
	\begin{itemize}
		\item[(a)] $\widetilde\phi$ is intrinsically $L$-Lipschitz in $\widetilde U$;
		\item[(b)] $\|\pi_{\mathbb L}\left(p^{-1}\cdot q\right)\|\leq L\|\pi_{\mathbb W}\left(p^{-1}\cdot q\right)\|$ for every $p, q\in{\rm graph}(\widetilde\phi)$;
		\item[(c)] for any $a\in\widetilde U$, setting $q\coloneqq\widetilde\phi(a)^{-1}\cdot a^{-1}$, one has $\|\widetilde\phi_q(b)\|\leq L\|b\|$ for every $b\in\widetilde U_q$, where $\widetilde\phi_q$ and $\widetilde U_q$ are defined in \cref{def:PhiQ}.
	\end{itemize}
	Moreover, for every $a\in\widetilde U$, setting $q\coloneqq\widetilde\phi(a)^{-1}\cdot a^{-1}$, one has that $\widetilde\phi$ is intrinsically $L$-Lipschitz in $\widetilde U$ if and only if $\widetilde\phi_q$ is intrinsically $L$-Lipschitz in $\widetilde U_q$.
\end{prop}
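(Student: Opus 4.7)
The strategy is to observe that all three statements encode the same geometric constraint on $\mathrm{graph}(\widetilde\phi)$, namely that no two distinct graph points can be ``too close in the $\mathbb L$-direction'' relative to their separation in the $\mathbb W$-direction. I would prove (a)$\Leftrightarrow$(b) by a direct unfolding of \cref{def:Cone}, then obtain (b)$\Leftrightarrow$(c) via the intrinsic-translation calculus of \cref{prop:PropertiesOfIntrinsicTranslation}, and finally deduce the ``moreover'' clause from (b) by left-translation invariance.

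For (a)$\Leftrightarrow$(b): by \cref{def:Cone}, a point $q$ lies in $C_{\mathbb W,\mathbb L}(p,L^{-1})$ if and only if $p^{-1}\cdot q$ belongs to $\{g:\|g_{\mathbb W}\|\leq L^{-1}\|g_{\mathbb L}\|\}$, which rearranges to $\|\pi_{\mathbb L}(p^{-1}\cdot q)\|\geq L\,\|\pi_{\mathbb W}(p^{-1}\cdot q)\|$. The requirement $C_{\mathbb W,\mathbb L}(p,L^{-1})\cap\mathrm{graph}(\widetilde\phi)=\{p\}$ thus says that this reverse inequality can hold for pairs of graph points only when $p=q$, in which case both projections vanish; this is precisely (b). The only delicate point is that (a) naturally produces a strict inequality while (b) is stated with ``$\leq$'', but this discrepancy is reconciled by the standard argument that allows one to absorb an arbitrary $\eps>0$ in the constant $L$ (or, equivalently, by passing between open and closed cones).

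For (b)$\Leftrightarrow$(c): fix $a\in\widetilde U$ and set $q\coloneqq\widetilde\phi(a)^{-1}\cdot a^{-1}$, so that by \cref{prop:PropertiesOfIntrinsicTranslation}(a),(d) one has $\widetilde\phi_q(e)=e$ and $\mathrm{graph}(\widetilde\phi_q)=q\cdot\mathrm{graph}(\widetilde\phi)$. Write $p\coloneqq a\cdot\widetilde\phi(a)$. Every point of $\mathrm{graph}(\widetilde\phi_q)$ has the form
\[
b\cdot\widetilde\phi_q(b)\;=\;q\cdot a'\cdot\widetilde\phi(a')\;=\;p^{-1}\cdot p',
\]
where $p'\coloneqq a'\cdot\widetilde\phi(a')$ ranges over $\mathrm{graph}(\widetilde\phi)$. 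Projecting onto $\mathbb W$ and $\mathbb L$ yields the dictionary $b=\pi_{\mathbb W}(p^{-1}\cdot p')$ and $\widetilde\phi_q(b)=\pi_{\mathbb L}(p^{-1}\cdot p')$, under which the bound $\|\widetilde\phi_q(b)\|\leq L\|b\|$ in (c) and the bound in (b) are the same statement. Letting $a$ vary over $\widetilde U$ makes $p$ vary over $\mathrm{graph}(\widetilde\phi)$, supplying the remaining quantifier.

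For the ``moreover'' part, condition (b) depends on $\widetilde\phi$ only through the set of group-theoretic differences $\{p^{-1}\cdot p' : p,p'\in\mathrm{graph}(\widetilde\phi)\}$, and this set is manifestly invariant under any global left-translation of $\mathrm{graph}(\widetilde\phi)$. By \cref{prop:PropertiesOfIntrinsicTranslation}(a), such a left-translation by $q$ realises exactly $\mathrm{graph}(\widetilde\phi_q)$, so (b) for $\widetilde\phi$ on $\widetilde U$ is equivalent to (b) for $\widetilde\phi_q$ on $\widetilde U_q$, with the same constant $L$. The hardest bookkeeping in the whole argument, I expect, is tracking the strict/non-strict distinction in (a)$\Leftrightarrow$(b); the rest reduces to a careful application of the identities in \cref{prop:PropertiesOfIntrinsicTranslation}.
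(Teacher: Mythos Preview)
The paper does not supply its own proof of this proposition: it is stated with a citation to \cite[Theorem~3.2 \& Proposition~3.3]{FS16} and used as a black box thereafter. So there is no in-paper argument to compare your proposal against.

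That said, your proof is essentially the standard one and is correct. The unfolding of \cref{def:Cone} for (a)$\Leftrightarrow$(b) is exactly right, and your dictionary $b=\pi_{\mathbb W}(p^{-1}p')$, $\widetilde\phi_q(b)=\pi_{\mathbb L}(p^{-1}p')$ via \cref{prop:PropertiesOfIntrinsicTranslation}(a),(d) for (b)$\Leftrightarrow$(c) is the natural mechanism; the ``moreover'' clause is indeed an immediate consequence of the left-invariance of the difference set together with \cref{prop:PropertiesOfIntrinsicTranslation}(a). Your flagged subtlety about strict versus non-strict inequalities in (a)$\Leftrightarrow$(b) is real: the closed cone in \cref{def:Cone} forces (a)$\Rightarrow$(b) to produce a strict inequality for $p\neq q$, while (b)$\Rightarrow$(a) with the \emph{same} constant $L$ only yields $C_{\mathbb W,\mathbb L}(p,L^{-1})\cap\mathrm{graph}(\widetilde\phi)\subseteq\{p\}$ up to the boundary of the cone. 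This is a well-known cosmetic issue in the literature and is usually handled exactly as you indicate (or by observing that a graph cannot meet the boundary nontrivially, since $\pi_{\mathbb W}(p^{-1}q)=e$ forces $p=q$ for graph points, reducing the boundary case to one where both norms are comparable but nonzero, which your $\eps$-argument then absorbs). Your write-up would be accepted as a proof of the cited result.
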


We now define the notion of intrinsically linear function, intrinsically differentiable function and uniformly intrinsically differentiable function. General properties are studied in \cite{FMS14}, see for example \cite[Proposition~3.1.3 \& Proposition~3.1.6]{FMS14}. For the forthcoming definitions and properties of intrinsically differentiable functions we follow also \cite{DiDonato18}.

The notion of intrinsic differentiability was first given in \cite[Definition~4.4]{FSSC06} and then first studied in \cite{ASCV06}, see \cite[Definition~1.1]{ASCV06}. In this last reference the notion of intrinsic differentiability is given in terms of the graph distance.
 We here give a slightly different definition of intrinsic differentiability that is indeed equivalent to ours, by \cite[Proposition~4.76]{SC16}, when $\mathbb W$ is a normal subgroup, that will always be in our case. 

\begin{defi}[Intrinsically linear function]
	Let $\mathbb W$ and $\mathbb L$ be complementary subgroups in $\mathbb G$. Then $\ell: \W \to \mathbb L$ is {\em intrinsically linear} if $\mathrm{graph}(\ell)$ is a homogeneous subgroup of $\mathbb G$.
\end{defi}

\begin{defi}[Pansu differentiability]\label{def:PansuDifferentiability}
	Let $\G$ and $\G'$ be two Carnot groups endowed with left-invariant homogeneous distances $d_\G$ and $d_{\G'}$ and let $\Omega\subseteq \G$ be an open set. A function $f\colon\G\to\G'$ is said to be {\em Pansu differentiable} at a point $p\in \Omega$ if there exists a Carnot homomorphism $L\colon \G\to\G'$, i.e., a group homomorphism that commutes with the dilations $\delta_{\lambda}$, such that
	\[
	\lim_{x\to p}\frac{d_{\G'}(f(p)^{-1}f(x), L(p^{-1}x))}{d_\G(x,p)}=0.
	\]
	The map $L$ is uniquely determined, whenever it exists, and it is called the {\em Pansu differential} of $f$ at $p$ and it is denoted by $\de_{\rm P}\! f(p)$.
\end{defi}

\begin{defi}[$C^1_{\rm H}$-function]
	Let $\Omega\subseteq \G$ be an open subset of a Carnot group $\G$. A map $f\colon \Omega\to \R^k$ is said to be {\em of class $C^1_{\rm H}$} if it is Pansu differentiable and the Pansu differential $\de_{\rm P}\! f\colon \G\to \R^k$ is continuous. We denote by $C^1_{\rm H}(\Omega;\R^k)$ the set of $\R^k$-valued functions of class $C^1_{\rm H}$ in $\Omega$.
\end{defi}

\begin{defi}[(Uniformly) intrinsic differentiability]\label{defiintrinsicdiff}
	Let $\W$ and $\mathbb L$ be complementary subgroups of a Carnot group $\mathbb G$ and let $\widetilde{\phi}\colon\widetilde{U}\subseteq \W \to\mathbb L$ be a function with $\widetilde{U}$ open in $\mathbb W$. For $a_0\in\widetilde{U}$, let $p_0\coloneqq\widetilde{\phi}(a_0)^{-1}\cdot a_0^{-1}$ and denote by $\widetilde{\phi}_{p_0}\colon \widetilde{U}_{p_0}\subseteq \mathbb W\to\mathbb L$ the shifted function defined in \cref{def:PhiQ}. 
	
	We say that $\widetilde{\phi}$ is {\em intrinsically differentiable} at $a_0$ if the shifted function $\widetilde{\phi}_{p_0}$
	is {\em intrinsically differentiable} at $e$, i.e., if there is an intrinsically linear map $\de^{\phi}\!\phi_{a_0}\colon\mathbb W\to\mathbb L$ such that
	\begin{equation}\label{eqn:IdInCoordinates2}
	\lim_{r\to 0}\left(\sup\left\{\frac{\|\de^{\phi}\!\phi_{a_0}(b)^{-1}\cdot\widetilde{\phi}_{p_0}(b)\|}{\|b\|}: b\in \widetilde U_{p_0}, 0<\|b\|<r\right\}\right)= 0.
	\end{equation}
	The function $\de^{\phi}\!\phi_{a_0}$, sometimes denoted also by $\de^{\phi}\!\phi(a_0)$, is called {\em intrinsic differential} of $\widetilde\phi$ at $a_0$, and we say that $\widetilde \phi$ is {\em intrinsically differentiable} if it is intrinsically differentiable at any point $a_0\in \widetilde U$. We also denote by ${\rm ID}(\widetilde U,\sW;\sL)$ the set of intrinsically differentiable functions $\widetilde\phi\colon\widetilde U\subseteq \sW\to\sL$.
	
	We say that $\widetilde{\phi}$ is {\em uniformly intrinsically differentiable} at $a_0$ if, setting $p_{a}\coloneqq \widetilde \phi(a)^{-1}\cdot a^{-1}$ for any $a\in \widetilde U$, we have 
	\begin{equation}
	\lim_{r\to 0}\left(\sup\left\{\frac{\|\de^{\phi}\!\phi_{a_0}(b)^{-1}\cdot\widetilde{\phi}_{p_a}(b)\|}{\|b\|}: a\in \widetilde U\cap B(a_0,r), b\in\widetilde U_{p_a}\cap B(a_0,r), a\neq b\right\}\right)= 0.
	\end{equation}
	We say that $\widetilde \phi$ is {\em uniformly intrinsically differentiable} on $\widetilde U$ if it is uniformly intrinsically differentiable at any $a_0\in \widetilde U$. We finally denote by ${\rm UID}(\widetilde U,\sW;\sL)$ the set of uniformly intrinsically differentiable functions $\widetilde \phi\colon\widetilde U\subseteq \sW\to \sL$.
\end{defi}

	\begin{rem}[Intrinsic difference quotients]
		In the papers \cite{FSSC06, Ser17}, the authors introduce and study the following two notions, giving characterizations for intrinsically Lipschitz continuity, see \cite[Proposition~3.11 \& Theorem~3.21]{Ser17}. For a continuous function $\widetilde\phi\colon \widetilde U\subseteq \mathbb W\to \mathbb L$, defined on $\widetilde U$ open, the \emph{intrinsic difference quotients} of $\widetilde\phi$ at the point $w\in \widetilde U$ in the direction $Y\in {\rm Lie}(\mathbb W)$ at time $t>0$, are defined as
		\[
		\nabla_Y\widetilde\phi(w,t)\coloneqq \delta_{1/t}\widetilde\phi_{p}(\delta_t\exp Y),
		\]
		for every $t>0$, where $p\coloneqq \widetilde\phi(w)^{-1}\cdot w^{-1}$, and whenever $\delta_t\exp Y$ is in $\widetilde U_p$. The \emph{intrinsic directional derivative} of $\widetilde\phi$ at $w\in \widetilde U$ in the direction $Y\in {\rm Lie}(\mathbb W)$ is defined by
		\[
		D_Y\widetilde\phi(w)\coloneqq \lim_{t\to0}\nabla_Y\widetilde\phi(w,t),
		\]
		whenever the limit exists. In analogy with Euclidean Calculus, we notice that, if $\widetilde\phi\in {\rm ID}(\widetilde U, \mathbb W; \mathbb L)$, then it admits intrinsic directional derivatives at any $w\in \widetilde U$ along any $Y\in {\rm Lie}(\mathbb W)$, and, moreover, one has $D_{Y}\widetilde\phi(w)=\de^{\phi}\!\phi(w)(\exp Y)$, for any $w\in \widetilde U$ and every $Y\in \mathrm{Lie}(\mathbb W)$. Indeed, this is a consequence of the following identity
		\[
		\|\de^\phi\!\phi(w)(\exp Y)^{-1}\delta_{1/t}\widetilde \phi_{p}(\delta_t\exp Y)\|=\frac{\|\left(\de^\phi\!\phi(w)(\delta_t\exp Y)\right)^{-1}\widetilde \phi _{p}(\delta_t\exp Y)\|}{t},
		\]
		that simply comes from the fact both the norm $\|\cdot\|$ and $\de^\phi\!\phi$ are $\delta_{\lambda}$-homogeneous. Then from the previous equality and \eqref{eqn:IdInCoordinates2} with $a_0=w$, and $b=\delta_t\exp Y$, we get the sought claim taking $t\to 0$.
	\end{rem}

\begin{prop}[{\cite[Proposition 3.4]{DiDonato18}}]\label{p2.5}
	Let $\W$ and $\mathbb L$ be two complementary subgroups of a Carnot group $\mathbb G$ with $\mathbb L$ horizontal and $k$-dimensional and let $\ell\colon\sW\to\sL$ be an intrinsically linear function. Then  $\ell$ only depends on the horizontal components of the elements in $\sW$, namely on $\W_1\coloneqq\W\cap \mathbb V_1$, where $\mathbb V_1=\exp(V_1)$. In particular, if $\pi_{\mathbb V_1}$ denotes the projection from $\G$ to $\mathbb V_1$, see \eqref{eqn:ProjectionOnV1}, one has
	\begin{equation*}\label{lineare0}
	\ell(a)=\ell(\pi_{\mathbb V_1} a), \qquad \forall a\in \mathbb W.
	\end{equation*}
	As a consequence, $\exp^{-1}\circ \ell\circ\exp_{|_{{\rm Lie}(\sW)\cap V_1}}\colon {\rm Lie}(\sW)\cap V_1\to {\rm Lie}(\sL)$ is linear, and there exists a constant $C\coloneqq C(\ell)>0$  such that
	\begin{equation}\label{linea2.0}
	\|\ell(a)\|\leq C \|\pi_{\mathbb V_1}a\|,\qquad \forall a\in \W.
	\end{equation}
\end{prop}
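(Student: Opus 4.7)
The plan is to exploit the fact that, since $\ell$ is intrinsically linear, the graph $H\coloneqq\mathrm{graph}(\ell)$ is a homogeneous closed subgroup of $\G$, hence of the form $H=\exp(\mathfrak h)$ for a graded Lie subalgebra $\mathfrak h=\bigoplus_{j=1}^s \mathfrak h_j$, with $\mathfrak h_j \coloneqq \mathfrak h \cap V_j$. Two structural facts are exploited throughout. First, $\mathfrak h\cap\mathrm{Lie}(\sL)=\{0\}$: the map $w\mapsto w\cdot\ell(w)$ is a bijection $\sW\to H$, in particular $\ell(e)=e$, so any $\eta\in\mathfrak h\cap\mathrm{Lie}(\sL)$ would produce $\exp(\eta)\in H\cap\sL=\{e\}$. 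Second, since $\sL$ is horizontal, $\mathrm{Lie}(\sL)\subseteq V_1$, and the graded splitting $\mathfrak g=\mathrm{Lie}(\sW)\oplus\mathrm{Lie}(\sL)$ forces $V_j\subseteq \mathrm{Lie}(\sW)$, hence $\exp(V_j)\subseteq\sW$, for every $j\geq 2$.

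The central step is to show $V_j\subseteq \mathfrak h$ for every $j\geq 2$, so that $\ell\equiv e$ on $\exp(V_j)$. Fix $\xi\in V_j$ with $j\geq 2$, set $z\coloneqq \exp(\xi)\in\sW$, and write $\ell(z)=\exp(\zeta)$ with $\zeta\in\mathrm{Lie}(\sL)$. Dilation invariance of $H$ gives $\ell(\delta_\lambda z)=\delta_\lambda\ell(z)$ for every $\lambda>0$; taking $\lambda=t^{1/j}$ (so that $\delta_{t^{1/j}}z=\exp(t\xi)$) yields $\ell(\exp(t\xi))=\exp(t^{1/j}\zeta)$, and therefore $\exp(t\xi)\cdot\exp(t^{1/j}\zeta)\in H=\exp(\mathfrak h)$. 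By Baker--Campbell--Hausdorff its logarithm lies in $\mathfrak h$; since every BCH bracket involving $k\geq 1$ copies of $t\xi\in V_j$ and $l\geq 0$ copies of $t^{1/j}\zeta\in V_1$ lives in $V_{kj+l}\subseteq V_{\geq j+1}$, the only $V_1$-contribution is $t^{1/j}\zeta$ itself. Thus $\zeta\in\mathfrak h\cap V_1\cap\mathrm{Lie}(\sL)=\{0\}$, i.e., $\ell(z)=e$. As $\xi\in V_j$ was arbitrary, $V_j\subseteq\mathfrak h$ for every $j\geq 2$, and consequently $\ell(z)=e$ for every $z\in\exp(V_2\oplus\cdots\oplus V_s)$.

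For a general $a\in\sW$, write $a=w_1\cdot a'$ with $w_1\coloneqq \pi_{\mathbb V_1}(a)\in\sW\cap\mathbb V_1$ and $a'\in\exp(V_2\oplus\cdots\oplus V_s)\cap\sW$; such a decomposition is produced by Baker--Campbell--Hausdorff in exponential coordinates of the first kind. Set $w_2\coloneqq \ell(w_1)^{-1}\cdot a'\cdot\ell(w_1)$: conjugation by $\ell(w_1)\in\mathbb V_1$ preserves $\exp(V_2\oplus\cdots\oplus V_s)$ (iterated brackets of $V_1$ with $V_{\geq 2}$ lie in $V_{\geq 3}$), and normality of $\sW$ keeps $w_2$ inside $\sW$; hence $\ell(w_2)=e$ by the previous paragraph. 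The group law on $H$ then yields
\[
\bigl(w_1\ell(w_1)\bigr)\bigl(w_2\ell(w_2)\bigr)=w_1\cdot\bigl(\ell(w_1)\, w_2\,\ell(w_1)^{-1}\bigr)\cdot\ell(w_1)=w_1\cdot a'\cdot\ell(w_1)=a\cdot\ell(w_1)\in H,
\]
and uniqueness of the splitting $\G=\sW\cdot\sL$ forces $\ell(a)=\ell(w_1)=\ell(\pi_{\mathbb V_1}(a))$.

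The linearity of $\exp^{-1}\circ\ell\circ\exp|_{\mathrm{Lie}(\sW)\cap V_1}$ is then immediate: its graph equals $\mathfrak h\cap V_1$, a linear subspace of $V_1$ projecting bijectively onto $\mathrm{Lie}(\sW)\cap V_1$ (surjectively because $\ell$ is defined on all of $\sW$, and injectively because $\mathfrak h\cap\mathrm{Lie}(\sL)=\{0\}$). The estimate \eqref{linea2.0} then follows from the equivalence of the homogeneous norm $\|\cdot\|$ with the Euclidean norm on the horizontal layer and the boundedness of every linear map on the finite-dimensional space $\mathrm{Lie}(\sW)\cap V_1$. The hardest point is the central BCH-grading analysis: one must recognize that the only $V_1$-contribution to $\log(\exp(t\xi)\exp(t^{1/j}\zeta))$ is $t^{1/j}\zeta$, so that the graded structure of $\mathfrak h$ genuinely forces $\zeta\in\mathrm{Lie}(\sL)$ to lie in $\mathfrak h\cap V_1$ and thereby to vanish.
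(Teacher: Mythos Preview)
The paper does not include its own proof of this proposition; it is quoted verbatim from \cite[Proposition~3.4]{DiDonato18} and used as a black box. There is therefore nothing in the present paper to compare your argument against.

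Your proof is correct and self-contained. The key ideas---that $H=\mathrm{graph}(\ell)$ is a homogeneous subgroup, hence $\mathfrak h=\log H$ is a graded Lie subalgebra, and that the dilation/BCH argument forces the $V_1$-component of $\log\bigl(\exp(t\xi)\exp(t^{1/j}\zeta)\bigr)$ to be exactly $t^{1/j}\zeta$, hence $\zeta\in\mathfrak h\cap\mathrm{Lie}(\sL)=\{0\}$---are cleanly executed and yield $V_{\geq 2}\subseteq\mathfrak h$, i.e., $\ell\equiv e$ on $\exp(V_{\geq 2})$. The passage to general $a\in\sW$ via $a=w_1\cdot a'$ and the conjugation trick with $w_2=\ell(w_1)^{-1}a'\ell(w_1)$ is also fine: normality of $\sW$ and the inclusion $V_{\geq 2}\subseteq\mathrm{Lie}(\sW)$ guarantee $w_2\in\sW\cap\exp(V_{\geq 2})$. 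One minor wording point: when you say ``every BCH bracket involving $k\geq 1$ copies of $t\xi$ and $l\geq 0$ copies of $t^{1/j}\zeta$ lives in $V_{kj+l}\subseteq V_{\geq j+1}$'', the containment $V_{kj+l}\subseteq V_{\geq j+1}$ only holds for the genuine bracket terms (those with $k+l\geq 2$); the linear term $t\xi$ itself sits in $V_j$. This does not affect the conclusion, since $j\geq 2$ already keeps it out of $V_1$.

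The linearity statement is also correctly handled: identifying the graph of $\exp^{-1}\circ\ell\circ\exp|_{\mathrm{Lie}(\sW)\cap V_1}$ with $\mathfrak h\cap V_1$ inside $V_1=(\mathrm{Lie}(\sW)\cap V_1)\oplus\mathrm{Lie}(\sL)$ is the natural way to see it, and the norm estimate \eqref{linea2.0} is then immediate from finite-dimensionality and the equivalence of $\|\cdot\|$ with a Euclidean norm on $V_1$.
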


\begin{defi}[Intrinsic gradient]\label{def:NablaPhiPhi}
	Let $\W$ and $\mathbb L$ be two complementary subgroups of a Carnot group $\mathbb G$ with $\mathbb L$ horizontal and $k$-dimensional, let $\widetilde U\subseteq \mathbb W$ be open, and let $\widetilde{\phi}\colon\widetilde U \to\sL$ be intrinsically differentiable at $a_0\in\widetilde{U}$. By \cref{p2.5}, the map $\exp^{-1}\circ (\de^{\phi}\!\phi_{a_0})\circ\exp_{|_{{\rm Lie}(\sW)\cap V_1}}$ is linear and thus there exists a linear map $\nabla^{\phi}\phi_{a_0}\in\mathrm{Lin}({\rm Lie}(\sW)\cap V_1; {\rm Lie}(\sL))$ such that 
	\[
	\de^{\phi}\!\phi_{a_0}(\exp W)=\exp\left(\nabla^{\phi}\phi_{a_0}(W)\right),\qquad \forall W\in{\rm Lie}(\sW)\cap V_1.
	\]
\end{defi}
\begin{rem}[Intrinsic gradient in exponential coordinates]\label{rem:IntrinsicGradientInCoordinates}
	Assume $(X_1,\dots, X_n)$ is an adapted basis of the Lie algebra $\mathfrak g$ such that $\mathbb L=\mbox{span}\{X_1,\dots,X_k\}$ and $\mathbb W=\mbox{span}\{X_{k+1},\dots,X_n\}$ and identify $\sW$ and $\sL$ with $\R^{n-k}$ and $\R^k$, respectively, through exponential coordinates as explained in \cref{def:coordinateconletilde}.
	Then, by \cref{def:NablaPhiPhi}, with a little abuse of notation, we get a $k\times (m-k)$ matrix $\nabla^{\phi}\phi_{a_0}$  such that, in coordinates, one has
	\begin{equation*}\label{DISSUdifferential}
	\de^{\phi}\!\phi_{ a_0}(a)=\left(\nabla^{\phi}\phi_{a_0}(a_{k+1},\dots,a_{m})^{\rm T}, 0,\dots,0\right), \qquad \forall a=(a_{k+1},\dots,a_n)\in \sW\equiv\R^{n-k}.
	\end{equation*}  
\end{rem}

The following proposition gives us a more explicit way to write the definition of functions in ${\rm ID}(\widetilde U,\sW;\sL)$ and in ${\rm UID}(\widetilde U,\sW;\sL)$, whenever $\mathbb L$ is horizontal. 
\begin{prop}[{\cite[Proposition 3.5]{DiDonato18}}]\label{rem:IdAndUidInCoordinates} Let $\W$ and $\mathbb L$ be complementary subgroups in a Carnot group $\mathbb G$, with $\mathbb L$ horizontal, and let $\widetilde{\phi}\colon\widetilde{U}\subseteq \W \to\mathbb L$ with $\widetilde{U}$ open in $\mathbb W$. Then the following facts hold.
	\begin{itemize}
		\item[(a)] $\widetilde \phi$ is intrinsically differentiable at $\widetilde a_0\in \widetilde U$ if and only if 
	\begin{equation}\label{eqn:IdInCoordinates}
	\lim_{r\to 0}\left(\sup\left\{\frac{|\phi(b)-\phi(a_0)-\nabla^{\phi}\phi_{a_0}(a_0^{-1}\cdot b)|}{\|\widetilde{\phi}(a_0)^{-1}a_0^{-1}\cdot b\,\widetilde{\phi}(a_0)\|}: b\in U, 0<\|a_0^{-1}\cdot b\|<r \right\}\right)= 0.
	\end{equation}
	\item[(b)] $\widetilde{\phi}$ is uniformly intrinsically differentiable at $\widetilde a_0$ if and only if
	\begin{equation}\label{eqn:UidInCoordinates}
	\lim_{r\to 0}\left(\sup\left\{\frac{|\phi(b)-\phi(a)-\nabla^{\phi}\phi_{a_0}(a^{-1}\cdot b )|}{\|\widetilde{\phi}(a)^{-1}a^{-1}b\,\widetilde \phi(a)\|}: a, b\in B(a_0,r)\cap U, a\neq b\right\}\right)= 0.
	\end{equation}
	\end{itemize}
\end{prop}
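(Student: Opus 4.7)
The plan is to unfold the definitions of (uniform) intrinsic differentiability by writing the shifted function $\widetilde\phi_{p_0}$ explicitly in terms of $\widetilde\phi$, matching the resulting quotient to the one in \eqref{eqn:IdInCoordinates} (respectively \eqref{eqn:UidInCoordinates}) through a change of variables and a short computation in exponential coordinates.

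Since $\mathbb L$ is horizontal, one has $\mathbb L\subseteq\mathbb V_1$, so $\mathbb W$ is normal, and \cref{prop:PropertiesOfIntrinsicTranslation}(c) applies. Fix $a_0\in\widetilde U$ and set $p_0\coloneqq\widetilde\phi(a_0)^{-1}\cdot a_0^{-1}$. A direct computation using the product rules recalled just before \cref{def:ComplementarySubgroups} yields $(p_0)_{\mathbb L}=\widetilde\phi(a_0)^{-1}$ and $(p_0)_{\mathbb W}=\widetilde\phi(a_0)^{-1}\cdot a_0^{-1}\cdot\widetilde\phi(a_0)$. For every $b\in\widetilde U_{p_0}$ introduce the change of variable
\[
b'\coloneqq a_0\cdot\widetilde\phi(a_0)\cdot b\cdot\widetilde\phi(a_0)^{-1}=\pi_{\mathbb W}(p_0^{-1}\cdot b)\in\widetilde U,\qquad b=\widetilde\phi(a_0)^{-1}\cdot a_0^{-1}\cdot b'\cdot\widetilde\phi(a_0).
\]
This is a homeomorphism between a neighborhood of $e$ in $\mathbb W$ and a neighborhood of $a_0$ in $\widetilde U$, and \cref{prop:PropertiesOfIntrinsicTranslation}(c) together with \eqref{eqn:Phiq} give
\[
\widetilde\phi_{p_0}(b)=\widetilde\phi(a_0)^{-1}\cdot\widetilde\phi(b'),\qquad \|b\|=\|\widetilde\phi(a_0)^{-1}\cdot a_0^{-1}\cdot b'\cdot\widetilde\phi(a_0)\|.
\]
The second identity already matches the denominator in \eqref{eqn:IdInCoordinates}.

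The numerator in \eqref{eqn:IdInCoordinates2} is then handled by two observations. First, \cref{p2.5} gives $\de^\phi\!\phi_{a_0}(b)=\de^\phi\!\phi_{a_0}(\pi_{\mathbb V_1}b)$. Second, applying the conjugation identity \eqref{eq:abusodinotazione} of \cref{lemma333FS} to $b=\widetilde\phi(a_0)^{-1}\cdot(a_0^{-1}\cdot b')\cdot\widetilde\phi(a_0)$ and using $\mathcal P^1=0$, one sees that the first-layer component of $b$ coincides with that of $a_0^{-1}\cdot b'$, so $\de^\phi\!\phi_{a_0}(b)=\de^\phi\!\phi_{a_0}(a_0^{-1}\cdot b')$. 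Passing to exponential coordinates, where $\mathbb L$ is identified with $\mathbb R^k$, the group operation on $\mathbb L$ reduces to Euclidean addition and $\de^\phi\!\phi_{a_0}$ corresponds to $\nabla^\phi\phi_{a_0}$ by \cref{rem:IntrinsicGradientInCoordinates}; thus, up to bi-Lipschitz equivalence of homogeneous norms,
\[
\|\de^\phi\!\phi_{a_0}(b)^{-1}\cdot\widetilde\phi_{p_0}(b)\|\;\simeq\;|\phi(b')-\phi(a_0)-\nabla^\phi\phi_{a_0}(a_0^{-1}\cdot b')|.
\]
Since $\|b\|\to 0$ if and only if $\|a_0^{-1}\cdot b'\|\to 0$ (by continuity of $b\mapsto b'$ and of its inverse), the supremum in \eqref{eqn:IdInCoordinates2} tends to zero exactly when the supremum in \eqref{eqn:IdInCoordinates} does. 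This proves (a).

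For (b), the same substitution is carried out with $p_a\coloneqq\widetilde\phi(a)^{-1}\cdot a^{-1}$ for $a$ varying in $\widetilde U\cap B(a_0,r)$, producing $b'_{a}\coloneqq a\cdot\widetilde\phi(a)\cdot b\cdot\widetilde\phi(a)^{-1}$ and the same pointwise identifications as above with $a_0$ replaced by $a$. The only step beyond routine rewriting, and thus the main obstacle, is to show that the supremum is preserved \emph{uniformly} in $a$: concretely, one needs that $b\mapsto b'_a$ is a bi-Lipschitz homeomorphism between neighborhoods of $e$ and of $a$ in $\mathbb W$, with constants locally uniform in $a$. This follows from the quantitative control on the conjugation polynomials in \cref{lemma333FS}, which are $\delta_\lambda$-homogeneous with coefficients continuous in the base point, combined with the continuity of $\widetilde\phi$. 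Once this uniform comparability is in place, the equivalence between the sup in the UID definition and the sup in \eqref{eqn:UidInCoordinates} follows verbatim from the argument used for (a).
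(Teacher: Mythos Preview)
The paper does not supply a proof for this proposition; it is stated with a citation to \cite[Proposition~3.5]{DiDonato18} and no argument is given. Your proposal is therefore being compared against an absent proof.

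That said, your argument is correct and is the natural one: you unfold the definition of $\widetilde\phi_{p_0}$ via \cref{prop:PropertiesOfIntrinsicTranslation}(c), perform the change of variable $b\leftrightarrow b'$ induced by conjugation, and then invoke \cref{p2.5} together with $\mathcal P^1=0$ from \cref{lemma333FS} to see that the intrinsic linear map is unchanged under conjugation by an element of the horizontal $\mathbb L$. The identification of the denominator is immediate, and the numerator identification uses that $\mathbb L$ is abelian (so the homogeneous norm on $\mathbb L$ is bi-Lipschitz to the Euclidean one in exponential coordinates). The passage from the sup-condition $\|b\|<r$ to $\|a_0^{-1}\cdot b'\|<r$ is handled correctly by noting that the two quantities are comparable with constants depending continuously on $\widetilde\phi(a_0)$, via the polynomial bound in \cref{lemma333FS}; this also gives the locally uniform comparability in $a$ needed for part~(b). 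One small point: the condition ``$a\neq b$'' in the UID definition and in \eqref{eqn:UidInCoordinates} plays slightly different roles under the change of variables (in one it concerns $b\in\widetilde U_{p_a}$, in the other $b'\in U$), but this is harmless because $b=e\iff b'=a$.
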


\begin{rem}\label{rem:abuse}
	We notice that in \eqref{eqn:IdInCoordinates} and \eqref{eqn:UidInCoordinates} there is a little abuse of notation, for the sake of simplicity. First we are identifying $\mathbb L$ with $\mathbb R^k$ in order to write the differences in the numerators, and moreover we write $\nabla^{\phi}\phi_{a_0}(a_0^{-1}\cdot b)$ but we mean $\nabla^{\phi}\phi_{a_0}(\pi_{V_1}\exp^{-1}(\widetilde a_0^{-1}\cdot\exp b))$. 
\end{rem}
\begin{rem}[Intrinsic differentiability \& tangent subgroups]\label{rem:InvarianceOfIdByTranslations}
 Let us collect the following observations about \cref{defiintrinsicdiff}.
 
(i) If $\widetilde \phi$ is intrinsically differentiable at $a_0\in \widetilde U$, there is a unique  intrinsically linear function $\de^{\phi}\!\phi_{a_0}$ satisfying $\eqref{eqn:IdInCoordinates2}$.  Moreover $\widetilde \phi$ is continuous at $ a_0$, see \cite[Theorem 3.2.8 and Proposition 3.2.3]{FMS14}.

(ii) The notion of intrinsic differentiability is invariant under group translations. More precisely, let $a,b$ be in $\widetilde U$ and let $p\coloneqq \widetilde \phi (a)^{-1}\cdot a^{-1}$ and $q\coloneqq \widetilde \phi (b)^{-1}\cdot b^{-1}$. Then $\widetilde \phi $ is intrinsically differentiable at $a$ if and only if $\widetilde \phi _{q^{-1}p} = (\widetilde \phi _{p})_{q^{-1}}$ is intrinsically differentiable at $b$, see (\cite[Remark 3.2.2]{FMS14}).

(iii) The analytic definition of intrinsic differentiability has an equivalent geometric formulation. Indeed, the intrinsic differentiability at one point is equivalent to the existence of a tangent subgroup to the graph, see \cite[Theorem 4.15]{FSSC11} for the proof in the case of Heisenberg groups $\mathbb H^n$. If we have $\widetilde \phi: \widetilde U\subseteq \W \to \mathbb L$, and $w_0\in\widetilde U$, we say  that a homogeneous subgroup $\T$ of $\G$ is a {\em tangent subgroup} to $\graph {\widetilde \phi }$ at $w_0\cdot\widetilde{\phi}(w_0)$ if the following facts hold.
\begin{enumerate}
\item[(i)] $\T$ is a complementary subgroup of $\mathbb L$;
\item[(ii)] In any compact subset of $\G$, the limit
\begin{equation*}
\lim_{\lambda \to \infty } \delta _\lambda \left((w_0\cdot \widetilde\phi(w_0))^{-1}\cdot\graph {\widetilde \phi } \right)  =\T
\end{equation*}
holds in the sense of Hausdorff convergence.
\end{enumerate}
In the introduction of \cite{FMS14} the authors say that $\widetilde \phi$ is intrinsically differentiable at $w_0$ if and only if $\graph {\widetilde \phi }$ has a tangent subgroup $\T$ in $w_0\cdot\widetilde{\phi}(w_0)$ and in this case $\T=\graph {\de ^\phi\! \phi_{w_0}}$. The complete proof can be given building on \cite[Theorem 3.2.8]{FMS14}, that shows one part of the statement, and generalizing \cite[Theorem 4.15]{FSSC11}, that holds verbatim in the context of arbitrary Carnot groups. We thank Sebastiano Nicolussi Golo for having shared with us some notes containing a detailed proof of the previously discussed statement. 
\end{rem}

\begin{prop}[{\cite[Proposition 3.7]{DiDonato18}}]\label{prop:ContinuityOfDifferentialUid}
Let $\W$ and $\mathbb L$ be complementary subgroups of a Carnot group $\G$ with $\mathbb L$ horizontal and $k$-dimensional, let $\widetilde U\subseteq \mathbb W$ be open and let $\widetilde\phi \in {\rm UID}(\widetilde U,\mathbb W;\mathbb L)$. Then the following facts hold.
\begin{enumerate}
\item[(a)] $\widetilde \phi$ is intrinsically Lipschitz continuous on every relatively compact subset of $\widetilde U$. 
\item[(b)] the function $a \mapsto \nabla ^{\phi}\phi_{a}$ is continuous from $\widetilde U$ to the space of matrices $\R^{k\times(m-k)}$. Here $\nabla^{\phi}\phi$ is the intrinsic gradient, see \cref{def:NablaPhiPhi}.
\end{enumerate}
\end{prop}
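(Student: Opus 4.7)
The plan is to deduce both parts from the UID hypothesis, using the bound $\|\de^{\phi}\!\phi_{a_0}(b)\|\leq C(a_0)\|b\|$ of \cref{p2.5} and the triangle inequality for the homogeneous norm (which may be assumed to induce a left-invariant distance, so the constant is $1$).

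For (a), fix $a_0\in \widetilde U$. For every $\varepsilon>0$ the UID condition at $a_0$ yields $r=r(\varepsilon,a_0)>0$ such that, for all $a\in \widetilde U\cap B(a_0,r)$ and all small $b\in \widetilde U_{p_a}$,
\[
\|\widetilde{\phi}_{p_a}(b)\|\leq \|\de^{\phi}\!\phi_{a_0}(b)\|+\|\de^{\phi}\!\phi_{a_0}(b)^{-1}\cdot\widetilde{\phi}_{p_a}(b)\|\leq (C(a_0)+\varepsilon)\|b\|.
\]
By \cref{prop:IntrinsicLipschitz}(c) restricted to small $b$, this is a local intrinsic Lipschitz estimate near $a_0$. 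Given $K\Subset \widetilde U$, I cover $K$ by finitely many such neighborhoods and patch the local estimates via a chaining argument along polygonal paths inside $\widetilde U$, using the continuity of $\widetilde{\phi}$ (which follows from ID by \cref{rem:InvarianceOfIdByTranslations}(i)) to obtain a uniform intrinsic Lipschitz constant on $K$.

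For (b), I show $\nabla^{\phi}\phi_a\to \nabla^{\phi}\phi_{a_0}$ as $a\to a_0$. Given $\varepsilon>0$, UID at $a_0$ produces $r>0$ such that
\[
\|\de^{\phi}\!\phi_{a_0}(b)^{-1}\cdot \widetilde{\phi}_{p_a}(b)\|\leq \varepsilon\|b\|
\]
for every $a\in \widetilde U\cap B(a_0,r)$ and every small $b\in \widetilde U_{p_a}$; meanwhile, ID of $\widetilde{\phi}$ at every such $a$ gives $\|\de^{\phi}\!\phi_a(b)^{-1}\cdot \widetilde{\phi}_{p_a}(b)\|=o(\|b\|)$ as $b\to 0$ with $a$ fixed. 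The triangle inequality gives
\[
\|\de^{\phi}\!\phi_{a_0}(b)^{-1}\cdot \de^{\phi}\!\phi_a(b)\|\leq \varepsilon\|b\|+o(\|b\|).
\]
I then test with $b=\exp(tW)$ for $W\in \mathrm{Lie}(\mathbb W)\cap V_1$ with $\|\exp W\|=1$, so that $\|b\|=t$. Since $\mathbb L$ is horizontal, $\mathrm{Lie}(\mathbb L)\subseteq V_1$ is an abelian subalgebra (brackets of horizontal vectors lie in $V_2$, which meets $V_1$ trivially), hence $\mathbb L$ is abelian; by \cref{def:NablaPhiPhi} together with the homogeneity of the intrinsic differential,
\[
\de^{\phi}\!\phi_{a_0}(b)^{-1}\cdot \de^{\phi}\!\phi_a(b)=\exp\bigl(t(\nabla^{\phi}\phi_a(W)-\nabla^{\phi}\phi_{a_0}(W))\bigr),
\]
of homogeneous norm $t|\nabla^{\phi}\phi_a(W)-\nabla^{\phi}\phi_{a_0}(W)|$. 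Dividing by $t$ and sending $t\to 0$ (with $a$ fixed) kills the $o(1)$ term, so $|\nabla^{\phi}\phi_a(W)-\nabla^{\phi}\phi_{a_0}(W)|\leq \varepsilon$ for every unit $W$ and every $a\in B(a_0,r)$. Letting $\varepsilon\to 0$ (hence $r\to 0$) gives continuity in operator norm, equivalently in matrix norm by finite-dimensionality.

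The main obstacle is the patching step in (a): the UID condition naturally controls $\widetilde{\phi}_{p_a}(b)$ only for small $b$, whereas \cref{prop:IntrinsicLipschitz}(c) demands the estimate for all $b\in \widetilde U_{p_a}$ coming from graph points over $K$. A non-trivial compactness and chaining argument is therefore required to glue the local intrinsic Lipschitz bounds into a uniform one on the entire compact set, whereas (b) is a more routine consequence of carefully subtracting the UID-at-$a_0$ and ID-at-$a$ estimates.
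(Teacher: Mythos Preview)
The paper does not prove this proposition: it is quoted verbatim from \cite[Proposition~3.7]{DiDonato18} and no argument is given. So there is no ``paper's proof'' to compare with; I evaluate your proposal on its own merits.

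Your argument for (b) is essentially correct and standard. Subtracting the UID estimate at $a_0$ from the ID estimate at $a$, evaluating on $b=\exp(tW)$ with $W$ horizontal, dividing by $t$ and letting $t\to 0$ is exactly the right mechanism; the identification of $\de^{\phi}\!\phi_{a_0}(b)^{-1}\de^{\phi}\!\phi_a(b)$ with $\exp\bigl(t(\nabla^{\phi}\phi_a-\nabla^{\phi}\phi_{a_0})(W)\bigr)$ is justified because $\mathbb L$, being horizontal, is abelian.

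Your argument for (a) has a genuine gap precisely where you flag it. The ``chaining along polygonal paths'' idea does not work for intrinsic Lipschitz bounds: the condition $\|\pi_{\mathbb L}(p^{-1}q)\|\le L\|\pi_{\mathbb W}(p^{-1}q)\|$ does not concatenate, because $\pi_{\mathbb W}$ is not a homomorphism (only $\pi_{\mathbb L}$ is, since $\mathbb W$ is normal) and there is no obvious way to bound $\|\pi_{\mathbb W}(\Phi(a)^{-1}\Phi(b))\|$ from below by a sum of $\|\pi_{\mathbb W}(\Phi(a_i)^{-1}\Phi(a_{i+1}))\|$ along a chain. In fact, even for ordinary metric Lipschitz maps, chaining only yields a global constant if the chain length is controlled by the distance of the endpoints, which requires extra structure.

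The correct globalisation is the standard close/far dichotomy. From your local step and a finite cover of $K$ you obtain $r^*>0$ and $L^*>0$ such that the intrinsic Lipschitz inequality holds with constant $L^*$ whenever $a,b\in K$ satisfy $\|a^{-1}b\|<r^*$. For $\|a^{-1}b\|\ge r^*$, use compactness directly: $\|\pi_{\mathbb L}(\Phi(a)^{-1}\Phi(b))\|=|\phi(b)-\phi(a)|\le 2\|\phi\|_{L^\infty(K)}=:M$, while $\pi_{\mathbb W}(\Phi(a)^{-1}\Phi(b))=\widetilde\phi(a)^{-1}a^{-1}b\,\widetilde\phi(a)$ has, by \cref{lemma333FS}, the same first-layer component as $a^{-1}b$, and since $\widetilde\phi(K)$ and $K^{-1}K$ are bounded, conjugation by elements of $\widetilde\phi(K)$ is uniformly bi-Lipschitz on $K^{-1}K$; hence $\|\pi_{\mathbb W}(\Phi(a)^{-1}\Phi(b))\|\ge c\,r^*$ for some $c>0$. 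Then $L:=\max\{L^*,\,M/(c\,r^*)\}$ works on all of $K$. Replacing your chaining sketch by this argument makes the proof of (a) complete.
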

 
 \begin{defi}[$\nabla_{\mathbb W},\nabla_{\mathbb L}$]\label{remarkDPHI2Vectorial} 
 	Let $\W$ and $\mathbb L$ be two complementary subgroups of a Carnot group $\mathbb G$, with $\mathbb L$ horizontal and $k$-dimensional and let $f\in C^1_{\rm H}(\widetilde U;\R^k)$.
 	Consider an adapted basis $(X_1,\dots,X_n)$ of the Lie algebra $\mathfrak g$ such that $\mathbb L=\exp({\rm span} \{X_1,\dots, X_k\})$ and \\ $\W=\exp({\rm span} \{X_{k+1},\dots, X_n\})$. Then, we  define $ \nabla _\sL f$ and $\nabla_\sW f$ by setting  
 	\begin{equation*}
 	\nabla_{\mathbb L}f\coloneqq \begin{pmatrix}
 	X_1f^{(1)}& \dots&  X_kf^{(1)} \\
 	\vdots & \ddots & \vdots \\
 	X_1f^{(k)} &\dots  &X_kf^{(k)}
 	\end{pmatrix},\qquad \nabla_{\mathbb W}f\coloneqq \begin{pmatrix}
 	X_{k+1}f^{(1)}&\dots&  X_{m}f^{(1)} \\
 	\vdots & \ddots & \vdots \\
 	X_{k+1}f^{(k)}& \dots&  X_{m}f^{(k)}
 	\end{pmatrix}.
 	\end{equation*}
 	In particular, one has that, in exponential coordinates, $\nabla _{\rm H} f = \left(
 	\nabla_{\mathbb L}f \;|\;  \nabla_{\mathbb W}f 
 	\right)$.
 \end{defi}
 
We recall the notion of co-horizontal $C^1_{\rm H}$-surface of arbitrary codimension, see ~\cite[Definition~3.3.4]{Koz15}. \textbf{We stress that we changed the terminology with respect to \cite[Definition~3.3.4]{Koz15}. What he calls co-Abelian surface, for us is a co-horizontal surface}. For a very general definition of $C^1_{\rm H}$-surface, we refer the reader to \cite[Definition~3.1]{Mag06}, \cite[Definition~10.2]{Mag13} and to \cite[Section~2.5]{JNGV20}. 
 
\begin{defi}[co-horizontal $C^1_{\rm H}$-surface]\label{def:CoAbelian}
Let $\G$ be a Carnot group of rank $m$ and let $1\leq k\leq m$. We say that $\Sigma\subset \G$ is a \emph{co-horizontal $C^1_{\rm H}$-surface of codimension $k$} if, for any $p\in \Sigma$, there exist a neighborhood $\widetilde U$ of $p$ and a map $f\in C^1_{\rm H}(\widetilde U;\R^k)$ such that
\begin{equation}\label{eqn:Representation}
\Sigma\cap  \widetilde U=\{g\in \widetilde U: f(g)=0\},
\end{equation}
and the Pansu differential $\de_{\rm P}\!f(p)\colon \mathbb G\to \R^k$ of $f$ is surjective.  

We say that $\Sigma$ is a codimension $k$ \emph{co-horizontal $C^1_{\rm H}$-surface with complemented tangents} if, in addition, given a representation around $p$ as in \eqref{eqn:Representation}, the homogeneous subgroup $\mathrm{Ker}(\de_{\rm P}\! f(p))$ admits a horizontal complement (of dimension $k$).
In this case, we call ${\rm Ker}(\de_{\rm P}\!f(p))$ the {\em homogeneous tangent space to $\Sigma$ at $p$}. This homogeneous subgroup at $p$ is independent of the choice of $f$, see \cite[Theorem~1.7]{Mag13}.
\end{defi}

\noindent We remark that, if $\Sigma\subseteq \G$ is a co-horizontal $C^1_{\rm H}$-surface with complemented tangents, then one can use the implicit function Theorem, see \cite[Theorem~2.1]{FSSC03b} for the one-codimensional case, and see \cite[Theorem~1.4]{Mag13} for the more general statement, to locally represent the surface as a graph of a function $\widetilde \phi\colon \widetilde U\subseteq \sW\coloneqq \mathrm{Ker}(\de_{\rm P}\! f(p))\to \sL$, with $\mathbb W$ and $\mathbb L$ complementary subgroups. 

The following proposition follows from \cite[Theorem~4.1 \& Theorem~4.6]{DiDonato18} and relates level sets of $\mathbb R^k$-valued $C^1_{\rm H}$-functions, and ultimately co-horizontal $C^1_{\rm H}$-surfaces with complemented tangents, with uniformly intrinsically differentiable functions.
\begin{prop}[{\rm \cite[Theorem~4.1 \& Theorem~4.6]{DiDonato18}}]\label{prop:UidC1H}
Let $\W$ and $\mathbb L$ be two complementary subgroups of a Carnot group $\mathbb G$, with $\mathbb L$ horizontal and $k$-dimensional,  take $\widetilde U\subseteq \mathbb W$ open and $\widetilde{\phi}\in\mathrm{UID}(\widetilde U,\mathbb W;\mathbb L)$. Then, for every $a\in\widetilde{U}$, there exist a neighborhood $\widetilde V$ of $ a\cdot\widetilde{\phi}(a)$ in $\mathbb G$, and $f\in C^1_{\rm H}(\widetilde V;\R^k)$, such that
\[
\widetilde{\Phi}(\widetilde{U})\cap \widetilde V= \{g\in \widetilde V: f(g)=0\},
\]
and, for every $g\in \widetilde V$, the Pansu differential $\de_{\rm P}\! f(g)_{|_{\mathbb L}}\colon\mathbb L\to\mathbb R^k$ is bijective. As a consequence $\graph{\widetilde \phi}$ is a co-horizontal $C^1_{\rm H}$-surface of codimension $k$, with tangents complemented by $\mathbb L$. Moreover, if $(X_1,\dots,X_n)$ is an adapted basis of the Lie algebra $\mathfrak g$ such that $\mathbb L=\exp({\rm span} \{X_1,\dots, X_k\})$ and $\W=\exp({\rm span} \{X_{k+1},\dots, X_n\})$, then $\det \nabla_{\sL}f\neq0$ and, in exponential coordinates, one has
\begin{equation}\label{DPHI2Vectorial}
\nabla^{\phi} \phi (a)=- \left(\nabla_{\mathbb L}f (\widetilde\Phi (a))\right)^{-1}\nabla_{\mathbb W}f (\widetilde\Phi (a)),\qquad \forall a\in \widetilde U.
\end{equation}
For the definition of $\nabla^{\phi}\phi,\nabla_{\mathbb W}$ and $\nabla_{\mathbb L}$ we refer to \cref{def:NablaPhiPhi} and \cref{remarkDPHI2Vectorial}.

On the other hand, if $1\leq k\leq m$ and $\Sigma$ is a codimension $k$ co-horizontal $C^1_{\rm H}$-surface with complemented tangents, then, for every $p\in \Sigma$, there exist two complementary subgroups $\sW$ and $\sL$ of $\G$ with $\sL$ horizontal and $k$-dimensional, a neighborhood $\widetilde V\subseteq \G$ of $p$ and $\widetilde \phi\in {\rm UID}(\widetilde U,\sW;\sL)$, with $\widetilde U=\pi_\sW(\widetilde V)$, such that 
\[
\Sigma\cap \widetilde V=\graph{\widetilde \phi}.
\]
\end{prop}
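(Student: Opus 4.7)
My plan is to prove both implications via the natural candidate
\[
f(p) := \pi_{\mathbb L}(p) - \widetilde{\phi}\bigl(\pi_{\mathbb W}(p)\bigr),
\]
constructed in exponential coordinates with $\mathbb L = \exp(\mathrm{span}\{X_1,\dots,X_k\})$ and $\mathbb W = \exp(\mathrm{span}\{X_{k+1},\dots,X_n\})$, identifying $\mathbb L \equiv \mathbb R^k$ via the first $k$ coordinates. The zero level set of $f$ is tautologically $\mathrm{graph}(\widetilde\phi)$, and a direct computation along the horizontal integral curve of $X_i$ for $i \leq k$ (which only shifts the $\mathbb L$-component, since $\mathbb W$ is normal) gives $\nabla_{\mathbb L} f \equiv I_k$ on the neighborhood where $f$ is defined. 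Hence, once $f$ is shown to be $C^1_{\rm H}$, the map $\de_{\rm P}\! f(g)_{|\mathbb L}$ will be an isomorphism and the formula \eqref{DPHI2Vectorial} will reduce to the identity $\nabla^{\phi}\phi(a) = -\nabla_{\mathbb W} f(\widetilde\Phi(a))$, to be checked pointwise on the graph.

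For the forward direction, fix $\widetilde a \in \widetilde U$ and choose a neighborhood $\widetilde V$ of $\widetilde a \cdot \widetilde\phi(\widetilde a)$ with $\pi_{\mathbb W}(\widetilde V) \subset \widetilde U$; continuity of $f$ on $\widetilde V$ follows from continuity of $\widetilde \phi$. To establish Pansu differentiability at $g_0 \in \widetilde V$ I would propose the intrinsic linear candidate
\[
L(v) := \pi_{\mathbb L}(v) - \de^{\phi}\!\phi_{a_0}\bigl(\pi_{\mathbb V_1}(\pi_{\mathbb W}(v))\bigr), \qquad a_0 := \pi_{\mathbb W}(g_0),
\]
which is a Carnot homomorphism into $\mathbb R^k$ thanks to \cref{p2.5}. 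Showing
\[
\lim_{g \to g_0}\frac{|f(g) - f(g_0) - L(g_0^{-1} \cdot g)|}{\|g_0^{-1}\cdot g\|} = 0,
\]
uniformly in $g_0$ running through $\widetilde V$, reduces, after writing $g_0^{-1}\cdot g$ via the splitting $\mathbb W \cdot \mathbb L$ and using \cref{lemma333FS} to bound the lower-layer conjugation terms $\mathcal{P}^i$, to the UID estimate \eqref{eqn:UidInCoordinates} for $\widetilde \phi$ at $a_0$. Continuity of $\de_{\rm P}\! f$ is then a corollary of the continuity of $a \mapsto \nabla^{\phi}\phi_a$ from \cref{prop:ContinuityOfDifferentialUid}, and the block structure $\nabla_{\rm H} f = (I_k \mid -\nabla^{\phi}\phi)$ gives bijectivity of $\de_{\rm P}\! f(g)|_{\mathbb L}$ and the gradient formula \eqref{DPHI2Vectorial}.

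For the converse, let $\Sigma$ be a codimension $k$ co-horizontal $C^1_{\rm H}$-surface with complemented tangents. Fixing $p \in \Sigma$ and a horizontal complement $\mathbb L$ of $\mathbb W := \mathrm{Ker}(\de_{\rm P}\! f(p))$, Magnani's implicit function theorem \cite[Theorem~1.4]{Mag13} produces a continuous $\widetilde \phi: \widetilde U \subseteq \mathbb W \to \mathbb L$ whose graph locally coincides with $\Sigma$. To promote $\widetilde \phi$ to UID, the key input is the strict Pansu differentiability of $C^1_{\rm H}$ maps — namely the uniform mean-value inequality \cite[Theorem~1.2]{Mag13} — applied to $f(a \cdot \widetilde \phi(a)) = f(b \cdot \widetilde \phi(b)) = 0$ for $a, b$ close in $\widetilde U$. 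Continuity of $\de_{\rm P}\! f$ ensures that $\nabla_{\mathbb L} f$ stays invertible on a neighborhood of the graph, so that inverting it and isolating $\widetilde\phi(b) - \widetilde\phi(a)$ yields exactly \eqref{eqn:UidInCoordinates} with intrinsic gradient given by \eqref{DPHI2Vectorial}.

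The main obstacle in both directions is bridging the two viewpoints of translation: the UID condition is phrased through the shifted map $\widetilde \phi_{p_a}$, whose graph is the left translate of $\mathrm{graph}(\widetilde\phi)$ by $\widetilde\phi(a)^{-1}\cdot a^{-1}$, whereas Pansu differentiability is phrased via left translation inside $\mathbb G$. Reconciling them requires the polynomial conjugation bound \eqref{eq:polinomi} from \cref{lemma333FS}, together with the observation from \cref{p2.5} that $\de^{\phi}\!\phi_{a_0}$ only sees horizontal components; this way the contributions of the non-horizontal layers of $g_0^{-1}\cdot g$ can be absorbed into a $o(\|g_0^{-1}\cdot g\|)$ remainder, and the uniformity in $a_0$ in \eqref{eqn:UidInCoordinates} is precisely what is needed to deduce continuity of $\de_{\rm P}\! f$ across $\widetilde V$.
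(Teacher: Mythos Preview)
The forward direction of your sketch contains a genuine gap: the candidate $f(p)=\pi_{\mathbb L}(p)-\widetilde\phi(\pi_{\mathbb W}(p))$ need not be Pansu differentiable at points \emph{off} the graph, so it is not the $C^1_{\rm H}$ defining function you are looking for. For a concrete obstruction in $\mathbb H^1$ with $\mathbb L=\exp(\mathbb R X_1)$, take $\widetilde\phi(0,x_2,x_3)=\mathrm{sgn}(x_3)|x_3|^{2/3}$, which is UID by \cref{rem:0ischaracteristic}, and has $\nabla^\phi\phi_0=0$. At $g_0=(l_0,0,0)$ with $l_0\neq 0$ one has $a_0=0$, $\widetilde\phi(a_0)=0$, and for $h=(0,\epsilon,0)$ one computes $g_0h=(l_0,\epsilon,\tfrac12 l_0\epsilon)$ and $\pi_{\mathbb W}(g_0 h)=(0,\epsilon,l_0\epsilon)$, so that, with your candidate $L$,
\[
f(g_0 h)-f(g_0)-L(h)=-\phi(\epsilon,l_0\epsilon)=-\,\mathrm{sgn}(l_0)\,|l_0\epsilon|^{2/3},\qquad \|h\|=\epsilon,
\]
and the ratio blows up like $\epsilon^{-1/3}$. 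The underlying reason is that $\pi_{\mathbb W}$ is not Lipschitz for the Carnot metric: since $\pi_{\mathbb W}(g_0 h)=a_0\cdot(l_0\,h_{\mathbb W}\,l_0^{-1})$, the conjugation by $l_0$ can inflate the vertical components of $h_{\mathbb W}$, and the UID remainder $o(\|\widetilde\phi(a_0)^{-1}u\,\widetilde\phi(a_0)\|)$ (with $u=a_0^{-1}b$) cannot be absorbed into $o(\|h\|)$ when $l_0\neq\widetilde\phi(a_0)$. Your appeal to \cref{lemma333FS} only controls $\mathcal P^i$ by lower-layer components of $u$, which is the wrong direction for this bound.

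The proof in \cite{DiDonato18}, to which the paper defers, does not verify Pansu differentiability of this explicit $f$ off the graph. Instead one works \emph{on} the graph: there the pair $(f,\nabla_{\rm H}f)$ is well defined, with $\nabla_{\mathbb L}f=I_k$ and $\nabla_{\mathbb W}f=-\nabla^\phi\phi$ continuous, and uniform intrinsic differentiability of $\widetilde\phi$ is exactly what is needed to check the $C^1_{\rm H}$ Whitney condition for this datum on the closed set $\mathrm{graph}(\widetilde\phi)$. The $\mathbb R^k$-valued Whitney extension theorem for $C^1_{\rm H}$ functions (whose availability for horizontal targets is precisely what the paper's introduction flags as the reason for restricting $\mathbb L$ to be horizontal) then furnishes the required $f\in C^1_{\rm H}(\widetilde V;\mathbb R^k)$ on a full neighborhood, with $\nabla_{\rm H}f$ prescribed on the graph; \eqref{DPHI2Vectorial} follows. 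Your converse direction, via \cite[Theorem~1.4]{Mag13} together with strict Pansu differentiability of $C^1_{\rm H}$ maps, is the correct route and matches the argument in \cite{DiDonato18}.
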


\begin{rem}
Notice that, in the setting of \cref{prop:UidC1H}, in the case $k=1$, one may assume $X_1f\neq 0$ on $\widetilde V$, and, in coordinates, formula \eqref{DPHI2Vectorial} reads as
\begin{equation}\label{DPHI2}
\nabla^{\phi} \phi(a)=-\left (\frac{X_2f}{X_1f} ,\dots , \frac{X_{m}f}{X_1f}\right)\circ\Phi(a),\qquad \forall a\in \widetilde U.
\end{equation}
\end{rem}

\begin{rem}[Tangent subgroups to $C^1_{\rm H}$-surfaces]\label{rem:ConvergenceLocallyUniform}
	From the previous \cref{prop:UidC1H} and \cref{rem:InvarianceOfIdByTranslations} it directly follows that every co-horizontal $C^1_{\rm H}$-surface with complemented tangents has Hausdorff tangent everywhere. For a proof of this property in a more general context  one can see \cite[Theorem~1.7]{Mag13}, or \cite[Lemma 2.14, point (iii)]{JNGV20}.
	This convergence is moreover locally uniform: we will not use this information, but this comes from \cite[Theorem~3.1.1]{Koz15}.
\end{rem}

\section{Intrinsic projected vector fields on subgroups}\label{sec:3}
In this section we mainly deal with complementary subgroups $\mathbb W$ and $\mathbb L$ of a Carnot group $\mathbb G$ along with a continuous map $\widetilde{\phi}\colon\widetilde{U}\subseteq\mathbb W\to\mathbb L$, where $\widetilde{U}$ is open in $\mathbb W$.

In \cref{sub:DefinitionDPhi} we shall define, for some $W\in {\rm Lie}(\mathbb W)$, the projected vector field $D^{\phi}_W$ on $\mathbb W$ by taking the projection on $\mathbb W$ of $W$ restricted the graph $\widetilde{\Phi}(\widetilde{U})$ of $\widetilde{\phi}$ (see \cref{def:PhiJ}), and we discuss some basic properties of these vector fields. We give explicit formulas for these vector fields in Heisenberg groups $\mathbb H^n$, in Carnot groups of step 2, and in the Engel group $\mathbb E$ (see \cref{example:Heisenberg}, \cref{example:Step2}, and \cref{example:Engel}, respectively). In \cref{prop:CompatibleCoordinates} we show an explicit expression of such vector fields in exponential coordinates. 
The definition of the projected vector fields appeared first in \cite{Koz15}, see \cref{rem:OnTheDefinitionOfDPhij}. In \cite{Koz15} the author gives equivalent conditions for $\widetilde{\Phi}(\widetilde{U})$ to be an intrinsically Lipschitz graph (respectively a co-horizontal $C^1_{\rm H}$-surfaces with complemented tangents) in terms of H\"older properties of the integral curves of the vector fields $D^{\phi}_W$, see \cite[Theorem 4.2.16]{Koz15} (respectively \ \cite[Theorem~4.3.1]{Koz15}). Within our context we recover these results by using invariance properties of such vector fields, see the introduction to \cref{sec:Theorems}.

In \cref{sub:InvarianceProperties} we prove some invariance properties of the projected vector fields with respect to the intrinsic translations (see \cref{def:PhiQ}) of $\widetilde\phi$. In particular we write how the vector field $D^{\phi_q}$ changes with respect to $D^{\phi}$ and how the integral curves of $D^{\phi_q}$ change with respect to the integral curves of $D^{\phi}$, see \cref{lem:Invariance1DPhi} and \cref{lem:Invariance2IntegralCurve} when $\mathbb L$ is horizontal, and \cref{rem:CurveInvarianceTrueAlsoWhenWNormal} for the general case in which $\mathbb W$ is normal. These invariance properties will be crucial for the proof of the results in \cref{sec:Theorems}.

In \cref{sub:MetricProperties} we recall that if $\widetilde\phi$ is intrinsically Lipschitz, then $\widetilde\phi\circ\widetilde\gamma$ is $1/j$-H\"older whenever $\widetilde\gamma$ is an integral curve of $D^{\phi}_W$ with $\deg W=j$, see \cref{prop:IntrinsicLipschitzIntegralCurve}. We stress that this property was already known from \cite[Theorem~4.2.16]{Koz15}. We improve this result when $\widetilde\phi$ is more regular. Namely if $\widetilde\phi$ is intrinsically differentiable, then $\widetilde\phi\circ\widetilde\gamma$ is Euclidean differentiable whenever $\widetilde\gamma$ is an integral curve of $D^{\phi}_W$ with $\deg W=1$, while if $\deg W>1$ we obtain a \textbf{pointwise} little H\"older continuity of $\widetilde\phi\circ\widetilde\gamma$, see \cref{prop:DerivativeOfIntegralCurves}. This pointwise little H\"older continuity improves to a \textbf{uniform} little H\"older continuity if $\widetilde\phi$ is \textbf{uniformly} intrinsically differentiable, see \cref{prop3.6cont}, and \cref{prop:UidImpliesH1k} for a more refined conclusion.

In \cref{sub:Broad*} we recall the notion of broad* solution to the system $D^{\phi}\phi=\omega$, with a continuous datum $\omega$. The study of the relation between being a broad* solution to the system $D^{\phi}\phi=\omega$ with a continuous $\omega$ and the intrinsic regularity of ${\rm graph}(\widetilde\phi)$ was first initiated, in $\mathbb H^n$ for $\mathbb L$ one-dimensional in \cite[Section~5]{ASCV06}, and then continued in \cite{BSC10a, BSC10b, BCSC14}. For the case $\G=\mathbb H^n$ and $\mathbb L$ horizontal and $k$-dimensional see also \cite{Cor19} and for the general case of Carnot groups of step 2 and $\mathbb L$ one-dimensional, see \cite{DiDonato18,DiDonato19}. In \cref{prop:IdGradContImpliesBroad} we give a sufficient condition for the map $\phi$ to be a broad* solution to the system $D^{\phi}\phi=\omega$, with $\omega$ continuous. This condition is the intrinsic differentiability plus the continuity of the intrinsic gradient. 

\subsection{Definition of $D^{\phi}$ and main properties}\label{sub:DefinitionDPhi}
In this subsection we define the projected vector fields $D^{\phi}_W$ and state some of their properties.
\begin{defi}[Projected vector fields]\label{def:PhiJ}
	Given two complementary subgroups $\mathbb W$ and $\mathbb L$ in a Carnot group $\mathbb G$, and a continuous function $\widetilde{\phi}\colon\widetilde{U}\subseteq\mathbb W\to\mathbb L$ defined on an open set $\widetilde{U}$ of $\mathbb W$, we define, for every $W\in \mathrm{Lie}(\mathbb W)$,  the continuous {\em projected vector field} $D^\phi_W$, by setting 
	\begin{equation}\label{eqn:DefinitionOfDj}
	(D^{\phi}_{W})_{|_w} (f)\coloneqq W_{|_{w\cdot\widetilde\phi(w)}}( f\circ \pi_{\sW}),
	\end{equation}
	for all $w\in \widetilde U$ and all $ f\in C^{\infty}(\W)$. When $W$ is an element $X_j$ of an adapted basis $(X_1,\dots,X_n)$ we also denote $D^{\phi}_j\coloneqq D^{\phi}_{X_j}$.
\end{defi}
\begin{rem}\label{rem:OnTheDefinitionOfDPhij}
	The \cref{def:PhiJ} is well posed since the projection $\pi_{\mathbb W}$ is polynomial and hence $C^\infty$ for every arbitrary splitting. Notice that if $\widetilde{\phi}$ is $C^{\infty}$ then  $D^{\phi}_W$ is a vector field with $C^\infty$ coefficients.
	
	 \cref{def:PhiJ} has been given in \cite[Definition~4.2.12]{Koz15} and it has been studied in the case $\mathbb W$ is a homogeneous normal subgroup and, more specifically, when $\mathbb L$ is horizontal and $k$-dimensional. We refer to the discussion in the introduction of \cref{sec:3}.
	 \textbf{From now on $\mathbb W$ denotes a homogeneous normal subgroup of $\mathbb G$}. 
\end{rem}
\begin{rem}
	Notice that \eqref{eqn:DefinitionOfDj} is equivalent to
	\begin{equation}\label{eqn:EquivalentDefinitionDj}
	(D^{\phi}_{W})_{|_w}=\de (\pi_{\sW})_{\widetilde{\Phi}(w)}(W_{|_{\widetilde{\Phi}(w)}}),
	\end{equation}
	that is, $D^{\phi}_W$ is the push-forward of the vector field $W$ towards the map $(\pi_{\mathbb W})_{|_{\widetilde{\Phi}(\widetilde{U})}}\colon\widetilde{\Phi}(\widetilde{U})\to \widetilde{U}$.
	Thus, as already observed in \cite[Equation 4.4]{Koz15}, one has
	\begin{equation}\label{eqn:DPhiJAsDifferential}
	\begin{aligned}
	(D^{\phi}_{W})_{|_w} &=\frac{\de}{\de t}_{|_{t=0}} \pi_{\sW}(\widetilde{\Phi}(w)\cdot\exp(tW))=\frac{\de}{\de t}_{|_{t=0}}w\cdot\widetilde{\phi}(w)\cdot\exp(tW)\cdot\widetilde{\phi}(w)^{-1}= \\
	&=\frac{\de}{\de t}_{|_{t=0}}L_w\circ L_{\widetilde{\phi}(w)}\circ R_{\widetilde{\phi}(w)^{-1}}(\exp(tW))= \\
	&= \de(L_w)_e\circ \de(L_{\widetilde{\phi}(w)})_{\widetilde{\phi}(w)^{-1}}\circ \de(R_{\widetilde{\phi}(w)^{-1}})_e (W_{|_e})=\de(L_w)_e\circ\mathrm{Ad}_{\widetilde\phi(w)}(W_{|_e}).
	\end{aligned}
	\end{equation}
	For the previous computation, we used the definition of the differential and, in the second equality, the fact that 
	\[
	\begin{aligned}
	\pi_{\sW}(\widetilde{\Phi}(w)\cdot\exp(tW))&=\pi_{\sW}(w\cdot\widetilde{\phi}(w)\cdot\exp(tW)\cdot\widetilde{\phi}(w)^{-1}\cdot\widetilde{\phi}(w))=\\
	&=w\cdot\widetilde{\phi}(w)\cdot\exp(tW)\cdot\widetilde{\phi}(w)^{-1},
	\end{aligned}
	\]
	 for every $w\in \mathbb W$ and $W\in \mathrm{Lie}(\W)$, where the last equality holds since $\mathbb W$ is normal.
\end{rem}
\begin{exa}[Projected vector fields on Heisenberg groups]\label{example:Heisenberg}
	Consider the Heisenberg group $\mathbb H^n$, with an adapted basis $(X_1,\dots,X_{2n+1})$ of its Lie algebra such that the only nonvanishing relations are $[X_i,X_{n+i}]=X_{2n+1}$, for every $1\leq i\leq n$. Fix $1\leq k\leq n$,  identify $\mathbb H^n$ with $\mathbb R^{2n+1}$ by means of exponential coordinates associated with $(X_1,\dots,X_{2n+1})$ and define $\mathbb W\coloneqq\{x_1=\dots=x_k=0\}$, and $\mathbb L\coloneqq\{x_{k+1}=\dots=x_{2n+1}=0\}$. Then, for a continuous $\widetilde{\phi}\colon\widetilde U\subseteq \mathbb W\to\mathbb L$, with $\widetilde U$ open, one can compute in exponential coordinates
	\begin{equation}\label{eqn:ProjectedVectorFieldsHn1}
	\begin{split}
	(D^{\phi}_{X_j})_{|_w}&=(X_j)_{|_w}, \qquad  k+1\leq j\leq n \vee k+n+1\leq j\leq 2n, \\ (D^{\phi}_{X_{n+i}})_{|_w}&=(\partial_{x_{n+i}})_{|_w}+\phi^{(i)}(w)(\partial_{x_{2n+1}})_{|_w}, \quad i= 1,\dots, k.
	\end{split}
	\end{equation}
	\begin{equation}\label{eqn:ProjectedVectorFieldsHn2}
	(D^{\phi}_{X_{2n+1}})_{|_w}=(\partial_{x_{2n+1}})_{|_w}.
	\end{equation}
	for every $w\in U$, where $\phi$ denotes the composition of $\widetilde \phi$ with the exponential coordinates and $\phi^{(i)}$ is its $i$-th component of $\phi$. Notice that we do not have the first condition in case $k=n$. 
\end{exa}
\begin{rem}\label{rem:ProjectedVectorFieldsHeisenberg}
	For the computations of \cref{example:Heisenberg}, we refer to \cite[Section 4.4.2]{Koz15}, where the constant is slightly different from ours because of the fact that the author considers the model of $\mathbb H^n$ with relations $[X_i,X_{n+i}]=-4X_{2n+1}$, for every $1\leq i\leq n$. The expression of the projected vector fields in case $\mathbb L$ is $k$-dimensional is also in \cite[Definition~3.6]{Cor19}.
	
	It is by now well known, from the papers \cite{ASCV06}, \cite{BSC10a}, \cite{BSC10b}, and \cite{BCSC14}, that in every Heisenberg group $\mathbb H^n$, in case $\mathbb L$ is one-dimensional, the intrinsic regularity of $\mathrm{graph}(\widetilde{\phi})$ depends on the regularity of the vector field $D^{\phi}$ applied to $\phi$, i.e., $D^{\phi}\phi\coloneqq(D^{\phi}_{X_2}\phi,\dots,D^{\phi}_{X_{2n}}\phi)$, which has to be considered in the sense of distributions. For the full results we refer to \cite[Theorems 4.90 \& 4.92]{SC16}. In particular $\mathrm{graph}(\widetilde{\phi})$ is an intrinsically Lipschitz graph (respectively a $C^1_{\rm H}$-hypersurface) if and only if $D^{\phi}\phi=\omega$, in the distributional sense, for some $\omega\in L^{\infty}(U)$ (respectively $\omega\in C(U)$). 
	
	A step towards obtaining analogous results in $\mathbb H^n$, in case $\mathbb L$ has higher dimension and $\omega$ is continuous, has been recently done by Corni in \cite{Cor19}. In particular, the author proves that, if $\mathbb L$ is horizontal $k$-dimensional, the set $\mathrm{graph}(\widetilde{\phi})$ is a co-horizontal $C^1_{\rm H}$-surface if and only if $\phi$ is a broad* solution to $D^{\phi}\phi=\omega$ for some $\omega\in C(U)$. We shall recall the definition of broad* in \cref{defbroad*}.
\end{rem}
\begin{exa}[Projected vector fields on Carnot groups of step 2]\label{example:Step2}
	 Consider a Carnot group $\mathbb G$ of rank $m$ and step 2 with an adapted basis $(X_1,\dots,X_{n})$. For $1\leq s,\ell \leq m$ and $m+1\leq i \leq n$, let us define the {\em structure constants} $c_{\ell s}^i$ by means of the relation $[X_{\ell},X_s]\eqqcolon\sum_{i=m+1}^n c_{\ell s}^i X_i$.   Identify $\mathbb G$ with $\mathbb R^n$ by means of exponential coordinates and take $\mathbb W\coloneqq\{x_1=0\}$, and $\mathbb L\coloneqq\{x_2=\dots=x_{n}=0\}$. Then, if $\widetilde{\phi}\colon \widetilde U\subseteq \mathbb W\to\mathbb L$ is continuous, with $\widetilde U$ open, by explicit computations one has in exponential coordinates
	\begin{equation}\label{ProjectedVectorFieldsStep21}
	 (D^{\phi}_{X_{j}})_{|_w}=(X_j)_{|_w}+\sum_{i=m+1}^{n} c_{1j}^i \phi(w)(\partial_{x_i})_{|_w}, \qquad \mbox{ for } j=2,\dots,m;
	\end{equation}
	\begin{equation}\label{ProjectedVectorFieldsStep22}
	(D^{\phi}_{X_{j}})_{|_w}=(X_j)_{|_w}=(\partial_{x_j})_{|_w}, \qquad \mbox{ for } j=m+1,\dots,n,
	\end{equation}
	for every $w\in U$, where $\phi$ denotes the composition of $\widetilde \phi$ with the exponential coordinates.
\end{exa}
\begin{rem}\label{rem:ProjectedVectorFieldsStep2}
For the expression of the projected vector fields in \cref{example:Step2} we refer also to \cite[Definition 5.2]{DiDonato18}. In the papers \cite{DiDonato18} and \cite{DiDonato19} the author started to generalize the results already proved in the Heisenberg groups $\mathbb H^n$ (see \cref{rem:ProjectedVectorFieldsHeisenberg}) to Carnot groups of step 2, in case $\mathbb L$ is one-dimensional. 

In particular, in \cite{DiDonato18}, in the setting of Carnot groups of step 2, the author deals with the characterization of maps $\widetilde{\phi}$ such that $\mathrm{graph}(\widetilde{\phi})$ is a $C^1_{\rm H}$-hypersurface. In \cite[Theorem~5.8]{DiDonato18}, the author recovers partially the result in \cite[Theorem 5.1]{ASCV06}, thus making the first step through the complete characterization in step 2 Carnot groups analogous to the one discussed in \cref{rem:ProjectedVectorFieldsHeisenberg} for the Heisenberg group. We stress that in this paper of ours we generalize \cite[Theorem~5.1]{ASCV06} to all step-2 Carnot groups, thus improving also \cite[Theorem~5.8]{DiDonato18}, see \cref{thm:MainTheorem.0}.

In \cite[Theorem 7.1 \& 7.2]{DiDonato19}, in the setting of Carnot groups of step 2, with $\mathbb L$ one-dimensional, the author recovers \cite[Theorem 1.1]{BCSC14} with an additional assumption: $\mathrm{graph}(\widetilde{\phi})$ is intrinsically Lipschitz if and only if $D^{\phi}\phi=\omega$ in the sense of distribution for some $\omega\in L^{\infty}(U)$ \textbf{and $\phi$ is locally $1/2$-H\"older along the vertical coordinates}. We expect that the techniques of \cref{sec:Step2} can be used to drop the previous additional assumption on the locally 1/2-H\"older continuity along the vertical coordinates. This will be subject of further investigations. 
\end{rem}
\begin{exa}[Projected vector fields on Engel group]\label{example:Engel}
	Consider the Engel group $\mathbb E$, which is the Carnot group of topological dimension $4$  whose Lie algebra $\mathfrak e$ admits an adapted basis $(X_1,X_2,X_3,X_4)$ such that
	\[
	\mathfrak e\coloneqq\mathrm{span}\{X_1,X_2\}\oplus\mathrm{span}\{X_3\}\oplus\mathrm{span}\{X_4\},
	\]
	where the only nonvanishing relations are $[X_1,X_2]=X_3, [X_1,X_3]=X_4$. We identify $\mathbb E$ with $\mathbb R^4$ by means of exponential coordinates, and we define $\mathbb W\coloneqq\{x_1=0\}$, and $\mathbb L\coloneqq\{x_2=x_3=x_4=0\}$. Then, by explicit computations that can be found in \cite[Section 4.4.1]{Koz15}, we get that, given a continuous function $\widetilde\phi:\widetilde U\subseteq \mathbb W\to\mathbb L$, with $\widetilde U$ open, the projected vector fields on $\mathbb W$ are
	\begin{equation}\label{eqn:ProjectedVectorFieldsEngel}
	D_{X_2}^{\phi}=\partial_{x_2}+\phi\partial_{x_3}+\frac{\phi^2}{2}\partial_{x_4}, \quad D_{X_3}^{\phi}=\partial_{x_3}+\phi\partial_{x_4}, \quad D^{\phi}_{X_4}=\partial_{x_4}.
	\end{equation}
	
	In \cite[Setion 4.4.1]{Koz15}, one can find the computations of the projected vector fields also for the pair of complementary subgroups $\left(\{x_2=0\},\{x_1=x_3=x_4=0\}\right)$. It is worth mentioning that in \cite[Section 4.5]{Koz15} there are some counterexamples, for the Engel group $\mathbb E$ with the splitting discussed here, to some of the statements, discussed in \cref{rem:ProjectedVectorFieldsHeisenberg}, and \cref{rem:ProjectedVectorFieldsStep2}, that holds for Carnot groups of step 2. For one of these examples, see also \cref{rem:VerticallyBroadHolderNonSiToglie}.
\end{exa}

We now prove a proposition about the general form of the projected vector fields, in an arbitrary Carnot group $\mathbb G$, in exponential coordinates. The proposition below can be found in \cite[Proposition 4.1.15]{Koz15}, but we however write the proof for the sake of completeness.
\begin{prop}[Projected vector fields in coordinates]\label{prop:CompatibleCoordinates}
	Let $\mathbb W$ and $\mathbb L$ be complementary subgroups of a Carnot group $\G$ such that  $\mathbb L$ is horizontal and $k$-dimensional, and let $\widetilde \phi\colon \widetilde U\subseteq \mathbb W\to \mathbb L$ be a continuous function on an open set $\widetilde U$. Fix an adapted basis $(X_1,\dots,X_n)$ of the Lie algebra $\mathfrak g$ such that $\mathbb W=\exp\left(\mathrm{span}\{X_{k+1},\dots,X_n\}\right)$ and $\mathbb L=\exp\left(\mathrm{span}\{X_1,\dots,X_k\}\right)$. If we identify $\mathbb G$ with $\mathbb R^n$ by means of exponential coordinates associated with $(X_1,\dots,X_n)$, then the vector fields $D^{\phi}_j\coloneqq D^{\phi}_{X_j}$ defined in \eqref{eqn:DefinitionOfDj} have the following expression:
	\begin{equation}\label{eqn:CoordinateDPhiJ}
	D^{\phi}_{j|_{(0,\dots,0,x_{k+1},\dots,x_n)}}=\partial_{x_j}+\sum_{i=n_{\deg j}+1}^n P_i^j(\phi^{(1)},\dots,\phi^{(k)},x_{k+1},\dots,x_{n_{\deg i-1}})\partial_{x_i}, \quad \forall j=k+1,\dots,n,
	\end{equation}
	where $\phi$ is the composition of $\widetilde \phi$ with the exponential map, $\phi^{(i)}=\phi^{(i)}(x_{k+1},\dots,x_n)$ denotes the $i$-th component of $\phi$, and $P_i^j$ is a polynomial of homogeneous degree $\deg i-\deg j$, with the convention that the degree of the $\phi^{(i)}$ components in the polynomial is 1. For the notation $n_{\square}$ and $\deg$, see the discussion before \cref{def:coordinateconletilde}.
\end{prop}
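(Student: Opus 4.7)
The plan is to start from the intrinsic expression in \eqref{eqn:DPhiJAsDifferential}, which yields
\[
(D^\phi_{X_j})_{|_w} = \de(L_w)_e\bigl(\mathrm{Ad}_{\widetilde\phi(w)}(X_j|_e)\bigr) = V|_w,
\]
where $V := \mathrm{Ad}_{\widetilde\phi(w)}(X_j)$ and $V|_w$ denotes the value at $w$ of the left-invariant vector field associated with $V$. The computation thus naturally splits into two steps: an \emph{algebraic} step, expanding the adjoint action into iterated commutators, and a \emph{coordinate} step, expressing a left-invariant vector field in exponential coordinates of the first kind.

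For the algebraic step, set $\xi := \sum_{r=1}^{k}\phi^{(r)}(w)X_r \in V_1$, the logarithm of $\widetilde\phi(w)\in\mathbb L$. By nilpotency of $\mathfrak g$ the exponential series terminates:
\[
\mathrm{Ad}_{\exp\xi}(X_j) = \sum_{\ell = 0}^{s-\deg j}\frac{1}{\ell!}\,\mathrm{ad}_\xi^{\ell}(X_j).
\]
Since $\mathbb W$ is normal each term stays in $\mathrm{Lie}(\mathbb W)$, and since $\xi\in V_1$ each $\mathrm{ad}_\xi^\ell(X_j)$ lies in $V_{\deg j + \ell}$. Writing this in the adapted basis, the coefficient on any $X_i$ with $\deg i = \deg j + \ell$ is a homogeneous polynomial in $(\phi^{(1)},\dots,\phi^{(k)})$ of degree $\ell = \deg i - \deg j$, and all other coefficients vanish. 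Hence there exist homogeneous polynomials $Q_i^j$ of degree $\deg i - \deg j$ such that
\[
\mathrm{Ad}_{\exp\xi}(X_j) = X_j + \sum_{i=n_{\deg j}+1}^{n}Q_i^j(\phi^{(1)},\dots,\phi^{(k)})\,X_i.
\]

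For the coordinate step, a standard consequence of the Baker--Campbell--Hausdorff formula applied to $X_i|_h f = \tfrac{\de}{\de t}|_{t=0}f(h\cdot\exp(tX_i))$ is that, in exponential coordinates of the first kind, every left-invariant field $X_i$ can be written as
\[
X_i|_h = \partial_{x_i} + \sum_{\ell:\,\deg\ell > \deg i}R_\ell^i(h)\,\partial_{x_\ell},
\]
where $R_\ell^i$ is $\delta_\lambda$-homogeneous of degree $\deg\ell - \deg i$ in the coordinates of $h$. Plugging in $w = (0,\dots,0,x_{k+1},\dots,x_n)$, so that only $x_{k+1},\dots,x_n$ survive, and combining with the algebraic step, the coefficient of $\partial_{x_j}$ in $(D^\phi_j)|_w$ equals $1$ (from $X_j|_w$), the coefficient of $\partial_{x_p}$ vanishes for every $p\neq j$ with $\deg p \leq \deg j$ (since neither $X_j|_w$ nor any $Q_i^j(\phi)\,X_i|_w$ produces such a direction), and for $\deg p > \deg j$ the coefficient is the finite sum
\[
R_p^j(w) + Q_p^j(\phi(w)) + \sum_{\deg j < \deg i < \deg p} Q_i^j(\phi(w))\,R_p^i(w).
\]

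The final and only delicate point is verifying the asserted variable range in $P_p^j$. By construction $P_p^j$ is $\delta_\lambda$-homogeneous of total degree $\deg p - \deg j$, where $\phi^{(r)}$ has degree $1$ and $x_r$ has degree $\deg r$. Any monomial $\prod_{r=1}^{k}(\phi^{(r)})^{a_r}\prod_{r=k+1}^{n}x_r^{b_r}$ appearing in $P_p^j$ must satisfy $\sum_r a_r + \sum_r b_r \deg r = \deg p - \deg j$, forcing $b_r = 0$ whenever $\deg r \geq \deg p$, that is whenever $r > n_{\deg p - 1}$. This yields the dependence on $(\phi^{(1)},\dots,\phi^{(k)},x_{k+1},\dots,x_{n_{\deg p - 1}})$ stated in the proposition, completing the proof.
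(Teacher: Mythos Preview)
Your proof is correct and follows essentially the same route as the paper's: both start from \eqref{eqn:DPhiJAsDifferential} and deduce the triangular polynomial structure from the homogeneity of the group product (equivalently, of the translation Jacobians) in exponential coordinates of the first kind. Your organization into an algebraic $\mathrm{Ad}$-step and a coordinate $\de(L_w)_e$-step is slightly more explicit than the paper's one-line appeal to the block structure of $\de(L_x)_{x'}$ and $\de(R_x)_{x'}$, but the underlying argument is the same.
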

\begin{proof}
Since $\mathbb L$ is horizontal, then $\mathbb W$ is normal and \eqref{eqn:DPhiJAsDifferential} holds. Now the result follows from \eqref{eqn:DPhiJAsDifferential} and the following general fact that holds for arbitrary Carnot groups. Let us fix, in exponential coordinates, $x\in \mathbb G\equiv \mathbb R^n$. The differential of the left (respectively right) translation evaluated at a point $x'\in\mathbb G\equiv \mathbb R^n$, that we denote by $\de(L_x)_{x'}$ (respectively $\de(R_x)_{x'}$), is a matrix with identity $(m_\ell\times m_\ell)$-blocks  on the diagonal, with $1\leq \ell\leq s$, and moreover the element of position $ij$ is a polynomial in the coordinates of $x, x'$ of homogeneous degree $\deg i-\deg j$, if $\deg i>\deg j$. Instead if $\deg i<\deg j$ the element of position $ij$ is zero. This last statement about the structure of the differential of left and right translations  follows by the explicit expression of the product in coordinates, see \cite[Proposition~2.1]{FSSC03a}.
\end{proof}
\begin{rem}\label{rem:SameProofWNormal}
	Notice that, for notational purposes, \cref{prop:CompatibleCoordinates} is stated just for $\mathbb L$ horizontal but an expression similar to \eqref{eqn:CoordinateDPhiJ} holds also in the more general case in which $\mathbb W$ is normal, see also \cite[Proposition~4.1.15]{Koz15}. The difference is that the zeros should be put in the components of $\mathbb L$, which are not necessarily all in the first layer, and the $\phi^{(i)}$ components in the polynomial are not necessarily of degree 1. 
\end{rem}
%GA: Spiegazione superflua
%We prove that, given a continuous map $\widetilde\phi\colon\widetilde U\subseteq \mathbb W\to \mathbb L$, where $\widetilde U$ is open, the set $\widetilde U$ is locally connectible by means of integral curves of the vector fields $D^{\phi}_{W}$, where $W$ runs in ${\rm Lie}(\mathbb W)$.  
\begin{lem}[Locally connectible with projected vector fields]\label{rem:ConnectingCurves}
	Let $\mathbb W$ and $\mathbb L$ be complementary subgroups of $\G$ with $\mathbb W$ normal, and let $\widetilde{\phi}\colon\widetilde{U}\subseteq\mathbb W\to\mathbb L$ be a continuous function on an open set $\widetilde U$. Then, for every $\widetilde U'\Subset \widetilde U$ and every $\widetilde w\in \widetilde U'$, there exists a neighborhood $\widetilde V\Subset \widetilde U'$ of $\widetilde w$ such that, for every $\widetilde v,\widetilde v'\in\widetilde V$ there exists a path, entirely contained in $\widetilde U'$, connecting $\widetilde v$ to $\widetilde v'$, made of a finite concatenation of integral curves for the vector fields $D^{\phi}_W$, for $W\in{\rm Lie}(\mathbb W)$.
\end{lem}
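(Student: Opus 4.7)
The plan is to exploit the triangular form of the projected vector fields in adapted coordinates in order to construct the connecting path explicitly, layer by layer. The statement is purely local, so after fixing an adapted basis $(X_1,\dots,X_n)$ of $\mathfrak g$ (arranged so that $\mathbb W=\exp(\mathrm{span}\{X_{k+1},\dots,X_n\})$) and passing to exponential coordinates, the argument will be constructive and will not require smoothness of $\widetilde\phi$.

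By \cref{prop:CompatibleCoordinates}, extended to the $\mathbb W$-normal case as noted in \cref{rem:SameProofWNormal}, each projected vector field takes the triangular form $D^{\phi}_j=\partial_{x_j}+\sum_{i\in I_j}P^j_i(\phi,x)\,\partial_{x_i}$, where every index $i\in I_j$ satisfies $\deg i>\deg j$. Two structural consequences are: (a) an integral curve of $D^{\phi}_j$ leaves fixed every coordinate $x_{j'}$ with $\deg j'<\deg j$; and (b) within the layer $\deg j$ it leaves fixed every coordinate $x_{j'}$ with $j'\ne j$. The coefficients $P^j_i$ are homogeneous polynomials whose degree in the non-$\phi$ variables equals $\deg i-\deg j\geq 1$, and $\phi$ is bounded on the compact set $\overline{\widetilde U'}$.

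Assume without loss of generality that $\widetilde w=0$ in coordinates. For a target point $\widetilde v=(v_{k+1},\dots,v_n)$ sufficiently close to $0$, I would build a path $0\to\widetilde v$ inside $\widetilde U'$ iteratively across the layers $d=1,2,\dots,s$. Processing layer $d$ means, for each $j$ with $\deg j=d$ (in some fixed order), to flow along an integral curve of $D^{\phi}_j$ for time $v_j-(\text{current value of }x_j)$. By (b) this sets $x_j$ to $v_j$ without affecting the other coordinates in the same layer, and by (a) all coordinates already fixed in earlier layers remain unchanged. After the layer $s$ is processed, the endpoint is exactly $\widetilde v$. Because integral curves of $-D^{\phi}_j=D^{\phi}_{-X_j}$ are obtained by reversing those of $D^{\phi}_j$ (the assignment $W\mapsto D^{\phi}_W$ is linear in $W$), one can then connect any $\widetilde v,\widetilde v'$ in a small enough neighborhood $\widetilde V$ of $\widetilde w$ by concatenating the reverse of $0\to\widetilde v$ with $0\to\widetilde v'$.

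The one point that needs attention, and which I expect to be the main obstacle, is to ensure that every flow required above is defined and keeps the trajectory inside $\widetilde U'$. The vector fields $D^{\phi}_j$ are continuous and bounded on $\overline{\widetilde U'}$, so Peano's theorem gives a uniform existence time $T>0$ for integral curves starting anywhere in a small neighborhood of $\widetilde w$. The required flow times are differences of the form $v_j-(\text{current value})$; by the homogeneity of the polynomials $P^j_i$ and the boundedness of $\phi$ on $\overline{\widetilde U'}$, each previously executed flow of duration $\tau$ perturbs higher-layer coordinates by at most $C\tau$. An inductive bound then shows that all intermediate flow times are controlled by a fixed multiple of $\|\widetilde v\|$, so choosing $\widetilde V$ small enough makes every such time less than $T$ and confines the full path to $\widetilde U'$. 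This quantitative control is what makes a purely continuous $\widetilde\phi$ sufficient, bypassing any need for uniqueness of integral curves or for a smoothing argument.
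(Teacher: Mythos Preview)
Your proposal is correct and follows essentially the same approach as the paper: both arguments pass to exponential coordinates, use Peano's theorem to obtain a uniform existence time for integral curves of the continuous vector fields $D^{\phi}_j$ on a compact neighborhood, and then exploit the triangular form of \eqref{eqn:CoordinateDPhiJ} to adjust one coordinate at a time without disturbing previously fixed ones. The only cosmetic differences are that you route both points through the center $\widetilde w$ while the paper connects $\widetilde v$ to $\widetilde v'$ directly, and you spell out the inductive bound on the intermediate flow times a bit more explicitly than the paper does.
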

\begin{proof}
	 We give the proof in the case $\mathbb L$ is horizontal and $k$-dimensional. The same proof can be given in the general case in which $\mathbb W$ is normal taking \cref{rem:SameProofWNormal} into account. We fix an adapted basis $(X_1,\dots,X_n)$ of the Lie algebra $\mathfrak g$, such that $\mathbb W=\exp(\mbox{span}\{X_{k+1},\dots,X_n\})$ and $\mathbb L=\exp(\mbox{span}\{X_{1},\dots,X_k\})$. We denote by $\phi\colon U\subseteq \mathbb R^{n-k}\to\mathbb R^k$ the composition of $\widetilde \phi$ with the exponential coordinates and we set $D^{\phi}_j\coloneqq D_{X_j}^{\phi}$ for every $j=k+1,\dots,n$. Using the particular form of $D^{\phi}_j$, see \eqref{eqn:CoordinateDPhiJ}, we shall prove that $U$ is locally connectible by means of integral curves of the vector fields $D^{\phi}_j$, with $j=k+1,\dots,n$. Indeed, if we fix $w\in U$, and $j\in\{k+1,\dots,n\}$, Peano's Theorem \cite[Theorem 1.1]{Hal80} and the estimate for the existing time \cite[Corollary 1.1]{Hal80} imply that, for every open neighborhood $U'\Subset U$ of $w$, there exist $0<\alpha\coloneqq\alpha(U',\phi)$ and an open neighborhood $V''\coloneqq V''(U',\phi)$ of $w$ with $V''\Subset U'$, such that, for every $v\in V''$, there exists at least one integral curve $\gamma$ of $D^{\phi}_j$ starting from $v$, defined in $[-\alpha,\alpha]$, and such that $\gamma([-\alpha,\alpha])\subseteq U'$. 
	
	We can iterate the argument for each $j$, eventually considering a smaller neighborhood $V''$ and a smaller time at each stage of the iteration. Thus we obtain a neighborhood $V'\Subset U'$ of $w$ and a time $\delta>0$ such that, starting from an arbitrary point in $V'$,  there exists a concatenation of $n-k$ integral curves of $D^{\phi}_j$, with $j=k+1,\dots,n$, with interval of definition containing $[-\delta,\delta]$, and this concatenation is supported in $U'$. Eventually, up to reducing $V'$, we deduce that for every $U'\Subset U$ containing $w$ there exists a neighborhood $V\Subset V'\Subset U'$ of $w$ such that, for every $v,v'\in V$, there exists at least one path from $v$ to $v'$, made of integral curves of the vector fields $D^{\phi}_j$, with $j=k+1,\dots,n$, entirely contained in $U'$. We remark that this can be done taking into account that any integral curve of $D^{\phi}_j$ is a line along the $j$-th coordinate, and adjusting one coordinate at time. When we adjust one coordinate, we do not have the check the previous ones because of the triangular form of the vector fields $D^{\phi}_j$, see \eqref{eqn:CoordinateDPhiJ}.
\end{proof}

\subsection{Invariance properties of $D^{\phi}$}\label{sub:InvarianceProperties}
Now we prove some invariance properties of the vector fields $D^{\phi}$ under the operation of translating graphs that we have introduced in \cref{def:PhiQ}. 
\begin{lem}\label{lem:Invariance1DPhi}
	Let $\mathbb W$ and $\mathbb L$ be two complementary subgroups of a Carnot group $\mathbb G$, with $\mathbb L$ $k$-dimensional and horizontal and let $\widetilde{\phi}\colon\widetilde{U}\subseteq \mathbb W\to \mathbb L$ be a continuous function defined on $\widetilde U$ open. Take $W\in{\rm Lie}(\mathbb W)$, let $\widetilde{f}\colon\widetilde{U}\subseteq \mathbb W\to\mathbb L$ be a $C^{\infty}$ function and let us denote $D^{\phi}\coloneqq D^{\phi}_W$. Fix $q\in\mathbb G$ and denote by $\widetilde{\phi}_q$ and $\widetilde{f}_q$ the translated functions defined as in \eqref{eqn:Phiq} with domain $\widetilde{U}_q$. Denote by $f,\phi\colon U\subseteq \mathbb R^{n-k}\to \mathbb R^k$ and $f_q,\phi_q\colon U_q\subseteq\mathbb R^{n-k}\to\mathbb R^k$ the composition of $\widetilde f,\widetilde \phi,\widetilde f_q,\widetilde \phi_q$ with the exponential coordinates, respectively. Then, for every $w\in U_q$, the following equality holds in exponential coordinates
	\begin{equation}\label{eqn:InvarianceDPhi}
	D^{\phi_q}_{|_w}(f_q) = D^{\phi}_{|_{\pi_{\sW}(q^{-1}\cdot w)}}(f), 
	\end{equation}
	where $D^{\phi}(f)$, for a vector valued $f$, stands for the vector $(D^{\phi}(f^{(1)}),\dots,D^{\phi}(f^{(k)}))$. 
\end{lem}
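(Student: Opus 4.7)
My plan is to prove the identity by combining the infinitesimal characterization of the projected vector field given in \eqref{eqn:DPhiJAsDifferential} with the graph-translation rule from \cref{prop:PropertiesOfIntrinsicTranslation}. Writing $\tau(w)\coloneqq\pi_{\mathbb W}(q^{-1}\cdot w)$ for $w\in\widetilde U_q$, the identity \eqref{eqn:GraphOfPhiQEqualsQGraphOfPhi} obtained in the proof of \cref{prop:PropertiesOfIntrinsicTranslation} gives the key relation
\[
\widetilde\Phi_q(w)=q\cdot\widetilde\Phi(\tau(w)), \qquad \forall w\in\widetilde U_q.
\]
Setting $g_s\coloneqq\widetilde\Phi(\tau(w))\cdot\exp(sW)$, left-invariance of $W$ gives $\widetilde\Phi_q(w)\cdot\exp(sW)=q\cdot g_s$, and hence by \eqref{eqn:DPhiJAsDifferential} one has
\[
D^{\phi_q}_{|_w}(f_q)=\left.\frac{\de}{\de s}\right|_{s=0}\! f_q\bigl(\pi_{\mathbb W}(q\cdot g_s)\bigr).
\]

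Next, since $\mathbb L$ is horizontal, $\mathbb W$ is normal, and writing $q=q_{\mathbb W}\cdot q_{\mathbb L}$ one computes directly, using normality of $\mathbb W$, that
\[
\pi_{\mathbb W}(q\cdot g_s)=q_{\mathbb W}\cdot q_{\mathbb L}\cdot\pi_{\mathbb W}(g_s)\cdot q_{\mathbb L}^{-1},
\]
whence $q^{-1}\cdot\pi_{\mathbb W}(q\cdot g_s)=\pi_{\mathbb W}(g_s)\cdot q_{\mathbb L}^{-1}$. Decomposing this product according to $\mathbb G=\mathbb W\cdot\mathbb L$ gives $\pi_{\mathbb W}\bigl(q^{-1}\cdot\pi_{\mathbb W}(q\cdot g_s)\bigr)=\pi_{\mathbb W}(g_s)$ and $\pi_{\mathbb L}\bigl(q^{-1}\cdot\pi_{\mathbb W}(q\cdot g_s)\bigr)=q_{\mathbb L}^{-1}$; the crucial point is that the $\mathbb L$-component is independent of $s$. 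Substituting into the defining formula \eqref{eqn:Phiq} of $\widetilde\phi_q$ yields
\[
\widetilde f_q\bigl(\pi_{\mathbb W}(q\cdot g_s)\bigr)=q_{\mathbb L}\cdot\widetilde f\bigl(\pi_{\mathbb W}(g_s)\bigr).
\]

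Finally, since $\mathbb L$ is horizontal it is abelian, so in exponential coordinates the product on $\mathbb L$ is just vector addition, and the constant contribution $q_{\mathbb L}$ disappears under the derivative in $s$. Therefore
\[
\left.\frac{\de}{\de s}\right|_{s=0}\! f_q\bigl(\pi_{\mathbb W}(q\cdot g_s)\bigr)=\left.\frac{\de}{\de s}\right|_{s=0}\! f\bigl(\pi_{\mathbb W}(g_s)\bigr)=W_{|_{\widetilde\Phi(\tau(w))}}(f\circ\pi_{\mathbb W})=D^{\phi}_{|_{\tau(w)}}(f),
\]
which is exactly \eqref{eqn:InvarianceDPhi}. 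I expect the only delicate step to be the normality computation $\pi_{\mathbb W}(q\cdot g_s)=q_{\mathbb W}\cdot q_{\mathbb L}\cdot\pi_{\mathbb W}(g_s)\cdot q_{\mathbb L}^{-1}$ and the clean bookkeeping when it is fed into the twisted definition of $\widetilde\phi_q$; once this is in place, the abelian structure of $\mathbb L$ in coordinates makes the differentiation immediate and the argument applies componentwise to the vector-valued $f$.
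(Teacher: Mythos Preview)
Your proof is correct and follows essentially the same approach as the paper: both arguments use the graph-translation identity $\widetilde\Phi_q(w)=q\cdot\widetilde\Phi(\tau(w))$ together with left-invariance of $W$ to reduce the claim to the identity $\widetilde f_q\bigl(\pi_{\mathbb W}(q\cdot a)\bigr)=q_{\mathbb L}\cdot\widetilde f\bigl(\pi_{\mathbb W}(a)\bigr)$, proved via normality of $\mathbb W$, and then conclude by noting that on the horizontal subgroup $\mathbb L$ the product is Euclidean addition in exponential coordinates so the constant $q_{\mathbb L}$ vanishes under differentiation. The paper packages the left-invariance step as $W_{|_{q\cdot g}}(f_q\circ\pi_{\mathbb W})=W_{|_g}(f_q\circ\pi_{\mathbb W}\circ L_q)$ while you phrase it through the curve $g_s$, but the content is identical.
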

\begin{proof}
	We stress a little abuse of notation throughout the proof, for the sake of simplicity. We exploit the identifications as in \cref{def:coordinateconletilde} without explicitly write the exponential map $F$ or $F^{-1}$.
	
	By \eqref{eqn:DefinitionOfDj} and \eqref{eqn:GraphOfPhiQEqualsQGraphOfPhi}, if we fix $w\in U_q$ and set $g\coloneqq\pi_{\mathbb W}(q^{-1}\cdot w)\cdot \widetilde\phi(\pi_{\sW}(q^{-1}\cdot w))$, we get 
	\begin{equation}\label{eqn:1}	
	D^{\phi_q}_{|_w}(f_q)=W_{|_{w\cdot\widetilde{\phi}_q(w)}}(f_q\circ \pi_{\sW})=W_{|_{q\cdot g }}(f_q\circ \pi_{\sW})=W_{|_g}(f_q\circ \pi_{\sW}\circ L_q),
	\end{equation}
	where in the last equality we used the fact that $W$ is left-invariant, namely $W_{|_{q\cdot g}}=\de(L_q)_g(W_{|_g})$. By definition of $g$ and the definition of $D^\phi$, see \eqref{eqn:DefinitionOfDj}, one has
	\begin{equation}\label{eqn:2}
	D^{\phi}_{|_{\pi_{\sW}(q^{-1}\cdot w)}}(f)=W_{|_g}(f\circ \pi_{\sW}). 
	\end{equation}
	Thus, taking \eqref{eqn:1} and \eqref{eqn:2} into account, we are left to show that 
	\begin{equation}\label{eqn:3}
	W_{|_g}(f_q\circ \pi_{\sW}\circ L_q)=W_{|_g}(f\circ \pi_{\sW}).
	\end{equation}
	Indeed, if $a\in\mathbb G$, we have 
	\begin{equation}\label{eqn:ComputationFqCircPiWCircLq}
	\begin{aligned}
	\widetilde f_q\circ\pi_{\sW}\circ L_q(a)&=\widetilde f_q\circ\pi_{\sW}(q\cdot a)=\widetilde f_q\circ \pi_{\sW}\left(\left(q_{\mathbb W}\cdot q_{\mathbb L}\cdot a_{\mathbb W}\cdot q_{\mathbb L}^{-1}\right)\cdot q_{\mathbb L}\cdot a_{\mathbb L}\right)\\
	&=\widetilde f_q(q_{\mathbb W}\cdot q_{\mathbb L}\cdot a_{\mathbb W}\cdot q_{\mathbb L}^{-1})=q_{\mathbb L}\cdot \widetilde f(a_{\mathbb W})=q_{\mathbb L}\cdot \widetilde f\circ\pi_{\sW}(a),
	\end{aligned}
	\end{equation}
	where in the third equality we used that $\mathbb W$ is a normal subgroup, and in the fourth equality we used (c) of \cref{prop:PropertiesOfIntrinsicTranslation}. Then, the functions $\widetilde f_q\circ\pi_{\sW}\circ L_q$ and $\widetilde f\circ \pi_{\sW}$ differ only by a left translation of the element $q_{\mathbb L}$. Thus, in exponential coordinates, they are $\mathbb R^k$-valued functions that differ by the fixed Euclidean translation of the $\mathbb R^k$-vector corresponding to $q_{\mathbb L}$. This last observation comes from the fact that, in exponential coordinates, the operation of the group restricted to $\mathbb L$ is the Euclidean sum, being $\mathbb L$ horizontal, see \cite[Proposition 2.1]{FSSC03a}. Finally, \eqref{eqn:3} holds true by the fact that, component by component, we are differentiating along a vector field two functions that differ by a fixed constant.
\end{proof}
\begin{lem}\label{lem:Invariance2IntegralCurve}
	Consider the setting of the \cref{lem:Invariance1DPhi} above, let $T>0$ and let $\widetilde{\gamma}\colon [0,T]\to \widetilde U$ be a $C^1$ regular solution of the Cauchy problem 
	\begin{equation}\label{eqn:CauchyProblem}
	\begin{cases}
	\widetilde{\gamma}'(t)=D^{\phi}\circ\widetilde{\gamma}(t), \\
	\widetilde{\gamma}(0)= w.
	\end{cases}
	\end{equation}
	Then for every $q\in \G$ there exists a unique $C^1$ map $\widetilde{\gamma}_q\colon[0,T]\to\widetilde U_q$ such that
	\begin{equation}\label{eqn:PropertyOfGammaTilde}
	\pi_{\sW}(q^{-1}\cdot \widetilde{\gamma}_q(t))=\widetilde{\gamma}(t), \qquad \forall t\in[0,T].
	\end{equation}
	In addition, $\widetilde\gamma_q$ is a solution of the Cauchy problem 
	\begin{equation}\label{eqn:IntegralCurveOfDPhiQ}
	\begin{cases}
	\widetilde{\gamma}_q'(t)=D^{\phi_q}\circ\widetilde{\gamma}_q(t), \\
	\widetilde{\gamma}_q(0)=q_{\mathbb W}\cdot q_{\mathbb L}\cdot  w\cdot q_{\mathbb L}^{-1}.
	\end{cases}
	\end{equation}
	
	Moreover, if there exists a continuous function $\omega\colon U\subseteq \mathbb R^{n-k}\to\mathbb R^k$ such that
	\[
	\phi(\gamma(t))-\phi(\gamma(0))=\int_{0}^t \omega(\gamma(s))\de\!s, \qquad \forall t\in[0,T],
	\]
	then, there exists a continuous function $\bar \omega_q\colon U_q\subseteq\mathbb R^{n-k}\to\mathbb R^k$ such that 
	\[
	\phi_q(\gamma_q(t))-\phi_q(\gamma_q(0))=\int_{0}^t \bar \omega_q(\gamma_q(s))\de\!s, \qquad \forall t\in [0,T].
	\]
\end{lem}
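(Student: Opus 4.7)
The plan is to define $\widetilde\gamma_q(t)\coloneqq q_\sW\cdot q_\sL\cdot\widetilde\gamma(t)\cdot q_\sL^{-1}$. Since $\mathbb L$ is horizontal, $\sW$ is normal, and by item (c) of \cref{prop:PropertiesOfIntrinsicTranslation} one has $\widetilde U_q = q_\sW\cdot(q_\sL\cdot\widetilde U\cdot q_\sL^{-1})$, so $\widetilde\gamma_q$ is $C^1$ into $\widetilde U_q$ with the required initial value $q_\sW q_\sL w q_\sL^{-1}$. A one-line computation gives $q^{-1}\cdot\widetilde\gamma_q(t)=\widetilde\gamma(t)\cdot q_\sL^{-1}\in\sW\cdot\mathbb L$, hence \eqref{eqn:PropertyOfGammaTilde}; uniqueness is forced by the second identity in \eqref{eqn:ProjectionOfq-1a}, since that identity expresses $\widetilde\gamma(t)=\pi_\sW(q^{-1}\cdot\widetilde\gamma_q(t))$ as a bijective function of $\widetilde\gamma_q(t)\in\sW$.

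For the Cauchy problem \eqref{eqn:IntegralCurveOfDPhiQ}, the key algebraic identity, which follows from the explicit formula for $\widetilde\phi_q$ in item (c) of \cref{prop:PropertiesOfIntrinsicTranslation}, is
\[
\widetilde\gamma_q(t)\cdot\widetilde\phi_q(\widetilde\gamma_q(t)) = q\cdot\widetilde\gamma(t)\cdot\widetilde\phi(\widetilde\gamma(t)).
\]
I would test $\widetilde\gamma_q'(t)$ against an arbitrary smooth scalar $h$ on $\sW$, writing $h(\widetilde\gamma_q(t))=\tilde h(\widetilde\gamma(t))$ with $\tilde h(b)\coloneqq h(q_\sW q_\sL b q_\sL^{-1})$; normality of $\sW$ then gives the functional identity $\tilde h\circ\pi_\sW = (h\circ\pi_\sW)\circ L_q$. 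Combining \eqref{eqn:CauchyProblem}, the definition \eqref{eqn:DefinitionOfDj} of $D^\phi$, the chain rule, and the left-invariance of $W$, this reduces to
\[
\tfrac{\de}{\de t}h(\widetilde\gamma_q(t)) = W|_{\widetilde\gamma(t)\widetilde\phi(\widetilde\gamma(t))}\bigl((h\circ\pi_\sW)\circ L_q\bigr) = W|_{\widetilde\gamma_q(t)\widetilde\phi_q(\widetilde\gamma_q(t))}(h\circ\pi_\sW) = D^{\phi_q}|_{\widetilde\gamma_q(t)}(h).
\]
Since $h$ is arbitrary, $\widetilde\gamma_q$ solves \eqref{eqn:IntegralCurveOfDPhiQ}. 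This step is in essence the tangent-vector counterpart of the computation already carried out for \cref{lem:Invariance1DPhi}, and is the only point requiring some care.

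For the final assertion the natural candidate is $\bar\omega_q(w)\coloneqq\omega(\pi_\sW(q^{-1}\cdot w))$, which is continuous on $U_q$ because $\omega$ and $\pi_\sW$ are. In exponential coordinates, item (c) of \cref{prop:PropertiesOfIntrinsicTranslation} specialised to $\mathbb L$ horizontal reads $\phi_q(q_\sW q_\sL b q_\sL^{-1})=q_\sL+\phi(b)$, so $\phi_q(\gamma_q(t))-\phi_q(\gamma_q(0))=\phi(\gamma(t))-\phi(\gamma(0))$; together with $\bar\omega_q(\gamma_q(s))=\omega(\gamma(s))$, which is immediate from \eqref{eqn:PropertyOfGammaTilde}, this completes the argument. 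The only conceptual obstacle anywhere in the proof is the algebraic bookkeeping between conjugation by $q_\sL$, left-invariance of $W$, and the definition of $D^{\phi_q}$; no new analytic ingredient beyond \cref{lem:Invariance1DPhi} is needed.
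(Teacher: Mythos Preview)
Your proof is correct and follows essentially the same route as the paper: the same definition $\widetilde\gamma_q(t)=q_\sW q_\sL\widetilde\gamma(t)q_\sL^{-1}$, the same verification of \eqref{eqn:PropertyOfGammaTilde} and uniqueness via \eqref{eqn:ProjectionOfq-1a}, and the same translation identity $\phi_q\circ\gamma_q=\phi\circ\gamma+\text{const}$ for the integral formula. The only cosmetic differences are that the paper verifies \eqref{eqn:IntegralCurveOfDPhiQ} by invoking the already-proved invariance \eqref{eqn:InvarianceDPhi} of \cref{lem:Invariance1DPhi} (applied to $(f_{q^{-1}})_q$) rather than redoing the computation against a scalar test function, and the paper writes $\bar\omega_q$ as $\omega_q+(q^{-1})_\sL$ via the intrinsic translation of $\omega$, which unwinds to exactly your $\omega\circ\pi_\sW(q^{-1}\cdot)$.
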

\begin{proof}
	We stress a little abuse of notation throughout the proof, for the sake of simplicity. We exploit the identifications as in \cref{def:coordinateconletilde} without explicitly write the exponential map $F$ or $F^{-1}$.
	
	For every $q\in \G$, we define 
	\[
	\widetilde{\gamma}_q(t)\coloneqq q_{\sW}\cdot q_{\mathbb L}\cdot \widetilde{\gamma}(t)\cdot q_{\mathbb L}^{-1}, \qquad \forall t\in[0,T].
	\]
	 Then $\widetilde\gamma_q$ takes values in $\widetilde U_q$, see item (c) of \cref{prop:PropertiesOfIntrinsicTranslation}, and $\widetilde \gamma_q(0)=q_{\mathbb W}\cdot q_{\mathbb L}\cdot w\cdot q_{\mathbb L}^{-1}$. Moreover, one also has
	\[
	\pi_{\mathbb W}(q^{-1}\cdot\widetilde \gamma_q(t))=\pi_{\mathbb W}(q_{\mathbb L}^{-1}\cdot q_{\mathbb W}^{-1}\cdot q_{\mathbb W}\cdot q_{\mathbb L}\cdot \widetilde\gamma(t)\cdot q^{-1}_{\mathbb L})=\widetilde \gamma(t), \quad \forall t\in [0,T].
	\]
	Moreover, if we impose \eqref{eqn:PropertyOfGammaTilde}, the uniqueness of $\widetilde\gamma_q$ is guaranteed by the second equation of \eqref{eqn:ProjectionOfq-1a}, and the equivalence
	\begin{equation}\label{eqn:HowGammaChanges}
	q_{\mathbb L}^{-1}\cdot q_{\sW}^{-1}\cdot \widetilde{\gamma}_q(t)\cdot q_{\mathbb L}=\widetilde{\gamma}(t) \Leftrightarrow \widetilde{\gamma}_q(t)=q_{\sW}\cdot q_{\mathbb L}\cdot \widetilde{\gamma}(t)\cdot q_{\mathbb L}^{-1}, \qquad \forall t\in[0,T].
	\end{equation}
	Now we want to check that $\widetilde \gamma_q$ is a solution to the Cauchy problem \eqref{eqn:IntegralCurveOfDPhiQ}. We work in exponential coordinates. For every vector-valued function $f\colon U_q\subseteq \mathbb R^{n-k}\to\mathbb R^k$ of class $C^{\infty}$, we have that, for every $t\in [0,T]$, it holds, in exponential coordinates, the following chain of equalities:
	\begin{equation}\label{eqn:EqnInvarianceIntegralCurve}
	\begin{aligned}
	D^{\phi_q}_{|_{{\gamma}_q(t)}}(f)&=
	D^{\phi_q}_{|_{{\gamma}_q(t)}}((f_{q^{-1}})_q) = D^{\phi}_{|_{{\gamma}(t)}}(f_{q^{-1}})\\
	&={\gamma}'(t)(f_{q^{-1}})=\frac{\de}{\de t}(f_{q^{-1}}\circ \gamma(t)) \\
	&=\frac{\de}{\de t}\left(f_{q^{-1}}\circ \pi_{\sW}\circ L_{q^{-1}}({\gamma}_q(t))\right) \\
	&=\frac{\de}{\de t}\left((q^{-1})_{\mathbb L}\cdot f\circ \pi_{\mathbb W}({\gamma}_q(t))\right)=\frac{\de}{\de t}\left((q^{-1})_{\mathbb L}\cdot f\circ {\gamma}_q(t))\right)\\
	&=\frac{\de}{\de t}\left(f\circ {\gamma}_q(t)\right)={\gamma}'_q(t)(f),
	\end{aligned}
	\end{equation}
	where in the first equality we used item (b) of \cref{prop:PropertiesOfIntrinsicTranslation}, and in the second one we used \eqref{eqn:InvarianceDPhi} and the fact that $\pi_{\sW}(q^{-1}\cdot\widetilde{\gamma}_q(t))=\widetilde{\gamma}(t)$. In the fifth equality we used the coordinate version of $\pi_{\sW}(q^{-1}\cdot\widetilde{\gamma}_q(t))=\widetilde{\gamma}(t)$ with a little abuse of notation, and in the sixth equality we used the coordinate version of \eqref{eqn:ComputationFqCircPiWCircLq} with $q^{-1}$ in place of $q$. Finally, in the eighth equality we used the fact that, being $\mathbb L$ horizontal, the product with a fixed element of $\mathbb L$, in exponential coordinates, is a Euclidean translation and hence it does not affect the time derivative. This comes from the explicit expression of the product in coordinates, see \cite[Proposition~2.1]{FSSC03a}. Thus the proof of \eqref{eqn:IntegralCurveOfDPhiQ} is finished by \eqref{eqn:EqnInvarianceIntegralCurve}.
	
	By a further inspection, following the equalities starting from the right hand side of the second line to the right hand side of the fourth line of \eqref{eqn:EqnInvarianceIntegralCurve}, we have proved, \textbf{only by exploiting the fact that $\mathbb W$ is normal}, that if $\widetilde U$ is open then for every function $\widetilde G\colon\widetilde U\subseteq \mathbb W\to\mathbb L$, for every $q\in \G$, and every $t\in [0,T]$, it holds
	\begin{equation}\label{eqn:Translation1}
	\widetilde G\circ\widetilde\gamma(t)=(q^{-1})_{\mathbb L}\cdot  \widetilde G_q\circ\widetilde \gamma_q(t).
	\end{equation}
 	Since  $\mathbb L$ is horizontal, in exponential coordinates this equality reads as  
 	\begin{equation}\label{eqn:Translation}
 	 G\circ\gamma(t)= G_q\circ \gamma_q(t)+ (q^{-1})_{\mathbb L}.
 	\end{equation} 
 	Assume there exists a continuous map $\omega\colon U\subseteq \mathbb R^{n-k}\to\mathbb R^k$ as in the statement. Then, by composing with exponential coordinates we get a continuous $\widetilde\omega\colon\widetilde U\subseteq \mathbb W\to\mathbb L$. We then define $\widetilde\omega_q$ as in \eqref{eqn:Phiq} and we set $\omega_q\colon U_q\subseteq \mathbb R^{n-k}\to\mathbb R^k$ to be the composition of $\widetilde \omega_q$ with the exponential coordinates. We are then in a position to define $\overline \omega_q$ by setting
	\begin{equation}\label{eqn:HowOmegaChanges}
	\overline\omega_q(x)\coloneqq\omega_q(x)+(q^{-1})_{\mathbb L}, \quad\forall x\in U_q.
	\end{equation}
	Then, we have, for every $t\in [0,T]$,
	\begin{equation}\label{eqn:FundamentalTheoremOfCalculusOnCurves}
	\begin{aligned}
	\phi_q(\gamma_q(t))-\phi_q(\gamma_q(0))&=\phi(\gamma(t))-\phi(\gamma(0))=\int_0^t \omega(\gamma(s))\de\!s = \\
	&= \int_0^t \left(\omega_q(\gamma_q(s))+(q^{-1})_{\mathbb L}\right)\de\! s=\int_0^t\bar{\omega}_q(\gamma_q(s))\de\!s,
	\end{aligned}
	\end{equation}
	where in the first and in the third equality we used \eqref{eqn:Translation}. This completes the proof. 
\end{proof}
\begin{rem}\label{rem:CurveInvarianceTrueAlsoWhenWNormal}
	The curve $\widetilde\gamma_q$ defined in \eqref{eqn:HowGammaChanges} solves \eqref{eqn:IntegralCurveOfDPhiQ} also in the general case when $\mathbb W$ is normal, but one should run more involved computations, because the invariance property \eqref{eqn:InvarianceDPhi} might not be true. We presented the invariance in \eqref{eqn:InvarianceDPhi} in the specific case $\mathbb L$ is horizontal and $k$-dimensional because it will be frequently used in the last theorems of this section and in \cref{sec:Theorems}.
	
	We give a sketch of the proof in the general case. For a reference, one can also read the first item of \cite[Proposition 4.2.15]{Koz15}. Given $q\in\mathbb G$, define the map $\sigma_q$ on $\mathbb W$ by $\sigma_q(w)\coloneqq q_{\mathbb W}\cdot q_{\mathbb L}\cdot w\cdot (q_{\mathbb L})^{-1}$. By \eqref{eqn:ProjectionOfq-1a}, we get that $\sigma_{q^{-1}}(w)=\pi_{\mathbb W}(q^{-1}\cdot w)$. Then, in case $\mathbb L$ is horizontal, the invariance property proved in \eqref{eqn:InvarianceDPhi} reads as $D^{\phi_q}_{|_w}=D^{\phi}\circ\sigma_{q^{-1}}(w)$. In case $\mathbb L$ is not horizontal, using the properties stated in \cref{prop:PropertiesOfIntrinsicTranslation}, one can prove that
	\[
	D^{\phi_q}_{|_w} = \de(\sigma_q)_{\sigma_{q^{-1}}(w)} \left(D^{\phi}\circ\sigma_{q^{-1}}(w)\right),
	\]
	for every $w\in\widetilde U$ and $q\in \mathbb G$. One can hence use the previous equality and the definition of $\widetilde\gamma_q$ as in \eqref{eqn:HowGammaChanges} to show that, if a curve $\widetilde\gamma$ satisfies \eqref{eqn:CauchyProblem},  then $\widetilde\gamma_q$ satisfies \eqref{eqn:IntegralCurveOfDPhiQ}.
\end{rem}

\subsection{Metric properties of integral curves of $D^{\phi}$}\label{sub:MetricProperties}

In this subsection we study how the intrinsically Lipschitz regularity of $\widetilde\phi$ affects the metric regularity of the integral curves of the vector fields $D^{\phi}$ (see \cref{prop:IntrinsicLipschitzIntegralCurve}). We also see how the conclusions obtained can be improved when we assume $\widetilde\phi$ to be intrinsically differentiable (\cref{prop:DerivativeOfIntegralCurves}) or uniformly intrinsically differentiable (see \cref{prop3.6cont} and \cref{prop:UidImpliesH1k}). 

The following lemma is essentially the implication $(1)\Rightarrow (3)$ of \cite[Theorem~4.2.16]{Koz15}. For the reader convenience, and for some benefit toward \cref{rem:LeDonne}, we give the proof here, without going through all the precise estimates, and claiming no originality. 
\begin{lem}\label{lem:EstimateOnTheNormOfGamma}
	Let  $\mathbb W$ and $\mathbb L$ be two complementary subgroups of a Carnot group $\G$, with $\mathbb W$ normal, let $\widetilde{U}\subseteq\mathbb W$ be an open neighborhood of the identity $e$ and let $\widetilde{\phi}\colon\widetilde{U}\to\mathbb L$ be a function such that $\widetilde{\phi}(e)=e$. Assume there exists a constant $C>0$ with 
	\begin{equation}\label{eqn:LipschitzCondition}
	\|\widetilde\phi(w)\|_{\mathbb G}\leq C\|w\|_{\mathbb G}, \qquad \forall  w\in\widetilde{U}.
	\end{equation}
	Then for every integer $d\geq 1$, every $W\in{\rm Lie}(\mathbb W)\cap V_d$, and every integral curve $\widetilde{\gamma}\colon[0,T]\to\widetilde{U}$ of $D_W^{\phi}$ starting from $e$, there exists $C'$ depending only on $C$ and $W$, such that
	\begin{equation}\label{eqn:NormOfGamma}
	\|\widetilde\gamma(t)\|_{\mathbb G} \leq C't^{1/d}, \qquad \|\widetilde\phi(\widetilde\gamma(t))\|_{\mathbb G}\leq C't^{1/d}, \qquad \forall t\in [0,T].
	\end{equation}
\end{lem}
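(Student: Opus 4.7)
My plan is to work in exponential coordinates and close a Gronwall-type bootstrap driven by the triangular polynomial structure of $D^\phi_W$ from \cref{prop:CompatibleCoordinates}. First I would reduce to the case $\mathbb L$ horizontal (the general $\mathbb W$-normal case is completely analogous by \cref{rem:SameProofWNormal}), pick an adapted basis with $\mathbb L=\exp(\mathrm{span}\{X_1,\dots,X_k\})$, and, by linearity of $W\mapsto D^\phi_W$, focus on $W=X_{j_0}$ with $\deg j_0=d$, $j_0\geq k+1$ (general $W\in V_d\cap\mathrm{Lie}(\mathbb W)$ adds only a finite linear combination with coefficients depending on $W$). Since all homogeneous norms are bi-Lipschitz equivalent, I would use the anisotropic norm $\|x\|_{\mathbb G}=\sum_\ell|x_\ell|^{1/\deg\ell}$, under which assumption \eqref{eqn:LipschitzCondition} yields $|\phi^{(h)}(w)|\leq C\|w\|_{\mathbb G}$ (summed over $h$), and $|w_\ell|\leq\|w\|_{\mathbb G}^{\deg\ell}$.

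By \cref{prop:CompatibleCoordinates}, writing $\gamma\coloneqq F^{-1}\circ\widetilde\gamma$, the Cauchy problem splits as $\gamma_{j_0}(t)=t$, $\gamma_\ell\equiv 0$ for $\ell\in\{k+1,\dots,n_d\}\setminus\{j_0\}$, and, for $\deg\ell>d$,
\[
\gamma_\ell(t)=\int_0^t P^{j_0}_\ell(\phi(\gamma(s)),\gamma(s))\,\de\! s,
\]
where $P^{j_0}_\ell$ is polynomial of $\delta_\lambda$-homogeneous weight $\deg\ell-d$, counting every $\phi^{(h)}$ as weight $1$ and every $x_{\ell'}$ as weight $\deg\ell'$. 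Inserting the two bounds above into every monomial and using $\sum a_i+\sum b_j\deg\ell'_j=\deg\ell-d$ gives the key pointwise estimate
\[
|P^{j_0}_\ell(\phi(w),w)|\leq c_\ell\,\|w\|_{\mathbb G}^{\deg\ell-d},
\]
with $c_\ell$ depending only on $C$, $W$, and the group structure.

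For the bootstrap, let $G(t)\coloneqq\sup_{s\in[0,t]}\|\gamma(s)\|_{\mathbb G}$, which is continuous and finite on $[0,T]$. The integral identity and the previous bound yield $|\gamma_\ell(t)|\leq c_\ell\, t\, G(t)^{\deg\ell-d}$ for $\deg\ell>d$, so that, summing the anisotropic contributions,
\[
\|\gamma(t)\|_{\mathbb G}\leq t^{1/d}+\sum_{\deg\ell>d}c_\ell^{1/\deg\ell}\,t^{1/\deg\ell}\,G(t)^{1-d/\deg\ell}.
\]
Taking the supremum over $[0,t]$, writing $G(t)=R(t)\,t^{1/d}$, and dividing by $t^{1/d}$, a direct computation shows that the exponent of $t$ on the right collapses to $0$ (this is exactly where the $\delta_\lambda$-weight $\deg\ell-d$ of $P^{j_0}_\ell$ is used), producing the $t$-independent inequality
\[
R(t)\leq 1+\sum_{\deg\ell>d}A_\ell\,R(t)^{\alpha_\ell},\qquad \alpha_\ell\coloneqq 1-\frac{d}{\deg\ell}\in(0,1).
\]

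Since every $\alpha_\ell<1$, the right-hand side is sublinear in $R$, so the inequality forces $R(t)\leq M^*$ where $M^*$ is the largest root of the associated equation $x=1+\sum A_\ell x^{\alpha_\ell}$; this root depends only on $C$, $W$, and $\mathbb G$. This gives $\|\widetilde\gamma(t)\|_{\mathbb G}\leq M^*t^{1/d}$, and the second estimate $\|\widetilde\phi(\widetilde\gamma(t))\|_{\mathbb G}\leq CM^*t^{1/d}$ is then immediate from \eqref{eqn:LipschitzCondition}. The main technical point is the bookkeeping of $\delta_\lambda$-weights needed to force the cancellation of the $t$-powers in the bootstrap; once this cancellation is secured, the sublinear fixed-point estimate and the bound on $R$ are routine.
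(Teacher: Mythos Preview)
Your proof is correct and follows essentially the same approach as the paper's: both work in exponential coordinates with the anisotropic norm, exploit the triangular polynomial form of $D^\phi_j$ from \cref{prop:CompatibleCoordinates} together with the homogeneity degree $\deg\ell-d$ and the hypothesis \eqref{eqn:LipschitzCondition}, and close a bootstrap on $\sup_{s\in[0,t]}\|\gamma(s)\|_{\mathbb G}$ via the sublinear inequality you wrote (the paper phrases this last step slightly more tersely, and reduces to $W=X_j$ by choosing the adapted basis so that $W$ is a basis vector rather than invoking linearity).
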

\begin{proof}
	We give the proof in case $\mathbb L$ is $k$-dimensional and horizontal, just for notational purposes. Then, taking \cref{rem:SameProofWNormal} into account, the proof can be given in the general case. We recall that, from the fact that all the homogeneous norms are equivalent, we may fix $\|(x_1,\dots,x_n)\|_{\mathbb G}:=\sum_{i=1}^n |x_i|^{1/\deg i}$. Then, in the case $\mathbb L$ is horizontal, $ \|\widetilde\phi\circ\widetilde\gamma\|_{\mathbb G}=|\phi\circ\gamma|$.  

	We fix an adapted basis $(X_1,\dots,X_n)$ of the Lie algebra $\mathfrak g$ such that $W=X_j$ for some $j\in\{k+1,\dots,n\}$. Then $\deg j=d$ and, according to \cref{prop:CompatibleCoordinates}, we have, in exponential coordinates adapted to this basis, that $D^{\phi}_j\coloneqq D^{\phi}_{X_j}=D^{\phi}_W$ writes as
	\begin{equation}\label{eqn:eq1}
	D^{\phi}_j|_{(0,\dots,0,x_{k+1},\dots,x_n)}=\partial_{x_j}+\sum_{i=n_{d}+1}^n P_i^j(\phi^{(1)},\dots,\phi^{(k)},x_{k+1},\dots,x_{n_{\deg i-1}})\partial_{x_i},
	\end{equation}
	for some polynomials $P^j_i$ of homogeneous degree $\mathrm{deg}i-d$. By the use of  triangle inequality and Young's inequality, we get that for every $i=n_{d}+1,\dots,n$ there exists a constant $C_{1,i}>0$ depending only on the polynomial $P_i^j$, and a constant $C_{2,i}>0$ that depends only on $C_{1,i}$ and on $n$, such that 
	\begin{equation}\label{eqn:Young}
	\begin{aligned}
	|P_i^j(\phi^{(1)},\dots,\phi^{(k)},x_{k+1},\dots,x_{n_{\deg i-1}})|&\leq C_{1,i}\|(0,\dots,0,\phi^{(1)},\dots,\phi^{(k)},x_{k+1},\dots,x_{n_{\deg i-1}})\|_{\mathbb G}^{\deg i-d}  \\
	&\leq C_{2,i}(\|(0,\dots,0,\phi^{(1)},\dots,\phi^{(k)},0,\dots,0)\|_{\mathbb G}^{\deg i-d}\\
	&\hphantom{\leq C_3}+ \|(0,\dots,0,x_{k+1},\dots,x_n)\|_{\mathbb G}^{\deg i-d}).
	\end{aligned}
	\end{equation}
	Fix $t\in [0,T]$ and define
	\[
	m_{\gamma}(t)\coloneqq\max_{s\in [0,t]}\|\widetilde{\gamma}(s)\|_{\mathbb G}, \qquad m_{\phi}(t)\coloneqq\max_{s\in [0,t]}|\phi(\gamma(s))|.
	\]
	Then, by the fact that $\widetilde\gamma$ is an integral curve of $D^{\phi}_j$ and the particular form of $D^{\phi}_j$ in \eqref{eqn:eq1}, we get that $\gamma_j(t)=t$ for every $t\in [0,T]$, and $\gamma_\ell(t)\equiv 0$ for every $t\in [0,T]$ and every $\ell\neq j$ with $1\leq \ell\leq n_d$. The estimate \eqref{eqn:Young} implies that, for every $i\geq n_d+1$, one has 
	\begin{equation}\label{eqn:EstimateKoz}
	\begin{split}
	|\gamma_i(s)| &\leq \int_0^s\left|P_i^j\left(\phi^{(1)}(\gamma(r)),\dots,\phi^{(k)}(\gamma(r)),\gamma_{k+1}(r),\dots,\gamma_{n_{\deg i-1}}(r)\right)\right|\de\!r\\
	&\leq tC_{2,i}\left((m_{\gamma}(t))^{\deg i-d}+(m_{\phi}(t))^{\deg i-d}\right)\leq  tC_{3,i}(m_{\gamma}(t))^{\deg i-d}, \qquad 	\forall s\in [0,t],
	\end{split}
	\end{equation}
	where in the last inequality we used $m_{\phi}(t)\leq Cm_{\gamma}(t)$ for every $t\in [0,T]$, that comes from the hypothesis \eqref{eqn:LipschitzCondition}, and where $C_{3,i}$ depends only on $C_{2,i}$ and $C$. Thus, from \eqref{eqn:EstimateKoz} and the fact that the only other nonzero component of $\gamma$ is $\gamma_j(t)=t$, we get 
	\[
	\begin{split}
	\|\widetilde\gamma(s)\|_{\mathbb G}&\leq C_4 \left( t^{1/d}+\sum_{i=m_d+1}^nt^{1/\deg i}(m_{\gamma}(t))^{1-d/\deg i}\right) \\
	&\leq nC_4\max_{i\in\{n_d+1,\dots,n\}}\left\{t^{1/d},t^{1/\deg i}(m_{\gamma}(t))^{1-d/\deg i}\right\},
	\end{split}
	\]
	for every $s\in [0,t]$, where $C_4$ depends only on all the constants $C_{3,i}$, with $i\geq m_d+1$. Maximizing the previous inequality with respect to $s\in [0,t]$, gives
	\[
	m_{\gamma}(t) \leq nC_4\max_{i\in\{n_d+1,\dots,n\}}\left\{t^{1/d},t^{1/\deg i}(m_{\gamma}(t))^{1-d/\deg i}\right\}, 
	\]
	for every $t\in [0,T]$. As a consequence of this inequality we get $m_{\gamma}(t) \leq C_5 t^{1/d}$ for all $t\in [0,T]$, for a constant $C_5$ depending only on $C_4$ and on $\G$. We have thus proved
	\[
	\|\widetilde \gamma(t)\|_\G\leq C_5 t^{1/d},\qquad  \forall t\in [0,T].
	\]
	To conclude the proof, it is enough to use \eqref{eqn:LipschitzCondition} and choose $C'\coloneqq\max\{C_5,CC_5\}$. Finally, taking into account all the dependencies of the constants, the constant $C'$ only depends on $C$ and on the coefficients of $D^{\phi}_j$ in coordinates, and thus ultimately on the vector field $W\in \mathrm{Lie}(\mathbb W)$.
\end{proof}
\begin{rem}
	Condition \eqref{eqn:LipschitzCondition} in \cref{lem:EstimateOnTheNormOfGamma} can be deduced as soon as $\widetilde{\phi}\colon\widetilde{U}\subseteq \mathbb W\to\mathbb L$ is intrinsically Lipschitz at $e$, see the item (c) of \cref{prop:IntrinsicLipschitz}. We will give a general statement in this direction in the forthcoming \cref{prop:IntrinsicLipschitzIntegralCurve}, that is a restatement of the implication $(1)\Rightarrow(2) \& (3)$ of \cite[Theorem 4.2.16]{Koz15}.
	
	Notice that in \cref{lem:EstimateOnTheNormOfGamma} we only exploited the particular triangular form of $D^{\phi}_j$, see \eqref{eqn:CoordinateDPhiJ}. The same result as in \cref{lem:EstimateOnTheNormOfGamma} holds if we take the integral curves, starting from $e$, of any vector field of the same form as the right hand side of \eqref{eqn:eq1}, satisfying the  homogeneity conditions on the polynomials $P^j_i$ given in \cref{prop:CompatibleCoordinates}. On the contrary, for the forthcoming \cref{prop:IntrinsicLipschitzIntegralCurve}, we need that we are dealing precisely with the vector fields $D^{\phi}_j$ in order to use the invariance properties in \cref{lem:Invariance1DPhi}, and \cref{lem:Invariance2IntegralCurve}.
\end{rem}

	\begin{prop}\label{prop:IntrinsicLipschitzIntegralCurve}
		Let $\mathbb W$ and $\mathbb L$ be two complementary subgroups of a Carnot group $\mathbb G$, with $\mathbb W$ normal, let $\widetilde{U}$ be an open subset of $\mathbb W$, and let $\widetilde{\phi}\colon\widetilde{U}\to\mathbb L$ be an intrinsically $L$-Lipschitz function with Lipschitz constant $L>0$.
	
	Then for every integer $d\geq 1$, every $W\in{\rm Lie}(\mathbb W)\cap V_d$, and every integral curve $\widetilde{\gamma}\colon[0,T]\to\widetilde{U}$  of $D_W^{\phi}$, there exists a constant $C'>0$ depending only on $L$ and $W$ such that
	\begin{equation}\label{eqn:NormOfGammaGeneral1}
	\|\widetilde\phi(\widetilde\gamma(s))^{-1}\cdot\widetilde\gamma^{-1}(s)\cdot\widetilde\gamma(t)\cdot\widetilde\phi(\widetilde\gamma(s))\|_{\mathbb G} \leq C'|t-s|^{1/d}, \quad \mbox{for }\quad 0\leq s<t\leq T;
	\end{equation}
	\begin{equation}\label{eqn:NormOfGammaGeneral2}
	  \|\widetilde\phi(\widetilde\gamma(s))^{-1}\cdot\widetilde\phi(\widetilde\gamma(t))\|_{\mathbb G}\leq C'|t-s|^{1/d}, \quad \mbox{for }\quad 0\leq s<t\leq T.
	\end{equation}
	\end{prop}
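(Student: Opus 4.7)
My plan is to reduce this statement to \cref{lem:EstimateOnTheNormOfGamma} via the intrinsic translation machinery of \cref{lem:Invariance1DPhi} and \cref{lem:Invariance2IntegralCurve}, which is exactly what those invariance results are designed for. \cref{lem:EstimateOnTheNormOfGamma} already provides the required $1/d$-H\"older estimate in the special case in which $\widetilde{\phi}(e)=e$ and the integral curve starts at the identity, so what I need is to move a generic point $\widetilde{\gamma}(s)$ of the curve, together with its image $\widetilde{\phi}(\widetilde{\gamma}(s))$, to the identity by a suitable intrinsic translation, and to check that both the Lipschitz hypothesis \eqref{eqn:LipschitzCondition} and the property of being an integral curve of $D^{\phi}_W$ are preserved.

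First I would fix $s\in[0,T)$, set $w_s\coloneqq\widetilde{\gamma}(s)$ and $q\coloneqq\widetilde{\phi}(w_s)^{-1}\cdot w_s^{-1}$. By item (d) of \cref{prop:PropertiesOfIntrinsicTranslation} the shifted function satisfies $\widetilde{\phi}_q(e)=e$. Since $\mathbb{W}$ is normal and $\widetilde{\phi}(w_s)\in\mathbb{L}$, a direct decomposition gives $q_{\mathbb{L}}=\widetilde{\phi}(w_s)^{-1}$ and $q_{\mathbb{W}}=\widetilde{\phi}(w_s)^{-1}\cdot w_s^{-1}\cdot\widetilde{\phi}(w_s)$, hence the translated curve $\widetilde{\gamma}_q(t)\coloneqq q_{\mathbb{W}}\cdot q_{\mathbb{L}}\cdot\widetilde{\gamma}(t)\cdot q_{\mathbb{L}}^{-1}$ satisfies $\widetilde{\gamma}_q(s)=e$ after a short cancellation. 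By \cref{lem:Invariance2IntegralCurve}, $\widetilde{\gamma}_q$ is an integral curve of $D^{\phi_q}_W$ on $[0,T]$, so $\widetilde{\eta}(r)\coloneqq\widetilde{\gamma}_q(s+r)$, $r\in[0,T-s]$, is an integral curve of $D^{\phi_q}_W$ starting at the identity. Item (c) of \cref{prop:IntrinsicLipschitz} moreover transfers the intrinsic $L$-Lipschitzianity of $\widetilde{\phi}$ into the estimate $\|\widetilde{\phi}_q(b)\|_{\mathbb{G}}\leq L\|b\|_{\mathbb{G}}$ for every $b\in\widetilde{U}_q$, which is exactly the hypothesis \eqref{eqn:LipschitzCondition} with $\widetilde{\phi}_q$ in place of $\widetilde{\phi}$.

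At this point \cref{lem:EstimateOnTheNormOfGamma} applies to $\widetilde{\phi}_q$ and $\widetilde{\eta}$, producing a constant $C'=C'(L,W)$ with $\|\widetilde{\eta}(r)\|_{\mathbb{G}}\leq C'r^{1/d}$ and $\|\widetilde{\phi}_q(\widetilde{\eta}(r))\|_{\mathbb{G}}\leq C'r^{1/d}$ for every $r\in[0,T-s]$. To recover \eqref{eqn:NormOfGammaGeneral1} I would just substitute $r=t-s$ and expand using the explicit expressions of $q_{\mathbb{W}}$ and $q_{\mathbb{L}}$, obtaining
\[
\widetilde{\eta}(t-s)=\widetilde{\phi}(\widetilde{\gamma}(s))^{-1}\cdot\widetilde{\gamma}(s)^{-1}\cdot\widetilde{\gamma}(t)\cdot\widetilde{\phi}(\widetilde{\gamma}(s)).
\]
For \eqref{eqn:NormOfGammaGeneral2}, using item (c) of \cref{prop:PropertiesOfIntrinsicTranslation} together with the identity $q_{\mathbb{L}}^{-1}\cdot q_{\mathbb{W}}^{-1}\cdot\widetilde{\gamma}_q(t)\cdot q_{\mathbb{L}}=\widetilde{\gamma}(t)$, a computation yields $\widetilde{\phi}_q(\widetilde{\eta}(t-s))=q_{\mathbb{L}}\cdot\widetilde{\phi}(\widetilde{\gamma}(t))=\widetilde{\phi}(\widetilde{\gamma}(s))^{-1}\cdot\widetilde{\phi}(\widetilde{\gamma}(t))$, which is exactly the quantity to be estimated.

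The analytic heart of the argument is already contained in \cref{lem:EstimateOnTheNormOfGamma}, so I do not expect any genuine analytic difficulty. The only part that needs attention is the algebraic bookkeeping of the translation identities, in particular checking that the decomposition $q=q_{\mathbb{W}}\cdot q_{\mathbb{L}}$ together with the formulas of \cref{prop:PropertiesOfIntrinsicTranslation} really delivers a curve $\widetilde{\gamma}_q$ whose value at time $s$ is the identity and whose image under $\widetilde{\phi}_q$ at time $s$ is also the identity; once this is verified, applying \cref{lem:EstimateOnTheNormOfGamma} and translating back to $\widetilde{\gamma}$ and $\widetilde{\phi}$ is routine.
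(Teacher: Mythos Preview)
Your proposal is correct and follows essentially the same approach as the paper's proof: translate by $q=\widetilde\phi(\widetilde\gamma(s))^{-1}\cdot\widetilde\gamma(s)^{-1}$, use \cref{prop:PropertiesOfIntrinsicTranslation}(d) and \cref{prop:IntrinsicLipschitz}(c) to get $\widetilde\phi_q(e)=e$ and the Lipschitz bound, use \cref{lem:Invariance2IntegralCurve} to recognise the shifted curve as an integral curve of $D^{\phi_q}_W$ starting at $e$, and then apply \cref{lem:EstimateOnTheNormOfGamma}. One small caveat: \cref{lem:Invariance2IntegralCurve} is stated only for $\mathbb L$ horizontal, so in the general case $\mathbb W$ normal you should invoke \cref{rem:CurveInvarianceTrueAlsoWhenWNormal}, as the paper does.
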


\begin{proof}
	 Fix $s\in [0,T]$ and define $q\coloneqq\widetilde\phi(\widetilde\gamma(s))^{-1}\cdot\widetilde\gamma(s)^{-1}$. By exploiting item (d) of \cref{prop:PropertiesOfIntrinsicTranslation}, item (c) of \cref{prop:IntrinsicLipschitz}, and the fact that $\widetilde\phi$ is intrinsically $L$-Lipschitz, we get that $\widetilde\phi_q(e)=e$ and $\|\widetilde\phi_q(w)\|_{\mathbb G}\leq L\|w\|_{\mathbb G}$ for every $w\in\widetilde U_q$. We now apply \cref{lem:Invariance2IntegralCurve} to get that the curve $\widetilde \gamma_q$ defined by $\widetilde\gamma_q(t)\coloneqq\widetilde\phi(\widetilde\gamma(s))^{-1}\cdot\widetilde\gamma^{-1}(s)\cdot\widetilde\gamma(t)\cdot\widetilde\phi(\widetilde\gamma(s))$ is an integral curve of the vector field $D^{\phi_q}_W$. Notice that this curve takes values in $\widetilde U_q$ as noticed in \cref{lem:Invariance2IntegralCurve}. We stress that \cref{lem:Invariance2IntegralCurve} is stated only for $\mathbb L$ horizontal, but it also holds in case $\mathbb W$ is normal, see \cref{rem:CurveInvarianceTrueAlsoWhenWNormal}.
	
	Define $\widetilde\gamma^+_q(\cdot)\coloneqq\widetilde\gamma_q(\cdot+s)$. Since  $\widetilde\gamma^+_q(0)=e$, we are in a position to apply \cref{lem:EstimateOnTheNormOfGamma} to the function $\widetilde\phi_q$ and to the  curve $\widetilde\gamma^+_q$. Evaluating the first inequality of \eqref{eqn:NormOfGamma} at time $t-s$ we get \eqref{eqn:NormOfGammaGeneral1}. Finally, by \eqref{eqn:Translation1} - that holds in the general case in which $\mathbb W$ is normal - and by evaluating the second inequality of \eqref{eqn:NormOfGamma} at time $t-s$, we get  \eqref{eqn:NormOfGammaGeneral2}.
\end{proof}
	
	\begin{rem}[An improvement of {\cite[Proposition 6.6]{ALD19}}]\label{rem:LeDonne}
	A simple modification of the proof of \cref{prop:IntrinsicLipschitzIntegralCurve} provides a general argument for the second part of \cite[Proposition~6.6]{ALD19}, that was proved only in the setting of Carnot groups of step 2 and in case $\mathbb L$ is 1-dimensional. The generalization reads as follows. Fix two complementary subgroups $\mathbb W$ and $\mathbb L$ of a Carnot group $\mathbb G$, with  $\mathbb W$ normal, and an intrinsically Lipschitz function $\widetilde\phi\colon\widetilde U\subseteq \mathbb W\to\mathbb L$, where $\widetilde U$ is open. Consider a solution $\widetilde\gamma:I\to\widetilde U$ of 
	\begin{equation}\label{eq:controlli}
	\widetilde\gamma'(t)=\sum_{j=k+1}^m a_j(t)(D^{\phi}_{X_j})_{|_{\widetilde\gamma(t)}},
	\end{equation}
	for some controls $a_j(t)$ of class $L^{\infty}(I)$, where $\{X_{k+1},\dots,X_m\}$ is a basis of $\mathrm{Lie}(\mathbb W)\cap V_1$. Then the curve $\widetilde\phi\circ\widetilde\gamma$ is Lipschitz. 
	
	To prove this last statement one first proves the analogous of \cref{lem:EstimateOnTheNormOfGamma} for curves satisfying \eqref{eq:controlli}. The estimates are done in the same way but the constant $C'$ also depends on a uniform bound of the controls $a_j(\cdot)$ in $L^{\infty}(I)$. In order to conclude, one can run the same argument of \cref{prop:IntrinsicLipschitzIntegralCurve}, taking into account that the invariance property shown in \cref{lem:Invariance2IntegralCurve} also holds for curves satisfying \eqref{eq:controlli} with exactly the same proof. For the sketch of the proof of \cref{lem:Invariance2IntegralCurve} in the general case when $\mathbb W$ is normal we refer the reader to \cref{rem:CurveInvarianceTrueAlsoWhenWNormal}.
	\end{rem}
	
	The forthcoming \cref{prop:DerivativeOfIntegralCurves} is, to our knowledge, new. It tells us what are the metric properties of the integral curves $\widetilde\gamma$ of $D^{\phi}$ whenever $\widetilde\phi\in{\rm ID}(\widetilde U,\mathbb W;\mathbb L)$. The counterpart of \cref{prop:DerivativeOfIntegralCurves} in the setting of the Heisenberg group $\mathbb H^n$ is already known: for the case in which $\mathbb L$ is  one-dimensional, the proof follows from the argument of \cite[Theorem~4.95, $(3)\Rightarrow(2)$]{SC16}, while for the case in which $\mathbb L$ is $k$-dimensional, the proof is in \cite[Proposition~4.6]{Cor19}. A weaker version of this proposition, which also requires $\widetilde\phi\circ\widetilde\gamma$ to be $C^1$, has appeared in \cite[Proposition~3.7]{ASCV06} in the Heisenberg group, for $\mathbb L$ one-dimensional, and in \cite[Proposition~5.6]{DiDonato18} in Carnot groups of step 2, for $\mathbb L$ one-dimensional.

\begin{prop}\label{prop:DerivativeOfIntegralCurves}
	Let $\mathbb W$ and $\mathbb L$ be two complementary subgroups in a Carnot group $\mathbb G$, with $\mathbb L$ horizontal and k-dimensional, let $\widetilde U$ be an open set in $\mathbb W$ and let $\widetilde{\phi}\colon\widetilde{U}\to\mathbb L$ be an intrinsically differentiable function at $w_0\in\widetilde{U}$. Then the following facts hold.
	\begin{itemize}
	\item[(i)] For every $W\in {\rm Lie}(\mathbb W)\cap V_1$ and every integral curve $\widetilde{\gamma}\colon[0,T]\to\widetilde{U}$ of $D_W^{\phi}$ starting from $w_0$, the composition $\widetilde\phi\circ\widetilde\gamma$ is differentiable at 0 and 
	\begin{equation}\label{eqn:IdBehaviourOf1Curves}
	\frac{\de}{\de t}_{|_{t=0}}(\widetilde\phi\circ\widetilde\gamma)(t) = \de^{\phi}\!\phi(w_0)(\exp W).
	\end{equation}
	
	\item[(ii)] For every $d>1$, $W\in{\rm Lie}(\mathbb W)\cap V_d$ and every integral curve $\widetilde \gamma\colon[0,T]\to\widetilde U$ of $D^\phi_W$ starting from $w_0$, the following holds:
	\begin{equation}\label{eqn:IdBehaviourOfkCurves}
	\lim_{t\to 0}\frac{\|\widetilde\phi(w_0)^{-1}\cdot\widetilde\phi(\widetilde\gamma(t))\|_{\mathbb G}}{t^{1/d}}=0.
	\end{equation}
\end{itemize}
\end{prop}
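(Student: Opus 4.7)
My plan is to reduce both (i) and (ii) to the case in which the base point is the identity. Set $q\coloneqq\widetilde\phi(w_0)^{-1}\cdot w_0^{-1}$, so that $q_{\mathbb L}=\widetilde\phi(w_0)^{-1}$ and $(q^{-1})_{\mathbb L}=\widetilde\phi(w_0)$. By item (d) of \cref{prop:PropertiesOfIntrinsicTranslation} one has $\widetilde\phi_q(e)=e$, and by item (ii) of \cref{rem:InvarianceOfIdByTranslations} the function $\widetilde\phi_q$ is intrinsically differentiable at $e$ with the \emph{same} differential $\de^\phi\!\phi_{w_0}$. By \cref{lem:Invariance2IntegralCurve}, the translated curve $\widetilde\gamma_q$ is an integral curve of $D^{\phi_q}_W$ starting from $e$, and \eqref{eqn:Translation1} gives $\widetilde\phi(w_0)^{-1}\cdot\widetilde\phi(\widetilde\gamma(t))=\widetilde\phi_q(\widetilde\gamma_q(t))$. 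Since $\mathbb L$ is horizontal, the group law on $\mathbb L$ is vector addition in exponential coordinates, so this also reduces the differentiability of $\widetilde\phi\circ\widetilde\gamma$ at $0$ to that of $\widetilde\phi_q\circ\widetilde\gamma_q$ at $0$, with the same derivative.

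Next I would establish a sharp norm bound on the curve. By \cref{p2.5} the intrinsic differential satisfies $\|\de^\phi\!\phi_{w_0}(b)\|_{\mathbb G}\leq C_0\|\pi_{\mathbb V_1}b\|_{\mathbb G}\leq C_0\|b\|_{\mathbb G}$; combining this with the ID of $\widetilde\phi_q$ at $e$ and $\widetilde\phi_q(e)=e$, one obtains $\|\widetilde\phi_q(b)\|_{\mathbb G}\leq(C_0+1)\|b\|_{\mathbb G}$ in some neighborhood of $e$. This is the hypothesis \eqref{eqn:LipschitzCondition} of \cref{lem:EstimateOnTheNormOfGamma} applied to $\widetilde\phi_q$, whose conclusion yields $\|\widetilde\gamma_q(t)\|_{\mathbb G}\leq C'|t|^{1/d}$ for $t$ sufficiently small, where $d=\deg W$.

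Now I would compute the horizontal part of $\widetilde\gamma_q(t)$. Fixing an adapted basis $(X_1,\dots,X_n)$ with $W=X_j$ and $\deg j=d$, \cref{prop:CompatibleCoordinates} shows that $D^{\phi_q}_{X_j}$ has vanishing components along $\partial_{x_i}$ whenever $\deg i<d$, while the only contribution in layer $V_d$ is $\partial_{x_j}$. Starting from $e$, this forces in coordinates $\pi_{\mathbb V_1}\widetilde\gamma_q(t)=\exp(tW)$ if $d=1$, and $\pi_{\mathbb V_1}\widetilde\gamma_q(t)=e$ if $d>1$. Since $\de^\phi\!\phi_{w_0}$ only depends on the horizontal component (\cref{p2.5}) and is $1$-homogeneous, this gives $\de^\phi\!\phi_{w_0}(\widetilde\gamma_q(t))=t\,\de^\phi\!\phi_{w_0}(\exp W)$ in case (i), and $\de^\phi\!\phi_{w_0}(\widetilde\gamma_q(t))=e$ in case (ii).

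To conclude, I rewrite the ID of $\widetilde\phi_q$ at $e$ in coordinates (again using that $\mathbb L$ is horizontal) as $\widetilde\phi_q(b)=\de^\phi\!\phi_{w_0}(b)+o(\|b\|_{\mathbb G})$ as $b\to e$. Substituting $b=\widetilde\gamma_q(t)$ and using the step-2 bound $\|\widetilde\gamma_q(t)\|_{\mathbb G}\leq C'|t|^{1/d}$, dividing by $t$ in case (i) yields \eqref{eqn:IdBehaviourOf1Curves}, and dividing by $t^{1/d}$ in case (ii) yields \eqref{eqn:IdBehaviourOfkCurves}. The main technical point is the horizontal-structure computation of step three: the triangular form of the projected vector fields is exactly what turns the qualitative, merely pointwise, intrinsic differentiability into the sharp quantitative decay $o(t^{1/d})$ along integral curves of vector fields of degree $d$.
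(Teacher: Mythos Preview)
Your proposal is correct and follows essentially the same approach as the paper: translate by $q=\widetilde\phi(w_0)^{-1}w_0^{-1}$ to reduce to the case $w_0=e$, $\widetilde\phi_q(e)=e$ via \cref{prop:PropertiesOfIntrinsicTranslation} and \cref{rem:InvarianceOfIdByTranslations}, use the ID estimate together with \cref{p2.5} to verify \eqref{eqn:LipschitzCondition}, apply \cref{lem:EstimateOnTheNormOfGamma} to bound $\|\widetilde\gamma_q(t)\|_{\mathbb G}$, read off $\pi_{\mathbb V_1}\widetilde\gamma_q(t)$ from the triangular form in \cref{prop:CompatibleCoordinates}, and conclude from the definition of intrinsic differentiability. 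The only cosmetic remark is that your phrase ``step-2 bound'' could be misread as a restriction to step-2 groups; the bound from \cref{lem:EstimateOnTheNormOfGamma} holds in arbitrary step, and you are simply referring back to the second paragraph of your outline.
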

\begin{proof}
	Notice that, since $\mathbb L$ is horizontal, the homogeneous norm $\|\cdot\|_{\mathbb G}$ restricted to $\mathbb L$ is equivalent to the Euclidean norm of the exponential coordinates. First of all, by item (i) of \cref{rem:InvarianceOfIdByTranslations}, $ \widetilde\phi$ is continuous at $w_0$.  If we define $q\coloneqq{\widetilde\phi}(w_0)^{-1}\cdot w_0^{-1}$ we get, from item (d) of \cref{prop:PropertiesOfIntrinsicTranslation}, that $\widetilde{\phi}_q(e)=e$ and, from item (ii) of \cref{rem:InvarianceOfIdByTranslations}, that $\widetilde{\phi}_q$ is intrinsically differentiable at $e$ with $\de^{\phi_q}\!\phi_q (e)=\de^{\phi}\!\phi (w_0)$. By  \eqref{eqn:IdInCoordinates}, \eqref{linea2.0} and the triangle inequality, for every $\widetilde V\Subset \widetilde {U}_q$ containing $e$, there exists a constant $C$ such that $|\phi_q(w)|\leq C\|w\|_{\mathbb G}$ for every $w\in \widetilde V$. 
	
	(i) Fix $W\in {\rm Lie}(\mathbb W)\cap V_1$ and $\widetilde\gamma$ as in the assumption. By the first part of \cref{lem:Invariance2IntegralCurve},  there exists $\widetilde{\gamma}_q\colon[0,T]\to \widetilde{U}_q\subseteq \mathbb W$ such that $\widetilde{\gamma}_q$ is an integral curve of $D^{\phi_q}_W$ starting from $e$. Then, since $|\phi_q(w)|\leq C\|w\|_{\mathbb G}$ for every $w\in\widetilde V$, and since for sufficiently small times $t>0$ it holds $\widetilde\gamma_q([0,t])\subseteq \widetilde V$, we are in the setting of \cref{lem:EstimateOnTheNormOfGamma}. Thus, from the first inequality in \eqref{eqn:NormOfGamma}, we can write
	\[
	\limsup_{t\to 0}\frac{\|\widetilde\gamma_q(t)\|_{\mathbb G}}{t} <+\infty.
	\] 
	 Fix an adapted basis $(X_1,\dots, X_n)$ of the Lie algebra $\mathfrak g$ such that $\mathbb L=\exp(\mathrm{span}\{X_1,\dots,X_k\})$ and $\mathbb W=\exp(\mathrm{span}\{X_{k+1},\dots,X_n\})$. We use the notation of \cref{def:coordinateconletilde} and \cref{def:NablaPhiPhi}. By using the previous inequality we get that there is a constant $\bar{C}>0$ such that, for every small enough $t\in [0,T]$, 
	\begin{equation}\label{eqn:Final1}
	\frac{|\phi_q(\gamma_q(t))-\phi_q(\gamma_q(0))-t\nabla^{\phi_q}\phi_q(e)(W)|}{t} \leq \bar{C}\frac{|\phi_q(\gamma_q(t))-\phi_q(\gamma_q(0))-t\nabla^{\phi_q}\phi_q(e)(W)|}{\|\widetilde\gamma_q(t)\|_{\mathbb G}}.
	\end{equation}
	Notice that $\phi_q(\gamma_q(0))=\phi_q(0)=0$. Moreover, using the particular form of the projected vector fields in \eqref{eqn:CoordinateDPhiJ} and the fact that $W\in V_1$, it is easy to see that $\pi_{\mathbb V_1}(\widetilde{\gamma}_q(t))=\exp(tW)$ for all $t\in[0,T]$. By exploiting the fact that the intrinsic differential is linear on the horizontal components (see \cref{p2.5}), we get that for all $t\in [0,T]$
	\[
	t \nabla^{\phi_q}\phi_q(e)(W)= \nabla^{\phi_q}\phi_q(e)(tW) =\exp^{-1}\left(\de^{\phi_q}\!\phi_q(e)(\exp (t W))\right).
	\]
	Let us conclude the proof. The intrinsic differentiability of $\widetilde\phi_q$ at $e$ provides \eqref{eqn:IdInCoordinates}. Thus, by exploiting $\pi_{\mathbb V_1}(\widetilde\gamma_q(t))=\exp(t W)$ for all $t\in [0,T]$, the previous equality, and the fact that the intrinsic differential depends only on the projection on $\mathbb V_1$ (see \cref{p2.5}), the right hand side of \eqref{eqn:Final1} goes to zero as $t\to 0$. Thus, also the left hand side goes to zero as $t\to 0$ and this means that 
	\[
	\frac{\de}{\de t}_{|_{t=0}}(\phi_q\circ\gamma_q)(t)=\nabla^{\phi_q}\phi_q(e)(W).
	\]
	By using \eqref{eqn:Translation}, and since $\nabla^{\phi_q}\phi_q(e)=\nabla^{\phi}\phi(w_0)$, we get \eqref{eqn:IdBehaviourOf1Curves}. This concludes the proof of (i).
	
	(ii) Assume $W\in{\rm Lie}(\mathbb W)\cap V_d$ with $d>1$, and $\widetilde\gamma$ as in the assumption. We proceed with the same argument as in (i). Then, following the lines of the proof in item (i) and by the first inequality in \eqref{eqn:NormOfGamma}, we obtain that there exists $\bar C>0$ such that for sufficiently small $t\in[0,T]$
	\begin{equation}\label{eq:final2}
	\frac{|\phi_q(\gamma_q(t))-\phi_q(\gamma_q(0))|}{t^{1/d}} \leq \bar C\frac{|\phi_q(\gamma_q(t))-\phi_q(\gamma_q(0))|}{\|\widetilde\gamma_q(t)\|_{\mathbb G}}.
	\end{equation}
	Since $W\in V_d$ with $d>1$ and $\phi_q(0)=0$, the projection of every integral curve of $D^{\phi_q}_W$, starting from 0, on the horizontal bundle is zero. This follows by exploiting the particular form of $D^{\phi_q}$ in coordinates, see \eqref{eqn:CoordinateDPhiJ}. Then the intrinsic differentiability of $\widetilde \phi_q$ at $e$ jointly with \eqref{eq:final2}, the fact that the projection of $\widetilde\gamma_q$ on $\mathbb V_1$ is zero, and that the intrinsic gradient is linear on $V_1$ (see \cref{p2.5}) yields, with the same reasoning as before,
	\[
	\lim_{t\to0}\frac{|\phi_q(\gamma_q(t))-\phi_q(\gamma_q(0))|}{t^{1/d}}=0.
	\]
	Then, by using \eqref{eqn:Translation} we conclude \eqref{eqn:IdBehaviourOfkCurves} and thus the proof. 
\end{proof} 
\begin{rem}\label{rem:EaseOfNotation}
	For the ease of notation, we considered in \cref{prop:DerivativeOfIntegralCurves} only intervals $[0,T]$, and thus we got conclusions only on the right limits and the right derivatives. The same proof provides the same conclusion on the full limit, or the full derivative, whenever the interval is centered at the origin. 
\end{rem}
Now we want to deduce metric properties of $\widetilde\phi$ when we know that it is {\rm UID}. The following proposition shows that any uniformly intrinsically differentiable function $\widetilde\phi$ is $\frac 1s$-little H\"older continuous on any Carnot group of step $s$, when read in exponential coordinates. It is a generalization of \cite[Proposition 4.4]{ASCV06}.
\begin{prop}\label{prop3.6cont}
	Let $\mathbb W$ and $\mathbb L$ be two complementary subgroups of a Carnot group $\mathbb G$ with $\LL$ horizontal and k-dimensional, and let $\widetilde U\subseteq \mathbb W$ be an open set. If $\widetilde \phi \in {\rm UID}(\widetilde U,\mathbb W;\LL)$, then such a function read in exponential coordinates is in $h^{1/s}_{\rm loc}(U; \R ^k)$, that is
	$\phi \in C(U;\R ^k)$ and for all $U ' \Subset U$ one has
	\begin{equation}\label{big3.3.11nuova}
	\lim_{r\to 0}\left( \sup_{} \left\{ \frac{|\phi (b)-\phi (a)|}{|b-a|^{1/s}}: a,b \in U', 0<|b-a|<r \right\}\right) =0.
	\end{equation}
\end{prop}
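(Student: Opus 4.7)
The plan is to combine the coordinate form of UID from \cref{rem:IdAndUidInCoordinates}(b) with two quantitative estimates, one for the linear term $\nabla^{\phi}\phi_{a_0}(a^{-1}\cdot b)$ and one for the conjugation norm $\|\widetilde{\phi}(a)^{-1}\cdot a^{-1}\cdot b\cdot \widetilde{\phi}(a)\|_{\mathbb G}$, both expressed in terms of the Euclidean distance $|b-a|$. Fix $U'\Subset U$. For each $a_0\in\overline{U'}$ and each $\eta>0$, UID produces $\rho=\rho(a_0,\eta)>0$ so that
\[
|\phi(b)-\phi(a)|\le |\nabla^{\phi}\phi_{a_0}(a^{-1}\cdot b)|+\eta\,\|\widetilde{\phi}(a)^{-1}\cdot a^{-1}\cdot b\cdot \widetilde{\phi}(a)\|_{\mathbb G}
\]
for all $a,b\in B(a_0,\rho)\cap U$ with $a\neq b$. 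The job is to bound each piece by $o(|b-a|^{1/s})$ uniformly as $|b-a|\to 0$.

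For the linear term, \cref{p2.5} gives that $\nabla^{\phi}\phi_{a_0}$ only depends on the first-layer components of its argument, which in exponential coordinates coincide with the first-layer Euclidean differences $b^{(1)}-a^{(1)}$ (the group law is additive in $V_1$). Hence $|\nabla^{\phi}\phi_{a_0}(a^{-1}\cdot b)|\le |\nabla^{\phi}\phi_{a_0}|\cdot |b-a|$, and by the continuity asserted in \cref{prop:ContinuityOfDifferentialUid}(b) the factor $|\nabla^{\phi}\phi_{a_0}|$ is bounded uniformly for $a_0\in\overline{U'}$, so dividing by $|b-a|^{1/s}$ gives $O(|b-a|^{1-1/s})=o(1)$. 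For the conjugation term I exploit the equivalence of the chosen homogeneous norm with the anisotropic one $\|x\|_{\mathbb G}=\sum_i |x^i|_{m_i}^{1/i}$ and invoke \cref{lemma333FS}: in coordinates
\[
\widetilde{\phi}(a)^{-1}\cdot (a^{-1}\cdot b)\cdot \widetilde{\phi}(a)=(a^{-1}\cdot b)+\mathcal{P}\bigl(\widetilde{\phi}(a),\,a^{-1}\cdot b\bigr).
\]
Since $\widetilde{\phi}$ is continuous on $\overline{U'}$, the entries $\widetilde{\phi}(a)$ stay in a bounded set, and each layer component of $a^{-1}\cdot b$ is a smooth function of $(a,b)$ vanishing on the diagonal, hence of size $\le C|b-a|$; the same bound propagates to the polynomial $\mathcal{P}^i$ by its estimate. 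In the anisotropic norm, for $|b-a|\le 1$, this yields
\[
\|\widetilde{\phi}(a)^{-1}\cdot(a^{-1}\cdot b)\cdot\widetilde{\phi}(a)\|_{\mathbb G}\le \sum_{i=1}^{s}(C|b-a|)^{1/i}\le C'|b-a|^{1/s},
\]
where $1/s$ dominates all exponents $1/i$ when $|b-a|\le 1$.

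Finally I uniformize over $U'$. Cover $\overline{U'}$ by finitely many balls $B(a_{0,j},\rho_j/2)$ with $\rho_j=\rho(a_{0,j},\eta)$, set $\rho_\ast=\min_j\rho_j/2$, and use the standard comparison $d\le C|\cdot|_{\rm Eucl}^{1/s}$ on bounded sets to find $r>0$ such that $|b-a|_{\rm Eucl}<r$ forces $d(a,b)<\rho_\ast$; hence any such $a,b\in U'$ fall inside a common $B(a_{0,j},\rho_j)$ where the UID estimate applies. Combining the two paragraphs above then gives
\[
\frac{|\phi(b)-\phi(a)|}{|b-a|^{1/s}}\le C|b-a|^{1-1/s}+C'\eta,
\]
and letting first $|b-a|\to 0$ and then $\eta\to 0$ delivers \eqref{big3.3.11nuova}. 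The step that carries the real content is the conjugation estimate: a naive bound would only say that the conjugation norm is bounded on a bounded set, which is too weak to make $\eta(r)\cdot\|\cdot\|_{\mathbb G}$ negligible against $|b-a|^{1/s}$; it is precisely the layer-by-layer control in \cref{lemma333FS}, coupled with the anisotropic structure of $\|\cdot\|_{\mathbb G}$, that produces the extra $|b-a|^{1/s}$ factor needed to conclude.
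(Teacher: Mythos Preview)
Your proof is correct and follows essentially the same route as the paper: both arguments split $|\phi(b)-\phi(a)|$ via the UID inequality \eqref{eqn:UidInCoordinates}, bound the linear piece using \cref{p2.5} to get $O(|b-a|^{1-1/s})$, and bound the conjugation norm by $C'|b-a|^{1/s}$ via \cref{lemma333FS} together with the anisotropic norm. The only cosmetic difference is in the uniformization over $U'$: you use a finite compactness cover to obtain a common $\rho_\ast$ for each $\eta$, whereas the paper argues by contradiction, extracting subsequences $a_h,b_h\to a_0\in\overline{U'}$ and invoking $\rho_{a_0}(r)\to 0$ directly.
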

\begin{proof} 
	We fix an adapted basis $(X_1,\dots,X_n)$ such that $\mathbb L=\exp({\rm span}\{X_1,\dots,X_k\})$ and $\mathbb W=\exp({\rm span}\{X_{k+1},\dots,X_n\})$.  We use the convention in \cref{def:coordinateconletilde} and \cref{rem:IdAndUidInCoordinates}, taking into account the little abuse of notation as in \cref{rem:abuse}. In these coordinates, up to  bi-Lipschitz equivalence, we can suppose to work with the anisotropic norm. If $a\in U$, we denote by $a^1,\dots, a^s$ the vector of components of $a$ in each layer, so $a^j\in \R^{m_j}$ for every $j=1,\dots, s$ and $a=(a^1,\dots, a^s)$. For any $a_0\in U$ and $r>0$ we set
	\[
	\rho_{a_0}( r)\coloneqq\sup \left\{ \frac{ | \phi (b)-  \phi (a)-\nabla^{\phi}\phi_{ a_0}(a^{-1} b)|}{\|\widetilde \phi(a)^{-1} a^{-1} b\, \widetilde \phi(a)  \| } \, :\, a,b \in B(a_0,r)\cap U, a\neq b\right\}. 
	\]
	Assuming $\widetilde \phi\in {\rm UID}(\widetilde U,\mathbb W;\LL)$, we have by \eqref{eqn:UidInCoordinates}
	\begin{equation}\label{limgoestoz}
	\lim_{ r \to 0}\rho_{a_0}( r)=0,
	\end{equation}
	for every $a_0\in U$.	Fix $U'\Subset U$ with $a_0\in U'$. From \cref{p2.5}, since the intrinsically linear function $\de^{\phi}\!\phi_{a_0}$ depends only on the variables on the first layer of $\W$, and it is homogeneous, we can find a constant $C>0$ depending on $a_0$ for which
	\[
	|\nabla^{\phi}\phi_{a_0}(a^{-1} b)| \leq C |b^1-a^1|, \qquad \forall a,b\in \mathbb R^n,
	\]
	 and, consequently, we have
	\begin{equation}\label{equa1}
	\begin{aligned}
	\frac{|\nabla^{\phi}\phi_{a_0} (a^{-1} b)|}{ |b-a|^{1/s } } \leq C r^{1-1/s },
	\end{aligned}
	\end{equation}
	for all $a, b \in \mathbb R^n$ with $0<|a-b|<r<1$.  By a consequence of \cref{lemma333FS}, see \cite[Corollary 3.13]{FS16}, we have with a little abuse of notation 
	\[
	\widetilde \phi(a)^{-1}a^{-1}b\,\widetilde \phi(a)=  b-a +\mathcal P(\widetilde \phi(a) , a^{-1}b),
	\]
	for every $a,b \in \mathbb R^n$, where  $$\mathcal{P}(\widetilde \phi(a) , a^{-1}b)\coloneqq (\mathcal{P} ^1(\widetilde \phi(a) , a^{-1}b), \dots, \mathcal{P} ^s (\widetilde \phi(a) , a^{-1}b)),$$  with $\mathcal{P}^1 (\widetilde \phi(a) , a^{-1}b) =0$. Moreover, for each $i=2,\dots, s$, there is $C_i>0$ depending only $U'$ and $\widetilde\phi$ such that
	\begin{equation*}
	|\mathcal{P}^i(\widetilde \phi(a) , a^{-1}b) | \leq C_i \left( |b^1-a^1|+\dots + |b^{i-1}-a^{i-1}| \right), 
	\end{equation*}
	for all $a, b \in U'$. Hence there exists $C'>0$ depending only on $C_i$ and on the group $\mathbb G$ such that \[
	\frac{ \|\widetilde \phi(a)^{-1}a^{-1}b\, \widetilde \phi(a)\|}{ |b-a|^{1/s }} \leq C',\qquad \forall a,b\in U' \text{ with } 0<|a-b|<1.
	\]  
	Finally, by the last inequality together with \eqref{equa1}, we get
	\begin{equation}\label{eq:finaleconrho}
	\begin{aligned}
	\frac{| \phi (b)-\phi (a) |}{ |b-a |^{1/s } }&\leq  \frac{| \phi (b)- \phi (a)-\nabla^{\phi}\phi_{a_0} (a^{-1} b) |}{\|\widetilde \phi(a)^{-1}a^{-1}b\, \widetilde\phi(a)\| } \,   \frac{ \|\widetilde \phi(a)^{-1}a^{-1}b\, \widetilde \phi(a)\|}{ |b-a|^{1/s } }  + \frac{| \nabla^{\phi}\phi_{a_0} (a^{-1} b)|}{ |b-a|^{1/s } } \\
	& \leq C'\rho_{a_0}(r) + C r^{1-1/s },
	\end{aligned}
	\end{equation}
	for all $a_0\in U$ and all $a, b \in U'\cap B(a_0,r)$ with $0<|a-b|<r<1$. We stress that, ultimately, $C'$ depends only $U'$ and $\widetilde\phi$, while $C$ depends only on $a_0$. 
	
	We conclude the proof by contradiction. Assume we can find $U'\Subset U$, two sequences $(a_h)$ and $(b_h)$ in $U'$, and an infinitesimal sequence $(r_h)$ of positive numbers such that $0<|a_h-b_h|<r_h$ and 
	\[
	\frac{|\phi(b_h)-\phi(a_h)|}{|b_h-a_h|^{1/s}}>M,
	\] 
	for some $M>0$. Since $\overline{U'}$ is compact, we can assume that, up to passing to subsequences, both $(a_h)$ and $(b_h)$ converge to some $a_0\in \overline{U'}$. By \eqref{eq:finaleconrho} we would find some $M'>0$ such that that
	\[
	\rho_{a_0}(r_h)>M',
	\] 
	for arbitrarily large $h\in\mathbb N$, a contradiction to \eqref{limgoestoz}.
\end{proof}

The previous proposition tells us what is the regularity of $\phi$ in \textbf{all} the exponential coordinates, in case it is {\rm UID}. Actually, we can refine \cref{prop3.6cont} by improving the property \eqref{eqn:IdBehaviourOfkCurves}. We stress that the forthcoming proposition would also follow from the implication (1)$\Rightarrow$(2) of \cite[Theorem~4.3.1]{Koz15} but, up to our knowledge, the proof presented here is new. Indeed, in (1)$\Rightarrow$(2) of \cite[Theorem~4.3.1]{Koz15} it is proved that if the intrinsic graph of $\phi$ is a co-horizontal $C^1_{\rm H}$-surface with complemented tangents, then \eqref{eqn:UidImpliesH1k} holds. Then the following \cref{prop:UidImpliesH1k} would be a consequence of that implication and \cref{prop:UidC1H}. Instead, we here give a direct proof within our context. In conclusion we obtain, in a different way, the implication (1)$\Rightarrow$(2) of \cite[Theorem~4.3.1]{Koz15} by making use of \cref{prop:UidImpliesH1k} and \cref{prop:UidC1H}. 
\begin{prop}\label{prop:UidImpliesH1k}
	Let $\mathbb W$ and $\mathbb L$ be two complementary subgroups of a Carnot group $\mathbb G$, with $\mathbb L$ horizontal and $k$-dimensional. Let $\widetilde U\subseteq \mathbb W$ be open and $\widetilde{\phi} \in {\rm UID}(\widetilde U, \mathbb W;\mathbb L)$. Fix an adapted basis $(X_1,\dots,X_n)$ in which  $\mathbb W=\exp({\rm span}\{X_{k+1},\dots,X_n\})$, $\mathbb L=\exp({\rm span}\{X_{1},\dots,X_k\})$ and let $\widetilde V\Subset \widetilde U$. Then
	\begin{equation}\label{eqn:UidImpliesH1k}
	\lim_{\varrho\to0}\!\left(\!\sup\left\{\frac{|\phi(\gamma(t))-\phi(\gamma(s))|}{|t-s|^{1/{\deg j}}}: j=m+1,\dots,n,\,\gamma'=D^{\phi}_{X_j}\circ\gamma, \gamma\subseteq V, 0<|t-s|\leq\varrho \right\}\right)\!=0.
	\end{equation}
\end{prop}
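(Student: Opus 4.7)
The strategy I would follow is to reduce the uniform H\"older bound to the pointwise UID estimate via the translation invariance of $D^{\phi}$ encoded in \cref{lem:Invariance1DPhi,lem:Invariance2IntegralCurve}, and then to exploit the fact that $j\ge m+1$ forces the integral curves of $D^{\phi}_{X_j}$ to leave the first-layer coordinates unchanged, so that the ``linear part'' of the UID Taylor expansion vanishes identically.

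Fix $\varepsilon>0$. By \cref{prop:ContinuityOfDifferentialUid}(a), $\widetilde\phi$ is intrinsically $L$-Lipschitz on a relatively compact neighborhood $\widetilde V''\Supset \widetilde V$ with uniform $L$. By (b) of \cref{rem:IdAndUidInCoordinates}, for each $a_0\in\overline{\widetilde V}$ there exists $r(a_0)>0$ such that
\[
|\phi(b)-\phi(a)-\nabla^{\phi}\phi_{a_0}(a^{-1}\cdot b)|\le\varepsilon\,\|\widetilde\phi(a)^{-1}a^{-1}b\,\widetilde\phi(a)\|_{\mathbb G}
\]
for every $a,b\in B(a_0,r(a_0))\cap U$ with $a\ne b$. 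By compactness of $\overline{\widetilde V}$ I would extract a finite subcover $\{B(a_i,r_i/2)\}_{i=1}^N$, so that a uniform radius $r_0>0$ is available at every point of $\widetilde V$.

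Now fix $j\in\{m+1,\dots,n\}$ (so $d\coloneqq\deg j\ge 2$), an integral curve $\widetilde\gamma\subseteq\widetilde V$ of $D^{\phi}_{X_j}$, and times $s,t$ with $0<|t-s|\le\varrho$. Set $a\coloneqq\widetilde\gamma(s)$, $b\coloneqq\widetilde\gamma(t)$, reparametrize $\bar{\widetilde\gamma}(\tau)\coloneqq\widetilde\gamma(s+\tau)$, and let $q\coloneqq\widetilde\phi(a)^{-1}\cdot a^{-1}$. By \cref{prop:PropertiesOfIntrinsicTranslation}(d), $\widetilde\phi_q(e)=e$, and by \cref{prop:IntrinsicLipschitz}(c), $\|\widetilde\phi_q(w)\|_{\mathbb G}\le L\|w\|_{\mathbb G}$ on $\widetilde U_q$, uniformly in $a\in\widetilde V$. \cref{lem:Invariance2IntegralCurve} ensures that $\bar{\widetilde\gamma}_q$ starts at $e$ and solves $\bar{\widetilde\gamma}_q'=D^{\phi_q}_{X_j}\circ\bar{\widetilde\gamma}_q$, while \cref{lem:EstimateOnTheNormOfGamma} yields $\|\bar{\widetilde\gamma}_q(\tau)\|_{\mathbb G}\le C'|\tau|^{1/d}$ for $|\tau|$ small, with $C'=C'(L,X_j)$ independent of $a\in\widetilde V$. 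Unwinding \eqref{eqn:HowGammaChanges} provides the key identity
\[
\widetilde\phi(a)^{-1}a^{-1}b\,\widetilde\phi(a)=\bar{\widetilde\gamma}_q(t-s),
\]
so $\|\widetilde\phi(a)^{-1}a^{-1}b\,\widetilde\phi(a)\|_{\mathbb G}\le C'|t-s|^{1/d}$.

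The decisive observation is that, since $d\ge 2$, formula \eqref{eqn:CoordinateDPhiJ} shows that $D^{\phi}_{X_j}$ acts trivially on all first-layer coordinates, hence the first-layer components of $\widetilde\gamma$ are constant in time; therefore $\pi_{\mathbb V_1}(a^{-1}\cdot b)=e$, and \cref{p2.5} forces $\nabla^{\phi}\phi_{a_i}(a^{-1}\cdot b)=0$ for every $a_i$ in the finite cover. Choosing $\varrho>0$ small enough (possible by continuity of $\widetilde\gamma$ and the uniform bound on $D^{\phi}_{X_j}$ on $\widetilde V''$) so that $b\in B(a_i,r_i)\cap U$ whenever $a\in B(a_i,r_i/2)\cap\widetilde V$, the UID estimate applied at $a_i$ becomes
\[
|\phi(\widetilde\gamma(t))-\phi(\widetilde\gamma(s))|\le\varepsilon\,\|\widetilde\phi(a)^{-1}a^{-1}b\,\widetilde\phi(a)\|_{\mathbb G}\le\varepsilon C'|t-s|^{1/d},
\]
uniformly in $j$, $\widetilde\gamma$, $s$ and $t$. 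This bounds the supremum in \eqref{eqn:UidImpliesH1k} by $\varepsilon C'$, and letting $\varepsilon\to 0$ closes the argument. The main obstacle is the bookkeeping required to ensure that the constants $L$, $C'$ and the radius $\varrho$ are genuinely uniform in the basepoint and in the integral curve; this is precisely what the translation-invariance package \cref{lem:Invariance1DPhi,lem:Invariance2IntegralCurve}, combined with the uniform growth estimate \cref{lem:EstimateOnTheNormOfGamma}, is designed to provide.
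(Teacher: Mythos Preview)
Your proposal is correct and follows essentially the same approach as the paper: both use the intrinsic Lipschitz property (via \cref{prop:ContinuityOfDifferentialUid}) together with \cref{lem:EstimateOnTheNormOfGamma}/\cref{prop:IntrinsicLipschitzIntegralCurve} to bound $\|\widetilde\phi(a)^{-1}a^{-1}b\,\widetilde\phi(a)\|_{\mathbb G}\le C'|t-s|^{1/d}$, and both exploit that for $j\ge m+1$ the first-layer coordinates of $\widetilde\gamma$ are constant so the intrinsic-linear term vanishes. The only cosmetic difference is in how uniformity is extracted: the paper argues by contradiction (a sequence violating \eqref{eqn:UidImpliesH1k} subconverges to some $\widetilde v\in\overline{\widetilde V}$, and UID at $\widetilde v$ yields the contradiction), whereas you precompile a finite cover of $\overline{\widetilde V}$ with uniform UID radii and conclude directly---both are standard and equivalent.
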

\begin{proof}
	We use the convention in \cref{def:coordinateconletilde} and \cref{rem:IdAndUidInCoordinates}, taking into account the little abuse of notation as in \cref{rem:abuse}. By item (a) of \cref{prop:ContinuityOfDifferentialUid} we have that $\widetilde\phi$ is intrinsically Lipschitz on $\widetilde V\Subset \widetilde U$. We denote by $C$ the constant for which $\widetilde\phi$ is intrinsically $C$-Lipschitz in $\widetilde V$. 
	
	Fix $w_0\in \widetilde V$. Let us take $m+1\leq j\leq n$, and an integral curve $\widetilde{\gamma}\colon I \to\widetilde{V}\subseteq\mathbb W$ of $D^{\phi}_{X_j}$. Without loss of generality we may assume that the curve is defined on $I=[0,T]$, with $T>0$ possibly depending on the curve. By the particular form of $D^{\phi}_j$ in coordinates, see \eqref{eqn:CoordinateDPhiJ}, and the fact that $j\geq m+1$, we have that the projection of $\widetilde{\gamma}$ on $\mathbb V_1$ is constant. Then, since by \cref{p2.5} $\nabla^{\phi}\phi_{w_0}$ depends only on the projection on $\mathbb V_1$ and it is linear, we have, for all $t,s\in [0,T]$,
	\begin{equation}\label{eqn:Finale2}
	\frac{|\phi(\gamma(t))-\phi(\gamma(s))-\nabla^{\phi}\phi_{w_0}(\gamma(s)^{-1}\cdot\gamma(t) )|}{\|\widetilde{\phi}(\widetilde{\gamma}(s))^{-1}\cdot\widetilde{\gamma}(s)^{-1}\cdot\widetilde{\gamma}(t)\cdot\widetilde{\phi}(\widetilde{\gamma}(s))\|}=\frac{|\phi(\gamma(t))-\phi(\gamma(s))|}{\|\widetilde{\phi}(\widetilde{\gamma}(s))^{-1}\cdot\widetilde{\gamma}(s)^{-1}\cdot\widetilde{\gamma}(t)\cdot\widetilde{\phi}(\widetilde{\gamma}(s))\|}.
	\end{equation}
	 Since $\widetilde\phi$ is intrinsically $C$-Lipschitz in $\widetilde V$, by \eqref{eqn:NormOfGammaGeneral1} there exists a constant $C_j>0$ depending only on $j,C$ and the adapted basis such that
	\begin{equation}\label{eqn:EstimateGanza}
	\|\widetilde{\phi}(\widetilde{\gamma}(s))^{-1}\cdot\widetilde{\gamma}(s)^{-1}\cdot\widetilde{\gamma}(t)\cdot\widetilde{\phi}(\widetilde{\gamma}(s))\|\leq C_j|t-s|^{1/\deg j}, \qquad \forall 0\leq s<t\leq T.
	\end{equation} 
	
	In particular, we can find a constant $C'>0$ depending only on $C$ and the adapted basis such that for every $j=m+1,\dots,n$, for every integral curve $\widetilde\gamma\colon[0,T]\to\widetilde V$ of $D^{\phi}_{X_j}$ and every $0\leq s<t\leq T$ we have 
	\begin{equation}\label{eqn:Finale3}
	\frac{|\phi(\gamma(t))-\phi(\gamma(s))|}{\|\widetilde{\phi}(\widetilde{\gamma}(s))^{-1}\cdot\widetilde{\gamma}(s)^{-1}\cdot\widetilde{\gamma}(t)\cdot\phi(\widetilde{\gamma}(s))\|}\geq \frac{|\phi(\gamma(t))-\phi(\gamma(s))|}{C'|t-s|^{1/\deg j}}.
	\end{equation}
	Combining \eqref{eqn:Finale3} and \eqref{eqn:Finale2} we get
	\begin{equation}\label{eqn:EstimateGanza3}
	 \frac{|\phi(\gamma(t))-\phi(\gamma(s))|}{|t-s|^{1/\deg j}}\leq  C'\frac{|\phi(\gamma(t))-\phi(\gamma(s))-\nabla^{\phi}\phi_{w_0}(\gamma(s)^{-1}\cdot\gamma(t) )|}{\|\widetilde{\phi}(\widetilde{\gamma}(s))^{-1}\cdot\widetilde{\gamma}(s)^{-1}\cdot\widetilde{\gamma}(t)\cdot\widetilde{\phi}(\widetilde{\gamma}(s))\|},
	\end{equation}
	 for every $j=m+1,\dots,n$, every integral curve $\widetilde\gamma\colon[0,T]\to\widetilde V$ of $D^{\phi}_{X_j}$, every $0\leq t<s\leq T$ and every $w_0\in\widetilde V$.
	If $|t-s|\leq \varrho$, by the estimate in \cref{lemma333FS} and \eqref{eqn:EstimateGanza}, we get 
	\begin{equation}\label{eqn:EstimateGanza2}
	\|\widetilde\gamma(s)^{-1}\cdot\widetilde\gamma(t)\|\leq C(\varrho),
	\end{equation}
	for every $j=m+1,\dots,n$ and every integral curve $\widetilde\gamma\colon[0,T]\to\widetilde V$ of $D^{\phi}_{X_j}$, where $C(\varrho)$ is a continuous increasing function such that $\lim_{\varrho\to0}C(\varrho)=0$, depending on $\widetilde\phi$ and independent on the choices of $j$ and $\widetilde\gamma$.
	
	Assume by contradiction that \eqref{eqn:UidImpliesH1k} is false. Then there exist $\eps_0>0$, a sequence of integral curves $\widetilde\gamma_\ell\colon[0,T_\ell]\to\widetilde V$  of $D^{\phi}_{X_{i_\ell}}$, for some $i_\ell\in\{m+1,\dots,n\}$, and sequences of times $0\leq t_\ell <s_\ell\leq T_\ell$ such that $|t_\ell-s_\ell|\to0$ as $\ell\to \infty$ and
	\begin{equation}\label{eqn:Contradiction}
	 \frac{|\phi(\gamma_\ell(t_\ell))-\phi(\gamma_\ell(s_\ell))|}{|t_\ell-s_\ell|^{1/\deg(i_{\ell})}} \geq \eps_0.
	\end{equation}
	By compactness, up to passing to subsequences, there exists $\widetilde v\in \widetilde V$ such that $\widetilde\gamma_\ell(t_\ell)\to \widetilde v$. Then, by the uniform control \eqref{eqn:EstimateGanza2} and the fact that $|t_\ell-s_\ell|\to 0$, we get also $\widetilde\gamma_\ell(s_\ell)\to \widetilde v$. Since $\widetilde\phi$ is uniformly intrinsically differentiable at $\widetilde v$, we get, by \cref{rem:IdAndUidInCoordinates} (see \eqref{eqn:UidInCoordinates}) that the right hand side of \eqref{eqn:EstimateGanza3} evaluated at $\widetilde\gamma=\widetilde\gamma_\ell$, $t=t_\ell$, $s=s_\ell$, and $w_0=\widetilde{v}$ goes to 0 as $\ell\to +\infty$. Passing  \eqref{eqn:EstimateGanza3} to the limit (with $j=i_{\ell}$) gives the sought contradiction with  \eqref{eqn:Contradiction}. 
\end{proof}
\begin{rem}\label{rem:OnlyVerticalCoordinates}
	In case $\G$ is a Carnot group of step 2 and rank $m$, and we choose an adapted basis of the Lie algebra such that $\mathbb L=\exp({\rm span}\{X_1,\dots,X_k\})$ and $$\mathbb W=\exp({\rm span}\{X_{k+1},\dots,X_m,X_{m+1},\dots,X_n\}),$$ we have that $D_{X_j}^{\phi}=\partial_{x_j}$ for $m+1\leq j\leq n$, see \cref{example:Step2}. If $\widetilde{\phi}\in {\rm UID}(\widetilde U,\mathbb W;\mathbb L)$ then \cref{prop:UidImpliesH1k}
	tells us that on every compact subset of $U$, the function $\phi$ is uniformly $1/2$-little H\"older continuous in the vertical coordinates. Moreover, \eqref{eqn:IdBehaviourOf1Curves} and the fact the $\de^{\phi}\!\phi$ is continuous, see \cref{prop:ContinuityOfDifferentialUid}, tell us that along the integral curves of $D_{X_j}^{\phi}$, with $k+1\leq j\leq m$, $\phi$ is $C^1$. Then, by exploiting the triangular form of the vector fields $D^{\phi}$ in \eqref{eqn:CoordinateDPhiJ}, one could use the previous informations to prove that $\phi$ is locally 1/2-little H\"older continuous in \textbf{all} the variables, and thus give an alternative proof of \cref{prop3.6cont} in the step-2 case. This is essentially the idea of the proof in \cite[Proposition 4.4]{ASCV06}. 
\end{rem}
%GA: Silenziato e messo nell'intro
%\begin{rem}
%	A result similar to \cref{prop:DerivativeOfIntegralCurves}, and \cref{prop:UidImpliesH1k} appeared in \cite[Theorem 1.2]{CM06} in the more general context of $CC$ spaces. 
%\end{rem}

\subsection{Definition of broad*}\label{sub:Broad*}
In this subsection we give the notion of broad* solution to the system $D^{\phi}\phi=\omega$, with $\omega$ continuous. Eventually we show that an intrinsically differentiable function $\phi$ with continuous intrinsic gradient $\nabla^{\phi}\phi$ (see \cref{def:NablaPhiPhi}) is a broad* solution to $D^{\phi}\phi=\nabla^{\phi}\phi$. 

Let $\mathbb W$ and $\mathbb L$ be complementary subgroups of a Carnot group $\mathbb G$, with $\mathbb L$ horizontal and $k$-dimensional. Let $\widetilde U$ be an open subset of $\mathbb W$ and let $\widetilde{\phi}:\widetilde{U}\subseteq \mathbb W\to \mathbb L$ be a continuous function. We fix an adapted basis $(X_1,\dots, X_n)$ of the Lie algebra $\mathfrak g$, such that $\mathbb W=\exp(\mbox{span}\{X_{k+1},\dots,X_n\})$ and $\mathbb L=\exp(\mbox{span}\{X_{1},\dots,X_k\})$. We give the notion of broad* solution of the system \begin{equation}\label{sistemone} \left(
\begin{matrix}
D^\phi _{X_{k+1}}\phi ^{(1)}&  \dots & D^\phi _{X_{m}} \phi ^{(1)} \\
\vdots &\ddots & \vdots  \\
D^\phi _{X_{k+1}}\phi ^{(k)} & \dots & D^\phi _{X_{m}} \phi ^{(k)} \\
\end{matrix}
\, \right) = \left(\,
\begin{matrix}
\omega_{1 \, k+1}&  \dots & \omega_{1\, m} \\
\vdots &\ddots & \vdots  \\
\omega_{k\, k+1} & \dots & \omega_{k\, m} \\
\end{matrix}
\, \right),
\end{equation}
where $\omega \coloneqq\left(\omega _{\ell j}  \right)\colon U \to \mathbb R^{k\times (m-k)}$, with  $\ell\in \{1,\dots,k\}$, $j \in \{k+1,\dots,m\}$, is a continuous matrix valued function, and where we refer to the notation introduced in \cref{def:coordinateconletilde}.

\begin{defi}[Broad* and broad solutions]\label{defbroad*}
Let $\mathbb W$ and $\mathbb L$ be complementary subgroups of a Carnot group $\mathbb G$, with $\mathbb L$ horizontal and $k$-dimensional. Let $\widetilde U\subseteq \mathbb W$ be open and let $\widetilde\phi\colon\widetilde U\to\mathbb L$ be a continuous function. Consider an adapted basis $(X_1,\dots, X_n)$ of the Lie algebra $\mathfrak g$ such that $\mathbb L=\exp({\rm span}\{X_{1},\dots,X_k\})$ and $\mathbb W=\exp({\rm span}\{X_{k+1},\dots,X_n\})$. Let $\omega\coloneqq\left(\omega_{\ell j}\right)\colon U \to \R^{k\times (m-k)}$  be a continuous matrix valued function with  $\ell\in \{1,\dots,k\}$ and $j \in \{k+1,\dots,m\}$. We say that $\phi \eqqcolon(\phi^{(1)},\dots , \phi^{(k)})\in C(U; \R^k)$ is a \emph{broad* solution} of $D^\phi \phi=\omega$ in $U$ if for every $a_0\in U$ there exist $0< \delta_2 < \delta_1$ and $m-k$ maps $E_j^\phi\colon \overline{B(a_0,\delta_2)} \times[-\delta _2,\delta _2]\to \overline{B(a_0,\delta_1)}$ for $j=k+1,\dots, m$, \textbf{where the balls are considered restricted to $U$}, satisfying the following two properties.
\begin{itemize}
	 \item[(a)] For every $a\in \overline{B(a_0, \delta_2)}$ and every $j=k+1,\dots, m$, the map $E_j^\phi(a)\coloneqq E_j^\phi(a,\cdot)$ is $C^1$ regular and it is a solution of the Cauchy problem
		\[
		\begin{cases}
		\dot \gamma= D^\phi_j\circ \gamma&\\
		\gamma(0)=a,&
		\end{cases}
		\]
		in the interval $[-\delta_2,\delta_2]$, where the vector field $D^\phi_j\coloneqq D^{\phi}_{X_j}$ is defined in \eqref{eqn:DefinitionOfDj}.
	\item[(b)] For every $a\in \overline{B(a_0,\delta_2)}$, for every $t\in [-\delta_2,\delta_2]$, every $j=k+1,\dots,m$ and every $\ell=1,\dots,k$ one has
		\[
		\phi^{(\ell)}(E_j^\phi(a,t))-\phi^{(\ell)}(a)=\int_0^t \omega_{\ell j} (E_j^\phi(a,s))\,\de\!s.
		\]
	\end{itemize}

	We say that $D^{\phi}\phi=\omega$ in the {\em broad sense on $U$} if for every $W\in {\rm Lie}(\mathbb W)\cap V_1$ and every $\gamma:I\to U$ integral curve of $D^{\phi}_W$, it holds that 
	$$
	\frac{\de}{\de s}_{|_{s=t}} (\phi\circ\gamma)(s)=\omega(W)(\gamma(t)), \qquad \forall t\in I,
	$$
	where by $\omega(W)$ we mean the matrix $\omega$ applied to the $(m-k)$-vector $W$. 
\end{defi}
\begin{rem}
	We stress that, in the setting of \cref{defbroad*}, if $\widetilde\phi \in C^1(\widetilde U)$ then $D^{\phi}\phi=\nabla^{\phi}\phi$ both pointwise and in the broad sense on $U$. First $\widetilde\phi\in {\rm UID}(\widetilde U,\mathbb W;\mathbb L)$ by \cite[Theorem 4.9]{DiDonato18}, because $\widetilde\phi\in C^1(\widetilde U)$. Then we can consider the intrinsic gradient $\nabla^{\phi}\phi$ as in \cref{def:NablaPhiPhi}, which is continuous, see \cref{prop:ContinuityOfDifferentialUid}. Thus the claim is an outcome of point (i) of \cref{prop:DerivativeOfIntegralCurves} and point (c) of \cref{prop2.22}, that becomes a pointwise equality if $\widetilde\phi$ is $C^1(\widetilde U)$, see the proof of \cref{prop2.22}. 
\end{rem}
\begin{rem}\label{rem:Broad*InCoordinates}
	Let us notice that the definition given in \cref{defbroad*} is a priori susceptible to the choice of an adapted basis. Nevertheless, when it is coupled with the vertically broad* h\"older condition in the same basis, see \cref{def:vertically*Holder}, it is independent of this choice. This is an outcome of \cref{thm:MainTheorem}. Indeed, from (d)$\Rightarrow$(a) of \cref{thm:MainTheorem}, it follows that the broad* condition and the vertically broad* h\"older condition on a fixed basis imply that $\widetilde\phi$ is UID. Thus, from item (i) of \cref{prop:DerivativeOfIntegralCurves}, we get that the broad* condition is satisfied for every other basis. Finally, from \cref{prop:UidImpliesH1k}, we get that also the vertically broad* h\"older condition holds in every other basis. 
	
	With the above reasoning, we remark that we can conclude something stronger: if the broad* condition and the vertically broad* h\"older condition hold on a fixed basis, then they hold uniformly on the choice of $W\in {\rm Lie}(\mathbb W)$ with bounded norm, see \cref{defbroad*Intro}, and \cref{def:vertically*HolderIntro}.
\end{rem}

The following result is already known in the Heisenberg groups $\mathbb H^n$: in case $\mathbb L$ is one-dimensional, it is  proved in \cite[(3)$\Rightarrow$(2) \& Theorem 4.95]{SC16}, while in case $\mathbb L$ is $k$-dimensional it is proved in \cite[Theorem~1.4, (iii)$\Rightarrow$(ii)]{Cor19}. We here generalize it to arbitrary Carnot groups, in the case $\mathbb L$ is horizontal and $k$-dimensional.
\begin{prop}\label{prop:IdGradContImpliesBroad}
	Let $\mathbb W$ and $\mathbb L$ be complementary subgroups of a Carnot group $\mathbb G$, with $\mathbb L$ horizontal and $k$-dimensional, and consider an adapted basis of the Lie algebra $\mathfrak g$ such that $\mathbb W=\exp(\mathrm{span}\{X_{k+1},\dots,X_n\})$ and $\mathbb L=\exp(\mathrm{span}\{X_{1},\dots,X_k\})$. Let $\widetilde U$ be an open subset of $\mathbb W$, and $\widetilde\phi\in {\rm ID}(\widetilde U,\mathbb W;\mathbb L)$ be such that $\de^{\phi}\!\phi$ is continuous on $\widetilde U$. Denote by $\nabla^{\phi}\phi$ the $k\times (m-k)$ matrix that represents $\de^{\phi}\!\phi$ in coordinates, see \cref{def:NablaPhiPhi}.
	
	Then, we have that
	\begin{equation}\label{eqn:Semplice}
	\frac{\de}{\de t}_{|_{t=t_0}}(\phi^{(\ell)}\circ\gamma)(t)=\nabla^{\phi}_{\ell j}\phi(\gamma(t_0)),
	\end{equation}
	for every $j=k+1,\dots,m$, every integral curve $\widetilde\gamma\colon I\to\widetilde U$  of $D^{\phi}_j\coloneqq D^{\phi}_{X_j}$, every $\ell=1,\dots,k$, and every $t_0\in I$. In particular the function $\phi$ is a broad solution, and thus also a broad* solution, of the system $D^{\phi}\phi=\nabla^{\phi}\phi$.
\end{prop}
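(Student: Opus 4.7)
The plan is to reduce everything to Proposition~\ref{prop:DerivativeOfIntegralCurves}. First I would verify the pointwise formula \eqref{eqn:Semplice}. Fix $j\in\{k+1,\dots,m\}$, an integral curve $\widetilde\gamma\colon I\to\widetilde U$ of $D^\phi_{X_j}$, and an interior $t_0\in I$. Setting $w_0\coloneqq\widetilde\gamma(t_0)$ and reparametrising the time so that $w_0$ corresponds to $t=0$ (this preserves being an integral curve of $D^\phi_{X_j}$, since the vector field does not depend on $t$), Proposition~\ref{prop:DerivativeOfIntegralCurves}(i) applied to $W=X_j\in V_1$ and to $w_0$ shows that $\widetilde\phi\circ\widetilde\gamma$ is differentiable at $t_0$ with derivative $\de^\phi\!\phi(w_0)(\exp X_j)$; Remark~\ref{rem:EaseOfNotation} ensures we really obtain the two-sided derivative. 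By Definition~\ref{def:NablaPhiPhi} and Remark~\ref{rem:IntrinsicGradientInCoordinates}, expressing $\de^\phi\!\phi(w_0)(\exp X_j)$ in exponential coordinates and extracting the $\ell$-th component yields exactly \eqref{eqn:Semplice}.

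Granted \eqref{eqn:Semplice}, the broad-ness of $\phi$ is immediate: the very same argument applies verbatim to any $W\in{\rm Lie}(\mathbb W)\cap V_1$ (Proposition~\ref{prop:DerivativeOfIntegralCurves} covers all such $W$). Combining this with the linearity of $\de^\phi\!\phi$ on $V_1$ afforded by Proposition~\ref{p2.5} converts the pointwise derivative into the identity $\frac{\de}{\de s}_{|_{s=t}}(\phi\circ\gamma)(s)=\omega(W)(\gamma(t))$ with $\omega=\nabla^\phi\phi$, along every integral curve of every $D^\phi_W$, at every interior $t$.

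Finally, I would derive the broad* property by constructing, for each $a_0\in U$, the maps $E_j^\phi$ required by Definition~\ref{defbroad*}. Since intrinsic differentiability implies continuity (Remark~\ref{rem:InvarianceOfIdByTranslations}(i)), the explicit formula in Proposition~\ref{prop:CompatibleCoordinates} shows that each $D^\phi_j$ has continuous coefficients. Peano's existence theorem together with the standard lower bound on the existence time -- exactly the ingredients already used in the proof of Lemma~\ref{rem:ConnectingCurves} -- produce $0<\delta_2<\delta_1$ and, for every $j=k+1,\dots,m$, a selection $E_j^\phi\colon\overline{B(a_0,\delta_2)}\times[-\delta_2,\delta_2]\to\overline{B(a_0,\delta_1)}$ of $C^1$ integral curves of $D^\phi_j$, establishing item~(a). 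Item~(b) then follows by integrating the pointwise identity \eqref{eqn:Semplice} along each curve $E_j^\phi(a,\cdot)$ via the fundamental theorem of calculus, using the continuity of $\nabla^\phi\phi$ to legitimise the integral.

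The only mildly delicate point is bookkeeping rather than genuine analysis: the broad* definition asks for a uniform selection of integral curves on a fixed neighbourhood of $a_0$, not just pointwise existence; one therefore has to appeal to the uniform-in-initial-datum form of Peano's estimate, again exactly as in Lemma~\ref{rem:ConnectingCurves}. Beyond that, no new tools are needed: the result is essentially a direct corollary of Proposition~\ref{prop:DerivativeOfIntegralCurves} and the coordinate representation of $D^\phi$.
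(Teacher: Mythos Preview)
Your proposal is correct and follows essentially the same approach as the paper: the paper's proof simply states that \eqref{eqn:Semplice} follows from \eqref{eqn:IdBehaviourOf1Curves} read in coordinates, and that the broad (hence broad*) conclusion then follows from \eqref{eqn:Semplice} together with the assumed continuity of $\nabla^{\phi}\phi$. You spell out more explicitly the reparametrisation to apply Proposition~\ref{prop:DerivativeOfIntegralCurves} at a generic $t_0$, the appeal to Remark~\ref{rem:EaseOfNotation} for the two-sided derivative, and the Peano-based construction of the maps $E_j^\phi$ needed for Definition~\ref{defbroad*}; these are exactly the details the paper leaves implicit.
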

\begin{proof}
	Equation \eqref{eqn:Semplice} directly follows from \eqref{eqn:IdBehaviourOf1Curves} seen in coordinates. Then, from \eqref{eqn:Semplice} and the fact that $\nabla^{\phi}\phi$ is continuous by hypothesis, the second part of the thesis follows.
\end{proof}

\section{Main Theorems in arbitrary Carnot groups}\label{sec:Theorems}
In this section we prove \cref{thm:MainTheorem}, that is \cref{thm:MainTheoremIntro2} in the introduction. We deal with an arbitrary Carnot group $\mathbb G$ along with a continuous function $\widetilde{\phi}\colon\widetilde U\subseteq\mathbb W\to\mathbb L$, where $\mathbb W$ and $\mathbb L$ are complementary subgroups of $\mathbb G$, with $\mathbb L$ horizontal and $k$-dimensional, and $\widetilde U$ is an open subset of $\mathbb W$. 

In \cref{sub:FromCurvesToFunction}, we study how H\"older properties of $\widetilde\phi$ along integral curves of the vector fields $D^{\phi}$ as defined in \eqref{eqn:DefinitionOfDj} affect the intrinsic regularity of the function $\widetilde \phi$. The main result of this section is a converse of \cref{prop:UidImpliesH1k}: if $D^{\phi}\phi=\omega$ holds in the broad* sense (see \cref{defbroad*}) and there is, locally around every point, a family of curves satisfying the little H\"older regularity condition \eqref{eqn:UidImpliesH1k} (we shall call this property vertically broad* h\"older regularity, see \cref{def:vertically*Holder}), then $\widetilde \phi$ is uniformly intrinsically differentiable. For the full statement, see \cref{prop:DPhiPhi=wBroadImpliesUid}. We notice that, taking \cref{rem:OnlyVerticalCoordinates} into account, the latter proposition generalizes \cite[Theorem~5.7]{ASCV06}, which deals with the case  $\G=\mathbb H^n$ and $\mathbb L$ one-dimensional, \cite[Theorem~5.8, (4)$\Rightarrow$(2)]{DiDonato18}, which is proved in case $\G$ has step 2 and $\mathbb L$ is one-dimensional, and \cite[Theorem~5.3]{Cor19} that solves the problem for $\G=\mathbb H^n$ with $\mathbb L$ horizontal and $k$-dimensional. We remark that, also in these cases, we obtain slightly stronger results, requiring just a locally 1/2-little H\"older regularity in the vertical components.

 \cref{prop:DPhiPhi=wBroadImpliesUid} could also be obtained by a combination of (2)$\Rightarrow$(1) of \cite[Theorem~4.3.1]{Koz15} and \cref{prop:UidC1H}, which is proved in \cite{DiDonato18}. The idea of (2)$\Rightarrow$(1) in \cite[Theorem~4.3.1]{Koz15} is to show that the H\"older conditions on $\widetilde\phi$ along a family of integral curves, that is the assumptions of \cref{prop:DPhiPhi=wBroadImpliesUid}, imply that the intrinsic graph of $\widetilde \phi$ is a co-horizontal $C^1_{\rm H}$-surface with complemented tangents. To prove this latter fact the author uses a characterization of co-horizontal $C^1_{\rm H}$-surfaces by means of uniform Hausdorff convergence to tangents - see \cite[Theorem~3.1.12]{Koz15} - that is in turn based on the so-called four cones Theorem, see \cite[Theorem~1.2]{BK14}. With the independent proof we give in \cref{prop:DPhiPhi=wBroadImpliesUid}, with more analytic flavor, we stress we can indirectly obtain (2)$\Rightarrow$(1) of \cite[Theorem~4.3.1]{Koz15} by making use of \cref{prop:DPhiPhi=wBroadImpliesUid} and \cref{prop:UidC1H}. We also obtained (1)$\Rightarrow$(2) of \cite[Theorem~4.3.1]{Koz15}, see the discussion before \cref{prop:UidImpliesH1k}.

Our proof of \cref{prop:DPhiPhi=wBroadImpliesUid} requires \cref{prop:InverseLpischitz}, that is stated  only for $\mathbb L$ horizontal and $k$-dimensional. The \cref{prop:InverseLpischitz} is a converse of \cref{lem:EstimateOnTheNormOfGamma}, i.e., it can be roughly read in the following way: the uniform H\"older regularity of the curves $\widetilde\phi\circ \widetilde\gamma$, where $\widetilde\gamma$ is an integral curve of the vector field $D^\phi$, implies the intrinsically Lipschitz regularity of $\widetilde\phi$. We give a proof of \cref{prop:InverseLpischitz} as we crucially need it for the proof of \cref{prop:DPhiPhi=wBroadImpliesUid}, but we remark that a more general statement can be given, in case $\mathbb W$ is normal, with a proof that is very similar to the one of \cref{prop:InverseLpischitz}. Notice that the general statement with $\mathbb W$ normal can be found in (3)$\Rightarrow$(1) of \cite[Theorem~4.2.16]{Koz15}. For more details, see \cref{rem:LipschitzOnCurvesImpliesIntrinsicLipschitz}.

As a by-product, we obtain some analytical results, that can have their own independent interests. The first one is given by \cref{coroll:StarImpliesWithoutStar} and it states that broad* regularity implies broad regularity. Roughly speaking, having a function that is H\"older regular on a \textbf{precise family} of integral curves implies the H\"older regularity on \textbf{every} integral curve. Then, in \cref{coroll:IDContinuousImpliesUid}, we prove that every intrinsically differentiable function that is vertically broad* h\"older (see \cref{def:vertically*Holder}), and that has a continuous intrinsic gradient, is uniformly intrinsically differentiable. We do not know at present whether the assumption on the vertically broad* h\"older regularity can be dropped in \cref{coroll:IDContinuousImpliesUid}, see also \cref{rem:StrategyIdGradContinuousUid}. We expect that the hypothesis on the vertically broad* h\"older regularity in \cref{coroll:IDContinuousImpliesUid} can be dropped in general, see also the paragraph \textbf{Geometric characterizations of intrinsic differentiability} in the introduction. From the results proved in \cite{BSC10b} and \cite{Cor19},  we know that the assumption on the vertically broad* h\"older regularity in \cref{coroll:IDContinuousImpliesUid} is not necessary in the case of the Heisenberg groups $\mathbb H^n$, with $\mathbb L$ horizontal $k$-dimensional, see also the introduction to \cref{sec:Applications}. We stress we obtain that we can remove the assumption on the vertically broad* h\"older regularity in \cref{coroll:IDContinuousImpliesUid} also in the case of step-2 Carnot groups with $\mathbb L$ one-dimensional, see \cref{sec:Step2}. 

In \cref{sub:Area} we focus on the case in which $\mathbb L$ is one-dimensional and we prove \cref{prop2.22}. We present an area formula that represents the perimeter of the subgraph of a uniformly intrinsically differentiable function $\phi$ in terms of the density $\sqrt{1+|\nabla^{\phi}\phi|^2}$. For more details about the area formula and a representation involving the Hausdorff measures, we refer the reader to \cref{rem:AreaFormula}. In \cref{prop2.22} we also prove that, whenever the target $\mathbb L$ is one-dimensional, every uniformly intrinsically differentiable function $\phi$ is \textbf{a distributional solution} of the system $D^{\phi}\phi=\nabla^{\phi}\phi$ and, in \cref{coroll:Broad*ImpliesDistributionally}, we deduce that if $D^{\phi}\phi=\omega$ holds in the broad* sense with a continuous datum $\omega$ and $\phi$ is vertically broad* h\"older, then $D^{\phi}\phi=\omega$ in the sense of distributions. We do not know, in general, if in \cref{coroll:Broad*ImpliesDistributionally} we can remove the assumption on the vertically broad* h\"older regularity. In fact, one can remove the hypothesis on the vertically broad* h\"older regularity in \cref{coroll:Broad*ImpliesDistributionally} in Heisenberg groups, and it is a consequence of the results in \cite{BSC10b}. We stress that thanks to the results obtained in \cref{sec:Step2}, we drop the assumption on the vertically broad* h\"older regularity in \cref{coroll:Broad*ImpliesDistributionally} also in the case of step-2 Carnot groups. 

It is interesting to investigate the converse implication: if one has $D^{\phi}\phi=\omega$ in the sense of distributions with a continuous function $\omega$, is it true that $D^{\phi}\phi=\omega$ in the broad* sense? This is actually the case in the Heisenberg groups, see \cite{BSC10a}, and the techniques used in \cref{sec:Step2} seem a good tool to address this implication in arbitrary step-2 Carnot groups. We will not address this issue in this paper and it will be the target of further investigations. It is however interesting to notice that, in some examples besides the step-2 case and for particular $\phi$, one can obtain that if $D^{\phi}\phi=\omega$ holds in the sense of distributions with a continuous function $\omega$, then $D^{\phi}\phi=\omega$ in the broad* sense. We will not discuss this issue in the paper, but we refer the reader to \cite{ABC16}.

In \cref{sub:Relations} we come back to the general case in which the target $\mathbb L$ is horizontal and not necessarily one-dimensional. We prove that if $\phi$ is locally approximable with a sequence of smooth functions whose intrinsic derivatives converge to a continuous function $\omega$, then $D^{\phi}\phi=\omega$ in the broad*, see \cref{prop4.12}. This notion of local approximability has been first introduced and studied in \cite{ASCV06}, see also \cref{rem:PointwiseAndLocalApproximability}. We exploit this result to prove that every uniformly intrinsically differentiable function $\phi$ always solves $D^{\phi}\phi=\nabla^{\phi}\phi$ in the broad* sense. 

In \cref{sub:Main}, we combine some of the previous results together to prove our main theorem \cref{thm:MainTheorem}, which is \cref{thm:MainTheoremIntro2} in the introduction. Notice that our result provides the generalization to all Carnot groups, and to any possible horizontal and $k$-dimensional target $\mathbb L$, of \cite[Theorem~5.8]{DiDonato18}. We stress that \cref{thm:MainTheorem} will be strengthened in \cref{sec:Step2} dropping the hypothesis on the vertical broad* h\"older regularity in the setting of Carnot groups of step 2. %As a by-product, we also get a generalizations to step 2 Carnot groups of the fundamental results \cite[Theorem~5.1]{ASCV06} and \cite[Theorem~1.2]{BSC10b}. 
We stress that, in general, the assumption on the vertical broad* h\"older regularity cannot be dropped in \cref{thm:MainTheorem}, see \cref{rem:VerticallyBroadHolderNonSiToglie} for a counterexample in the easiest step-3 group, namely the Engel group. 
 
\subsection{From regularity of $\phi$ along integral curves of $D^{\phi}$ to regularity of $\phi$}\label{sub:FromCurvesToFunction}
In this subsection we show how the H\"older regularity of $\phi$ along integral curves of $D^{\phi}$ affects the intrinsic regularity of $\widetilde\phi$. 
 
\begin{prop}\label{prop:InverseLpischitz}
	Let $\mathbb W$ and $\mathbb L$ be complementary subgroups of a Carnot group $\G$, with $\mathbb L$ horizontal and $k$-dimensional, let $\widetilde U$ be open, and let $\widetilde{\phi}\colon\widetilde{U}\subseteq \mathbb W\to\mathbb L$ be a continuous function with $e\in\widetilde{U}$ and $\widetilde{\phi}(e)=e$.
	
	Let $(X_1,\dots,X_n)$ be an adapted basis of the Lie algebra $\mathfrak g$ such that $\mathbb L=\exp({\rm span}\{X_{1},\dots,X_k\})$ and $\mathbb W=\exp({\rm span}\{X_{k+1},\dots,X_n\})$. Denote by
	$D^{\phi}_j\coloneqq D^{\phi}_{X_j}$, for every $j=k+1,\dots,n$. Let $L>0$.
	
	Fix $v\in U$ and consider a concatenation of curves $\gamma_{k+1},\dots,\gamma_n$ in $U$ connecting $0$ to $v$ such that $\gamma_j\colon I_j\to U$ is an integral curve of $D^\phi_j$ for $j=k+1,\dots, n$. Assume that the function $\phi\circ \gamma_j$ is $\frac{1}{\mathrm{deg}j}$-H\"older continuous on $I_j$, for $j=k+1,\dots,n$, with H\"older constant $L$.
	Then, there exists $C>0$ only depending on $L, \mathbb W$, $\mathbb L$ and the adapted basis $(X_1,\dots, X_n)$ such that
	\begin{equation}\label{eqn:ThesisOfConverseLipschitz1}
	|\phi\circ \gamma_j(t)|\leq C\|v\|_{\mathbb G}, \qquad \forall t\in I_j, \quad  \forall j=k+1,\dots, n,
	\end{equation}
	\begin{equation}\label{eqn:ThesisOfConverseLipschitz2}
	\;\;\;|\gamma_j^{(i)}(t)| \leq C\|v\|_{\mathbb G}^{\deg i}, \qquad\; \forall t\in I_j, \quad \forall i,j=k+1,\dots,n,
	\end{equation}
	where $\gamma^{(i)}_j$ denotes the $i$-th component of $\gamma_j$ in exponential coordinates.
\end{prop}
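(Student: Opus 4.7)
The plan is to pass to the exponential coordinates associated with $(X_1,\dots,X_n)$ and to use the anisotropic homogeneous norm $\|x\|_{\mathbb G}=\sum_{\ell=1}^n|x_\ell|^{1/\deg\ell}$, which is bi-Lipschitz equivalent to any other homogeneous norm, so replacing $\|\cdot\|_{\mathbb G}$ by it only changes the final constant $C$; it has the bonus that $|v^{(i)}|\leq\|v\|_{\mathbb G}^{\deg i}$ holds by definition. By \cref{prop:CompatibleCoordinates}, the projected vector field has the lower-triangular form
$$D^{\phi}_j|_x=\partial_{x_j}+\sum_{i>n_{\deg j}}P_i^j(\phi,x_{k+1},\dots,x_{n_{\deg i-1}})\,\partial_{x_i},$$
with each $P_i^j$ a polynomial $\delta_\lambda$-homogeneous of degree $\deg i-\deg j$. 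Up to a time translation I assume $I_j=[0,T_j]$, and set $v_0=0$, $v_{j-k}\coloneqq\gamma_j(T_j)$, so that $v_{n-k}=v$ and $\gamma_j(0)=v_{j-k-1}$. Two structural facts then drive the whole argument: (A) along $\gamma_j$ the coordinates $x_\ell$ with $\ell\leq n_{\deg j}$, $\ell\ne j$, are constant while $\gamma_j^{(j)}(t)=v_{j-k-1}^{(j)}+t$; and (B) since the basis is adapted, for every $\ell>j$ one has $j\leq n_{\deg j}\leq n_{\deg\ell}$ and $j\ne\ell$, so $D^{\phi}_\ell$ has zero coefficient on $\partial_{x_j}$, whence the $j$-th coordinate is unchanged by $\gamma_{j+1},\dots,\gamma_n$ and $v^{(j)}=v_{j-k-1}^{(j)}+T_j$.

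Next I would prove simultaneously, by induction on $j\in\{k+1,\dots,n\}$, that $T_j\leq C\|v\|_{\mathbb G}^{\deg j}$, that $|\phi\circ\gamma_j(t)|\leq C\|v\|_{\mathbb G}$ for all $t\in I_j$, and that $|\gamma_j^{(i)}(t)|\leq C\|v\|_{\mathbb G}^{\deg i}$ for every $i$ and every $t\in I_j$, with $C$ depending only on $L$, the basis, and $\mathbb G$. The bound on $T_j$ comes from the same analysis used for (B) applied to the low-index curves: only the $\gamma_\ell$ with $\ell<j$ and $\deg\ell<\deg j$ can have changed the $j$-th coordinate, so
$$v_{j-k-1}^{(j)}=\sum_{\ell}\int_0^{T_\ell}P_j^\ell(\phi\circ\gamma_\ell(s),\gamma_\ell(s))\,\de s,$$
and the inductive hypothesis plus homogeneity of $P_j^\ell$ of degree $\deg j-\deg\ell$ gives $|v_{j-k-1}^{(j)}|\leq C\|v\|_{\mathbb G}^{\deg j}$; combining with (B) and the trivial $|v^{(j)}|\leq\|v\|_{\mathbb G}^{\deg j}$ yields the estimate on $T_j$. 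The bound on $\phi\circ\gamma_j$ then follows from the H\"older hypothesis $|\phi\circ\gamma_j(t)-\phi(v_{j-k-1})|\leq LT_j^{1/\deg j}\leq L C^{1/\deg j}\|v\|_{\mathbb G}$, controlling the initial value $\phi(v_{j-k-1})$ either by $\phi(0)=0$ (when $j=k+1$) or by $\phi\circ\gamma_{j-1}(T_{j-1})$ from the previous step.

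For the bound on $\gamma_j^{(i)}$ I would nest a secondary induction on $i$ inside the fixed step $j$. When $i\leq n_{\deg j}$ the coordinate is either constant along $\gamma_j$, equal to $v_{j-k-1}^{(i)}$ and controlled by the same recursion as for $v_{j-k-1}^{(j)}$, or (when $i=j$) linear, hence bounded by $|v_{j-k-1}^{(j)}|+T_j\leq C\|v\|_{\mathbb G}^{\deg j}$. When $i>n_{\deg j}$, the essential point is that $P_i^j$ depends only on $\phi$ and on coordinates of index at most $n_{\deg i-1}$, i.e.\ of strictly lower degree than $\deg i$; the inner inductive hypothesis thus controls those components of $\gamma_j$, and homogeneity of $P_i^j$ of degree $\deg i-\deg j$ together with the already-established bound on $|\phi\circ\gamma_j|$ gives $|P_i^j(\phi\circ\gamma_j(s),\gamma_j(s))|\leq C\|v\|_{\mathbb G}^{\deg i-\deg j}$, so $|\gamma_j^{(i)}(t)-v_{j-k-1}^{(i)}|\leq T_j\cdot C\|v\|_{\mathbb G}^{\deg i-\deg j}\leq C\|v\|_{\mathbb G}^{\deg i}$, and the initial term is handled by the outer induction. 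The main obstacle is bookkeeping rather than any single hard estimate: one must keep the outer induction on $j$ and the inner induction on $i$ cleanly separated, and check that every right-hand side involves only quantities strictly lower in the chosen ordering. The lower-triangular form of $D^{\phi}_j$ together with the correct homogeneous degree of the polynomials $P_i^j$ is exactly what makes each recursion step close with a constant depending only on $L$, the basis, and $\mathbb G$.
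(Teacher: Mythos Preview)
Your proposal is correct and follows essentially the same strategy as the paper: reduce to the anisotropic norm, exploit the lower-triangular form of $D^\phi_j$ from \cref{prop:CompatibleCoordinates}, and run a double induction---outer on the curve index $j$, inner on the coordinate index $i$---using homogeneity of the polynomials $P_i^j$ and the H\"older hypothesis to close each step. The paper organizes the induction slightly differently (it bounds $|I_j|$ implicitly via the endpoint errors $r_{j,j-1}$ rather than isolating $T_j$ as a separate claim), and you should replace $T_j$ by $|T_j|$ throughout since the flow time may be negative, but the substance is the same.
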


\begin{proof}
	Up to bi-Lipschitz equivalence, we can prove the result choosing the anisotropic norm $\|x\|_{\mathbb G}\coloneqq \sum_{\ell=1}^n |x_{\ell}|^{1/\deg \ell}$. For the sake of readability, we fix $k=1$ and assume that all the layers $V_i$ of the algebra $\mathfrak g$, with $i\geq 2$, are 1-dimensional, so that $\deg i=i-1$ for every $i=2,\dots, n$. The proof in the general case only requires a typographical effort to deal with multiple components in each layer and the fact that we have more zeros in the components of the first layer.
	
	We work in exponential coordinates so that $v=( 0,v_{2},\dots,v_n)$, and denote the extremal points of the concatenation of $\gamma_2,\dots, \gamma_n$ by the following chain
	\begin{equation}\label{eqn:Concatenation}
	\begin{aligned}
	e=(0,0,\dots,0)&\rightarrow_{\gamma_{2}}( 0,v_2,r_{3,2},\dots,r_{n,2})\rightarrow_{\gamma_{3}}\dots  \\\dots 
	&\rightarrow_{\gamma_j}( 0,v_{2},\dots,v_j,r_{j+1,j},\dots,r_{n,j})\rightarrow_{\gamma_{j+1}}\dots \\
	\dots &\rightarrow_{\gamma_{n}} v=( 0,v_{2},\dots,v_n).
	\end{aligned}
	\end{equation}
	In the previous chain, we used the triangular form of $D^\phi_j$ given in \eqref{eqn:CoordinateDPhiJ}, so that the flow along $D^\phi_j$ does not affect the coordinates with index less than $j$. Notice also that, without loss of generality we can assume $I_j\subseteq[-|v_j-r_{j,j-1}|,|v_j-r_{j,j-1}|]$, with the convention $r_{2,1}\coloneqq 0$. Indeed, again by using \eqref{eqn:CoordinateDPhiJ}, in order to correct the error $r_{j,j-1}$ in the $j$-th component, we have to flow for a time $v_j-r_{j,j-1}$ along $D^{\phi}_j$.

	We prove \eqref{eqn:ThesisOfConverseLipschitz1} and \eqref{eqn:ThesisOfConverseLipschitz2} by induction on $j$. When we say that a constant depends on $\mathbb W$ and  $\mathbb L$ we mean that also depends on the chosen adapted basis $(X_1,\dots, X_n)$.
	
	 \noindent By assumption, the curve $\phi\circ\gamma_2$ is $L$-Lipschitz on $I_2$  and $\phi\circ\gamma_2(0)=0$. Consequently, we have that 
	\begin{equation}\label{eqn:FirstInduction}
	|\phi\circ\gamma_2(t)|=|\phi\circ\gamma_2(t)-\phi\circ\gamma_2(0)|\leq L|t|\leq L|v_2| \leq L\|v\|_{\mathbb G}, \qquad \forall t\in I_2.
	\end{equation}
	Therefore \eqref{eqn:ThesisOfConverseLipschitz1} is proved for $j=2$. 
	
	Next we shall prove \eqref{eqn:ThesisOfConverseLipschitz2} for $j=2$ by means of induction on $i$. For $i=2$, we have $\gamma_2^{(2)}(t)=t$ and then  
	\[
	|\gamma_2^{(2)}(t)|=|t|\leq |v_2|\leq \|v\|_{\mathbb G}, \qquad \forall t\in I_2.
	\]
	Assume now that for some $i_0\geq 2$, there is a constant $C_{2,i_0}>0$ such that
	\begin{equation}\label{eqn:ipotesiinduttiva1}
	|\gamma_2^{(i)}(t)|\leq C_{2,i_0}\|v\|_{\mathbb G}^{\deg i}= C_{2,i_0}\|v\|_{\mathbb G}^{i-1},\quad \forall t\in I_2,\quad \forall i=2,\dots,i_0.
	\end{equation}
	 We want to prove that there exists $C_{2,i_0+1}>0$ such that 
	 \[
	  |\gamma_2^{(i_0+1)}(t)|\leq C_{2,i_0+1}\|v\|_{\mathbb G}^{\deg (i_0+1)} =C_{2,i_0+1}\|v\|_{\mathbb G}^{i_0}, \quad \forall t\in I_2,
	 \]
	 where $C_{2,i_0+1}$ only depends on  $ i_0, L,C_{2,i_0}, \mathbb W$ and $\mathbb L$. By using the particular triangular form of $D_2^{\phi}$ in exponential coordinates, see \eqref{eqn:CoordinateDPhiJ}, we get that
	\begin{equation}\label{eqn:SecondInduction}
	|\gamma_2^{(i_0+1)}(t)|=\left|\int_0^t P_{i_0+1}^2(\phi\circ\gamma_2(s),\gamma_2^{(2)}(s),\dots,\gamma_2^{(i_0)}(s))\de\!s\right|,\quad \forall t\in I_2,
	\end{equation}
	for some polynomial $P^2_{i_0+1}$ of homogeneous degree $\deg(i_0+1)-\deg 2=i_0-1$. Then, from \eqref{eqn:FirstInduction} and \eqref{eqn:ipotesiinduttiva1}, we deduce there exists $ C_{2,i_0+1}$ depending on $i_0$, $L$, $C_{2,i_0}$ ,$\mathbb W$ and $\mathbb L$ such that
	\[
	\left|P_{i_0+1}^2(\phi\circ\gamma_2(t),\gamma_2^{(2)}(t),\dots,\gamma_2^{(i_0)}(t))\right| \leq C_{2,i_0+1}\|v\|_{\mathbb G}^{i_0-1}, \qquad \forall t\in I_2,
	\]
	and thus, by using this inequality in \eqref{eqn:SecondInduction}, and since $|t|\leq |v_2|\leq \|v\|_{\mathbb G}$, we get 
	\[
	|\gamma_2^{(i_0+1)}(t)|\leq C_{2,i_0+1}|t|\|v\|_{\mathbb G}^{i_0-1}\leq  C_{2,i_0+1}\|v\|_{\mathbb G}^{i_0}, \qquad \forall t\in I_2.
	\]
	To conclude the proof  of \eqref{eqn:ThesisOfConverseLipschitz1} and \eqref{eqn:ThesisOfConverseLipschitz2} in the case $j=2$, it is enough to set $C_2\coloneqq \max_{i=2,\dots,n} C_{2,i}$, where $C_{2,2}=\max\{L,1\}$.
	
	Assume now that for some $2\leq j_0\leq n$  and some constant $C_{j_0}>0$ one has
	\begin{equation}\label{eqn:ipotesiinduttivaj1}
	|\phi\circ \gamma_{j}(t)|\leq C_{j_0}\|v\|_{\mathbb G}, \qquad \forall t\in I_{j},\quad \forall j=2,\dots,j_0, \\
	\end{equation}
	\begin{equation}\label{eqn:ipotesiinduttivaj2}
	|\gamma_{j}^{(i)}(t)| \leq C_{j_0}\|v\|_{\mathbb G}^{\deg i}, \qquad \forall t\in I_{j}, \quad \forall i\geq 2,\quad \forall j=2,\dots, j_0.
	\end{equation}
	We want to find $C_{j_0+1}>0$ only depending on $L, j_0, C_{j_0}, \mathbb W$ and $\mathbb L$ such that
	\begin{equation}\label{eqn:ThirdInduction1}
	|\phi\circ \gamma_{j_0+1}(t)|\leq C_{j_0+1}\|v\|_{\mathbb G}, \qquad \forall t\in I_{j_0+1}, 
	\end{equation}
	\begin{equation}\label{eqn:ThirdInduction2}
	|\gamma_{j_0+1}^{(i)}(t)| \leq C_{j_0+1}\|v\|_{\mathbb G}^{\deg i}, \qquad \forall t\in I_{j_0+1}, \quad \forall i= 2,\dots, n.
	\end{equation}
	
	To prove \eqref{eqn:ThirdInduction1}, we develop the following inequalities:
	\begin{equation}\label{eqn:CentralEstimate}
	\begin{aligned}
	|\phi\circ \gamma_{j_0+1}(t)|&\leq |\phi\circ \gamma_{j_0+1}(t)-\phi\circ \gamma_{j_0+1}(0)|+|\phi\circ \gamma_{j_0+1}(0)|\leq L|t|^{1/\deg(j_0+1)}+|\phi\circ \gamma_{j_0+1}(0)| \\
	&\leq L|v_{j_0+1}-r_{j_0+1,j_0}|^{1/j_0}+C_{j_0}\|v\|_{\mathbb G}\leq L|v_{j_0+1}|^{1/j_0}+L|r_{j_0+1,j_0}|^{1/j_0}+C_{j_0}\|v\|_{\mathbb G} \\
	&\leq L\|v\|_\G+LC_{j_0}^{\frac 1{j_0}}\|v\|_\G+C_{j_0}\|v\|_\G=\widetilde C_{j_0+1}\|v\|_\G, \qquad \forall t\in I_{j_0+1},
	\end{aligned}
	\end{equation}
	where in the second inequality we used the fact that $\phi\circ\gamma_{j_0+1}$ is H\"older of constant $L$; in the third inequality we used the definition of $I_{j_0+1}$ for the first term, while the estimate on the second term  comes from \eqref{eqn:ipotesiinduttivaj1} and the fact that $\gamma_{j_0+1}(0)$ is the endpoint of $\gamma_{j_0}$; the fifth inequality is a consequence of \eqref{eqn:ipotesiinduttivaj2}, inequality $|v_{j_0+1}|^{1/j_0}\leq \|v\|_{\mathbb G}$, and the fact that $r_{j_0+1,j_0}$ is the endpoint of $\gamma_{j_0}^{(j_0+1)}$.
	
	We are left to prove \eqref{eqn:ThirdInduction2}. First we notice that, by \eqref{eqn:CoordinateDPhiJ}, for $2\leq i\leq j_0$ we have $\gamma_{j_0+1}^{(i)}(t)\equiv v_i\leq \|v\|^{\deg i}_\G$ for all $t\in I_{j_0+1}$. Using again \eqref{eqn:CoordinateDPhiJ}, for $i=j_0+1$ we have that $\gamma_{j_0+1}^{(j_0+1)}(t)=r_{j_0+1,j_0}+t$. Then, since  $|t|\leq |v_{j_0+1}-r_{j_0+1,j_0}|$, for every $t\in I_{j_0+1}$, arguing similarly to \eqref{eqn:CentralEstimate}, one can obtain
	\[
	|\gamma_{j_0+1}^{(j_0+1)}(t)|\leq 2|r_{j_0+1,j_0}|+|v_{j_0+1}|\leq (2C_{j_0}+1)\|v\|_\G^{j_0}.
	\]
	Setting $C_{j_0+1,j_0+1}=\max\{2C_{j_0}+1,1\}$, we get  
	\[
	|\gamma_{j_0+1}^{(i)}(t)| \leq C_{j_0+1,j_0+1}\|v\|_{\mathbb G}^{\deg i},\qquad \forall t\in I_{j_0+1},\quad \forall i=2,\dots,j_0+1.
	\]
	
	Let us now proceed by induction on $i$ and assume there exists $i_0\in\{j_0+1,\dots,n\}$ and a constant $C_{j_0+1,i_0}>0$ such that
	\begin{equation}\label{eqn:induzionenellinduzione}
	|\gamma_{j_0+1}^{(i)}(t)|\leq  C_{j_0+1,i_0}\|v\|_{\mathbb G}^{\deg i}=C_{j_0+1,i_0}\|v\|_{\mathbb G}^{i-1}, \quad \forall t\in I_{j_0+1}, \quad \forall i=2,\dots,i_0.
	\end{equation}
	 We want to find $C_{j_0+1,i_0+1}>0$ only depending on $L, C_{j_0}, C_{j_0+1,i_0}, \mathbb W$ and $\mathbb L$ such that
	 \[
	 |\gamma_{j_0+1}^{(i_0+1)}(t)|\leq  C_{j_0+1,i_0+1}\|v\|_{\mathbb G}^{\deg (i_0+1)} = C_{j_0+1,i_0+1}\|v\|_{\mathbb G}^{i_0},\quad \forall t\in I_{j_0+1}.
	 \]
	 By using again \eqref{eqn:CoordinateDPhiJ}, we get that, for every $t\in I_{j_0+1}$, one has
	\begin{equation}\label{eqn:SecondInduction2}
	\begin{aligned}
	|\gamma_{j_0+1}^{(i_0+1)}(t)|&=\left|r_{i_0+1,j_0}+\int_0^t P_{i_0+1}^{j_0+1}(\phi\circ\gamma_{j_0+1}(s),\gamma_{j_0+1}^{(2)}(s),\dots,\gamma_{j_0+1}^{(i_0)}(s))\de\!s\right| \\
	&\leq |r_{i_0+1,j_0}|+\int_0^t\left|P_{i_0+1}^{j_0+1}(\phi\circ\gamma_{j_0+1}(s),\gamma_{j_0+1}^{(2)}(s),\dots,\gamma_{j_0+1}^{(i_0)}(s))\right|\de\!s,
	\end{aligned}
	\end{equation}
	for a polynomial $P^{j_0+1}_{i_0+1}$ of homogeneous degree $\deg(i_0+1)-\deg(j_0+1)=i_0-j_0$. From \eqref{eqn:CentralEstimate} and \eqref{eqn:induzionenellinduzione}, we deduce that there exists a constant $M>0$ depending only on $j_0, i_0, L, \mathbb W$ and $\mathbb L$ (it indeed depends on the coefficients of the polynomial, the constant $\widetilde C_{j_0+1}$, and the induction constant $C_{j_0+1,i_0}$) such that
	\[
	\left|P_{i_0+1}^{j_0+1}(\phi\circ\gamma_{j_0+1}(t),\gamma_{j_0+1}^{(2)}(t),\dots,\gamma_{j_0+1}^{(i_0)}(t))\right| \leq M\|v\|_{\mathbb G}^{i_0-j_0}, \qquad \forall t\in I_{j_0+1},
	\]
	and thus from \eqref{eqn:SecondInduction2} we get
	\begin{equation}\label{eqn:FourthInduction}
	|\gamma_{j_0+1}^{(i_0+1)}(t)|\leq |r_{i_0+1,j_0}|+M|t|\|v\|_{\mathbb G}^{i_0-j_0}\leq |r_{i_0+1,j_0}|+M|v_{j_0+1}-r_{j_0+1,j_0}|\|v\|_{\mathbb G}^{i_0-j_0},
	\end{equation}
	for every $t\in I_{j_0+1}$. Notice that $r_{i_0+1,j_0}$ is the endpoint of $\gamma_{j_0}^{(i_0+1)}$, as well as $r_{j_0+1,j_0}$ is the endpoint of $\gamma_{j_0}^{(j_0+1)}$. Thus, by \eqref{eqn:ipotesiinduttivaj2}, we get that 
	\[
	|r_{i_0+1,j_0}|\leq C_{j_0}\|v\|^{\deg(i_0+1)}_{\mathbb G}=C_{j_0}\|v\|^{i_0}_{\mathbb G},
	\]
	and 
	\[
	|v_{j_0+1}-r_{j_0+1,j_0}|\leq (1+C_{j_0})\|v\|_{\mathbb G}^{\deg(j_0+1)}=(1+C_{j_0})\|v\|_{\mathbb G}^{j_0}.
	\]
	Replacing these last two equalities in \eqref{eqn:FourthInduction}, we get 
	\[
	|\gamma_{j_0+1}^{(i_0+1)}(t)|\leq  C_{j_0+1,i_0+1}\|v\|^{i_0}_{\mathbb G}, \qquad \forall t\in I_{j_0+1},
	\]
	for the constant $ C_{j_0+1,i_0+1}\coloneqq C_{j_0}+M(1+C_{j_0})$, which only depends on $j_0, i_0, L, \mathbb W$, and $\mathbb L$. Inequalities \eqref{eqn:ThirdInduction1} and \eqref{eqn:ThirdInduction2} are completed by choosing $C_{j_0+1}\coloneqq\max\{ \max_{i=j_0+1,\dots,n} C_{j_0+1,i}, \widetilde C_{j_0+1} \}$. To conclude the proof, it is enough to set $C\coloneqq \max _{j=2,\dots, n} C_j$.
\end{proof}
\begin{rem}[An improvement of \cref{prop:InverseLpischitz}]\label{rem:LipschitzOnCurvesImpliesIntrinsicLipschitz}
	The careful reader could have noticed that the scheme of the proof  \cref{prop:InverseLpischitz} above can be adapted to prove the very same statement, in the more general case in which $\mathbb W$ is normal. We will not use the conclusions of this remark in what follows, but we nonetheless give a sketch of the proof of this fact.
	
	Indeed, also in the case in which $\mathbb W$ is normal, taking \cref{rem:SameProofWNormal} into account, the vector fields $D^{\phi}_{j}$, for $k+1\leq j\leq n$, in exponential coordinates, have a triangular form analogous to \cref{prop:CompatibleCoordinates}. Then, if we adopt the same notation and setting as the statement of \cref{prop:InverseLpischitz} and if we assume that the curves $\widetilde\phi\circ\widetilde\gamma_j$ are $\frac{1}{\deg j}$-H\"older with respect to the norm $\|\cdot\|_{\mathbb G}$, the same double-induction argument of the proof of \cref{prop:InverseLpischitz} implies that, for any $k+1\leq j\leq n$, and any $t\in I_j$, one has $\|\widetilde\phi\circ\widetilde\gamma_j(t)\|_{\mathbb G} \leq\! C\| v\|_{\mathbb G}$, and $\|\widetilde\gamma_j(t)\|_{\mathbb G}\leq\! C\|v\|_{\mathbb G}$, instead of \eqref{eqn:ThesisOfConverseLipschitz1}, and \eqref{eqn:ThesisOfConverseLipschitz2},respectively.
	
	Thus, by evaluating the general form of \eqref{eqn:ThesisOfConverseLipschitz1} at $j=n$ and  time $t$ corresponding to the endpoint of $I_n$, we get $\|\widetilde\phi(v)\|_{\mathbb G}\leq C\|v\|_{\mathbb G}$, with a constant $C$ only depending on $L$, $\mathbb W$, $\mathbb L$ and the basis adapted to the splitting. Then, if we assume that the bound $L$ on the H\"older constant of $\widetilde\phi\circ\widetilde\gamma_j$ is uniform with respect to the choice of the integral curve $\gamma_j$ of $D^{\phi}_j$, with $k+1\leq j\leq n$, we get, by exploiting the fact that $\sW$ is locally $D^\phi$-connectible according to \cref{rem:ConnectingCurves}, that 
	\begin{equation}\label{eqn:ConverseLipschitzWNormal}
	\|\widetilde{\phi}(v)\|_{\mathbb G}\leq C\|v\|_{\mathbb G},
	\end{equation}
	for every $v\in\widetilde U'\Subset\widetilde U$, where the constant $C$ only depends on $L$, $\mathbb W$, $\mathbb L$ and the chosen basis adapted to the splitting.
	
	Finally, if we do not necessarily assume $\widetilde{\phi}(e)=e$ as in the statement of \cref{prop:InverseLpischitz}, but we still assume that the $1/\deg j$-H\"older constant of $\widetilde\phi\circ\widetilde\gamma_j$ is uniformly bounded with respect to the choice of the integral curves $\widetilde\gamma_j$, the same translation argument as in the beginning of the proof of \cref{prop:IntrinsicLipschitzIntegralCurve}, joined with the conclusion \eqref{eqn:ConverseLipschitzWNormal} and the third point of \cref{prop:IntrinsicLipschitz}, implies that $\widetilde\phi$ is intrinsically Lipschitz on $\widetilde U'\Subset\widetilde U$. This last statement is the local converse of \cref{prop:IntrinsicLipschitzIntegralCurve}.
	
	We finally remark that the improved result we described here is the implication (3)$\Rightarrow$(1) of \cite[Theorem~4.2.16]{Koz15}.
\end{rem}

We now exploit \cref{prop:InverseLpischitz} to show a criterion to prove that a function is uniformly intrinsically differentiable in an arbitrary Carnot group. To do so we introduce the definition of the vertically broad* h\"older property in a fixed adapted basis. 

\begin{defi}[Vertically broad* h\"older and vertically broad h\"older condition]\label{def:vertically*Holder}
	Let $\mathbb W$ and $\mathbb L$ be two complementary subgroups of a Carnot group $\G$ with $\mathbb L$ horizontal and $k$-dimensional, and fix an adapted basis $(X_1,\dots,X_n)$ of the Lie algebra $\mathfrak g$ such that $\mathbb W=\exp({\rm span}\{X_{k+1},\dots,X_n\})$ and $\mathbb L=\exp({\rm span}\{X_{1},\dots,X_k\})$. Let us fix $\widetilde U\subseteq \mathbb W$ an open subset and let us denote $D^{\phi}_j\coloneqq D^{\phi}_{X_j}$ as defined in \eqref{eqn:DefinitionOfDj}. We say that a continuous function $\widetilde \phi\colon\widetilde U\to \mathbb L$ is \emph{vertically broad* h\"older} if for every $a_0$ in $U$ there exist $\delta_{a_0}>0$ and neighborhoods $U'_{a_0}\Subset U_{a_0}\Subset U$ of $a_0$ such that for every $a\in U'_{a_0}$ and every $j=m+1,\dots, n$ one can find a $C^1$ regular solution $E_j^\phi(a)\colon[-\delta_{a_0},\delta_{a_0}]\to U_{a_0}$ of the Cauchy problem
	\[
	\begin{cases}
	\dot\gamma=D^\phi_j\circ\gamma& \\
	\gamma(0)=a& 
	\end{cases}
	\]
	such that
	\begin{equation}\label{eqn:vertcicallybroad*holder}
	\lim_{\varrho\to0}\left(\sup\left\{\frac{|\phi(E_j^\phi(a,t))-\phi(E_j^\phi(a,s))|}{|t-s|^{1/{\mathrm{deg}j}}}: j=m+1,\dots,n, a\in U'_{a_0}, 0<|t-s|\leq\varrho \right\}\right) =0.
	\end{equation}
	We moreover say that $\widetilde\phi$ is \emph{vertically broad h\"older} if for every $V\Subset U$ one has
	\begin{equation*}
	\lim_{\varrho\to0}\left(\sup\left\{\frac{|\phi(\gamma(t))-\phi(\gamma(s))|}{|t-s|^{1/{\deg j}}}: j=m+1,\dots,n,\,\dot\gamma=D^{\phi}_{j}\circ\gamma, \gamma\subseteq V, 0<|t-s|\leq\varrho \right\}\right) =0.
	\end{equation*}
\end{defi}
\begin{rem}\label{rem:VerticallyBroad*InCoordinates}
	Notice that \cref{def:vertically*Holder} is a priori susceptible to the choice of a basis adapted to the splitting. However, when it is coupled with the broad* condition in the same basis, see \cref{defbroad*}, it is independent on this choice. See \cref{rem:Broad*InCoordinates} for details.
\end{rem}

\begin{prop}\label{prop:DPhiPhi=wBroadImpliesUid}
	Let $\mathbb W$ and $\mathbb L$ be complementary subgroups of a Carnot group $\mathbb G$, with $\mathbb L$ horizontal and $k$-dimensional, and fix an adapted basis $(X_1,\dots, X_n)$ of the Lie algebra $\mathfrak g$ such that $\mathbb W=\exp({\rm span}\{X_{k+1},\dots,X_n\})$ and $\mathbb L=\exp({\rm span}\{X_{1},\dots,X_k\})$. Let $\widetilde U\subseteq \mathbb W$ be open, let $\widetilde{\phi}\colon\widetilde{U}\to\mathbb L$ be a vertically broad* h\"older map and assume $\omega\colon U\subseteq \mathbb R^{n-k}\to\mathbb R^{k\times(m-k)}$ is a continuous function such that $D^{\phi}\phi=\omega$ in the broad* sense on $U$.
	 Then $\widetilde{\phi}\in \mathrm{UID}(\widetilde{U}, \mathbb W; \mathbb L)$. Moreover $\nabla^{\phi}\phi=\omega$, where $\nabla^{\phi}\phi$ is the intrinsic gradient defined in \cref{def:NablaPhiPhi}.
\end{prop}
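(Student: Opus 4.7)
The plan is to verify the uniform intrinsic differentiability criterion \eqref{eqn:UidInCoordinates} at every fixed $a_0\in U$, with candidate intrinsic differential the map whose matrix on $\mathrm{Lie}(\mathbb W)\cap V_1$ is $\omega(a_0)$; this map is intrinsically linear by \cref{p2.5}. A preliminary translation, using $q\coloneqq\widetilde\phi(a_0)^{-1}\cdot a_0^{-1}$, reduces the task to proving intrinsic differentiability at $e$ for the translated function $\widetilde\phi_q$, which satisfies $\widetilde\phi_q(e)=e$ by \cref{prop:PropertiesOfIntrinsicTranslation}(d). Both hypotheses transfer through the translation: by \cref{lem:Invariance1DPhi} and \cref{lem:Invariance2IntegralCurve}, integral curves of $D^{\phi_q}_j$ are images of integral curves of $D^\phi_j$, and via \eqref{eqn:HowOmegaChanges} the function $\widetilde\phi_q$ solves $D^{\phi_q}\phi_q=\overline\omega_q$ in the broad* sense for a continuous $\overline\omega_q$ with $\overline\omega_q(e)=\omega(a_0)$; moreover the vertically broad* hölder property is preserved by the same transport.

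For $b$ close to $e$, I would connect $e$ to $b$ by a concatenation $\widetilde\gamma_{k+1},\dots,\widetilde\gamma_n$ of integral curves of the vector fields $D^{\phi_q}_j$, as provided by \cref{rem:ConnectingCurves}. The triangular form of \cref{prop:CompatibleCoordinates} forces each final time $t_j$ to equal (in absolute value) the $j$-th exponential coordinate of $b$ corrected by the previously adjusted components, so that $|t_j|\leq C\|b\|_{\G}^{\deg j}$. The broad* identity along horizontal pieces $j=k+1,\dots,m$ (together with continuity of $\overline\omega_q$) gives a uniform Lipschitz bound on $\phi_q\circ\widetilde\gamma_j$, while the vertically broad* hölder condition gives a uniform $1/\deg j$-Hölder bound along vertical pieces $j=m+1,\dots,n$. \cref{prop:InverseLpischitz} then yields the size estimates
\[
|\phi_q\circ\widetilde\gamma_j(t)|\leq C\|b\|_{\G},\qquad |\widetilde\gamma_j^{(i)}(t)|\leq C\|b\|_{\G}^{\deg i},
\]
for all $t$ in the parameter interval and with $C$ independent of $b$.

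The next step is to sum the $\phi_q$-increments along the concatenation. Each vertical piece contributes $o(|t_j|^{1/\deg j})=o(\|b\|_\G)$ by the vertically broad* hölder condition. Each horizontal piece contributes, via the broad* formula,
\[
\phi_q(\widetilde\gamma_j(t_j))-\phi_q(\widetilde\gamma_j(0))=\int_0^{t_j}\overline\omega_q(\widetilde\gamma_j(s))(X_j)\,\de\!s=\overline\omega_q(e)(X_j)\,t_j+o(\|b\|_\G),
\]
where the last equality exploits the continuity of $\overline\omega_q$ at $e$ together with the size bound on $\widetilde\gamma_j$. Telescoping and using that the horizontal $t_j$ equals the $j$-th exponential coordinate of $b$ gives $\phi_q(b)=\omega(a_0)(\pi_{\mathbb V_1}b)+o(\|b\|_\G)$. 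Since $\mathbb L$ is horizontal we have $\|\widetilde\phi_q(e)^{-1}\cdot e^{-1}\cdot b\cdot\widetilde\phi_q(e)\|=\|b\|_\G$, so \eqref{eqn:IdInCoordinates} holds at $e$ with candidate $\omega(a_0)$; uniqueness of the intrinsic gradient then forces $\nabla^\phi\phi(a_0)=\omega(a_0)$.

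The main obstacle is upgrading this pointwise estimate to the uniform one required for UID, namely that the $o(\|b\|_\G)$ is uniform as the base point $a$ ranges in a neighborhood of $a_0$. I would rerun the concatenation argument with an arbitrary $a\in B(a_0,r)\cap U$ in place of $e$, noting that everything transports by \cref{lem:Invariance1DPhi}--\cref{lem:Invariance2IntegralCurve}; the constants in \cref{prop:InverseLpischitz} depend only on the splitting, the adapted basis, and an upper bound for the data, and such bounds are locally uniform in the base point by continuity of $\omega$ on compact sets. The infinitesimal Hölder rate along vertical pieces is uniform in $a$ by the very formulation of \cref{def:vertically*Holder}, which closes the uniformity needed to conclude \eqref{eqn:UidInCoordinates}.
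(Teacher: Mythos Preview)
Your approach is correct and follows the paper's strategy: translate, concatenate integral curves of $D^{\phi_q}_j$ from $0$ to the target point, telescope $\phi_q$ using the broad* identity on horizontal pieces and the vertically broad* h\"older rate on vertical ones, and invoke \cref{prop:InverseLpischitz} for the size control $|r_{j,j-1}|^{1/\deg j}\le C\|b'\|$. Two small points worth tightening. First, the concatenating curves must be the \emph{specific} maps $E_j^\phi$ furnished by \cref{defbroad*} and \cref{def:vertically*Holder}, then transported via \cref{lem:Invariance2IntegralCurve}, rather than arbitrary integral curves as \cref{rem:ConnectingCurves} would produce: the broad* identity and the uniform h\"older rate are only asserted along those particular curves, and your claim $|t_j|\le C\|b\|_{\G}^{\deg j}$ for vertical $j$ is not immediate from the triangular form alone but follows only \emph{after} \cref{prop:InverseLpischitz} bounds the accumulated errors $r_{j,j-1}$. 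Second, the paper avoids your two-pass structure (prove ID at $a_0$, then rerun for varying $a$) by translating from the outset by $p_a=\widetilde\phi(a)^{-1}a^{-1}$ with $a$ ranging over $B(a_0,r)$ and comparing throughout against the fixed matrix $\omega(a_0)$; this way the horizontal error is $|\omega_j(b_j^*)-\omega_j(a_0)|$ with $b_j^*$ close to $a_0$, and the uniformity in $a$ is built into a single estimate rather than patched on afterwards.
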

\begin{proof}
	Up to bi-Lipschitz equivalence, we can prove the result choosing the anisotropic norm $\|x\|_{\mathbb G}\coloneqq \sum_{\ell=1}^n |x_{\ell}|^{1/\deg \ell}$. For the sake of readability, we give the proof in the case $k=1$. The proof for a larger $k$ only requires a typographical effort due to the fact that $\phi$ has more than one component. 

	Fix $a_0\in U$. According to the definition of vertically broad* h\"older, we find $\delta_{a_0}>0$, neighborhoods $U'_{a_0}\Subset U_{a_0}\Subset U$ of $a_0$ and $C^1$ maps $E^\phi_j(a)\colon[-\delta_{a_0},\delta_{a_0}]\to U_{a_0}$ satisfying the conditions of \cref{def:vertically*Holder}. Define for every $\varrho>0$ sufficiently small the quantity
	\begin{equation}\label{eqn:DefinitionOff}
	f(\varrho)\coloneqq\sup\left\{\frac{|\phi(E_j^\phi(a,t))-\phi(E_j^\phi(a,s))|}{|t-s|^{1/{\mathrm{deg}j}}}: j=m+1,\dots,n, a\in U'_{a_0}, 0<|t-s|\leq\varrho \right\},
	\end{equation}
	which by assumption converges to $0$ as $\varrho\to0$. Throughout the proof, we will often write $E_j^\phi$ instead of $E_j^\phi(a)$, where the dependence on the starting point has to be understood for a suitable $a\in U'_{a_0}$.
	
	We claim that $\phi$ is UID\ at $a_0$ and $\nabla^{\phi}\phi _{a_0}(b)=\omega(a_0)\cdot b^1$, for every $b\in \mathbb R^{n-1}$, where $b^1\in \mathbb R^{m-1}$ denotes the projection of $b$ onto the first $(m-1)$ components, and where $\omega(a_0)\cdot b^1\coloneqq\sum_{i=2}^{m} \omega_i(a_0)\cdot b^1_i$ denotes the usual scalar product on $\mathbb R^{m-1}$. 
	
	By \eqref{eqn:UidInCoordinates}, we just need to prove that
	\begin{equation}\label{eqn:EquationToShow}
	\lim_{\varrho\to 0}\left(\sup\left\{\frac{|\phi(b)-\phi(a)-\omega(a_0)\cdot(b^1-a^1)|}{\|\widetilde{\phi}(a)^{-1}a^{-1}b\,\widetilde{\phi}(a)\|}: a\in B(a_0,\varrho), \|a^{-1}b\|<\varrho\right\}\right)= 0.
	\end{equation}
	
	Since $D^{\phi}\phi=\omega$ in the broad* sense, by \cref{defbroad*}, we can find neighborhoods $ U''\Subset U' \Subset U'_{a_0}\Subset U$ of $a_0$ and $\delta>0$ such that, for every $j=2,\dots,m$, one has $E_j^\phi(U''\times[-\delta,\delta])\subseteq U'$. We can improve this observation using the triangular form of $D^{\phi}_j$, see \eqref{eqn:CoordinateDPhiJ}, and arguing as in \cref{rem:ConnectingCurves}. Indeed, the 
	sets $ U''$ and $U'$ can be chosen small enough such that, for every $a,b\in U''$, there exists a path connecting $a$ to $b$, entirely contained in $U'$, made first by a concatenation of the maps $E_2^\phi,\dots,E_m^\phi$ (defined accordingly to \cref{defbroad*}) and then by a concatenation $E^\phi_{m+1},\dots,E^\phi_n$ of integral curves of $D^{\phi}_{m+1},\dots,D^{\phi}_n$ provided by the vertically broad* h\"older condition. 
	
	Let use improve this conclusion. We know from \cref{lem:Invariance2IntegralCurve} that if $\widetilde{\gamma}$ is an integral curve of $D^{\phi}_j$, then $\widetilde{\gamma}_q\coloneqq q\cdot \widetilde{\gamma}\cdot (q_{\mathbb L})^{-1}$ is an integral curve of $D^{\phi_q}_j$, see \eqref{eqn:IntegralCurveOfDPhiQ}. Then, by possibly taking a smaller $U''$, we can suppose without loss of generality that there exist neighborhoods $ V''\Subset V'$  of $0$, such that, for every $a\in U''$ and every $b'\in  V''$, there exists a path connecting $0$ to $b'$, entirely contained in $V'$ made of $q$-translations of exponential maps. Indeed, if $a, b\in U''$, it is enough to set $q\coloneqq\widetilde{\phi}(a)^{-1}a^{-1}$ and build the concatenation of $q$-translated of the maps $(E_2^\phi)_q,\dots,(E_m^\phi)_q$, that are integral curves of $D^{\phi_q}_2,\dots,D^{\phi_q}_m$, respectively, by \cref{lem:Invariance2IntegralCurve}, and then chain it with the $q$-translated curves $(E^\phi_{m+1})_q,\dots,(E^\phi_n)_q$, that are integral curves of $D^{\phi_q}_{m+1},\dots,D^{\phi_q}_n$, respectively. By construction,  this concatenation connects $0$ to $b'=\widetilde\phi(a)^{-1}a^{-1}b\widetilde \phi(a)$. It suffices then to take a small enough \[V''\subseteq\bigcup\{\widetilde\phi(a)^{-1}a^{-1}b\,\widetilde \phi(a): a, b\in U''\}.\] 
	Moreover, by (d) of \cref{prop:PropertiesOfIntrinsicTranslation}, we get $\phi_q(0)=0$ and by (c) of \cref{prop:PropertiesOfIntrinsicTranslation}, one also has $\phi_q(b')=\phi_q(b')-\phi_q(0)=\phi(b)-\phi(a)$.
	%Let us now prove \eqref{eqn:EquationToShow}. Fix $\varrho>0$ such that $B(a_0,2\varrho)\subseteq U''$. With this condition, if $a\in B(a_0,\varrho)$ and $\|a^{-1}b\|<\varrho$, we have $a,b\in U''$. Define $q\coloneqq\widetilde{\phi}(a)^{-1}a^{-1}$ and $b'\coloneqq\widetilde{\phi}(a)^{-1}a^{-1}b\,\widetilde{\phi}(a)$. 
	Notice also that $\left(\widetilde{\phi}(a)^{-1}a^{-1}b\,\widetilde{\phi}(a)\right)^1 = b^1-a^1$, so that we have
	\begin{equation}\label{eqn:KeyEstimate2}
	\frac{|\phi(b)-\phi(a)-\omega(a_0)\cdot(b^1-a^1)|}{\|\widetilde{\phi}(a)^{-1}a^{-1}b\,\widetilde{\phi}(a)\|} = \frac{|\phi_q(b')-\phi_q(0)-\omega(a_0)\cdot (b')^1|}{\|b'\|}, \quad \forall a,b\in U''.
	\end{equation}
	
	%Notice that, by \cref{lemma333FS}, $b'\to 0$ as $\varrho\to 0$ and then we can assume $b'\in V''$. 
	For any $a, b\in U''$, we hence consider the concatenation $(E^\phi_2)_q,\dots,(E^\phi_n)_q$ of integral curves of $D^{\phi_q}_2,\dots,D^{\phi_q}_n$ entirely lying in $V'$, constructed as above, that connects $0$ to $b'$. Similarly to \eqref{eqn:Concatenation}, we denote the concatenation by the following chain 
	\begin{equation}\label{eqn:ConcatenationInTheorem}
	\begin{aligned}
	\bar b'_1\coloneqq( 0,0,\dots,0)&\rightarrow_{(E^\phi_{2})_q}\bar b'_{2}\coloneqq( 0,b'_2,r_{3,2},\dots,r_{n,2})\rightarrow_{(E^\phi_{3})_q}\dots  \\\dots 
	&\rightarrow_{(E^\phi_j)_q} \bar b'_j\coloneqq( 0,b'_2,\dots,b'_j,r_{j+1,j},\dots,r_{n,j})\rightarrow_{(E^\phi_{j+1})_q}\dots \\
	\dots &\rightarrow_{(E^\phi_{n})_q}\bar b'_n\coloneqq b'=(0,b'_2,\dots,b'_n),
	\end{aligned}
	\end{equation}
	where each $(E^\phi_j)_q$ is defined on  $I_j\subseteq[-|b_j'-r_{j,j-1}|,|b_j'-r_{j,j-1}|]$, with the convention  $r_{2,1}\coloneqq 0$. Since $D^{\phi}\phi=\omega$ in the broad* sense, by \cref{lem:Invariance2IntegralCurve} and in particular \eqref{eqn:HowOmegaChanges} and \eqref{eqn:FundamentalTheoremOfCalculusOnCurves}, we get that 
	\begin{equation}\label{eqn:FundamentalOfCalculus}
	(\phi_q\circ(E_j^\phi(a))_q)'(t)=(\overline \omega_j)_q((E^\phi_j(a))_q(t))=\omega_j(E^\phi_j((a,t))),
	\end{equation}
	 for all $j=2,\dots,m$, all $t\in I_j$ and all $a\in U''$; where the first equality follows from \eqref{eqn:FundamentalTheoremOfCalculusOnCurves} and the second one by the definition of $(\overline \omega_j)_q$, see \eqref{eqn:Translation} and \eqref{eqn:HowGammaChanges}. For every $a, b \in U''$, we can now perform the following estimates, which we subsequently explain:
	\begin{equation}\label{eqn:KeyEstimate}
	\begin{aligned}
	|\phi_q(b') -&\phi_q(0)-\omega(a_0)\cdot (b')^1|\\ &=\left|\sum_{j=2}^{m}\left(\phi_q(\bar b'_j)-\phi_q(\bar b'_{j-1})-\omega_j(a_0)b'_j\right)+\sum_{j=m+1}^n \left(\phi_q(\bar b'_j)-\phi_q(\bar b'_{j-1})\right)\right| \\
	&=\left|\sum_{j=2}^{m}\left(\omega_j(b_j^*)-\omega_j(a_0)\right)b'_j+\sum_{j=m+1}^n \left(\phi_q(\bar b'_j)-\phi_q(\bar b'_{j-1})\right)\right|  \\
	\leq \sup_{j=2,\dots,m}|&\omega_j(b_j^*)-\omega_j(a_0)|\|b'\|+ f\left(\sup_{j=m+1,\dots,n}| b'_j-r_{j,j-1}|\right)\sum_{j=m+1}^n|b'_j-r_{j,j-1}|^{1/\deg j}.
	\end{aligned}
	\end{equation}
	In the second equality we used \eqref{eqn:FundamentalOfCalculus} and, for every $j=2,\dots,m$, the point $b_j^*$ is on $E_j^\phi(I_j)$ and satisfies the conditions of Lagrange's Theorem. In the third inequality we estimated the first term with the supremum norm and the second term by exploiting the definition of $f$ in \eqref{eqn:DefinitionOff}, but replacing in \eqref{eqn:DefinitionOff} $\phi\circ E^{\phi}_j$ with $\phi_q\circ(E^\phi_j)_q$. Indeed, by \eqref{eqn:Translation}, the map $\phi_q\circ (E^\phi_j)_q$ is a Euclidean translation (in the horizontal coordinates) of $\phi\circ E^\phi_j$.
	
	Notice that, by continuity and with simple estimates coming from \cref{lemma333FS}, by definition of $b'$ and $b^*_j$ one has
	\[
	\begin{aligned}
	\lim_{\varrho\to0}& \left(\sup\{\|b'\|: a,b\in U'', \|a^{-1}b\|\leq \varrho\}\right)=0, \qquad\text{and}\\
	\lim_{\varrho\to0}&\left(\sup\{\|a_0^{-1}b^*_j\|: a,b\in U'', \|a^{-1}b\|\leq \varrho\}\right)=0, \quad\forall j=2,\dots, n,
	\end{aligned}
	\]
	where we implicitly mean that $b^*_j$ is also a function of the concatenation and the limit is uniform also with respect to that choice.
	Moreover, the concatenation in \eqref{eqn:ConcatenationInTheorem} uniformly collapses to $0$ as $\varrho\to 0$, since by continuity one has 
	\[
	\lim_{\varrho\to0}\left(\sup\{|b'_j-r_{j,j-1}|: a,b\in U'', \|a^{-1}b\|\leq \varrho\}\right)=0, \qquad \forall j=2,\dots,n,
	\]
	where again, the uniformity has to be understood also in the choice of the concatenation.
	We can then find two continuous functions $\alpha_1,\alpha_2\colon (0,+\infty)\to (0,+\infty)$ such that $\lim_{\varrho\to 0} \alpha_1(\varrho)=\lim_{\varrho\to0}\alpha_2(\varrho)=0$ and, combining \eqref{eqn:KeyEstimate} and \eqref{eqn:KeyEstimate2}, one has  
	\begin{equation}\label{eqn:Final}
	\begin{aligned}
	\frac{|\phi(b)-\phi(a)-\omega(a_0)\cdot(b^1-a^1)|}{\|\widetilde{\phi}(a)^{-1}a^{-1}b\,\widetilde{\phi}(a)\|}&\leq \sup_{j=2,\dots,m}\|\omega_j-\omega_j(a_0)\|_{L^\infty(B(a_0,\alpha_1(\varrho)))}\\&\hphantom{=}+f(\alpha_2(\varrho))\sum_{j=m+1}^n\frac{|b'_j-r_{j,j-1}|^{1/\deg j}}{\|b'\|},
	\end{aligned}
	\end{equation}
	for every sufficiently small $\varrho>0$ and every $a\in B(a_0,\varrho)$ and $b\in U''$ such that $\|a^{-1}b\|<\varrho$.
	We claim that we are in a position to apply \cref{prop:InverseLpischitz} to the function $\phi_q$ and to the curves $E^{\phi_q}_j$ that connect 0 to $b'$. Indeed, for $j=2,\dots,m$, the uniform bound on the Lipschitz constant of the map $\phi_q\circ(E^\phi_j)_q$ follows from \eqref{eqn:FundamentalOfCalculus} and the fact that $\omega$ is continuous, while for $j\geq m+1$ we use the fact that, since $f(\varrho)\to0$ as $\varrho\to0$, the maps $\phi_q\circ(E^\phi_j)_q$ are in $h^{1/\deg j}(I_j)$ with a uniform bound on the H\"older constant. In particular, by \eqref{eqn:ThesisOfConverseLipschitz2}, we get that for every $\varrho$ sufficiently small, there exists $C'>0$ depending on the uniform bound of $\omega$ on $U'$ such that for every $j=2,\dots,n$  
	\[
	|r_{j,j-1}|^{1/{\rm deg}j}\leq C'\|b'\|,
	\]
	and thus from \eqref{eqn:Final} we get
	\begin{equation*}
	\frac{|\phi(b)-\phi(a)-\omega(a_0)\cdot(b^1-a^1)|}{\|\widetilde{\phi}(a)^{-1}a^{-1}b\,\widetilde{\phi}(a)\|} \leq \sup_{j=2,\dots,m}\|\omega_j-\omega_j(a_0)\|_{L^\infty(B(a_0,\alpha_1(\varrho)))}+f(\alpha_2(\varrho))(n-m)(1+C').
	\end{equation*}
	By the continuity of each $\omega_j$, the proof follows by letting $\varrho\to0$.
\end{proof}
For the forthcoming corollaries we recall for the reader's benefit that we are going to use the notation in \cref{def:coordinateconletilde}.
\begin{coroll}\label{coroll:Koz}
	Let $\mathbb W$ and $\mathbb L$ be two complementary subgroups of a Carnot group $\G$, with $\mathbb L$ horizontal and $k$-dimensional and let $(X_1,\dots,X_n)$ be an adapted basis of the Lie algebra $\mathfrak g$ such that $\mathbb W=\exp({\rm span}\{X_{k+1},\dots,X_n\})$ and $\mathbb L=\exp({\rm span}\{X_{1},\dots,X_k\})$. Let $\widetilde U\subseteq \mathbb W$ be open and let $\widetilde \phi\colon\widetilde U\to \mathbb L$ and $\omega\colon U\subseteq \R^{n-k}\to\R^{k\times(m-k)}$ be two continuous functions. Assume that $\widetilde\phi$ is vertically broad* h\"older and assume $D^\phi\phi=\omega$ holds in the broad* sense on $U$. Then the graph of $\widetilde\phi$ is a co-horizontal $C^1_{\rm H}$-surface with tangents complemented by $\mathbb L$.
\end{coroll}
\begin{proof}
	It is enough to combine \cref{prop:DPhiPhi=wBroadImpliesUid} and \cref{prop:UidC1H}.
\end{proof}
\begin{coroll}\label{coroll:IDContinuousImpliesUid}
	Let $\mathbb W$ and $\mathbb L$ be two complementary subgroups of a Carnot group $\G$, with $\mathbb L$ horizontal and $k$-dimensional and let $(X_1,\dots,X_n)$ be an adapted basis of the Lie algebra $\mathfrak g$ such that $\mathbb W=\exp({\rm span}\{X_{k+1},\dots,X_n\})$ and $\mathbb L=\exp({\rm span}\{X_{1},\dots,X_k\})$. Let $\widetilde U\subseteq \mathbb W$ be open and let $\widetilde \phi\in \mathrm{ID}(\widetilde U,\mathbb W;\mathbb L)$ be a vertically broad* h\"older map and assume $\de ^\phi\!\phi$ is continuous on $\widetilde U$. Then $\widetilde\phi\in \mathrm{UID}(\widetilde U, \mathbb W;\mathbb L)$.
\end{coroll}
\begin{proof}
	It is enough to combine \cref{prop:IdGradContImpliesBroad} and \cref{prop:DPhiPhi=wBroadImpliesUid}.
\end{proof}
\begin{rem}\label{rem:StrategyIdGradContinuousUid}
	We do not know whether, in arbitrary Carnot groups, one can drop the condition of vertically broad* h\"older regularity in \cref{coroll:IDContinuousImpliesUid}. This is the case for step-2 Carnot groups, with $\mathbb L$ horizontal and one-dimensional, see \cref{coroll:step2.2}. 
	
	%{\color{blue} A general strategy could be the following one: if $\widetilde{\phi}\in {\rm ID}(\widetilde U, \mathbb W;\mathbb L)$ and $\de^{\phi}\!\phi$ is continuous, then, by \cref{prop:IdGradContImpliesBroad}, we get that $D^{\phi}\phi=\nabla^{\phi}\phi$ in the broad* sense with a continuous datum. Also, by \eqref{eqn:IdBehaviourOfkCurves} in \cref{prop:DerivativeOfIntegralCurves}, we get that, along every integral curve $\widetilde\gamma$ of $D^{\phi}_W$ with $\deg W=d$, $\widetilde\phi$ is {\em pointwise} $1/d$-little H\"older continuous. 
	
%	In order to get $\widetilde{\phi}\in {\rm UID}$, we would like then to apply \cref{prop:DPhiPhi=wBroadImpliesUid}, but we need $\widetilde\phi$ to be vertically broad* h\"older , that is a {\em locally uniform} little H\"older continuity of the right degree on a family of integral curves. As it is shown in \cref{exa:Serapioni}, the pointwise little H\"older continuity cannot improve to a uniform little H\"older continuity, so that to make this argument run one should obtain this uniformity in an other way that we still do not know - either if it is possible or true. }
\end{rem}
\begin{coroll}\label{coroll:StarImpliesWithoutStar}
	Let $\mathbb W$ and $\mathbb L$ be two complementary subgroups of a Carnot group $\G$, with $\mathbb L$ horizontal and $k$-dimensional and let $(X_1,\dots,X_n)$ be an adapted basis of the Lie algebra $\mathfrak g$ such that $\mathbb W=\exp({\rm span}\{X_{k+1},\dots,X_n\})$ and $\mathbb L=\exp({\rm span}\{X_{1},\dots,X_k\})$. Let $\widetilde U\subseteq \mathbb W$ be open and let $\widetilde \phi\colon\widetilde U\to \mathbb L$ and $\omega\colon U\subseteq \R^{n-k}\to\R^{k\times(m-k)}$ be two continuous functions. Assume $\widetilde\phi$ is vertically broad* h\"older and assume $D^\phi \phi=\omega$ holds in the broad* sense on $U$. Then $\widetilde\phi$ is vertically broad h\"older and $D^\phi\phi=\omega$ holds in the broad sense on $U$.
\end{coroll}
\begin{proof}
	It is enough to combine \cref{prop:DPhiPhi=wBroadImpliesUid}, \cref{prop:UidImpliesH1k} and \cref{prop:IdGradContImpliesBroad}.
\end{proof}

\subsection{Area formula for codimension-one graphs in terms of intrinsic derivatives}\label{sub:Area} 
We prove here that, if $\mathbb L$ is horizontal and one-dimensional, and $\widetilde\phi\in {\rm UID}(\widetilde U,\mathbb W;\mathbb L)$, with $\widetilde U$ open, then $D^{\phi}\phi=\nabla^{\phi}\phi$ holds {\bf in the sense of distributions}. For a precise definition of the distribution $D^{\phi}\phi$ the interested reader may soon read the first lines of the proof of \cref{prop2.22}. We also provide an area formula for ${\rm graph}(\widetilde\phi)$ in this case. For the case $\mathbb G= \mathbb H^n$, this formula was already obtained in \cite[Proposition 2.22 \& Remark 2.23]{ASCV06}. Recall that we are going to use the notation given in \cref{def:coordinateconletilde}.
\begin{prop}\label{prop2.22}
Let $\W$ and $\mathbb L$  be two complementary subgroups of a Carnot group $\G$ with $\mathbb L$ horizontal and one-dimensional, and let $(X_1,\dots, X_n)$ be an adapted basis of the Lie algebra $\mathfrak g$ such that $\mathbb L=\exp({\rm span}\{X_1\})$ and $\mathbb W=\exp({\rm span}\{X_2,\dots,X_n\})$. Let $\widetilde U \subseteq \mathbb W$ be open and consider $\widetilde \phi\in \mathrm{UID}(\widetilde U, \mathbb W; \mathbb L)$. Then the following facts hold.
\begin{itemize}
	\item[(a)] For every $j=2,\dots,m$, the distribution $D_j^{\phi}\phi\coloneqq D_{X_j}^{\phi}\phi$ is well-defined on $U$.
	\item[(b)] For every $a\in U$, there exist $\delta>0$ and a family of functions $\{\phi_\eps\in C^1(B(a,\delta)):\eps\in (0,1)\}$ such that
	\[
	\lim_{\eps\to0}\phi_\eps=\phi \quad\text{and}\quad\lim_{\eps\to0}D_j^{\phi_\eps}\phi_\eps=\nabla^\phi\phi(X_j)  \quad\text{in $L^\infty(B(a,\delta))$},
	\]
	for every  $j=2,\dots,m$.
	\item[(c)] The equation
\begin{equation}\label{eqn:DistributionalEquation}
D_j^{\phi}\phi = \nabla^{\phi}\phi(X_j)\eqqcolon\nabla^\phi_j\phi,
\end{equation}
holds in the distributional sense on $U$, for every $j=2,\dots,m$. Here $\nabla^{\phi}\phi$ is the intrinsic gradient of $\phi$, see \cref{def:NablaPhiPhi}.
\item[(d)] The subgraph of $\phi$ defined by $ E_\phi\coloneqq\{ w\cdot\exp(tX_1): w\in U, t<\phi(w)\}$ has locally finite $\G$-perimeter\footnote{Here we take the usual definition of the horizontal perimeter with respect to the orthonormal basis $(X_1,\dots,X_m)$, see \cite[Definition 2.18]{FSSC03a}.} in $U\cdot \exp(\R X_1)$ and its $\G$-perimeter measure $|\partial E_\phi|_{\G}$ is given by
\begin{equation}\label{eqn:PerimeterSubgraph}
|\partial E_\phi|_{\G}(V) =\int _{\Phi^{-1}( V)} \sqrt{1+  |\nabla^{ \phi} \phi|_{m-1}^2} \,  \de\mathscr{L}^{n-1},
\end{equation}
for every Borel set $ V\subseteq U\cdot \exp{(\R X_1)} $, where $\Phi\colon U\to \R^n$ is the graph function of $\widetilde \phi$ composed with the exponential coordinates, and with a little abuse of notation we wrote $U\cdot \exp(\R X_1)$ meaning the set $\widetilde U\cdot \exp(\R X_1)$ embedded in $\R^n$ through exponential coordinates.

Moreover, the set $\mbox{graph}(\widetilde\phi)$ has a unit normal given, up to a sign, by 
\begin{equation}\label{eqn:Normal}
\nu_{E_{\phi}} = \left(-\frac{1}{\sqrt{1+  |\nabla^{ \phi} \phi|_{m-1}^2}},\frac{\nabla_2^{\phi}\phi}{\sqrt{1+  |\nabla^{ \phi} \phi|_{m-1}^2}},\dots,\frac{\nabla_m^{\phi}\phi}{\sqrt{1+  |\nabla^{ \phi} \phi|_{m-1}^2}}\right)\in \R^m.
\end{equation}
\end{itemize}
\end{prop}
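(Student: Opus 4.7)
The argument proceeds by smoothing $\phi$ and using the classical area formula for graphs of smooth maps, combined with the distributional structure coming from the triangular form of the projected vector fields. For item (a), Proposition~\ref{prop:CompatibleCoordinates} gives $D^\phi_j = \partial_{x_j} + \sum_{i>n_{\deg j}} P_i^j(\phi,x_{k+1},\dots,x_{n_{\deg i-1}})\,\partial_{x_i}$, and the key observation is that $P_i^j$ does not depend on $x_i$. Thus, setting $G_i^j(s,x)\coloneqq\int_0^s P_i^j(\sigma,x)\,d\sigma$, one has the divergence-free identity $P_i^j(\phi,x)\,\partial_{x_i}\phi = \partial_{x_i}\!\bigl[G_i^j(\phi,x)\bigr]$ pointwise for any smooth $\phi$. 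This suggests defining $D^\phi_j\phi$ as the distribution acting on test functions $\psi\in C_c^\infty(U)$ by $\langle D^\phi_j\phi,\psi\rangle \coloneqq -\int_U \phi\,\partial_{x_j}\psi\,dx - \sum_i \int_U G_i^j(\phi,x)\,\partial_{x_i}\psi\,dx$, which is meaningful because $\phi$ is continuous by Remark~\ref{rem:InvarianceOfIdByTranslations}(i).

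For item (b), the natural choice is $\phi_\eps\coloneqq \phi\ast\rho_\eps$ with a standard Euclidean mollifier in $\mathbb{R}^{n-1}$. Uniform convergence $\phi_\eps\to\phi$ on compact subsets of $U$ is immediate from continuity of $\phi$. The non-trivial statement is the uniform convergence $D^{\phi_\eps}_j\phi_\eps \to \nabla^\phi_j\phi$, and this is where the UID\ hypothesis must be fully exploited. I would represent $D^{\phi_\eps}_j\phi_\eps(a) = (d/dt)|_{t=0}(\phi_\eps\circ\gamma^a_\eps)(t)$ where $\gamma^a_\eps$ is the integral curve of $D^{\phi_\eps}_j$ through $a$, compare with $\nabla^\phi_j\phi(a) = (d/dt)|_{t=0}(\phi\circ\gamma^a)(t)$ provided by Proposition~\ref{prop:DerivativeOfIntegralCurves}(i), and control the difference using: continuous dependence of ODE solutions on the coefficients (uniform in $a$), the continuity of $\nabla^\phi\phi$ from Proposition~\ref{prop:ContinuityOfDifferentialUid}(b), and the uniform $1/\deg j$-little Hölder control on $\phi\circ\gamma$ provided by Proposition~\ref{prop:UidImpliesH1k}. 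The uniformity statements in Proposition~\ref{prop:ContinuityOfDifferentialUid} and Proposition~\ref{prop:UidImpliesH1k} are precisely what upgrades pointwise to uniform convergence of $D^{\phi_\eps}_j\phi_\eps$.

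Item (c) is then a direct consequence of (b): for the smooth approximants, the divergence-free structure from (a) gives $\int(D^{\phi_\eps}_j\phi_\eps)\psi\,dx = -\int\phi_\eps\,\partial_{x_j}\psi\,dx - \sum_i\int G_i^j(\phi_\eps,x)\,\partial_{x_i}\psi\,dx$; passing to the limit $\eps\to 0$ using uniform convergence of $\phi_\eps$, $D^{\phi_\eps}_j\phi_\eps$, and the continuity of $G_i^j(\cdot,x)$ in its first argument identifies the limit with $\langle D^\phi_j\phi,\psi\rangle$ on one hand and with $\int\nabla^\phi_j\phi\cdot\psi$ on the other, giving \eqref{eqn:DistributionalEquation}.

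For item (d), the area formula and the normal are classical when $\phi_\eps$ is smooth: taking $f_\eps(x_1,x')\coloneqq x_1-\phi_\eps(x')$ as a defining function, \eqref{DPHI2} relates the horizontal gradient of $f_\eps$ on the graph to $\nabla^{\phi_\eps}\phi_\eps$, yielding both the stated formula $|\partial E_{\phi_\eps}|_\G(V) = \int_{\Phi^{-1}_\eps(V)}\sqrt{1+|\nabla^{\phi_\eps}\phi_\eps|^2}\,d\mathcal L^{n-1}$ and the explicit expression \eqref{eqn:Normal} for $\nu_{E_{\phi_\eps}}$. To transfer to the UID\ case I would use the $L^1_{\mathrm{loc}}$-convergence $\mathbf{1}_{E_{\phi_\eps}}\to\mathbf{1}_{E_\phi}$ coming from uniform convergence of $\phi_\eps$, combined with $L^1$-lower semicontinuity of the $\G$-perimeter to get $|\partial E_\phi|_\G(V)\le\int_{\Phi^{-1}(V)}\sqrt{1+|\nabla^\phi\phi|^2}\,d\mathcal L^{n-1}$ (after identifying the limit of the right-hand side via (b)); the matching lower bound can be obtained directly by testing the definition of $\G$-perimeter against $\psi\in C_c^1(V;\mathbb{R}^m)$ with $|\psi|\le 1$ and using the distributional identity (c) and the explicit form of $X_1,\dots,X_m$ acting on the indicator of the subgraph in exponential coordinates. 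The normal \eqref{eqn:Normal} is then read off by uniqueness of the measure-theoretic normal on the reduced boundary. The principal difficulty lies in (b), specifically the uniform control of the commutator between the Euclidean mollifier and the polynomial, $\phi$-dependent coefficients of $D^\phi_j$: this is where Proposition~\ref{prop:UidImpliesH1k} plays a decisive role, generalising the analogous step in \cite{ASCV06} for $\mathbb H^n$.
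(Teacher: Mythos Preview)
Your treatment of (a) is fine and essentially coincides with the paper's: the structural point is that each coefficient $P_i^j$ in \eqref{eqn:CoordinateDPhiJ} is polynomial in $\phi$ with $x$-coefficients independent of $x_i$, so $P_i^j(\phi,x)\partial_{x_i}\phi$ is a total $\partial_{x_i}$-derivative of a continuous function. The paper phrases this monomial by monomial ($g(x)\phi^h\partial_{x_i}\phi = \frac{g(x)}{h+1}\partial_{x_i}\phi^{h+1}$), but the content is the same.

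The genuine gap is in (b). You propose $\phi_\eps=\phi\ast\rho_\eps$ and claim $D^{\phi_\eps}_j\phi_\eps\to\nabla^\phi_j\phi$ uniformly, invoking ODE continuous dependence and \cref{prop:UidImpliesH1k}. But $D^{\phi_\eps}_j\phi_\eps(a)=\partial_{x_j}\phi_\eps(a)+\sum_i P_i^j(\phi_\eps(a),a)\partial_{x_i}\phi_\eps(a)$ involves the \emph{Euclidean} partial derivatives of $\phi_\eps$, which are not controlled by uniform convergence of $\phi_\eps$; the commutator $[\text{mollifier},D^\phi_j]$ you allude to is precisely the obstruction, and neither \cref{prop:DerivativeOfIntegralCurves} nor \cref{prop:UidImpliesH1k} gives any handle on $\partial_{x_i}\phi_\eps$. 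Your integral-curve representation $D^{\phi_\eps}_j\phi_\eps(a)=(d/dt)|_{t=0}(\phi_\eps\circ\gamma^a_\eps)$ does not help: even if $\gamma^a_\eps\to\gamma^a$ uniformly, the derivative at $t=0$ of $\phi_\eps\circ\gamma^a_\eps$ still sees $\nabla\phi_\eps$, for which you have no bound. This is a known difficulty (cf.\ \cref{rem:PointwiseAndLocalApproximability}), and the paper bypasses it entirely.

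The paper's route for (b) is to use \cref{prop:UidC1H}: since $\widetilde\phi$ is UID, its graph is locally $\{f=0\}$ for some $f\in C^1_{\rm H}$ with $X_1f\neq 0$. One mollifies $f$ (not $\phi$) to get $f_\eps\in C^1$ with $X_jf_\eps\to X_jf$ uniformly, and applies the Euclidean implicit function theorem to obtain $\phi_\eps$. Having first checked that for $C^1$ maps one has the pointwise identity $D^{\phi_\eps}_j\phi_\eps=-(X_jf_\eps/X_1f_\eps)\circ\Phi_\eps$, uniform convergence to $-(X_jf/X_1f)\circ\Phi=\nabla^\phi_j\phi$ (via \eqref{DPHI2}) is immediate. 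This completely avoids any commutator estimate.

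For (d) your approximation-plus-semicontinuity plan is more laborious than necessary and has a loose end: $f_\eps(x_1,x')=x_1-\phi_\eps(x')$ is not adapted to the left-invariant frame (in general $X_1f_\eps\neq 1$), so the link to $\nabla^{\phi_\eps}\phi_\eps$ via \eqref{DPHI2} is not as direct as you suggest. The paper instead invokes \cite[Theorem~2.1]{FSSC03b} for $C^1_{\rm H}$-hypersurfaces, which directly yields $|\partial E_\phi|_\G(V)=\int_{\Phi^{-1}(V)}\frac{|\nabla_\G f|}{X_1f}\circ\Phi\,\de\mathscr L^{n-1}$ for the defining function $f$; combining with \eqref{DPHI2} gives \eqref{eqn:PerimeterSubgraph} locally, and a covering argument globalises. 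The normal \eqref{eqn:Normal} is read off from $\nabla_\G f$ and \eqref{DPHI2}. No approximation or semicontinuity is needed.
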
 
\begin{proof} 
Notice that the fact that $D^{\phi}_j\phi$ is a well-defined distribution on $U$ is a consequence of \cref{prop:CompatibleCoordinates} and the fact that $\mathbb L$ is one-dimensional. Indeed, in coordinates, we get that the vector field $D^{\phi}_j$ is the sum of terms $g(x)\phi^h\partial_{x_i}$, for some polynomial function $g$ of the coordinates of $\mathbb W$, some integer $h\geq 0$, and some $i=2,\dots,n$. Thus in order to define the distribution $D^{\phi}_j\phi$, we only need to define $g(x)\phi^h\partial_{x_i}\phi\coloneqq g(x)\frac{1}{h+1}\partial_{x_i}\phi^{h+1}$. Since $g(x)\frac{1}{h+1}\partial_{x_i}\phi^{h+1}$ is well-defined in the sense of distributions, because $\phi$ is continuous, (a) is proved.

We now show that equality \eqref{eqn:DistributionalEquation} holds pointwise if $\widetilde{\phi}\colon\widetilde{U}\subseteq \mathbb W\to\mathbb L$ is of class $C^1$ in $\widetilde{U}$. In case $\widetilde \phi\in C^1(\widetilde U)$, then, by \cite[Theorem 4.9]{DiDonato18}, one has $\widetilde{\phi}\in \mathrm{UID}(\widetilde U, \mathbb W; \mathbb L)$. \\ We can assume without loss of generality that $e\in\widetilde{U}$ and $\widetilde{\phi}(e)=e$. Indeed, if we want to prove identity \eqref{eqn:DistributionalEquation} in $a\in U$, we may consider $\widetilde{\phi}_p$ with $p\coloneqq\widetilde{\phi}(a)^{-1}\cdot  a^{-1}$, and use the invariance properties given by \cref{lem:Invariance1DPhi} and \cref{rem:InvarianceOfIdByTranslations}, and then notice that $\widetilde{\phi}_p(e)=e$. 
 By \cref{prop:UidC1H}, since $\widetilde \phi$ is ${\rm UID}$, the set ${\rm graph}(\widetilde{\phi})$ is a $C^1_{\rm H}$-hypersurface and therefore there exist a neighborhood $\widetilde V$ of $e$ in $\G$ and a function $f\in C^1_{\rm H}(\widetilde V)$ such that 
 \begin{equation}\label{eqn:proprietaC1H}
 \mathrm{graph}(\widetilde\phi)\cap \widetilde V=\{p\in \G: f(p)=0\}\cap \widetilde V \quad \mbox{and} \quad \nabla_\G f\neq 0 \text{ on $\widetilde V$}.
 \end{equation}
  Then, for every sufficiently small $\eps>0$ and for every $j=2,\dots,n$, one has
  \[
  f\Big(\exp (\eps X_j) \widetilde{\phi} (\exp (\eps X_j)) \Big)=0.
  \]
  Therefore, with a little abuse of notation, one can differentiate with respect to $\eps$ to get
\begin{equation}\label{eqn:DeriveF}
\begin{split}
0& = \frac \de {\de\eps }_{|_{\eps=0}} f\Big(\exp (\eps X_j) \widetilde{\phi} (\exp (\eps X_j)) \Big)\\
&= \frac \de {\de\eps }_{|_{\eps=0}} f\Big(\widetilde{\phi} (\exp (\eps X_j)) \Big) + \frac \de {\de\eps }_{|_{\eps =0}} f\Big(\exp (\eps X_j)  \Big) \\
&= X_1 f_{|_e}\left(\frac \de {\de\eps }_{|_{\eps =0}} \phi  (\eps X_j) \right) + X_jf_{|_e}\\
&= X_1 f_{|_e}\left(\frac \de {\de\eps }_{|_{\eps =0}} ( \phi \circ \pi_{\mathbb W})(\eps X_j) \right) + X_jf|_e\\
&= (X_1 f)_{|_e} D_j^\phi\phi|_e  + (X_jf)_{|_e},
\end{split}
\end{equation}
where we used the fact that $\G=\mathbb W\cdot\mathbb L$ and exploited the fact that $\widetilde{\phi}$ takes values in $\mathbb L=\exp({\rm span}\{X_1\})$. The last equality follows by using the definition of $D^{\phi}_j$ acting on $\phi$, see \eqref{eqn:DefinitionOfDj}. The claim is then obtained by \eqref{eqn:DeriveF} and \eqref{DPHI2}.

To prove (b), we use some ideas of \cite[Theorem~2.1]{FSSC03b} to show that, for any $a\in U$, there exist $\delta>0$ and a family of functions $\{\phi_{\eps}\in C^1(U): 0<\eps<1\}$, such that 
\[
\phi_{\eps}\to\phi, \qquad {\rm and} \quad  D_j^{\phi_{\eps}}\phi_{\eps} \to \nabla^{\phi}\phi(X_j), \quad \forall j=2,\dots,m,
\]
uniformly in the Euclidean ball $B_{\rm e}(a,\delta)\Subset U$, as $\eps\to 0$.  \\
 By \cref{prop:UidC1H}, we can find a neighborhood $\widetilde V$ of $a\cdot \widetilde \phi(a)$ and $f\in C^1_H(\widetilde V)$ satisfying \eqref{eqn:proprietaC1H}. Let $\delta>0$  be such that, setting  $B\coloneqq B_{\rm e}(a,\delta)$, one has $\widetilde{B}\cdot\widetilde{\phi}(\widetilde{B})\subseteq \widetilde V$. Then, up to reducing $\delta$ and $\widetilde V$ and a regularization argument analogous to \cite[Step 1 of Theorem~2.1]{FSSC03b}, we can construct a family  $\{f_{\eps}\in C^1(\widetilde V): 0<\varepsilon<1\}$ such that
\begin{equation}\label{eqn:UniformConvergenceXj}
\lim_{\eps\to0}\left(\sup_{\widetilde V}|X_jf_{\eps}- X_jf|\right)=0, \qquad \forall j=1,\dots,m.
\end{equation}
 Since $\widetilde{\phi}$ takes values in $\exp({\rm span}\{X_1\})$, by \cref{prop:UidC1H}, we can assume without loss of generality that $X_1f>0$ on $\widetilde V$, since $X_1f\neq 0$ and we are free to possibly exchange $f$ with $-f$. By \eqref{eqn:UniformConvergenceXj}, we can find $\varepsilon_0>0$ such that $X_1f_{\eps}>0$ on $\widetilde V$, for every $\eps\in (0,\eps_0)$. For any such $\eps>0$, by the Euclidean implicit function theorem, we can find $\widetilde \phi_{\eps}$, defined on $\widetilde B$, such that
 \[
 \mathrm{graph}(\widetilde \phi_\eps)\cap \widetilde{B}\cdot\widetilde{\phi}_\eps(\widetilde B)=\{p\in \G: f_\eps(p)=0\}\cap\widetilde{B}\cdot\widetilde{\phi}_\eps(\widetilde B).
 \]
 Moreover, since $f_\eps$ is smooth, then also $\widetilde \phi_\eps$ is smooth. In particular, for every $b\in \widetilde{B}$, one has
\[
f_{\eps}(b\cdot\widetilde{\phi}_{\eps}(b))=0, \qquad \forall \eps\in (0,\varepsilon_0).
\]
From \cite[Step 3 of Theorem 1.2]{FSSC03b} we deduce that
\begin{equation}\label{eqn:UniformConvergencePhi}
\lim_{\eps\to 0}\left(\sup_B|\phi_{\eps}-\phi|\right)=0.
\end{equation}
Denote by $\Phi_{\eps}\colon B\to \R^n$ the graph function of $\widetilde\phi_\eps$ composed with the exponential coordinates. Since $\widetilde{\phi}_{\eps}$ is of class $C^1$, by using the pointwise version of \eqref{eqn:DistributionalEquation} for $C^1$ functions and \eqref{DPHI2}, we deduce that
\begin{equation}\label{eqn:nonnevica}
D^{\phi_{\eps}}_j\phi_{\eps}(x)=-\frac{X_jf_{\eps}}{X_1f_{\eps}}\circ \Phi_{\eps}(x), \qquad \forall j=2,\dots,m, \quad \forall x\in B.
\end{equation}
Then, by \eqref{eqn:nonnevica}, \eqref{eqn:UniformConvergenceXj}, \eqref{DPHI2} and \eqref{eqn:UniformConvergencePhi}, we conclude that, for any $j=2,\dots, m$, the family $D^{\phi_{\eps}}_j\phi_{\eps}$ converges uniformly on $B$ to $-\frac{X_jf}{X_1f}\circ\Phi=\nabla^{\phi}\phi(X_j)$, as $\varepsilon \to 0$.\\

The proof of (c) follows directly from (b) and the particular form of the distribution $D^{\phi}_j\phi$ we discussed at the beginning of this proof. In fact from the convergence proved in (b), we know that, for any $a\in U$, there exists $\delta>0$ such that $D_j^\phi \phi=\nabla^\phi\phi(X_j)$ on $B(a, \delta)$ in the sense of distributions, for every $j=2,\dots, m$. It is then enough to consider a locally finite countable open sub-covering $\{B(a_h,\delta_h):h\in \mathbb N\}$ of $U$ and build a partition of unity subordinate to it. The fact that $D^\phi_j\phi=\nabla^\phi \phi(X_j)$ holds in the sense of distributions on $U$ is a consequence of the local identity, the linearity of distributions, and a standard argument using the partition of unity. 

To prove (d), we first notice that, by \cite[Theorem~2.1]{FSSC03b} and \cref{prop:UidC1H}, we know that, for every $p\in \mathrm{graph}(\widetilde \phi)$ there exists a neighborhood $\widetilde V'$ of $p$ and a function $ f\in C_H^1(\widetilde V')$, with $X_1 f>0$ on $\widetilde V'$, representing $\mathrm{graph}(\widetilde \phi)$ as non critical level set and such that
\[
|\partial  E_\phi|_\G ( V) = \int _{\Phi^{-1}(V)} \frac{ |\nabla_\G  f|}{X_1 f}\circ \Phi \, \de \mathscr{L}^{n-1} 
\]
holds for every Borel set $ V\subseteq  V'$. Now by \eqref{DPHI2}, we can write
\begin{equation}\label{eq:arealocale}
|\partial  E|_\G ( V) = \int _{\Phi^{-1}(V)} \sqrt{1+|\nabla^\phi \phi|^2_{m-1}} \de \mathscr{L}^{n-1}, \quad\text{for every Borel set }\quad V \subseteq  V'.
\end{equation}
 Since the right hand side of \eqref{eq:arealocale} does not depend on the choice of $ f$, by a covering argument we can extend it to every Borel subset $V$ of $ U\cdot \exp(\R X_1)$.
 
 The explicit expression of the unit normal in \eqref{eqn:Normal} comes from the fact that the graph of $\widetilde\phi$ is locally the zero-level set of $ f$. Thus, the unit normal of ${\rm graph}(\widetilde\phi)$ is in the direction of $\nabla_{\mathbb H} f$, and taking  \eqref{DPHI2} into account, one has $\nabla_j^{\phi}\phi=-\frac{X_j f}{X_1 f}\circ\Phi$ for every $j=2,\dots,m$, and then we conclude by normalization. 
\end{proof} 

\begin{rem}\label{rem:AreaFormula}
	For what concerns the area formula in arbitrary Carnot groups, in \cite[Theorem~1.2]{Mag17}, the author proves that 
	\begin{equation}\label{eqn:AreaFormulaMagnani}
	P_\G(E)=\beta(d,\nu_E)\mathscr S^{Q-1}\res \mathcal FE,
	\end{equation}
	for any set $E$ of finite perimeter with $C^1_{\rm H}$-rectifiable reduced boundary $\mathcal FE$. The density $\beta$ is explicitly computed in \cite[Theorem~3.2]{Mag17} and depends on the metric $d$ and on the normal $\nu_E$ of $E$ that is defined in the sense of Geometric Measure Theory. Moreover, in case the distance $d$ is vertically symmetric, $\beta$ is a constant that only depends on the group $\G$ and on the metric $d$, see \cite[Theorem~6.3]{Mag17}. We finally notice that every Carnot group admits a metric $d$ that is vertically symmetric, see \cite[Theorem~5.1]{FSSC03a}.
	For a survey on the area formula in Carnot groups, we refer the reader to \cite[Section~4]{SC16}.
	
	Notice that if $\widetilde{\phi}$ is in ${\rm UID}(\widetilde U,\mathbb W;\mathbb L)$, the set ${\rm graph}(\widetilde\phi)$ is a $C^1_{\rm H}$-hypersurface by \cref{prop:UidC1H}. Then, by definition of $C^1_{\rm H}$-rectifiability and by \cite[Theorem~2.1]{FSSC03b}, the subgraph $E_{\phi}$ of $\widetilde\phi$ has a $C^1_{\rm H}$-rectifiable reduced boundary $\mathcal{F}E_{\phi}={\rm graph}(\widetilde\phi)$. Thus we are in a position to apply \cref{prop2.22} and \cite[Theorem~1.2]{Mag17}, and in particular to compare \eqref{eqn:PerimeterSubgraph} with \eqref{eqn:AreaFormulaMagnani} in order to obtain the explicit representation 
	\begin{equation}\label{eqn:AreaFormula2}
	\int_{\widetilde V\cap {\rm graph}(\widetilde \phi)} \beta(d,\nu_{E_\phi})\de \mathscr S^{Q-1} = \int_{\Phi^{-1}(V)} \sqrt{1+|\nabla^{\phi}\phi|_{m-1}^2}\de\mathscr L^{n-1},
	\end{equation}
	for every Borel set $\widetilde V\subseteq \widetilde U\cdot \exp(\mathbb R X_1)$.
	
	A general area formula for a $C^1_{\rm H}$-surface $\Sigma$, valid in an arbitrary Carnot groups, has been very recently obtained in \cite[Theorem~1.1]{JNGV20}. If we call $\alpha$ the Hausdorff dimension of $\Sigma$, and we suppose $\Sigma={\rm graph}(\widetilde\phi)$, this formula allows for a representation of $\mathscr S^\alpha\res \Sigma$ as an integral on $\widetilde U$ of a properly defined area element with respect to $\mathscr S^{\alpha}\res \mathbb W$, see \cite[Lemma~3.2]{JNGV20}. According to the previous equation \eqref{eqn:AreaFormula2}, we thus get that for an arbitrary Carnot group $\mathbb G$, and in case $\mathbb L$ is one-dimensional, the area element of \cite[Theorem~1.1]{JNGV20} is, up to the function $\beta$, explicitly written in terms of the intrinsic gradient of $\widetilde \phi$. 
	
	Eventually, by the recent work \cite{CM20}, in particular \cite[Eq.\ (43) after Theorem~4.2]{CM20}, we get that on $\mathbb H^n$ equipped with a vertically symmetric distance, in case $\mathbb L$ is horizontal and $k$-dimensional, one can explicitly write the area element of \cite[Theorem~1.1]{JNGV20} in terms of the intrinsic gradient $\nabla^{\phi}\phi$. 
\end{rem}

\begin{coroll}\label{coroll:Broad*ImpliesDistributionally}
	Let $\mathbb W$ and $\mathbb L$ be two complementary subgroups of a Carnot group $\G$, with $\mathbb L$ horizontal and one-dimensional, and let $(X_1,\dots, X_n)$ be an adapted basis of the Lie algebra $\mathfrak g$ such that $\mathbb W=\exp({\rm span}\{X_{2},\dots,X_n\})$ and $\mathbb L=\exp({\rm span}\{X_{1}\})$. Let $\widetilde U\subseteq \mathbb W$ be open, and let $\widetilde \phi\colon\widetilde U\to \mathbb L$ and $\omega\colon U\subseteq \R^{n-1}\to\R^{m-1}$ be two continuous functions. Assume $\widetilde\phi$ is vertically broad* h\"older and assume that $D^\phi \phi=\omega$ in the broad* sense on $U$. Then $D^\phi\phi=\omega$ in the sense of distributions and $\omega=\nabla^\phi\phi$.
\end{coroll}
\begin{proof}
	It is enough to combine \cref{prop:DPhiPhi=wBroadImpliesUid} and item (c) of \cref{prop2.22}.% and \cref{prop:IdGradContImpliesBroad}.
\end{proof}

\subsection{Relations between intrinsic differentiability and local approximability}\label{sub:Relations}

\begin{rem}\label{remprop2.22}
 Point (b) of \cref{prop2.22} can actually be generalized to the case $\mathbb L$ horizontal and $k$-dimensional. The computation for the $k$-dimensional case is similar but one should pay attention to the fact that  if $f\eqqcolon(f^{(1)},\dots, f^{(k)})\in C^1_{\rm H}(\widetilde V;\R^k)$ is a vector valued map, its horizontal gradient is a $(k \times m)$-dimensional matrix (see \cref{remarkDPHI2Vectorial}). Using a regularization argument that is similar to \cite[Step 1 of Theorem 2.1]{FSSC03b} as in the proof of \cref{prop2.22}, we can find a family of functions $\{f_{\eps}\in C^1(\widetilde V;\R^k):0<\eps<1\}$, such that each component $X_jf^{(i)}_{\eps}$ converges uniformly to $X_jf^{(i)}$ for every $i=1,\dots, k$ and $j=1,\dots, m$ as $\eps\to 0$, and such that, for every $\eps\in (0,1)$, the associated matrix $\nabla_{\mathbb L}f_\eps$ defined in \cref{remarkDPHI2Vectorial} has det$\nabla_{\mathbb L}f_\eps \ne 0$ on $\widetilde V$. 
 
 Then in order to prove point (b) of \cref{prop2.22} in this general case, we take the previous changes into account, the straightforward changes in \eqref{eqn:DeriveF}, and we run the same computations of the proof of \cref{prop2.22} by using \eqref{DPHI2Vectorial} instead of \eqref{DPHI2} at the end of the argument. 
\end{rem}

\begin{rem}\label{rem:PointwiseAndLocalApproximability}
 	The statement (i)$\Rightarrow$(ii) of \cite[Theorem~5.1]{ASCV06}
 	claims that, in the Heisenberg groups $\mathbb H^n$, if $\sL$ is one-dimensional and $\phi\in {\rm UID}(U,\sW;\sL)$, then there exists $\omega\in C(U;\R^{2n-1})$ such that $\nabla ^\phi\phi=\omega$ in the sense of distributions and there exists a family $\{\phi_\eps\in C^1(U): \eps\in (0,1)\}$ such that $\phi_\eps\to \phi$ and $\nabla^{\phi_\eps}\phi_\eps\to \omega$ uniformly on any compact subset $K\subseteq U$. 
 
  	However, the implication (5.17)$\Rightarrow$(5.19) in its proof is imprecise. In sight of this, one could replace point (ii) with a local version of it in which the approximating functions $\{\phi_{\eps}\}$ depend on the point $a$ and the convergence is uniform in a neighborhood $B(a,\delta)$, see \cite[Proposition 4.6]{ASCV06}. This does not affect the validity of the proofs, that only refer to \cite[Lemma~5.6]{ASCV06}, which holds true with the  weakened approximation assumptions, since it has a local statement. Indeed, the proof of (ii)$\Rightarrow$(i) of \cite[Theorem 5.1]{ASCV06} just needs the local approximation. To the best of our knowledge, all the references to \cite[Theorem~5.1]{ASCV06} just require the local approximation: see point (3) \cite[Theorem~5.8]{DiDonato18}, point (3) of \cite[Theorem~8.2]{DiDonato19}, \cite[Theorem~1.3]{Cor19}, \cite[Proposition~4.4]{Cor19}, \cite[Theorem~2.7]{BSC10b}, \cite[Theorem~1.1 \& Corollary~1.4]{BSC10a}, \cite[Theorem~2.7]{BCSC14}.
  	
  	Anyway, the original formulation of (i)$\Rightarrow$(ii) of \cite[Theorem~5.1]{ASCV06} can be fixed with the approximation argument exploited in the proof of \cite[Theorem~1.2]{MV12} and we expect this to hold true in all Carnot groups of step 2. We warmly thank Francesco Serra Cassano and Davide Vittone for precious suggestions.
\end{rem}

\begin{prop}\label{prop4.12}
Let $\mathbb W$ and $\mathbb L$ be two complementary subgroups in a Carnot group $\mathbb G$, with $\mathbb L$ $k$-dimensional and horizontal. Let $(X_1,\dots, X_n)$
be an adapted basis of the Lie algebra $\mathfrak g$ 
such that $\mathbb L=\exp({\rm span}\{X_1,\dots,X_k\})$ and
$\mathbb W=\exp({\rm span}\{X_{k+1},\dots,X_n\})$. Let $\widetilde U\subseteq \mathbb W$ be an open set and let $\widetilde{\phi}\colon\widetilde{U}\to\mathbb L$ be a continuous function. 

Assume there exists $\omega \in C(U;\R^{k\times(m-k)})$ such that, for every $a\in U$, there exist $r>0$,  and a family of functions $\{\phi_\eps \in C^1(B(a,r);\R^k): 0<\eps<1\}$ satisfying
\begin{equation*}
\lim_{\eps\to0}\phi _\eps=\phi  \quad\text{in $L^\infty(B(a,r))$}\quad  \text{and} \quad  \lim_{\eps\to0}D^{{\phi _\eps}} {\phi _\eps}=\omega \quad\text{in $L^\infty(B(a,r);\R^{k\times(m-k)})$}.
\end{equation*}
Then $\phi $ is a broad* solution in $U$ of the system $ D^{{\phi}} {\phi} = \omega $.
\end{prop}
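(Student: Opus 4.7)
Fix $a_0\in U$ and let $r>0$, $\{\phi_\eps\in C^1(B(a_0,r);\R^k):0<\eps<1\}$ be as given by the hypothesis. The plan is to construct, for each starting point $a$ in a small ball around $a_0$ and each $j\in\{k+1,\dots,m\}$, the map $E_j^\phi(a,\cdot)$ required by \cref{defbroad*} as a uniform limit of the $C^1$-integral curves of the smooth vector fields $D^{\phi_\eps}_j$. The two conditions (a) and (b) in \cref{defbroad*} will follow by passing to the limit, using the uniform convergences $\phi_\eps\to\phi$ and $D^{\phi_\eps}\phi_\eps\to\omega$.

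First I would choose $0<\delta_2<\delta_1<r$ and observe that, by the triangular polynomial form of the vector fields recorded in \cref{prop:CompatibleCoordinates}, the coefficients of $D^{\phi_\eps}_j$ on $B(a_0,\delta_1)$ are polynomials in the components of $\phi_\eps$ and in the coordinates of $\mathbb{W}$. Since $\phi_\eps\to\phi$ uniformly on $B(a_0,r)$ and $\phi$ is continuous, the sup-norms $\|\phi_\eps\|_{L^\infty(B(a_0,\delta_1))}$ are uniformly bounded in $\eps$, and consequently the coefficients of $D^{\phi_\eps}_j$ are uniformly bounded on $B(a_0,\delta_1)$. In particular, by standard ODE theory there exists $\delta_2>0$ such that, for every $a\in\overline{B(a_0,\delta_2)}$ and every $\eps\in(0,1)$, the Cauchy problem $\dot\gamma=D^{\phi_\eps}_j\circ\gamma$ with $\gamma(0)=a$ admits a unique $C^1$ solution $\gamma_{a,j}^\eps\colon[-\delta_2,\delta_2]\to\overline{B(a_0,\delta_1)}$. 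Moreover, by the uniform boundedness of the coefficients, the family $\{\gamma_{a,j}^\eps\}_\eps$ is equi-Lipschitz.

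Since $\phi_\eps\in C^1(B(a_0,r);\R^k)$, the chain rule and the very definition of $D^{\phi_\eps}_j$ as the push-forward along $\pi_{\mathbb W}$ of the left-invariant vector field $X_j$ yield the pointwise identity
\[
\frac{\de}{\de t}\bigl(\phi_\eps^{(\ell)}\circ\gamma_{a,j}^\eps\bigr)(t)=\bigl(D^{\phi_\eps}_j\phi_\eps^{(\ell)}\bigr)\bigl(\gamma_{a,j}^\eps(t)\bigr), \qquad t\in[-\delta_2,\delta_2],
\]
so that, integrating,
\[
\phi_\eps^{(\ell)}\bigl(\gamma_{a,j}^\eps(t)\bigr)-\phi_\eps^{(\ell)}(a)=\int_0^t\bigl(D^{\phi_\eps}_j\phi_\eps^{(\ell)}\bigr)\bigl(\gamma_{a,j}^\eps(s)\bigr)\,\de\!s,
\]
for every $\ell=1,\dots,k$, every $a\in\overline{B(a_0,\delta_2)}$, and every $\eps\in(0,1)$.

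Fix $a\in\overline{B(a_0,\delta_2)}$ and $j\in\{k+1,\dots,m\}$. By Arzelà--Ascoli, there exists an infinitesimal sequence $\eps_h\downarrow 0$ and a Lipschitz curve $\gamma_{a,j}\colon[-\delta_2,\delta_2]\to\overline{B(a_0,\delta_1)}$ such that $\gamma_{a,j}^{\eps_h}\to\gamma_{a,j}$ uniformly. Since the coefficients of $D^{\phi_\eps}_j$ are polynomials in the components of $\phi_\eps$ and the coordinates, the uniform convergence $\phi_\eps\to\phi$ on $B(a_0,\delta_1)$ forces $D^{\phi_{\eps_h}}_j\to D^\phi_j$ uniformly on $B(a_0,\delta_1)$. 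Passing to the limit in the ODE $\dot\gamma_{a,j}^{\eps_h}=D^{\phi_{\eps_h}}_j\circ\gamma_{a,j}^{\eps_h}$ (or equivalently in its integrated form) yields that $\gamma_{a,j}$ is $C^1$, solves $\dot\gamma=D^\phi_j\circ\gamma$, and $\gamma_{a,j}(0)=a$, proving condition (a) of \cref{defbroad*}. Finally, from $\phi_\eps\to\phi$ uniformly, $\gamma_{a,j}^{\eps_h}\to\gamma_{a,j}$ uniformly and $D^{\phi_\eps}_j\phi_\eps^{(\ell)}\to\omega_{\ell j}$ uniformly on $B(a_0,r)$, we may pass to the limit under the integral in the displayed identity above (using the continuity of $\omega$) to obtain
\[
\phi^{(\ell)}\bigl(\gamma_{a,j}(t)\bigr)-\phi^{(\ell)}(a)=\int_0^t\omega_{\ell j}\bigl(\gamma_{a,j}(s)\bigr)\,\de\!s,
\]
which is condition (b). Setting $E_j^\phi(a,\cdot)\coloneqq\gamma_{a,j}$ concludes the proof.

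The only mildly delicate point is the uniform bound on existence time: this is secured by \cref{prop:CompatibleCoordinates}, which shows that the coefficients of $D^{\phi_\eps}_j$ are polynomials in the (uniformly bounded) components of $\phi_\eps$ and the coordinates of $\mathbb{W}$, so that $\{D^{\phi_\eps}_j\}_\eps$ is a family of vector fields with uniformly bounded $L^\infty$-norm on $B(a_0,\delta_1)$. Everything else is a routine stability argument for ODEs combined with uniform passage to the limit under the integral.
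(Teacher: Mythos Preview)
Your proof is correct and follows essentially the same strategy as the paper's: uniform bounds on the polynomial coefficients of $D^{\phi_\eps}_j$ yield a common existence time, Arzel\`a--Ascoli produces a limit curve, and uniform convergence of $\phi_\eps$ and $D^{\phi_\eps}\phi_\eps$ allows passage to the limit in the integrated ODE and in the fundamental-theorem-of-calculus identity. The only cosmetic difference is that the paper applies Arzel\`a--Ascoli to the family $\{E^{\phi_\eps}_j\}_\eps$ viewed as maps on the product $\overline{B(a_0,\delta_2)}\times[-\delta_2,\delta_2]$, extracting a single subsequence $(\eps_h)$ and obtaining a jointly continuous $E_j^\phi$, whereas you extract a subsequence for each fixed $a$ (and $j$) separately; since \cref{defbroad*} requires no regularity of $E_j^\phi$ in the $a$-variable, your pointwise version suffices.
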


\begin{proof}
The proof closely follows the argument used in \cite[Lemma 5.6]{ASCV06}. For simplicity, we consider the case $k=1$. The case $k\geq 2$ can be reached with the same proof and some typographical effort.\\
Let $a\in U$ and fix $j\in \{2,\dots , m\}$ and $\eps >0$. 
Since $\phi_\eps\in C^1(B(a,r))$, we can find $0<\delta_2(\eps)<\delta_1<r$ and a map $E^{\phi_\eps}_j \colon \overline{B(a, \delta_2 (\eps))}\times[-\delta _2(\eps),\delta _2(\eps)] \to  \overline{B(a,\delta_1)}$ such that $E^{\phi_{\eps}}_j(b,\cdot)$ is the \textbf{unique} solution of the Cauchy problem
\begin{equation*}
\left\{
\begin{array}{l}
\gamma '=D^{\phi _\eps} _{j} \circ\gamma, \\
\gamma (0)= b,
\end{array}
\right.
\end{equation*}
 in the interval $[-\delta_2(\eps),\delta_2(\eps)]$, for every $b\in \overline{B(a,\delta_2(\eps))}$.
 By Peano's estimate on the existence time for solutions of ordinary differential equations (see e.g.\ \cite[Theorem 1]{Mustafa}) we can choose $\delta_2 (\eps)=C/\| P_j(x,\phi _\eps) \|_{L^\infty ( \overline{B(a, \delta_1 ) })} $, with $C$ only depending on $\delta _1$ and with $P_j$ a polynomial function of the coordinates $x$ of $\mathbb W$ and the components of $\phi_{\eps}$. This polynomial function depends only on the structure of $D^\phi_j$, see \eqref{eqn:CoordinateDPhiJ}. In particular, since $\phi_\eps$ is converging uniformly on $B(a,\delta_1)$, then $\delta_2(\eps)$ has a positive lower bound $M$ independent on $\eps$ and we are going to verify the conditions of \cref{defbroad*} with $\delta_2\leq M$.\\ Since $\phi_\eps$ are bounded on $B(a,\delta_1)$ uniformly in $\varepsilon>0$ and $D^{\phi_\eps}_j$ are vector fields with polynomial coefficients in $\phi_\eps$ and, possibly, in some of the coordinates, the functions $E^{\phi_\eps}_j$ are equi-Lipschitz with respect to $\varepsilon$ on the compact set $\overline{B(a, \delta_2)}\times[-\delta_2,\delta_2]$. By Arzel\'a-Ascoli Theorem, we can therefore find an infinitesimal sequence $(\eps _h)$ in $(0,1)$ such that $E^{\phi_{\eps _h}}_j$ converges to some continuous function $E_j^\phi $ uniformly on $\overline{B(a, \delta_2)}\times[-\delta_2,\delta_2]$. By definition of $E^{\phi_{\eps_h}}_j$, one has
\begin{equation*}
E^{\phi_{\eps _h}}_j(b,t)= b + \int_0^t D^{\phi _{ \eps_h} }_{j} \left(E^{ \phi_{\eps_h}}_j(b,s)\right)\de\! s \qquad \text{and}
\end{equation*}
\begin{equation*}
\phi _{\eps_h}\left(E_j^{\phi_{\eps_h}}(b,t)\right) - \phi _{\eps_h}\left(E_j^{\phi_{ \eps_h}}(b,0)\right)= \int_0^t  D^{\phi _{ \eps_h} }_{j} \phi _{ \eps_h}\left(E_j^{\phi_{\eps_h}}(b,s)\right)\de \!s,
\end{equation*}
for every $b\in \overline{B(a,\delta_2)}$ and every $t\in [-\delta_2,\delta_2]$. \\ By letting $h\to \infty$ and using that all the involved convergences are uniform, we get
\begin{equation*}
E _j^\phi(b,t)= b+ \int_0^t D^\phi _{j}\left({E_j^\phi(b,s)}\right)\de \!s,\qquad \text{and}
\end{equation*}
\begin{equation*}
\quad\phi(E_j^\phi(b,t))-\phi(E_j^\phi(b,0))= \int_0^t \omega _{j} (E _j^\phi(b,s)) \de\! s,
\end{equation*}
 for every $b\in \overline{B(a,\delta_2)}$ and every $t\in [-\delta_2,\delta_2]$, which are the conditions we were looking for to make $D^\phi\phi=\omega$ hold in the broad* sense.
\end{proof}

\begin{coroll}
Let $\mathbb W$ and $\mathbb L$ be two complementary subgroups in a Carnot group $\mathbb G$, with $\mathbb L$ horizontal and k-dimensional. Let $(X_1,\dots, X_n)$
be an adapted basis of the Lie algebra $\mathfrak g$ 
such that $\mathbb L=\exp({\rm span}\{X_1,\dots,X_k\})$ and
$\mathbb W=\exp({\rm span}\{X_{k+1},\dots,X_n\})$. Let $\widetilde U\subseteq \mathbb W$ be open and let  $\widetilde{\phi}\in \mathrm{UID}(\widetilde{U},\mathbb W;\mathbb L)$. 
Then, there exists  $\omega \in C(U ; \R^{k\times (m-k)})$ such that $\phi $ is a broad* solution in $U$ of the system $ D^{{\phi}} {\phi} = \omega $. Moreover, $\omega=\nabla^\phi\phi$.
\end{coroll}
\begin{proof}
	It is enough to choose $\omega=\nabla^\phi\phi$, which is continuous taking \cref{prop:ContinuityOfDifferentialUid} into account, since $\widetilde{\phi}\in \mathrm{UID}(\widetilde{U},\mathbb W;\mathbb L)$. The proof follows by combining item (b) of \cref{prop2.22}, which also holds in case $\mathbb L$ is $k$-dimensional, see \cref{remprop2.22}, together with \cref{prop4.12}.
\end{proof}

\subsection{Main theorem}\label{sub:Main}
Now we are in a position to give the following theorem, which is a generalization of \cite[Theorem~5.8]{DiDonato18} to all Carnot groups.
\begin{theorem}\label{thm:MainTheorem}
	Let $\mathbb W$ and $\mathbb L$ be two complementary subgroups in a Carnot group $\mathbb G$, with $\mathbb L$ horizontal and k-dimensional. Let $\widetilde U\subseteq \mathbb W$ be open and let  $\widetilde{\phi}\colon\widetilde{U}\subseteq\mathbb W\to\mathbb L$ be a continuous function. Fix a graded basis $(X_1,\dots,X_n)$  of the Lie algebra $\mathfrak g$ such that $\mathbb L=\exp({\rm span}\{X_1,\dots,X_k\})$ and $\mathbb W=\exp({\rm span}\{X_{k+1},\dots,X_n\})$.  Then, the following facts are equivalent.
	\begin{itemize}
		\item[(a)] $\widetilde\phi\in {\rm UID}(\widetilde U, \mathbb W; \mathbb L)$;
		\item[(b)] $\phi$ is vertically broad* h\"older and there exists $\omega\in C(U;\R^{k\times(m-k)})$ such that, for every $a\in U$, there exist $\delta>0$ and a family of functions $\{\phi_\eps\in C^1(B(a,\delta);\R^k):\eps\in (0,1)\}$ such that
		\[
		\lim_{\eps\to0}\phi_\eps=\phi \quad\text{and}\quad\lim_{\eps\to0}D_j^{\phi_\eps}\phi_\eps=\omega_j  \quad\text{in $L^\infty(B(a,\delta);\R^k)$},
		\]
		for every  $j=k+1,\dots,m$;
		\item[(c)] $\phi$ is vertically broad h\"older and there exists $\omega\in C(U;\R^{k\times(m-k)})$ such that $D^\phi \phi=\omega$ in the broad sense.
		\item[(d)] $\phi$ is vertically broad* h\"older and there exists $\omega\in C(U;\R^{k\times(m-k)})$ such that $D^\phi \phi=\omega$ in the broad* sense.
	\end{itemize}
Moreover, if any of the previous holds, then $\omega=\nabla^\phi\phi$.
\end{theorem}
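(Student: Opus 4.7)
The plan is to assemble the equivalences from the results already established in \cref{sec:3} and \cref{sub:Area}. The scheme will be the cycle (a) $\Rightarrow$ (b) $\Rightarrow$ (d) $\Rightarrow$ (a), supplemented by (a) $\Rightarrow$ (c) $\Rightarrow$ (d). The non-trivial analytic core, namely (d) $\Rightarrow$ (a), is precisely \cref{prop:DPhiPhi=wBroadImpliesUid}, which in addition yields the identification $\omega=\nabla^{\phi}\phi$ claimed in the final sentence of the statement; all the other implications are essentially bookkeeping of previously proven propositions, so I do not expect a genuine obstacle.

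For (a) $\Rightarrow$ (b) and (a) $\Rightarrow$ (c) I would proceed in parallel. Assume $\widetilde\phi\in\mathrm{UID}(\widetilde U,\mathbb W;\mathbb L)$ and set $\omega\coloneqq\nabla^{\phi}\phi$, which is continuous by \cref{prop:ContinuityOfDifferentialUid}. The vertically broad h\"older condition, and a fortiori the vertically broad* h\"older condition (which follows by restricting the supremum to any Peano-selected family of integral curves), is a direct consequence of \cref{prop:UidImpliesH1k}. For (b), the local $C^1$-approximation property is provided by \cref{prop2.22}(b), extended to horizontal targets of arbitrary dimension $k$ by \cref{remprop2.22}, with limit $\nabla^{\phi}\phi=\omega$. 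For (c), since UID implies ID and $\de^{\phi}\!\phi$ is continuous, \cref{prop:IdGradContImpliesBroad} gives that $D^{\phi}\phi=\omega$ in the broad sense on $U$.

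The implication (b) $\Rightarrow$ (d) will be an immediate application of \cref{prop4.12}: the local uniform convergences of $\phi_\eps$ and $D^{\phi_\eps}_j\phi_\eps$ force $\phi$ to be a broad* solution of $D^{\phi}\phi=\omega$, and the vertically broad* h\"older condition is inherited from (b). The implication (c) $\Rightarrow$ (d) is elementary: given $a_0\in U$, choose $\delta_1>0$ so small that the polynomial coefficients of $D^{\phi}_{X_j}$ coming from \cref{prop:CompatibleCoordinates} are uniformly bounded on $\overline{B(a_0,\delta_1)}$; Peano's theorem together with the resulting uniform existence-time estimate then yields $\delta_2<\delta_1$ and, for each $a\in\overline{B(a_0,\delta_2)}$ and each $j=k+1,\dots,m$, a $C^1$ integral curve $E_j^{\phi}(a,\cdot)\colon[-\delta_2,\delta_2]\to\overline{B(a_0,\delta_1)}$ of $D^{\phi}_{X_j}$ through $a$. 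Integrating the pointwise identity granted by the broad hypothesis along these curves produces the integral identity required by \cref{defbroad*}, and the vertically broad h\"older condition on $\widetilde\phi$ restricts trivially to this family to give the vertically broad* h\"older condition.

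Finally (d) $\Rightarrow$ (a) is exactly \cref{prop:DPhiPhi=wBroadImpliesUid}, which moreover identifies $\omega=\nabla^{\phi}\phi$; this closes the cycle and simultaneously establishes the last sentence of the theorem. The only mildly delicate ingredient in the whole argument is the uniform Peano existence time used in (c) $\Rightarrow$ (d), but the explicit polynomial description of the vector fields $D^{\phi}_j$ in \cref{prop:CompatibleCoordinates} makes this routine, in the same spirit as in \cref{rem:ConnectingCurves}.
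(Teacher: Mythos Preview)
Your proof is correct and follows essentially the same route as the paper: the cycle (a)$\Rightarrow$(b)$\Rightarrow$(d)$\Rightarrow$(a) together with (a)$\Rightarrow$(c)$\Rightarrow$(d), invoking \cref{prop:UidImpliesH1k}, \cref{prop2.22}(b) with \cref{remprop2.22}, \cref{prop4.12}, \cref{prop:IdGradContImpliesBroad}, \cref{prop:ContinuityOfDifferentialUid}, and \cref{prop:DPhiPhi=wBroadImpliesUid} exactly as the paper does. The only difference is that you spell out the Peano argument for (c)$\Rightarrow$(d), which the paper dismisses as ``trivial''; your expansion is accurate and in the spirit of \cref{rem:ConnectingCurves}.
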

 \begin{proof}
	(a)$\Rightarrow$(b) follows by combining \cref{prop:UidImpliesH1k}, item (b) of \cref{prop2.22} in the $k$-dimensional case following the lines of \cref{remprop2.22}. (b)$\Rightarrow$(d) follows from \cref{prop4.12}.  
	(d)$\Rightarrow$(a) follows from \cref{prop:DPhiPhi=wBroadImpliesUid}. (a)$\Rightarrow$(c) follows from \cref{prop:UidImpliesH1k}, \cref{prop:IdGradContImpliesBroad} and the continuity of $\omega$ follows from \cref{prop:ContinuityOfDifferentialUid}. The implication (c)$\Rightarrow$(d) is trivial.
\end{proof}

\begin{rem}\label{rem:VerticallyBroadHolderNonSiToglie}
	Notice that in \cite[Example~4.5.1]{Koz15}, in the setting of \cref{example:Engel}, the author constructs a function $\widetilde\phi\colon\widetilde U\subseteq \mathbb W\to\mathbb L$ such that $D^{\phi}\phi=-1$ in the broad* sense but $\widetilde\phi$ is not vertically broad* h\"older, see \cref{def:vertically*Holder}. Taking \cref{thm:MainTheorem} into account, this means that $\widetilde\phi$ cannot be {\rm UID}. This also means that, in general, in \cref{thm:MainTheorem}, one cannot drop the assumption on the vertically broad* h\"older regularity in arbitrary Carnot groups. We will show that this is possible for step 2 Carnot groups, in \cref{sec:Step2}. For more examples related to this topic, we refer the reader to \cite[Section~4.5]{Koz15}.
\end{rem}

\section{Some applications}\label{sec:Applications}
Let us begin with an observation that will motivate the first part of this section. Consider the first Heisenberg group $\mathbb H^1$ with Lie algebra $\mathfrak g\coloneqq {\rm span}\{X,Y\}\oplus{\rm span}\{Z\}$, and the only nontrivial relation $[X,Y]=Z$. By a direct application of the Baker-Campbell-Hausdorff formula one gets $\exp X\exp Y\exp (-X)\exp (-Y)=\exp Z$. 
By exploiting this formula, in \cref{prop:BroadImpliesH12Heisenberg} below, we give an alternative proof, in the case of $\mathbb H^n$ with $n\geq 2$, of \cite[Theorem 3.2]{BSC10b}.  %that is the key step to prove \cite[Theorem 1.2]{BSC10b}. 
 The argument we use is different because we prove that being a broad* solution with a continuous datum implies being $1/2$-little H\"older continuous along vertical coordinates (see also \cref{rem:OnlyVerticalCoordinates}) that is actually simpler than proving 1/2-little H\"older continuity in all the coordinates as in \cite[Theorem~3.2]{BSC10b}. Nevertheless, this is sufficient for applying \cref{prop:DPhiPhi=wBroadImpliesUid}.  Similarly, in \cref{prop:ReObtain}, we obtain the same result of \cite[Theorem~1.2]{BSC10b}, by making use of  \cref{prop:BroadImpliesH12Heisenberg} and \cref{prop:DPhiPhi=wBroadImpliesUid}. We remark that our argument is different by the one used in \cite{BSC10b} but, on the contrary, it does not work for $n=1$.

Recently, in \cite[Proposition~4.10]{Cor19}, the author has proved a generalization of \cite[Theorem~3.2]{BSC10b} that holds for arbitrary complemented subgroups in $\mathbb H^n$, and this statement is one of the key step in order to get the main theorem \cite[Theorem~1.4]{Cor19}. By using our reasoning, we are also able to recover \cite[Proposition~4.10]{Cor19}, and thus \cite[Theorem~1.4]{Cor19}, in the case $\mathbb H^n=\mathbb W\cdot \mathbb L$ with $\mathbb L$ horizontal and $k$-dimensional, with $k<n$. However, our argument does not apply to the remaining case $k=n$, while the argument in the reference does.

 Even if these results already appeared in the literature, we think it is worth writing them down with these different proofs. Indeed,  the proof of \cref{prop:BroadImpliesH12Heisenberg} provides a useful ``toolkit'' for the forthcoming theorems: in fact, a similar idea to the one exploited in \cref{prop:BroadImpliesH12Heisenberg} will be used in \cref{big3.3.12} to prove the analogous version of \cref{prop:BroadImpliesH12Heisenberg} in the setting of free Carnot groups of step 2, when $\mathbb L$ is one-dimensional. This will be the key step  to obtain the analogous \cref{prop:ReObtain} for Carnot groups of step 2 with $\mathbb L$ one-dimensional. We refer the reader to the introduction of \cref{sec:Step2}.

In \cref{exa:UidInEngel} below we give a class of nontrivial examples of ${\rm UID}$ functions in the Engel group. We build them by making use of  \cref{thm:MainTheorem}. This class of examples is inspired by \cite[Eq.\ (3.1)]{BV10}. Moreover, a slight modification of \cite[Eq.\ (3.1)]{BV10}, gives rise to functions whose intrinsic graphs are both of class $C^1_{\rm H}$ and of class $C^1$, but they possess a characteristic point, see \cref{rem:0ischaracteristic}. We thank R. Serapioni for having discussed this example with us.

\subsection{A different proof of the propagation of broad* regularity in $\mathbb H^n$, with $n\geq 2$}

%The following proposition comes from \cite[Lemma 3.26]{FSSC07}.
%\begin{prop}\label{prop:Lemma3.26FSSC}
%	Let $\mathbb W$ and $\mathbb L$ be two complementary subgroups of $\mathbb H^n$, such that $\mathbb L$ is horizontal and $k$-dimensional with $k\leq n$. Then, there exists an adapted basis $(X_1,\dots, X_{2n+1})$ such that $\mathbb L=\exp({\rm span}\{X_1,\dots,X_k\})$, $\mathbb W=\exp({\rm span}\{X_{k+1},\dots,X_{2n+1}\})$, and such that the only nonvanishing bracket relations are given by $[X_i,X_{n+i}]=X_{2n+1}$ for every $1\leq i\leq n$. 
%\end{prop}
\begin{prop}\label{prop:BroadImpliesH12Heisenberg}
	Let $\mathbb W$ and $\mathbb L$ be two complementary subgroups of $\mathbb H^n$, with $n\geq2\, ($see $\cref{example:Heisenberg})$ such that $\mathbb L$ is horizontal and $k$-dimensional with $k< n$, and let $\widetilde\phi\colon\widetilde U \subseteq \mathbb W\to\mathbb L$ be a continuous function, with $\widetilde U$ open. 
	
	Then there exists $(X_1,\dots,X_{2n+1})$ an adapted basis of the Lie algebra such that $\mathbb L=\exp({\rm span}\{X_1,\dots,X_k\})$, $\mathbb W=\exp({\rm span}\{X_{k+1},\dots,X_{2n+1}\})$, and such that the only nonvanishing bracket relations are given by $[X_i,X_{n+i}]=X_{2n+1}$ for every $1\leq i\leq n$.
	
	%Assume there exists a continuous function $\omega\colon U\subseteq \mathbb R^{2n+1-k}\to\mathbb R^{k\times (2n-k)}$ such that $D^{\phi}\phi=\omega$ in the broad* sense. 
	Moreover, if there exists a continuous function $\omega\colon U\subseteq \mathbb R^{2n+1-k}\to\mathbb R^{k\times (2n-k)}$ such that $D^{\phi}\phi=\omega$ in the broad* sense, then  $\widetilde\phi$ is vertically broad* holder. Namely, since we are in $\mathbb H^n$ (see \cref{rem:OnlyVerticalCoordinates}), for every $a_0 \in U$ there exists a neighborhood $U_{a_0}\Subset U$ of $a_0$ such that
	\begin{equation}\label{eqn:ThesisBroad*}
	\lim_{\varrho\to 0}\left(\sup\left\{\frac{|\phi(\xi,z_1)-\phi(\xi,z_2)|}{|z_1-z_2|^{1/2}}: (\xi,z_1)\in U_{a_0}, (\xi,z_2)\in U_{a_0}, 0<|z_1-z_2|<\varrho\right\}\right)=0,
	\end{equation}
	where we mean that $\xi\in  U_{a_0}\cap\{z=0\}\subseteq  \R^{2n-k}$ and $z_1,z_2\in \R$ are the $(2n+1)^{th}$ coordinates of $(\xi, z_1)$ and $(\xi,z_2)$ when seen as elements of $\mathbb H^n$. 
\end{prop}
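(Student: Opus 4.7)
The plan has two ingredients: constructing the adapted basis, and then using it to propagate broad* regularity into the vertical direction. For the basis, I would invoke symplectic linear algebra. The bracket on $V_1$ defines a non-degenerate symplectic form valued in $V_2 \cong \mathbb{R}$, and $L := \mathrm{Lie}(\mathbb{L}) \cap V_1$ is isotropic of dimension $k$. From $V_1 = L \oplus W$ with $W := \mathrm{Lie}(\mathbb{W}) \cap V_1$ and $\dim L^\perp = 2n - k$, one gets $\dim(W \cap L^\perp) = 2n - 2k$, so that the pairing $L \times W \to \mathbb{R}$ induced by the bracket is non-degenerate on $L$. One then picks a basis $X_1,\dots,X_k$ of $L$ and dual vectors $X_{n+1},\dots,X_{n+k} \in W$ satisfying $[X_i,X_{n+i}] = X_{2n+1}$, and completes to a Darboux basis by choosing a symplectic basis of the symplectic complement of $\mathrm{span}\{X_1,\dots,X_k,X_{n+1},\dots,X_{n+k}\}$ entirely inside $W$. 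Crucially, since $k < n$, there exists an index $i \in \{k+1,\dots,n\}$ for which both $X_i$ and $X_{n+i}$ lie in $W$.

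With such an $i$ fixed, the formulas in \cref{example:Heisenberg} show that $D^\phi_i$ and $D^\phi_{n+i}$ coincide on $\mathbb{W}$ with the restrictions of the left-invariant vector fields $X_i$ and $X_{n+i}$ (there is no $\phi$-dependence because the horizontal indices $i, n+i$ both exceed $k$), and their flows preserve $\mathbb{W}$. The Baker--Campbell--Hausdorff identity in step two gives the exact cancellation
\[
\exp(tX_i)\,\exp(sX_{n+i})\,\exp(-tX_i)\,\exp(-sX_{n+i}) \;=\; \exp(ts\,X_{2n+1}).
\]
Fix $a_0 \in U$ and a neighborhood $U_{a_0} \Subset U$ on which the broad* maps $E_i^\phi, E_{n+i}^\phi$ from \cref{defbroad*} are defined on a common time interval $[-\sqrt{\delta},\sqrt{\delta}]$ and remain in a fixed compact subset of $U$. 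For $(\xi,z_1),(\xi,z_2) \in U_{a_0}$ with $|z_2-z_1| \leq \varrho \ll \delta$, set $\tau := \sqrt{|z_2-z_1|}$ and $\sigma := \mathrm{sign}(z_2-z_1)$, and form the four-step concatenation
\[
a := (\xi,z_1),\;\; a_1 := E_i^\phi(a,\tau),\;\; a_2 := E_{n+i}^\phi(a_1,\sigma\tau),\;\; a_3 := E_i^\phi(a_2,-\tau),\;\; a_4 := E_{n+i}^\phi(a_3,-\sigma\tau).
\]
Because each leg coincides with the right-translation by a horizontal vector in $\mathbb{G}$, the BCH identity above yields $a_4 = a \cdot \exp((z_2-z_1)\,X_{2n+1}) = (\xi,z_2)$.

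To estimate $|\phi(a_4) - \phi(a)|$, along each leg the broad* identity $\phi(E_j^\phi(c,t)) - \phi(c) = \int_0^t \omega_j(E_j^\phi(c,s))\,ds$ gives the first-order expansion $\phi(E_j^\phi(c,t)) - \phi(c) = t\,\omega_j(c) + o(t)$ with $o(t)$ uniform in $c$ on compact sets, by continuity of $\omega$ and of the flow. Summing the four contributions, the leading terms pair into $\tau[\omega_i(a) - \omega_i(a_2)] + \sigma\tau[\omega_{n+i}(a_1) - \omega_{n+i}(a_3)]$; since all four points $a,a_1,a_2,a_3$ lie within Euclidean distance $O(\tau)$ of one another, continuity of $\omega$ makes each bracket $o(1)$ as $\tau \to 0$, uniformly in $(\xi,z_1) \in U_{a_0}$. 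Hence $|\phi(a_4)-\phi(a)| = o(\tau) = o(\sqrt{|z_2-z_1|})$ uniformly, which is exactly \eqref{eqn:ThesisBroad*}. The main technical obstacles are twofold: the symplectic basis construction, where $k < n$ is essential in order to place a commutator pair $\{X_i, X_{n+i}\}$ inside $\mathbb{W}$ (which is also why the argument cannot reach $k = n$, handled in \cite{Cor19} by a different method); and keeping all the $o(\cdot)$ estimates uniform across the concatenation, for which one shrinks to a compact neighborhood of $a_0$ and invokes uniform continuity of $\omega$ on its closure.
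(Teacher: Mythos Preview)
Your proposal is correct and follows essentially the same route as the paper. Both arguments pick an index $i\in\{k+1,\dots,n\}$ (the paper takes $i=k+1$), observe that $D^\phi_i=X_i$ and $D^\phi_{n+i}=X_{n+i}$ are $\phi$-independent so their broad* integral curves are the genuine group flows, build the four-leg commutator path realizing a vertical displacement of size $\tau^2$, and then pair the four increments of $\phi$ so that the leading terms cancel up to a modulus of continuity of $\omega$. The only cosmetic differences are that the paper cites \cite[Lemma~3.26]{FSSC07} for the adapted basis where you spell out the symplectic construction, and the paper applies Lagrange's theorem componentwise (producing intermediate points $b_{j,\ell}$) where you use the integral form $\int_0^t\omega_j(E_j^\phi(c,s))\,\de s=t\,\omega_j(c)+o(t)$; these are equivalent bookkeeping choices.
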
 
\begin{proof}
	The existence of the basis as in the first part of the statement comes from  \cite[Lemma 3.26]{FSSC07}. Fix $a_0\in U$ and find $\delta>0$ and sufficiently small neighborhoods $V'_{a_0} \Subset V_{a_0}\Subset U$ of $a_0$ such that, for every $a\in V'_{a_0}$, and every $j=k+1,\dots,2n$, we have the existence of integral curves $E^{\phi}_j(a)\colon[-\delta,\delta]\to V_{a_0}$ satisfying the conditions of \cref{defbroad*}. Denote by $\beta\colon[0,+\infty)\to[0,+\infty)$ a modulus of uniform continuity for $\omega$ on $\overline{V_{a_0}}$. 
	
	We fix a neighborhood $U_{a_0}\Subset V'_{a_0}$ of $a_0$, $\varrho>0$ and points $(\xi,z_1),(\xi, z_2)\in U_{a_0}$ such that $|z_1-z_2|<\varrho$. The set $U_{a_0}$ has to be chosen small enough: this will be clear during the proof. We are going to prove that, for every sufficiently small $\varrho$, we have
	\begin{equation}\label{eqn:FinalToObtain}
	\frac{|\phi(\xi,z_1)-\phi(\xi,z_2)|}{|z_1-z_2|^{1/2}} \leq 2k\beta(\alpha(\varrho)),
	\end{equation}
	for a continuous function $\alpha$ with $\alpha(0)=0$ that only depends on the norms of $\phi$ and $\omega$ on $V_{a_0}$. From this fact \eqref{eqn:ThesisBroad*} would follow, concluding the proof.
	
	Recall that, by \eqref{eqn:ProjectedVectorFieldsHn1} and \textbf{since $k<n$}, one has $D^{\phi}_{k+1}=X_{k+1}$ and $D^{\phi}_{n+k+1}=X_{n+k+1}$. Assume without loss of generality that $z_2>z_1$, and set $t_0\coloneqq (z_2-z_1)^{1/2}$. We exploit the relation $[D^{\phi}_{k+1},D^{\phi}_{n+k+1}]=X_{2n+1}$ and the Baker-Campbell-Hausdorff formula to conclude that we can join $(\xi,z_1)$ and $(\xi,z_2)$ by means of a concatenation of integral curves of $D^{\phi}_{k+1}=X_{k+1}$ and $D^{\phi}_{n+k+1}=X_{n+k+1}$. In particular 
	\begin{equation}\label{eqn:ConcatenationOfCurve2}
	\begin{split}
	a\coloneqq (\xi,z_1)&\to_{E^{\phi}_{k+1}(a)} \, a_1\coloneqq E^{\phi}_{k+1}(a,t_0) \to_{E^{\phi}_{n+k+1}(a_1)} \, a_2\coloneqq E^{\phi}_{n+k+1}(a_1,t_0) \\
	&\to_{E^{\phi}_{k+1}(a_2)} \, a_3\coloneqq E^{\phi}_{k+1}(a_2,-t_0) \to_{E^{\phi}_{n+k+1}(a_3)} \,  a_4\coloneqq (\xi,z_2)= E^{\phi}_{n+k+1}(a_3,-t_0), 
	\end{split}
	\end{equation}
	and if $\varrho$ is sufficiently small with respect to $\delta$ (for example $\varrho<\delta^2$) we know that the integral lines in \eqref{eqn:ConcatenationOfCurve2} are defined in $[-t_0,t_0]$ and all the points $a_1, a_2, a_3\in V'_{a_0}$. Notice that it is precisely here that we have to take $U_{a_0}$ small enough to guarantee that the points defined in \eqref{eqn:ConcatenationOfCurve2} are in $V'_{a_0}$. This can be done since we are taking the concatenation of integral curves living up to a time whose norm is bounded above by $t_0<\varrho^{1/2}$.
	
	We set $\phi=:(\phi^{(1)},\dots,\phi^{(k)})$ in coordinates. From \cref{defbroad*} we get that, for every $a\in V'_{a_0}$, we have
	\begin{equation}\label{eqn:USELGRANGE}
	\frac{\de}{\de \! s}{|_{s=t}}\phi^{(\ell)}(E^{\phi}_{j}(a,s))=\omega_{\ell j}(E^{\phi}_{j}(a,t)), \qquad \forall j=k+1,\dots,2n+1,\quad \forall \ell=1,\dots,k, \quad \forall t\in [-\delta,\delta].
	\end{equation} 
	Thus, by using the points defined in \eqref{eqn:ConcatenationOfCurve2}, Lagrange's theorem and the triangle inequality, we get
	\begin{equation}\label{eqn:ComplEstimate}
	\begin{split}
	\frac{|\phi(\xi,z_1)-\phi(\xi,z_2)|}{|z_1-z_2|^{1/2}} &=
	 \frac{\left|\left(\phi(a)-\phi(a_1)\right)+\left(\phi(a_1)-\phi(a_2)\right)+\left(\phi(a_2)-\phi(a_3)\right)+\left(\phi(a_3)-\phi(a_4)\right)\right|}{|z_1-z_2|^{1/2}} \\
	 &\leq \sum_{\ell=1}^k \frac{1}{|z_1-z_2|^{1/2}}\cdot  \Big|\left(\phi^{(\ell)}(a)-\phi^{(\ell)}(a_1)\right)+\left(\phi^{(\ell)}(a_1)-\phi^{(\ell)}(a_2)\right)+ \\
	 &+\left(\phi^{(\ell)}(a_2)-\phi^{(\ell)}(a_3)\right)+\left(\phi^{(\ell)}(a_3)-\phi^{(\ell)}(a_4)\right)\Big|\\
	&= \sum_{\ell=1}^k \frac{|-t_0\omega_{\ell,k+1}(b_{1,\ell})-t_0\omega_{\ell,n+k+1}(b_{2,\ell})+t_0\omega_{\ell,k+1}(b_{3,\ell})+t_0\omega_{\ell,n+k+1}(b_{4,\ell})|}{t_0} \\
	&\leq \sum_{\ell=1}^k \left(|\omega_{\ell,k+1}(b_{3,\ell})-\omega_{\ell,k+1}(b_{1,\ell})|+|\omega_{\ell,n+k+1}(b_{4,\ell})-\omega_{\ell,n+k+1}(b_{2,\ell})|\right) \\
	&\leq \sum_{\ell=1}^k\left(\beta(|b_{3,\ell}-b_{1,\ell}|)+\beta(|b_{4,\ell}-b_{2,\ell}|)\right),
	\end{split}
	\end{equation}
	where, for every $\ell=1,\dots,k$, the points $b_{1,\ell},b_{2,\ell},b_{3,\ell},b_{4,\ell}$ are respectively chosen according to Lagrange's Theorem, that can be applied in view of \eqref{eqn:USELGRANGE}, on the images of the integral curves $E^{\phi}_{k+1}(a)$, $E^{\phi}_{n+k+1}(a_1)$, $E^{\phi}_{k+1}(a_2)$, $E^{\phi}_{n+k+1}(a_3)$ used in \eqref{eqn:ConcatenationOfCurve2}. By simple estimates relying on the triangle inequality, we get a constant $C_0>0$ only depending on $V_{a_0}$ such that for every $\ell=1,\dots,k$, the estimates $|b_{3,\ell}-b_{1,\ell}|\leq C_0t_0$, and $|b_{4,\ell}-b_{2,\ell}|\leq C_0t_0$ hold. Since $t_0=|z_1-z_2|^{1/2}$, and $|z_1-z_2|< \varrho$, from \eqref{eqn:ComplEstimate} we thus get \eqref{eqn:FinalToObtain} with $\alpha(\varrho)\coloneqq C_0\varrho^{1/2}$. 
\end{proof}
\begin{prop}\label{prop:ReObtain}
	In the setting of \cref{prop:BroadImpliesH12Heisenberg}, it holds 
	$$
	D^{\phi}\phi=\omega \quad \mbox{{\rm in the broad* sense}} \Rightarrow \widetilde{\phi} \in{\rm UID}(\widetilde U,\mathbb W;\mathbb L).
	$$
\end{prop}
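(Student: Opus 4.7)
The plan is essentially immediate once we have \cref{prop:BroadImpliesH12Heisenberg} and \cref{prop:DPhiPhi=wBroadImpliesUid} in hand. The whole point of organizing the preceding material this way is to reduce the statement to a combination of these two results.

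First I would observe that the hypothesis of the proposition is exactly the setup of \cref{prop:BroadImpliesH12Heisenberg}: we are in $\mathbb H^n$ with $n\geq 2$, $\mathbb L$ is horizontal and $k$-dimensional with $k<n$, and $D^\phi\phi=\omega$ in the broad* sense for some continuous $\omega$. Applying \cref{prop:BroadImpliesH12Heisenberg} gives us the $1/2$-little Hölder regularity of $\phi$ along the vertical coordinate, uniformly in a neighborhood of any fixed point $a_0\in U$, cf.\ \eqref{eqn:ThesisBroad*}. As remarked in \cref{rem:OnlyVerticalCoordinates}, in the Heisenberg setting the only projected vector fields $D^\phi_j$ with $j\geq m+1=n+k+1$ are precisely $\partial_{x_{2n+1}}$ (see \eqref{eqn:ProjectedVectorFieldsHn2}), so their integral curves are just straight lines along the vertical direction. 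Therefore \eqref{eqn:ThesisBroad*} is exactly the vertically broad* Hölder condition of \cref{def:vertically*Holder}.

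With $\widetilde\phi$ now known to be both vertically broad* Hölder and a broad* solution to $D^\phi\phi=\omega$ with $\omega$ continuous, I would invoke \cref{prop:DPhiPhi=wBroadImpliesUid}. Its conclusion is precisely that $\widetilde\phi\in {\rm UID}(\widetilde U,\mathbb W;\mathbb L)$, and moreover $\nabla^\phi\phi=\omega$. This closes the argument.

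There is essentially no obstacle here beyond verifying that the hypotheses of the two cited propositions are met; the nontrivial work was already done in \cref{prop:BroadImpliesH12Heisenberg} (where the commutator relation $[X_{k+1},X_{n+k+1}]=X_{2n+1}$, available precisely because $k<n$, was used to connect two vertically aligned points by a short quadrilateral of horizontal integral curves, converting the broad* datum $\omega$ into a $1/2$-Hölder bound via Lagrange's theorem and uniform continuity of $\omega$). So the proof proposal is simply: apply \cref{prop:BroadImpliesH12Heisenberg} to get the vertically broad* Hölder regularity, then apply \cref{prop:DPhiPhi=wBroadImpliesUid} to conclude uniform intrinsic differentiability.
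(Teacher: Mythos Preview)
Your proposal is correct and follows exactly the paper's approach: apply \cref{prop:BroadImpliesH12Heisenberg} to obtain the vertical $1/2$-little H\"older regularity, observe via \eqref{eqn:ProjectedVectorFieldsHn2} that this is precisely the vertically broad* h\"older condition (since $D^{\phi}_{2n+1}=\partial_{x_{2n+1}}$), and then invoke \cref{prop:DPhiPhi=wBroadImpliesUid}. One small slip: you wrote $m+1=n+k+1$, but in $\mathbb H^n$ the rank is $m=2n$, so the only vertical index is $j=2n+1$; this does not affect the argument.
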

\begin{proof}
	It is a direct consequence of \cref{prop:BroadImpliesH12Heisenberg} and \cref{prop:DPhiPhi=wBroadImpliesUid}. Indeed, taking into account \eqref{eqn:ProjectedVectorFieldsHn2} of \cref{example:Heisenberg}, the integral curves of $D^{\phi}_{2n+1}$ are vertical lines and, therefore, condition \eqref{eqn:ThesisBroad*} of \cref{prop:BroadImpliesH12Heisenberg} implies that $\phi$ is vertically broad* h\"older.
\end{proof}

\subsection{Examples of uniformly intrinsically differentiable functions}
We show a class of nontrivial examples of ${\rm UID}$ functions in the Engel group, see \cref{example:Engel}.
\begin{exa}\label{exa:UidInEngel}
	Consider the Engel group $\mathbb E$, with the splitting $\mathbb E=\mathbb W\cdot\mathbb L$ described in \cref{example:Engel}. We show that the function 
	\begin{equation}
	\phi_{\alpha}(0,x_2,x_3,x_4)\coloneqq x_4^{\alpha}\chi_{\{x_4\geq 0\}}(0,x_2,x_3,x_4),
	\end{equation}
	produces $\widetilde\phi_\alpha\in {\rm UID}(\mathbb W;\mathbb L)$ for any $\alpha> 1/3$. We first claim that $D^{\phi_{\alpha}}_{X_2}\phi_{\alpha}=\frac{\alpha}{2}x_4^{3\alpha-1}\chi_{\{x_4\geq 0\}}$ in the broad* sense. 
	Indeed, for any $\eps\in(0,1)$, the functions \[(\phi_{\alpha})_{\eps}\coloneqq \eps^{1/3}\chi_{\{x_4<0\}}+ (x_4^{3\alpha}+\eps)^{1/3}\chi_{\{x_4\geq 0\}}\] are globally $C^1$ and 
	\[
	\lim_{\eps\to0}(\phi_{\alpha})_{\eps}=\phi_{\alpha} \quad \mbox{ in } L^\infty_{\rm loc}(\R^3), \quad \text{ and }\quad D^{(\phi_{\alpha})_{\eps}}_{X_2}(\phi_{\alpha})_{\eps}=\frac{\alpha}{2}x_4^{3\alpha-1}\chi_{\{x_4\geq 0\}}, \quad \forall \eps\in(0,1),
	\]
	where the last equality comes from the particular form of $D_{X_2}^{\phi}$ in \eqref{eqn:ProjectedVectorFieldsEngel}.
	By applying \cref{prop4.12}, we get the claim. 
	
	We claim now that $\phi$ is vertically broad* holder, see \cref{def:vertically*Holder}. Indeed, since the integral curves of $D_{X_4}^{\phi_{\alpha}}$ are the vertical lines along direction $x_4$, see \eqref{eqn:ProjectedVectorFieldsEngel}, and since $\alpha>1/3$, we get that $\phi_{\alpha}$ is locally uniformly $1/3$-little H\"older continuous along these curves. We are left to prove that, locally around any $ a \coloneqq (0, x_2, x_3, x_4)$, condition \eqref{eqn:vertcicallybroad*holder} is satisfied for the integral curves of $D_{X_3}^{\phi_{\alpha}}$, whose expression is in \eqref{eqn:ProjectedVectorFieldsEngel}. 
	
	According to the sign of $x_4$ we identify three families of integral curves of the vector field $D^{\phi_{\alpha}}_{X_3}$ starting from $a$. If $x_4<0$, the only integral curve of $D_{X_3}^{\phi_{\alpha}}$ starting from $a$ is $\gamma(t)= (0,x_2,x_3+t, x_4)$, and it is well-defined for every time $t$. If $ x_4=0$, we have that an integral curve of $D^{\phi_{\alpha}}_{X_3}$, existing for all times $t$, starting from $ a$ is given by \[\gamma(t)=\left(0,x_2, x_3+t, (1-\alpha)^{1/(1-\alpha)}t^{1/(1-\alpha)}\chi_{\{t\geq 0\}}(t)\right),\] while, if $ x_4>0$, we have that an integral curve of $D^{\phi_{\alpha}}_{X_3}$ starting from $ a$ is  given by \[\gamma(t)=\left(0, x_2, x_3+t, (1-\alpha)^{1/(1-\alpha)}\left(t+\frac{( x_4)^{1-\alpha}}{1-\alpha}\right)^{1/(1-\alpha)}\right),\] defined for $t\in \left(-\frac{ x_4^{1-\alpha}}{1-\alpha},+\infty\right)$. By exploiting this explicit choice of integral curves, we notice that $\gamma(t)$, in the three cases, is constant or %- up to translations and multiplication by a constant - 
	it is of order $t^{1/(1-\alpha)}$ on the last component. Thus, we get that $\phi_{\alpha}$ is 1/2-little H\"older continuous along these integral curves, because $\alpha/(1-\alpha)>1/2$ for any $\alpha>1/3$, and this happens locally uniformly. From this, we conclude that $\phi_{\alpha}$ is vertically broad* holder. 
	Now we conclude by applying (d)$\Rightarrow$(a) of \cref{thm:MainTheorem}.
	
	Notice that, if $\alpha=1/3$, the function $\phi_{\alpha}$ is not UID in every neighborhood of the origin. Indeed, by \cref{thm:MainTheorem}, if  $\phi_{\alpha}$ is UID, then it is vertically broad* h\"older and so  1/3-little H\"older continuous along the integral curves of the vector field $D^{\phi_{\alpha}}_{X_4}$. This means that $\phi_{\alpha}$ should be  $1/3$-little H\"older continuous in the last coordinate, but this is not true when $\alpha=1/3$. 
	
	We remark here that, in the case of the Heisenberg group $\mathbb H^1$, a slight modification of this types of examples gives rise to a $C^1_{\rm H}$-hypersurface, that is also $C^1$ Euclidean, but it has 0 as a characteristic point, see \cref{rem:0ischaracteristic}.
\end{exa}
\begin{rem}\label{rem:0ischaracteristic}
Consider the first Heisenberg group $\mathbb H^1$ identified with $\mathbb R^3$ by means of exponential coordinates, and consider the splitting $\mathbb H^1=\mathbb W\cdot\mathbb L$ described in \cref{example:Heisenberg}, with $\mathbb L$ one-dimensional. Define $\phi\colon\mathbb W\equiv \mathbb R^2\to \sL\equiv\R$ by setting $\phi(0,x_2,x_3)\coloneqq {\rm sgn}(x_3)|x_3|^{2/3}$. 
	
	Since $\phi ^2(0,x_2,x_3)=|x_3|^{4/3}\in C^1$, by \cite[Corollary 5.11]{ASCV06}, then $\widetilde\phi\in{\rm UID}(\mathbb W;\mathbb L)$ and, consequently, its graph is a $C^1_{\rm H}$-hypersurface. Moreover, in coordinates, one has
	\begin{equation}\label{eqn:Parametrized}
	(0,x_2,x_3)\cdot(\phi(0,x_2,x_3),0,0)=\left({\rm sgn}(x_3)|x_3|^{2/3},x_2,x_3-\frac{1}{2}{\rm sgn}(x_3)x_2|x_3|^{2/3}\right).
	\end{equation}
	The surface $\Sigma$ parametrized by \eqref{eqn:Parametrized} is the union of two surfaces $\Sigma_1,\Sigma_2$ given by 
	\begin{equation}
	\begin{split}
	\Sigma_1&\coloneqq \left\{(x_1,x_2,x_3)\in \R^3:x_1\geq 0, x_3-x_1^{3/2}+\frac{1}{2}x_1x_2=0\right\}, \\ \Sigma_2&\coloneqq \left\{(x_1,x_2,x_3)\in \R^3:x_1\leq 0, x_3+(-x_1)^{3/2}+\frac{1}{2}x_1x_2=0\right\},
	\end{split}
	\end{equation}
	that are glued along the $x_2$-axis $\{(x_1,x_2,x_3)\in \R^3: x_1=x_3=0\}$. We thus get that, for any $x\in {\rm int}(\Sigma_1)$, one has $T_x\Sigma_1=\left(\frac{1}{2}x_2-\frac{3}{2}x_1^{1/2},\frac{1}{2}x_1,1\right)^{\perp}$ with respect to the standard Euclidean scalar product, while for any $x\in {\rm int}(\Sigma_2)$, one has $T_x\Sigma_2=\left(\frac{1}{2}x_2-\frac{3}{2}(-x_1)^{1/2},\frac{1}{2}x_1,1\right)^{\perp}$ with respect to the standard Euclidean scalar product. From these explicit expressions, $\Sigma_1$ and $\Sigma_2$ glue together in $C^1$ regular way along the $x_2$-axis, and thus $\Sigma$ is also $C^1$-Euclidean surface. Moreover, from the previous expressions, we get that $T_0\Sigma = (0,0,1)^{\perp}=\{x_3=0\}=V_1$ and hence $0$ is a characteristic point for $\Sigma$.
	
	We remark here that a similar example appeared in \cite[Remark 3.8]{FSSC07}. In that case the intrinsic graph is $C^1_{\rm H}$ regular but not $C^1$ regular.
\end{rem}
\section{Carnot groups of step 2}\label{sec:Step2}

As already underlined in \cref{rem:VerticallyBroadHolderNonSiToglie}, a co-horizontal $C^1_{\mathrm H}$-surface cannot be always characterized only by its horizontal geometry. This is however possible inside Carnot groups of step 2. Indeed, in this section, we show that the assumption on the vertical broad* h\"older regularity of \cref{thm:MainTheorem} can be dropped when $\G$ is a Carnot group of step 2 and $\mathbb{L}$ is a horizontal one-dimensional subgroup of $\G$. In particular, as main result of this section, we get \cref{thm:MainTheorem.0}, which is \cref{thm:MainTheorem.0Intro2} in the introduction. 

We describe the strategy of the proof of \cref{thm:MainTheorem.0}. The key idea is to first show that the implication
\begin{equation}\label{eqn:Propagation}
D^{\phi}\phi=\omega \; \mbox{broad*}\; \Rightarrow\; \phi \; \mbox{vertically broad* h\"older}
\end{equation}
holds in free Carnot groups of step 2, for a continuous $\omega$. This is done in two main steps. First, we explicitly write the intrinsic vector fields $D^{\phi}$ and we notice that the  nonlinearity given by $\phi$ only shows up in one vertical coordinate for each vector field, see \eqref{operatoriproiettatiinF}. Second, by using the structure of the vector fields $D^{\phi}$, one can propagate the broad* regularity from the horizontal components  of $\phi$ to the little H\"older regularity along vertical components by using a geometric trick: if $[X,Y]=Z$, and $\phi$ is $C^1$ on the integral curves of $X$ and $Y$, we expect $\phi$ to be $1/2$-little H\"older continuous on the integral curves of $Z$. 

More precisely, the first step allows us to obtain the $1/2$-little H\"older regularity on the vertical coordinates affected by the nonlinearity by means of an adaptation of \cite[Theorem~5.8]{ASCV06}. The second step allows us to obtain the $1/2$-little H\"older  regularity on all the remaining vertical coordinates. Notice that the efforts made to prove (d)$\Rightarrow$(a) in \cref{thm:MainTheorem},  asking for just the regularity along the integral curves of $D^{\phi}$ (i.e., the vertically broad* regularity), are here payed back from a crucial simplification of this proof. Indeed, to conclude the proof of \cref{thm:MainTheorem.0} in the case of free Carnot groups of step 2, namely \cref{big3.3.12}, we just use \eqref{eqn:Propagation} and apply (d)$\Rightarrow$(a) of \cref{thm:MainTheorem}. This can be done after having noticed that on Carnot groups of step 2 the vertically broad* h\"older regularity reads as the locally 1/2-little H\"older continuity along vertical coordinates, see also \cref{rem:OnlyVerticalCoordinates}.

Finally, to conclude the proof of the difficult implication (e)$\Rightarrow$(a) of \cref{thm:MainTheorem.0}, we use \eqref{eqn:Propagation} together with the fact that, on a Carnot group $\mathbb G$ of step 2, the broad* condition lifts to the free Carnot group $\mathbb F$ of step 2 and of the same rank of $\mathbb G$, see \cref{broadin GimplicabroadinF}, while having a vertically broad* h\"older property is naturally transferred  from $\mathbb F$ to $\mathbb G$, see \cref{prop2}. The resulting strategy presents some similarity to \cite{LDPS19}. 

We point out that (a)$\Leftrightarrow$(c) of \cref{thm:MainTheorem.0} is a generalization to all step 2 Carnot groups of \cite[Theorem~5.1]{ASCV06} and (a)$\Leftrightarrow$(e) is a generalization of \cite[Theorem~1.2]{BSC10b}. We refer the reader to the introduction for a more detailed discussion on the literature.

In the current section, without loss of generality, we will always work in coordinates and there will be no distinction between $\square$ and $\widetilde\square$. See \cref{remIMPORT} for details on the identifications.
\subsection{Regularity results for broad* solutions in free Carnot groups of step 2}\label{freegroup}

Free-nilpotent Lie algebras can be defined as follows (see Definition 14.1.1 in \cite{BLU07}).
\begin{defi}[Free-nilpotent Lie algebras of step 2]
	Let $m\geq 2$ be integer. We say that $\mathfrak{f}_{m,2}$ is the {\em free-nilpotent Lie algebra of step 2 with $m$ generators} $X_1,\dots , X_m$ if the following facts hold.
	\begin{itemize}
		\item[(i)] $\mathfrak{f}_{m,2}$ is a Lie algebra generated by the elements $X_1,\dots , X_m$ (i.e., $\mathfrak{f}_{m,2}$ is the smallest Lie algebra containing $\{X_1,\dots , X_m\}$);
		\item[(ii)] $\mathfrak{f}_{m,2}$ is nilpotent of step $2$ (i.e., nested Lie brackets of length $3$ are always $0$);
		\item[(iii)] for every nilpotent Lie algebra $\mathfrak g$ of step $2$ and for every map $\Psi \colon \{X_1,\dots , X_m \}\to \mathfrak g$, there exists a unique homomorphism of Lie algebras $\overline{\Psi}: \mathfrak{f}_{m,2} \to \mathfrak g$ that extends $\Psi$. 
	\end{itemize}
\end{defi}

\begin{defi}[Free Carnot groups of step 2]\label{defi:gruppiliberidefinizione}
	A {\em free Carnot group of step 2} is a Carnot group whose Lie algebra is isomorphic to a
	free-nilpotent Lie algebra $\mathfrak{f}_{m,2}$  for some $m\geq 2$. In this case, the horizontal layer of
	the free Carnot group is isomorphic to the linear span of the generators of the Lie algebra $\mathfrak{f}_{m,2}$.
\end{defi}

\begin{rem}[Free Carnot groups of step 2 in exponential coordinates]\label{rem:coordinatestep2}
	We give an explicit representation of free Carnot groups of step $2$. Fix an integer $m\geq 2$ and denote by $n\coloneqq m+\frac{m(m-1)}{2}$. In $\R^n$ denote the coordinates by $x_j$, for $1\leq j\leq m$, and by $y_{\ell s}$, for
	$1 \leq s< \ell \leq m$. Let $\partial _j$ and $\partial _{\ell s}$ denote the standard basis vectors in this coordinate system.
	We define $n$ linearly independent vector fields on $\R^n$ by setting:
	\begin{equation}\label{eqn:VectorFieldsFree}
	\begin{aligned}
	X_j &\coloneqq \partial _j+ \frac 12 \sum_{ j<\ell\leq m } x_\ell \partial _{\ell j} -\frac 12 \sum_{ 1\leq \ell<j} x_\ell \partial _{j\ell}, \quad \mbox{ if } 1\leq j\leq m,\\
	Y_{\ell s}&\coloneqq \partial _{\ell s}, \hphantom{\frac 12 \sum_{ j<\ell\leq m } x_\ell \partial _{\ell j} -\frac 12 \sum_{ 1\leq \ell<j} x_\ell \partial _{j\ell}} \qquad  \mbox{ if } 1\leq s<\ell\leq m.
	\end{aligned}
	\end{equation}
	Let  $\mathbb{F} \coloneqq (\R^{m+\frac{m(m-1)}{2}}, \cdot)$ be the coordinate representation of the step 2 Carnot group with $m$ generators whose Lie algebra is generated by the vector fields in \eqref{eqn:VectorFieldsFree}. Then $\mathbb F$ is free and its Carnot structure is given by
	\begin{equation*}
	V_1\coloneqq\mbox{span} \{ X_j \,:\, 1\leq j\leq m\}\quad \mbox{and} \quad V_2\coloneqq\mbox{span} \{ Y_{\ell s} \,:\, 1\leq s<\ell \leq m\}.
	\end{equation*}
	Moreover, for any $p$ and $q\in \mathbb F$, the product $p\cdot q$  is given by the Baker-Campbell-Hausdorff formula, and yields
	\begin{equation*}
	\begin{aligned}
	(p\cdot q)_j &=p_j+q_j,\quad \quad \qquad \qquad \qquad \,\,\,\,\, \text{ if } 1\leq j\leq m,\\
	(p\cdot q)_{\ell s} &=p_{\ell s}+q_{\ell s} +\frac 1 2 (p_\ell q_s-q_\ell p_s), \quad \text{ if } 1\leq s<\ell\leq m.\\
	\end{aligned}
	\end{equation*}
\end{rem}
\noindent It is easily verified that for $1\leq s<\ell\leq m$ and $1\leq j\leq m$, one has
\begin{equation}\label{commutatorifree}
[X_\ell , X_s] = Y_{\ell s} \quad \mbox{and} \quad  [X_j , Y_{\ell s}] = 0.
\end{equation}

\begin{rem}\label{remIMPORT} 
	Let $\mathbb{F}$ be a free Carnot groups of step $2$ and rank $m$.
	To keep the notation simpler,
	throughout this section, we identify without loss of generality $\mathbb F$ with $\R^n$ with $n\coloneqq m+\frac{m(m+1)}{2}$ by means of the coordinates described in \cref{rem:coordinatestep2}. Moreover, given two complementary subgroups $\W$ and $\mathbb L$ with $\mathbb L$ horizontal and one-dimensional, we identify them in the following way:
	\begin{equation}\label{5.2.0}
	\begin{aligned}
	\mathbb L & \coloneqq\left\{ (x_1,\dots , x_n)\in \R^n : x_2=\dots=x_n=0 \right\},\\
	\W & \coloneqq\left\{ (x_1,\dots, x_n)\in\R^n : x_1=0 \right\}.
	\end{aligned}
	\end{equation}
	Therefore, there are na\-tu\-ral identifications between $ \R^{n-1}$ and $\W$ and  between $\R$ and $\sL$. We stress that, given an arbitrary free Carnot group of step 2 and complementary subgroups $\sW$ and $\sL$, with $\sL$ horizontal and one-dimensional, we can always choose coordinates satisfying \eqref{5.2.0} and such that the identifications of \cref{rem:coordinatestep2} are satisfied.
\end{rem}

\begin{rem}[Projected vector fields on free Carnot groups of step 2]\label{derivateintrinsecheperguppiliberi}
	Let $\mathbb{F}$ be a free Carnot group of step $2$ represented in coordinates as in \cref{rem:coordinatestep2} and let $\W$ and $\mathbb L$ be complementary subgroups of $\mathbb{F}$ as in \eqref{5.2.0}. Given $V\subseteq \mathbb W$ an open set, and given a continuous map $\psi\colon V\subseteq\sW\to \sL$, according to \cref{example:Step2}, the projected vector fields are given by
	\begin{equation}\label{operatoriproiettatiinF}
	\begin{aligned}
	D_j^\psi  = \partial _{j}-\psi \partial _{j1} + \frac 1 2 \sum_{  j<\ell\leq m } x_\ell\partial _{\ell j} -  \frac 1 2 \sum_{1<s <j} x_s \partial _{js}=  {X_j}_{| V}-\psi  {Y_{j1}}_{|V},  \quad &\mbox{ for } j=2,\dots, m,\\
	D^\psi_{\ell s}  = \partial _{\ell s}  =  {Y_{\ell s}}_{\vert \W},  \hphantom{\frac 1 2 \sum_{  j<\ell\leq m } x_\ell\partial _{\ell j} -  \frac 1 2 \sum_{1<s <j} x_s \partial _{js}=  {X_j}_{\vert \W}-\psi  {Y_{j1}}_{|V}}\quad &\mbox{ for }  1\leq s<\ell\leq m.
	\end{aligned}
	\end{equation} 
	Then, for each $j=2,\dots,m$, every integral curve $\gamma_j\colon I\to \mathbb R^{n-1}$ of $D^{\phi}_j$ has vertical components $y\coloneqq(y_{\ell s})_{1\leq s<\ell\leq m}\colon I\to \mathbb R^{\frac{m(m-1)}2}$  satisfying the following equations
	\begin{equation*}
	\begin{aligned}
	\dot y _{j1} (t)&=-\psi (x_2,\dots ,x_{j-1}, x_j+t,x_{j+1},\dots ,x_m, y(t)), \\
	\dot y _{\ell j} (t)&= \frac 12 x_\ell , \quad \quad  \qquad \mbox{ if }  j<\ell\leq m,\\
	\dot y _{js} (t)&=  -\frac 12 x_s, \quad  \,\qquad \mbox{ if }  1<s <j, \\
	\dot y_{\ell s} (t)&= 0,\quad \qquad  \quad \,\quad  \mbox{ otherwise,} 
	\end{aligned}
	\end{equation*}
where the horizontal components of $\gamma_j(0)$ are $(x_1,\dots, x_m)$.
\end{rem}
We now prove that, given an open set $V\subseteq \sW$ and a continuous function $\psi\colon V\to \sL$, a broad* solution of $D^\psi \psi =\omega $ for some continuous $\omega$ is also vertically broad* h\"older (see \cref{def:vertically*Holder}). In particular, since the vector field $D^\psi_{\ell s}$ satisfies $D^\psi_{\ell s}=\partial _{{\ell s}}$ for every $1\leq s<\ell\leq m$ (see \eqref{operatoriproiettatiinF}), then \eqref{eqn:vertcicallybroad*holder} is equivalent to the following condition: for every $a_0$ in $V$, there exists a neighborhood $V'$ of $a_0$ with $V'\Subset  V$ such that for every  $\ell$ and $s$ with $1\leq  s< \ell\leq m$, one has
\begin{equation}\label{eq:equazioneprimadella66}
\lim_{ \varrho  \to0} \left(\sup\left\{\frac{|\psi(\xi,\eta)-\psi(\xi, y)|}{|\eta_{ \ell  s}-  y_{ l s}|^{1/2}}\right\}\right) =0,
\end{equation}
where the supremum is taken on all the couples $(\xi,\eta), (\xi,y)\in V'$ such that $\eta_{k\tau}=y_{k\tau}$ for any $(k,\tau)\neq (\ell,s)$ and $0<|\eta_{\ell s}-y_{\ell s}|\leq \varrho$.

In the first part of the proof  of \cref{big3.3.12} below, we use techniques that are similar to the ones exploited in \cite[Theorems 5.8 and 5.9]{ASCV06} in the context of Heisenberg groups. The third step of the proof is new. 

\begin{theorem}\label{big3.3.12}
	Let $\W$ and $\mathbb{L}$ be two complementary subgroups of the free Carnot group $\mathbb{F}$ of step 2 with $\mathbb L$ horizontal and one-dimensional. Let $ V \subseteq \W$ be open and $\psi \colon{V} \to \mathbb L$ be a continuous function and assume that $\psi$ is a broad* solution of the system $D^{\psi } \psi =\omega$ in $V$, for some $\omega\in C(V;\mathbb R^{m-1})$, with respect to the basis chosen in \cref{rem:coordinatestep2} and \cref{remIMPORT}.
	Then $\psi$ is vertically broad* h\"older.
\end{theorem}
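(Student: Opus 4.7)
By \cref{rem:OnlyVerticalCoordinates}, since in a step-$2$ group one has $D^\psi_{\ell s} = \partial_{\ell s}$, verifying the vertically broad* h\"older property for $\psi$ reduces to showing locally uniform $1/2$-little H\"older continuity of $\psi$ in each vertical coordinate $y_{\ell s}$, $1 \leq s < \ell \leq m$, as encoded by \eqref{eq:equazioneprimadella66}. The plan is to split the verification in two steps, according to whether $s = 1$ or $s \geq 2$, each exploiting a different structural feature of the vector fields \eqref{operatoriproiettatiinF}.

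\medskip
\noindent\textbf{Step 1 (directions $y_{j1}$, $j = 2,\dots,m$).} Here the active ingredient is the nonlinear term $-\psi\,\partial_{j1}$ appearing in $D^\psi_j$, which makes the $y_{j1}$-component of an integral curve of $D^\psi_j$ satisfy $\dot y_{j1} = -\psi$ (see \cref{derivateintrinsecheperguppiliberi}). Given $a_0\in V$ and two nearby points differing only in the $y_{j1}$ coordinate by $\tau$, I compare $\psi$ at them by considering two integral curves of $D^\psi_j$ of time length $t = O(\tau^{1/2})$, starting from carefully chosen points so that their endpoints differ only in the $y_{j1}$ direction, and I exploit the broad* identity $D^\psi_j\psi = \omega_j$ plus uniform continuity of $\omega_j$ on a compact neighborhood of $a_0$. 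This is an essentially local, one-vector-field argument and proceeds in direct analogy with \cite[Theorem~5.8]{ASCV06}.

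\medskip
\noindent\textbf{Step 2 (directions $y_{\ell s}$, $2 \leq s < \ell \leq m$).} Here the active ingredient is the commutator identity $[X_\ell,X_s] = Y_{\ell s}$, see \eqref{commutatorifree}. Fix $a,b\in V$ differing only in $y_{\ell s}$ by $\tau>0$, set $t = \tau^{1/2}$, and consider the four-leg concatenation
\[
a \;\to_{E^\psi_\ell(\cdot,t)}\; a_1 \;\to_{E^\psi_s(\cdot,t)}\; a_2 \;\to_{E^\psi_\ell(\cdot,-t)}\; a_3 \;\to_{E^\psi_s(\cdot,-t)}\; a_4,
\]
in the spirit of \cref{prop:BroadImpliesH12Heisenberg}. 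Using the explicit formulas \eqref{operatoriproiettatiinF}, a direct coordinate-by-coordinate computation shows that $a_4 - a$ has zero $x$-components and zero $y$-components \emph{except}
\begin{itemize}
\item $y_{\ell s}$, which differs by exactly $t^2 = \tau$, matching the $y_{\ell s}$-difference between $a$ and $b$;
\item $y_{\ell 1}$ and $y_{s1}$, which differ by quantities $\Delta y_{\ell 1},\Delta y_{s1}$ given by the integrals of $\psi$ along the forward/backward $D^\psi_\ell$ and $D^\psi_s$ legs.
\end{itemize}
On the one hand, summing the broad* identities $D^\psi\psi = \omega$ along the four legs, applying Lagrange's theorem and using a modulus of continuity $\beta$ for $\omega$ produces $|\psi(a_4)-\psi(a)| = O(t\,\beta(t)) = o(\tau^{1/2})$. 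On the other hand, I would like to estimate $|\psi(a_4)-\psi(b)|$ by invoking the $1/2$-little H\"older property from Step 1 in the $y_{\ell 1}$ and $y_{s1}$ variables, obtaining $|\psi(a_4)-\psi(b)| = o(\tau^{1/2})$. Combining the two bounds and the triangle inequality gives $|\psi(a)-\psi(b)|/\tau^{1/2}\to 0$ as $\tau\to 0$, locally uniformly.

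\medskip
\noindent\textbf{Main obstacle.} The delicate point is obtaining the sharp bound $|\Delta y_{\ell 1}|,|\Delta y_{s1}| = O(t^2)$ (and not merely $o(t)$), which is needed for the Step-1 input to $|\psi(a_4)-\psi(b)|$ to contribute $o(\tau^{1/2})$ rather than something blowing up like $\tau^{-1/4}$. After rewriting $\Delta y_{\ell 1} = \int_0^t[\psi(\gamma_3(-r))-\psi(\gamma_1(r))]\,dr$ (and analogously for $\Delta y_{s1}$), the na\"ive continuity bound on $\psi$ only gives the pointwise estimate $o(1)$. The improvement to a uniform $O(t)$ bound on the integrand, and hence $O(t^2)$ after integration, is obtained by connecting $\gamma_1(r)$ and $\gamma_3(-r)$ by an explicit path of $D^\psi_\ell$- and $D^\psi_s$-integral curves of total length $O(t)$ and applying the broad* identity on each piece: this produces an $\|\omega\|_\infty$-Lipschitz control that, crucially, is of the right order to make the scheme close.
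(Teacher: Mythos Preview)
Your proposal is correct and follows essentially the same two-part strategy as the paper: the $y_{j1}$ directions are handled via the ASCV06-type argument exploiting the nonlinear term $-\psi\,\partial_{j1}$ (the paper implements this by a contradiction argument showing the two $y_{j1}$-trajectories must cross), and the remaining $y_{\ell s}$ directions via the four-leg commutator concatenation, with the crucial $O(t^2)$ bound on the spurious $y_{\ell 1},y_{s1}$ increments obtained exactly as you describe, using the $\|\omega\|_\infty$-Lipschitz control of $\psi$ along the broad*-provided integral curves. Your identification of the ``main obstacle'' and its resolution match the paper's equations \eqref{stima2903.1}--\eqref{stima2903.4} precisely.
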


\begin{proof}
	Working in the coordinates described in \cref{rem:coordinatestep2} and \cref{remIMPORT}, we can assume without loss of generality that $\sW$ and $\sL$ are defined as in \eqref{5.2.0}. If $m=2$, then $\mathbb{F}=\mathbb{H}^1$, and therefore the statement would follow from \cite[Theorem~1.2]{BSC10b}. 
	We therefore assume that $m>2$. 

	We prove the following stronger fact, from which \eqref{eq:equazioneprimadella66} follows. For each $a_0\in V$ there are sufficiently small neighborhoods $I\Subset  I'\Subset V$ of $a_0$ such that, for every $1\leq s<\ell\leq m$, one can find a continuous and increasing function  $\alpha _{ \ell s} \colon(0,+\infty )\to [0,+\infty )$ only depending on $I'$, $\|\psi \|_{L^\infty (I')}$, $\| \omega \|_{L^\infty (I')}$ and on the modulus of continuity of $\omega$ on $I'$ with the property that 
	\begin{equation}\label{limitealpha.2}
	\lim_{\varrho \to 0}\alpha  _{ \ell  s}(\varrho )=0,
	\end{equation}
	and 
	\begin{equation}\label{alpha88.2}
	\begin{split}
	\frac{|\psi(\xi,\eta)-\psi(\xi, y)|}{|\eta_{ \ell  s}-  y_{ \ell  s}|^{1/2}}\leq \alpha  _{  \ell  s} (\varrho ),
	\end{split}
	\end{equation}
	for every $(\xi,\eta), (\xi, y)\in I$ such that  $\eta_{ k\tau} =y_{ k\tau}$ for every $(k,\tau)\ne ( \ell, s)$ and $0< |y_{\ell  s}-  \eta_{ \ell s}|\leq \varrho$.
	
	Fix $a_0\in V$. Since $\psi $ is a broad* solution of $D^{\psi } \psi =\omega$ in $V$, there exist $0<\delta _2<\delta  _1$ and a family of maps 
	\begin{equation*}
	E_j^\psi\colon\overline{B(a_0, \delta _2) }\times[-\delta _2, \delta _2]  \to   \overline{B(a_0, \delta _1) },
	\end{equation*}
	for $j=2,\dots ,m$,  such that the conditions of \cref{defbroad*} are satisfied.
	Define $I'\coloneqq \overline{B(a_0, \delta _1) }$ and $I\coloneqq \overline{B(a_0, \delta _2) },$ and set
	$ M_1\coloneqq\| \psi \|_{L^\infty (I' )}$. Let also $\beta $ be an increasing modulus of uniform continuity of $\omega$ on $I' $. We are going to prove \eqref{alpha88.2} with $\alpha _{ \ell s}$ defined by 
	\begin{equation}\label{Ndefialfa0.2}
	\alpha _{ \ell s}(\varrho ) \coloneqq 
	\begin{cases}
	3 \delta (  \varrho ),   &  \mbox{ if } (\ell, s)=(j,1) \mbox{ and } j=2,\dots, m,\\
	G_{ \ell s} (\varrho ), & \mbox{ otherwise,}
	\end{cases}
	\end{equation}
	where $G_{ \ell s}$ will be determined later in \eqref{defiGls} and 
	\begin{equation}\label{eq:definizionedelta}
	\delta ( \varrho)\coloneqq\max \left\{  \varrho^{1/4}, \sqrt{\beta(C  \varrho^{1/4})} \right\},
	\end{equation}
	for some constant $C\coloneqq C(j) >0$ such that \begin{equation}\label{eq:costanteC} 
	 |\eta_{j1}- y_{j1}| + 2(|\xi|+M_1) |\eta_{j1}- y_{j1}|^{1/4} \leq C |\eta_{j1}- y_{j1}|^{1/4}  
	\end{equation} 
	for any $(\xi,\eta), (\xi, y) \in  I'$ with $\eta_{k \tau}=  y_{k\tau}$ for every couple $(k,\tau)\neq(j,1)$.

	\noindent \textbf{First step.} If $a=(x,y)\in I$, by using \eqref{operatoriproiettatiinF}, we have that for any $2\leq j\leq m$ and any $t \in [-\delta_2,\delta_2]$, it holds
	\begin{equation}\label{eq:sistemadiCauchyy}
	\begin{aligned}
	E_j^\psi(a,t) &=(x_2,\dots ,x_{j-1}, x_j+t,x_{j+1},\dots ,x_m, y(t)), \qquad \text{where}\\
	y_{\ell s}(t) &= 
	\begin{cases}
	y_{j1}- \int_{0}^t \psi (E_j^\psi(a,r)) \, \de \! r, & \mbox{if } (\ell,s)=(j,1),\\
	\frac{1}{2} t x_\ell  +y_{\ell j},   & \mbox{if }s=j,\text{ and }  j<\ell\leq m,\\
	-\frac{1}{2}tx_s   +y_{j s}, &\mbox{if } \ell=j,\text{ and }  1<s <j, \\
	y_{\ell s}, &  \mbox{otherwise,}
	\end{cases}
	\end{aligned}
	\end{equation}
	and consequently, since $E^{\psi}_j$ are the maps provided by \cref{defbroad*}, 
	 $t\mapsto y_{j1}(t)$ is a solution of the Cauchy problem
	\begin{equation*}\label{prcauchy.2}
	\left\{
	\begin{array}{l}
	\ddot y_{j1}(t)=\displaystyle\frac{ \de }{ \de \! t} \Big[ -\psi (E_j^\psi(a,t))\, \Big] = - \omega_j(E_j^\psi(a,t)),\qquad   t\in [-\delta _2,\delta _2],\\
	\\
	y_{j1}(0)=y_{j1}, \\
	\\
	\dot y_{j1}(0) = - \psi (a).
	\end{array}
	\right.
	\end{equation*}
	As a consequence of \eqref{eq:sistemadiCauchyy} one gets 
	\begin{equation}\label{eq:stimaderivata}
	\begin{aligned}
	\max_{ r \in [-|t|, |t|] } | \dot y (r)| & \leq  \frac{1}{2}  \left|   \sum_{  j<\ell\leq m } x_\ell \right| +  \frac{1}{2}\left|\sum_{ 1<s<j} x_s \right| +\max_{ r  \in [-|t|, |t|]} \left|  \psi (E_j^\psi(a,r)) \right|  \leq C_1(|x|+M_1),\\
	\end{aligned}
	\end{equation} 
	for every $t\in [-\delta_2,\delta_2]$, where the constant $C_1>0$ only depends on the topological dimension of $\mathbb{F}$.
	
	\noindent{\bf Second step.}  Fix $j=2,\dots , m$ and assume $a=(\xi,\eta), \hat a=(\xi, y) \in I $ with $\eta_{k\tau}= y_{k\tau}$ for all couples $(k,\tau)\neq(j,1)$. We will need to possibly shrink $I$ in a way that will be clear throughout the proof.
	We aim to show that
	\begin{equation}\label{finaleCC.2}
	\frac{|\psi (a)-\psi  (\hat a)|}{|\eta_{j1}- y_{j1}|^{1/2}  }  \leq \alpha _{j1} (|\eta_{j1}- y_{j1}| ),
	\end{equation}   
	where, according to \eqref{Ndefialfa0.2}, $\alpha _{j1}(\varrho) = 3\delta(\varrho) $. This would imply \eqref{alpha88.2} for $( \ell, s )=(j,1)$.
	
	Set $\delta \coloneqq\delta( |\eta_{j1}- y_{j1}| )$ and suppose \eqref{finaleCC.2} is not true, namely
	\begin{equation}\label{finaleCCassurdo}
	\frac{|\psi (a)-\psi  (\hat a)|}{|\eta_{j1}- y_{j1}|^{1/2} }  > 3\delta .
	\end{equation}   
	Let $E_j^\psi(a, \cdot )$ and $E_j^\psi( \hat a,\cdot) $ be the integral curves of $D^\psi_j$ given by the broad* condition. By the first step they satisfy 
	\[
	E_j^\psi(a,t)=(\xi_2,\dots, \xi_{j-1},\xi_j+t,\xi_{j+1},\dots ,\xi_m,\eta(t)), \] and
	\[
	E_j^\psi(\hat a,t)=(\xi_2,\dots ,\xi_{j-1},\xi_j+t,\xi_{j+1},\dots ,\xi_m, y(t)).
	\] 
	We claim we can find $t^*\in [-\delta _2,\delta_2]$ such that $\eta_{j1}(t^*)= y_{j1} (t^* )$, with $\psi (E_j^\psi(a,t^*) ) \ne  \psi (E_j^\psi(\hat a,t^*))$. This will lead to a contradiction, by the fact that the equality $\eta_{j1}(t^*)= y_{j1} (t^* )$ would also imply $\eta(t^*)=y(t^*)$.
	
	Without loss of generality, \textbf{assume that the initial data satisfy $\eta_{j1}> y_{j1}$}.  By the first step of this proof, for every $t\in [-\delta _2,\delta _2]$, one has
	\[
	\begin{aligned}
	\eta_{j1}(t)- y_{j1}(t) - (\eta_{j1}- y_{j1})
	&=\int _{0}^t \left[ \dot \eta_{j1}(0) -\dot y_{j1} (0)+\int _{0}^{r'} (\ddot \eta_{j1} (r) - \ddot { y }_{j1} (r))\,  \de \! r   \right] \de \! r'  \\
	& = - t(\psi (a)-\psi (\hat a))- \int _{0}^t\int _{0}^{r'} \left( \omega_j (E_j^\psi(a,r)) -\omega_j (E_j^\psi(\hat a,r))\right)\,  \de \! r   \de \! r' .\\
	\end{aligned}
	\]
 Using \eqref{eq:stimaderivata} and the fundamental theorem of Calculus, one gets a constant $C_2>0$ only depending on $C_1$ such that
	\begin{equation}\label{eq240318}
	\begin{aligned}
	|E_j^\psi(a,r) - E_j^\psi(\hat a,r)| & \leq  \vert \eta-y\vert+ |r|\left(\max_{r  \in [-|t|, |t|]} | \dot \eta (r)| + \max_{ r \in [-|t|, |t|] } | \dot {y} (r)| \right)\\
	& \leq \vert\eta-y\vert   + |t|\left(\max_{ r  \in [-|t|, |t|]} | \dot \eta(r)| + \max_{ r  \in [-|t|, |t|]} | \dot {y}(r)| \right)\\
	& \leq  C_2\left(|\eta_{j1}-y_{j1}|+2|t|(|\xi|+M_1 )\right),
	\end{aligned}
	\end{equation} 
	for every $r \in [-|t|, |t|]$ and $t\in [-\delta_2,\delta_2]$. Hence, we obtain that
	\begin{equation}\label{5.24}
	\begin{aligned}
	\eta_{j1}(t)- y_{j1}(t) - (\eta_{j1}- y_{j1})& \leq  - t(\psi (a)-\psi (\hat a))+ t^2 \max_{r  \in [-|t|, |t|] }\beta \big(\vert E_j^\psi(a,r) - E_j^\psi(\hat a,r)\vert\big)  \\
	& \leq - t(\psi (a)-\psi (\hat a))+
	t^2 \beta \left(  C_2(|\eta_{j1}- y_{j1}|  +2|t|(|\xi|+M_1 )) \right),
	\end{aligned}
	\end{equation} 
	for every $t\in [-\delta_2,\delta_2]$. Up to redefining $\beta$, we can replace without loss of generality $\beta(C_2\varrho)$ with $\beta(\varrho)$. Now, by \eqref{finaleCCassurdo} we know that
	\begin{equation}\label{phi1}
	- |\psi (a)-\psi (\hat a)|  < -3 \delta  |\eta_{j1}- y_{j1}|^{1/2}.
	\end{equation}  
	Without loss of generality, up to restricting $I$, we can assume that 
	\begin{equation}\label{eq:restrizioneI}
	\delta_2 \geq |y_{j1}-\eta_{j1}|^{1/4} \geq |y_{j1}-\eta_{j1}|^{1/2}/\delta,
	\end{equation}
	where the second inequality directly follows from \eqref{eq:definizionedelta}.
	 If $\psi(a)=\psi(\hat a)$, then \eqref{finaleCC.2} would be trivial. We study two cases: $\psi(a)-\psi(\hat a)<0$ or $\psi(a)-\psi (\hat a)>0$. If $\psi(a)-\psi(\hat a)<0$, set $t_0\coloneqq - \frac{|\eta_{j1}- y_{j1}|^{1/2}}{\delta }$ (otherwise we can choose $t_0\coloneqq\frac{|\eta_{j1}- y_{j1}|^{1/2} }{\delta }$) and evaluate $\eqref{5.24}$ in $t=t_0$. Combining it with \eqref{eq:costanteC}, \eqref{phi1}, \eqref{eq:restrizioneI}, the definition of $\delta$, and the fact that $\beta$ is increasing, we obtain (in both cases) 
	\begin{equation}\label{5.24bis}
	\begin{aligned}
	\eta_{j1}(t_0)- y_{j1}(t_0) & \leq \eta_{j1}- y_{j1} + |\eta_{j1}- y_{j1}|^{1/2} \frac{-|\psi (a)-\psi (\hat a)|}{\delta } +\\
	& \hspace{0,5 cm} + \frac{1}{\delta ^2}|\eta_{j1}- y_{j1}|  \beta  \left(  |\eta_{j1}- y_{j1}| + 2(|\xi|+M_1) \frac{|\eta_{j1}- y_{j1}|^{1/2} }{\delta} \right)    \\
	& \leq \eta_{j1}- y_{j1}  - 3 |\eta_{j1}- y_{j1}| + |\eta_{j1}- y_{j1}| \frac{ \beta  \left( C |\eta_{j1}- y_{j1}|^{1/4}\right)}{\delta ^2}    \\
	& \leq \eta_{j1}-y_{j1} - 3|\eta_{j1}- y_{j1}|+|\eta_{j1}- y_{j1}| \leq -|\eta_{j1}- y_{j1}|<0.
	\end{aligned}
	\end{equation} 
	If $ \psi (a)-\psi (\hat a)>0$  we can define  
	\begin{equation*}
	t^*\coloneqq \sup\{ \, r\in [0, \delta _2] \, :\,\eta_{j1}(s)- y_{j1}(s) >0,\, \forall s\in [0,r]\},
	\end{equation*}
	since the set $\{ \, r\in [0, \delta _2] \, :\,\eta_{j1}(s)- y_{j1}(s) >0,\, \forall s\in [0,r]\}$ is not empty; indeed, recall that we assumed without loss of generality that $\eta_{j1}>y_{j1}$ and therefore $\eta_{j1}(0)- y_{j1}(0)=\eta_{j1}- y_{j1}>0$. Moreover $0<t^*<t_0 \leq\delta _2$, and recalling that $\eta_{k\tau}=y_{k\tau}$ except for $(k,\tau)=(j,1)$, one has, by \eqref{eq:sistemadiCauchyy} that
	\begin{equation}\label{3.44big.2}
	\begin{aligned}
	\eta_{j1}(t^*) & = y_{j1}(t^*), \\
	\eta_{\ell j}(t^*)& = y_{\ell j}(t^*)= \frac{1}{2}t^* \xi_\ell  +\eta_{\ell j}, \quad \qquad \quad \mbox{ for }  j<\ell\leq m,\\
	\eta_{j s}(t^*) & = y_{j s}(t^*) =-\frac{1}{2}t^* \xi_s   +\eta_{j s}, \,\quad \quad \quad  \mbox{ for }  1<s <j, \\ \eta_{\ell s}(t^*)& =y_{\ell s}(t^*) =\eta_{\ell s},  \qquad  \qquad   \qquad \quad  \,\, \,\,  \mbox{ otherwise.}
	\end{aligned}
	\end{equation}
	Hence, by definition of $E_j^\psi(a,\cdot )$ and $E_j^\psi(\hat a,\cdot)$ in \eqref{eq:sistemadiCauchyy}, one gets $E_j^\psi(a,t^*)=E_j^\psi(\hat a, t^*)$ and therefore
	\begin{equation}\label{legame3.2}
	\begin{aligned}
	\psi (E_j^\psi(a,t^*) ) =  \psi (E_j^\psi(\hat a,t^*)).
	\end{aligned}
	\end{equation}
	In the case $ \psi (a)-\psi (\hat a)<0$, we consider $t_0= -\frac{|\eta_{j1}- y_{j1}|^{1/2}}{\delta }$ and define $t^*\coloneqq\inf \{  r\in [-\delta _2,0]: \eta_{j1}(s)- y_{j1}(s)>0,\, \forall s\in [r,0] \}$. Then  $-\delta _2\leq t_0<t^*<0$ and, also in this case, \eqref{3.44big.2} and \eqref{legame3.2} are satisfied. 
	
	We now show that this leads to a contradiction. In case $\psi (a)-\psi (\hat a)>0$, by using the properties of $E^\psi_j$, \eqref{eq240318}, \eqref{phi1}, \eqref{eq:costanteC}, the definition of $\beta$  and the fact that $t^*< t_0$, we deduce
	\begin{equation*}
	\begin{aligned}
	-(\psi (E_j^\psi(a,t^*) )  -  &\psi (E_j^\psi(\hat a,t^*)))= -(\psi (a)-\psi (\hat a))- \int _{0}^{t^*} \left( \omega_j (E_j^\psi(a,r)) -\omega_j (E_j^\psi(\hat a,r))\right)\,  \de  \! r  \\
	& \leq   -3 (\eta_{j1}-y_{j1})^{1/2}\delta + |t^*| \,\max_{r  \in [0, t^*] }  \beta \left(\vert E_j^\psi(a,r) - E_j^\psi(\hat a,r) \vert \right)  \\
	& \leq  -3 (\eta_{j1}- y_{j1})^{1/2}\delta +  |t^*| \beta  \left( |\eta_{j1}- y_{j1}|+ 2|t^*|(|\xi|+M_1) \right) \\
	& \leq -3 (\eta_{j1}- y_{j1})^{1/2}\delta  + |t^*| \, \beta  \left( |\eta_{j1}- y_{j1}|+ 2(|\xi|+M_1) \frac{|\eta_{j1}- y_{j1}|^{1/2}}{\delta} \right)    \\
	& \leq -3 (\eta_{j1}- y_{j1})^{1/2}\delta  +  (\eta_{j1}- y_{j1})^{1/2} \delta \, \frac{\beta  \left(C(\eta_{j1}- y_{j1})^{1/4} \right)}{\delta ^2}    \\
	& \leq (-3+1)(\eta_{j1}- y_{j1})^{1/2} \delta   <0.
	\end{aligned}
	\end{equation*}Similarly, if $\psi (a)-\psi (\hat a)<0$, then 
	\begin{equation*}
	\begin{aligned}
	\psi (E_j^\psi(a,t^*) )  -  \psi (E_j^\psi(\hat a,t^*))&= \psi (a)-\psi (\hat a)+ \int _{0}^{t^*} \left( \omega_j (E_j^\psi(a,r)) -\omega_j (E_j^\psi(\hat a,r))\right) \de  \! r  \\
	& \leq   -3 (\eta_{j1}- y_{j1})^{1/2}\delta + |t^*| \,\max_{r  \in [-t^*, 0] }  \beta \left(\vert E_j^\psi(a,r) - E_j^\psi(\hat a,r)\vert \right),  \\
	&<0.
	\end{aligned}
	\end{equation*}
	Hence, in both cases we have $\psi (E_j^\psi(a,t^*) ) \ne  \psi ( E_j^\psi(\hat a,t^*))$ that is in contradiction with \eqref{legame3.2}, so $\eqref{finaleCC.2}$ follows.
	
	\textbf{Third step.}
	Fix $ \ell, s$ with $1< s < \ell \leq m $,  denote by  $M_2\coloneqq\|\omega\|_{L^\infty (I' )}$ and define 
	\begin{equation}\label{defiGls}
	G_{\ell s}(\varrho)\coloneqq 2\sqrt {M_2} \alpha _{\ell 1} (4M_2 \varrho ) + 2\sqrt {M_2}  \alpha _{ s 1} (4M_2 \varrho )+2 \beta (C_0 \varrho^{1/2} ), 
	\end{equation}  
	where $\alpha_{\ell 1}$ and $\alpha_{s 1} $ are defined as in \eqref{Ndefialfa0.2} for $j=\ell$ and $j= s$, respectively, $\beta$ is an increasing modulus of uniform continuity of $\omega$ on $I' $ and $C_0>0$ is a suitable constant, only depending on the supremum norm of $\omega$ on $I'$, that will be determined later.
	 
	We want to show that 
	\begin{equation}\label{equVERIFICA.2}
	 \frac{|\psi(a)-\psi(\hat a)|}{|\eta_{\ell  s}-  y_{ \ell s}|^{1/2}}\leq G_{\ell s}(\varrho),
	\end{equation} 
	for every sufficiently small $\varrho>0$, every $a=(\xi,\eta), \hat a= (\xi, y)\in I$, such that $\eta_{ k \tau} = y_{ k\tau}$ for every $(k,\tau)\ne (\ell, s)$ and $0< |\eta_{\ell  s}-  y_{ \ell s}| \leq   \varrho$. Denote by $T_{\ell s}\coloneqq|\eta_{\ell s}- y_{\ell  s}|^{1/2}$. We will need to possibly shrink $I$ in a way that will be clear from the proof.

	\emph{Rough idea of the proof.} We build a concatenation of integral curves of the vector fields $D^\psi_{\ell}$ and $D^\psi_{s}$ that joins $\hat a$ and a suitable point $a_4$ that that can be connected to $a$ with two vertical lines on which we can use the result of the second step of this proof. We start from $\hat a$ and follow the integral curve of $D^\psi_{\ell}$ for a time $T_{\ell s}$ and we follow the integral curve of $D^\psi_{s}$ for the same time $T_{ \ell s}$; finally, we repeat the same procedure but for time $-T_{ \ell s}$. 
	At the end of this process, we obtain a point with three different vertical components with respect to $\hat a$: two increments are given by the non linear terms $-\psi \partial _{\ell 1}$ and $-\psi \partial _{ s1}  $ coming respectively from $D^\psi_{\ell}$ and from $D^\psi_{ s}$ and one increment is given by the commutator $[X_{\ell}, X_{s}]$ (which is $Y_{\ell s}$). In particular,  whenever $\eta_{\ell s}-  y_{\ell s}>0$, the $(\ell, s)$-component becomes equal to the $(\ell, s)$-component of $a$, that is $\eta_{\ell s}$. Vice versa, if $\eta_{\ell s}-y_{\ell s}<0$, one has to replace the times $\pm T_{ \ell s}$ with $\mp T_{\ell s}$.
	In the end, we complete the proof by using the estimate of the second step of this proof applied to $a_4$ and $a$. The desired estimates come by using Lagrange's Theorem.
	
	Assume $\eta_{\ell s} -y_{\ell s}>0$. Then we construct the following chain of points. 
	\begin{equation}\label{aggiustamentieq}
	\begin{split}
	\hat a &\to_{E_{\ell}^\psi (\hat a)}\, a_1\coloneqq E_{\ell}^\psi (\hat a, T_{\ell s})  \to_{E_{s}^\psi ( a_{1}) } \, a_2\coloneqq E_{ s}^\psi ( a_{1}, T_{\ell  s}) \\
	&\to_{E_{\ell}^\psi ( a_{2}) } \, a_3\coloneqq  E_{\ell}^\psi ( a_2 , -T_{\ell s}) \to_{E_{s}^\psi ( a_{3})} \,  a_4\coloneqq E_{ s}^\psi ( a_{3}, -T_{\ell  s}), \\
	\end{split}
	\end{equation}
	where we recall that $E_{\ell}^\psi$ and $ E_{s}^\psi $ are the integral curves of the vector fields $D^\psi_{\ell}$ and $D^\psi_{s}$, respectively, given by the fact that $D^{\psi } \psi =\omega$ in the broad* sense. In particular, $E_{\ell}^\psi $ and $ E_{s}^\psi $ satisfy \eqref{eq:sistemadiCauchyy}. 
	
	In case $\eta_{\ell s} - y_{\ell s} <0$ we repeat the same construction by replacing $\pm T_{\ell s}$ with $\mp T_{\ell s}$. In both cases, we have that $a_{4}\eqqcolon(\xi, y^{(a_{4})})$ is such that $y^{(a_{4})}_{\ell s} =\eta_{\ell s}$. Indeed, if $\eta_{\ell s} -y_{\ell s}>0$, then
	\begin{equation*}
	y^{(a_{4})}_{k\tau} = 
	\begin{cases}
	\eta_{\ell s}, & \mbox{if }  (k,\tau)=(\ell, s), \\
	\displaystyle\eta_{\ell 1}-\int_{0}^{T_{\ell s}} \psi ( E_{\ell}^\psi ( \hat a , t))  \de \!  t+ \int_{0}^{T_{\ell s}} \psi (E_{\ell}^\psi ( a_2 , t)) \de  \! t, & \mbox{if }  (k,\tau)=(\ell,1),\\
	\displaystyle\eta_{ s1} -\int_{0}^{T_{\ell s}} \psi (E_{ s}^\psi ( a_{1}, t))  \de  \! t+ \int_{0}^{ T_{\ell s}} \psi (E_{ s}^\psi ( a_{3}, t))  \de  \! t, &  \mbox{if }  (k,\tau)=(s,1),\\
	y_{k\tau}, &\mbox{otherwise.}
	\end{cases}
	\end{equation*}

	%Up to shrinking $I$ to some $I_0$, we can assume without loss of generality that $a_1,a_2,a_3, a_4 \in I$  for every sufficiently small $\varrho$  with respect to $\delta _2$, so that we might have to restrict $I$, in a way that is independent of $a$ and $\hat a$, to guarantee that the points defined in \eqref{aggiustamentieq} are in $I'$.
	We can assume that $a_1,a_2, a_3, a_4\in I$. Indeed, this can be done because for a sufficiently small $\varrho$ only depending on the supremum norms of $\phi$ and $\omega$ on $I'$, we can possibly reduce $I$ to some $I_0$ so that all the curves as in \eqref{aggiustamentieq} starting in $I_0$, and living for times bounded above by $\varrho$, lie inside $I$.
	
	Let $a_5=(\xi,y^{(a_{5})})$ be a point that has the same components of $a$, except for position $(\ell, 1)$ for which $y^{(a_{5})}_{\ell1}=y^{(a_{4})}_{\ell1}$. As remarked above, we can assume without loss of generality that also $a_5 \in I$.  Moreover, we can estimate $|\psi(a)-\psi(\hat a)|$ as follows:
	\begin{equation}\label{stima2903.6}
	\begin{aligned}
	\left|\psi (a)-\psi (\hat a) \right|  & \leq   \left|\psi (a)-\psi (a_5) \right|   + \left|\psi (a_5)-\psi ( a_4) \right|  +   \left|\psi (a_4)-\psi ( \hat a)\right|. 
	\end{aligned}
	\end{equation}  
	We start by considering $|\psi(a)-\psi( a_5)|$. Evaluating \eqref{finaleCC.2} for $j=\ell$, we get that
	\begin{equation*}
	\begin{aligned}
	\left|\psi (a)-\psi (a_5) \right|  & \leq  |\eta_{\ell 1}-y^{(a_{5})}_{\ell 1}|^{1/2}  \alpha _{\ell 1} ( |\eta_{\ell 1}-y^{(a_{5})}_{\ell1}|), \\
	\end{aligned}
	\end{equation*}  
	 and we also notice that  
	\begin{equation*}
	|\eta_{\ell 1}-y^{(a_{5})}_{\ell 1}|= \left|\int_{0}^{ T_{\ell s}} \psi ( E_{\ell}^\psi ( \hat a , t))  \de \!  t  -\int_{0}^{ T_{\ell s}} \psi ( E_{\ell }^\psi ( a _2, t))  \de  \!  t \right|. 
	\end{equation*}
	Recalling that $M_2=\|\omega\|_{L^\infty(I')}$, we aim to show that
	\begin{equation}\label{stima2903.1}
	\begin{aligned}
	\left|\int_{0}^{ T_{\ell s}} \psi ( E_{\ell}^\psi ( \hat a , t))  \de  \! t  -\int_{0}^{ T_{\ell s}} \psi ( E_{\ell}^\psi ( a _2, t)) \de  \!  t \right|  \leq 4M_2 T_{\ell s}^2, \\
	\end{aligned}
	\end{equation}  
	that would imply
	\begin{equation}\label{stima2903.2}
	\begin{aligned}
	\left|\psi (a)-\psi (a_5) \right|  & \leq  2\sqrt {M_2} T_{\ell  s}  \alpha _{\ell 1} ( |\eta_{\ell 1}-y^{(a_{5})}_{\ell1}|) \leq  2\sqrt{M_2} T_{\ell s}  \alpha _{\ell 1} ( 4M_2 T_{\ell s}^2). \\
	\end{aligned}
	\end{equation}  
	We first observe that
	\begin{equation*}
	\begin{aligned}
	& \left|\int_{0}^{ T_{\ell  s}} \psi (E_{\ell}^\psi ( \hat a , t)) \de  \!  t  -\int_{0}^{ T_{\ell s}} \psi ( E_{\ell}^\psi ( a_2 , t)) \de  \!  t \right| \\& \quad = \left|\int_{0}^{ T_{\ell s}} \left(\psi ( E_{\ell}^\psi (\hat a , t)) -\psi (\hat a)\right)  \de \!   t  -\int_{0}^{ T_{\ell s}} \left(\psi (E_{\ell}^\psi ( a_2 , t)) -\psi (a_2)\right) \de  \!  t  +T_{\ell s} \left((\psi (\hat a)-\psi (a_2))\right) \right|  \\
	&\quad \leq \int_{0}^{ T_{\ell s}}  \left| \psi (E_{\ell}^\psi (\hat a , t)) -\psi (\hat a)\right| \de  \!  t  + \int_{0}^{ T_{\ell s}}  \left| \psi ( E_{\ell}^\psi ( a _2, t)) -\psi (a_2) \right| \de  \!  t  +T_{\ell s} |\psi (\hat a)-\psi (a_2)|. \\
	\end{aligned}
	\end{equation*}  
	Recalling that for every $t$ in the interval of definition of the curve $E^{\psi}_{\ell}$, one has \\ $\frac{\de }{\de \! s}_{|s=t} \psi (E_{\ell}^\psi ( A , s)) = \omega_{\ell} ( E_{\ell}^\psi ( A , t))$ for $A=\hat a, a_2$, by exploiting the fundamental theorem of Calculus the previous estimate then yields
	\begin{equation*}
	\begin{aligned}
	& \left|\int_{0}^{ T_{\ell s}} \psi ( E_{\ell}^\psi (\hat a , t)) \de  \!  t  -\int_{0}^{ T_{\ell s}} \psi ( E_{\ell}^\psi ( a_2 , t))  \de  \!  t \right| \leq 2M_2 T^2_{\ell s} +T_{\ell s} |\psi (\hat a)-\psi (a_2)|. \\
	\end{aligned}
	\end{equation*}  
	By Lagrange's Theorem one also gets
	\begin{equation*}
	\begin{aligned}
	|\psi (\hat a)-\psi (a_2)| \leq  |\psi (\hat a)-\psi (a_1)|+ |\psi (a_1)-\psi (a_2)| =T_{\ell s} |\omega_{\ell } (b^*_0)|+ T_{\ell s} |\omega_{s} (b^*_1)| \leq 2M_2T_{\ell s},   \\
	\end{aligned}
	\end{equation*} 
	where $b^*_0$ and $b^*_1$ are two points on $E_{\ell}^\psi ( \hat a , [0,T_{\ell s}])$ and $E_{s}^\psi ( a _1, [0,T_{\ell s}])$, respectively. Hence, combining together the last two estimates, one obtains \eqref{stima2903.1} and, consequently, also \eqref{stima2903.2} holds.

	Now we consider $|\psi(a_5)-\psi(a_4)|$. Since  $|a_5-a_4|= |y^{(a_{5})}_{s1}-y^{(a_{4})}_{ s1}|$, analogously to the previous case, one obtains 
	\begin{equation}\label{stima2903.3}
	\left|\psi (a_5)-\psi (a_4) \right|   \leq  2\sqrt{M_2} T_{\ell s}  \alpha _{ s 1} ( |y^{(a_{5})}_{s1}-y^{(a_{4})}_{s1}|)  \leq  2\sqrt{M_2} T_{\ell s}  \alpha _{s 1} ( 4{M_2} T_{\ell s}^2),
	\end{equation}
	where  $\alpha _{ s 1}$ is defined as in \eqref{Ndefialfa0.2} and
	\begin{equation*}
	|y^{(a_{5})}_{s1}-y^{(a_{4})}_{s1}| = \left|\int_{0}^{T_{\ell s}} \psi (E_{s}^\psi (a _1, t)) \de \!  t- \int_{0}^{ T_{\ell s}} \psi (E_{ s}^\psi ( a _3, t))  \de \!  t \right|. 
	\end{equation*}
	
	Eventually, we estimate $\psi (a_4)-\psi ( \hat a)$ in the following way:
	\begin{equation*}
	\begin{aligned}
	\left|\psi (a_4)-\psi ( \hat a)\right| & \leq    \left|(\psi (a_4)-\psi ( a_3))+ (\psi (a_3)-\psi ( a_2))+(\psi (a_2)-\psi ( a_1))+(\psi (a_1)-\psi ( \hat a)) \right|  \\
	&=| -T_{\ell s}  \omega_{ s } (a^*_3)-T_{\ell s}  \omega_{\ell } (a^*_2) + T_{\ell s}  \omega_{ s } (a^*_1)+T_{\ell  s}  \omega_{\ell } (a^*_0)| \\
	& \leq T_{\ell s} (|\omega_{ s } (a^*_1)-  \omega_{ s } (a^*_3)| +|\omega_{\ell } (a^*_2)-  \omega_{\ell } (a^*_0)|)\\
	& \leq T_{\ell  s}\left( \beta (|a^*_1- a^*_3|)+ \beta (|a^*_2- a^*_0|) \right),
	\end{aligned}
	\end{equation*}  
	where  $a^*_0 \in E_{\ell}^\psi (\hat a, [0,T_{\ell s}]), a^*_1\in E_{ s}^\psi (a_1, [0,T_{\ell s}]), a^*_2 \in E_{\ell}^\psi (a_2,[-T_{\ell s} , 0])$ and $a^*_3\in E_{ s}^\psi (a_3, [-T_{\ell s}, 0])$ are chosen to fulfill the conditions of Lagrange's Theorem. By simple estimates relying on the triangle inequality, we get a constant $C_0>0$, only depending on the supremum norm of $\omega$ on $I'$, such that $|a^*_1- a^*_3|\leq C_0T_{\ell s} $, and $|a^*_2- a^*_0|\leq C_0 T_{\ell s}$. Hence
	\begin{equation}\label{stima2903.4}
	\begin{aligned}
	\left|\psi (a_4)-\psi ( \hat a)\right|    & \leq T_{\ell s}\left( \beta (|a^*_1- a^*_3|)+ \beta (|a^*_2- a^*_0|) \right) \leq 2  T_{\ell s} \beta (C_0 T_{\ell s}). 
	\end{aligned}
	\end{equation}  
	
	By combining \eqref{stima2903.6} with \eqref{stima2903.2}, \eqref{stima2903.3} and \eqref{stima2903.4} and recalling that $T_{\ell s}= |\eta_{\ell s}- y_{ \ell  s}|^{1/2}$ and $|\eta_{\ell s}- y_{\ell s}|< \varrho$, we thus get \eqref{equVERIFICA.2}. Indeed, one has
	\[
	\begin{aligned}
	\frac{\psi(a)-\psi(\hat a)}{|\eta_{\ell s}-y_{\ell s}|^{1/2}}&\leq \frac{2\sqrt {M_2}T_{\ell s} \alpha_{\ell 1}(4M_2 T_{\ell s}^2)+2\sqrt {M_2}T_{\ell s} \alpha_{s 1}(4M_2 T_{\ell s}^2)+2T_{\ell s}\beta(C_0T_{\ell s})}{|\eta_{\ell s}- y_{\ell s}|^{1/2}}\\
	&= 2\sqrt{M_2}\Big(\alpha_{\ell 1}(4M_2|\eta_{\ell s}-y_{\ell s}|)+\alpha_{s 1}(4M_2|\eta_{\ell s}-y_{\ell s}|)\Big)+ 2\beta(C_0|\eta_{\ell s}- y_{\ell s}|^{1/2})\\
	&\leq G_{\ell s}(\varrho).
	\end{aligned}
	\]
	 Finally, since $G_{\ell s}$ is defined as sum of continuous maps that are $0$ at $0$, it follows that, if $\varrho \to 0$, then $G_{\ell  s} (\varrho)\to 0 $ for which we get \eqref{alpha88.2} also for all $( \ell, s )$ with $s\ne 1$ and the proof is complete.
\end{proof}

\subsection{Regularity results for broad* solutions in Carnot groups of step 2}\label{carnotgroupgeneral}
In this section we see how to generalize \cref{big3.3.12}, valid for free Carnot groups of step $2$, to any Carnot group of step 2. We adapt some techniques already exploited in \cite{LDPS19}. More precisely, in \cref{broadin GimplicabroadinF} we prove that the broad* condition lifts from $\mathbb G$ to the free group $\mathbb F$ with same step and rank of $\G$. In \cref{prop2} we show that the vertically broad* h\"older regularity on $\mathbb F$ implies the vertically broad* h\"older regularity on $\mathbb G$. These two facts will put us in a position to prove \cref{thm:MainTheorem.0} by exploiting \cref{thm:MainTheorem} and \cref{big3.3.12}.

We here introduce Carnot groups of step 2 and we refer the reader to \cite[Chapter~3]{BLU07}.   
We denote with $m$ the rank of $\G$ and we identify $\mathbb G$ with $(\R^{m+h}, \cdot )$. If $q\in \G$, we write $q=(x,y)$ meaning that $x\in \R^m$ and $y\in \R^h$. The group operation $\cdot$ between two elements $q=(x,y)$ and $q'=(x',y')$ is given by
\begin{equation}\label{5.1.0}
q\cdot q'= \left(x+ x',y+ y'-\frac 1 2\langle \mathcal{B} x,  x' \rangle \right),
\end{equation} 
 where $\langle \mathcal{B}x,x' \rangle \coloneqq (\langle \mathcal{B}^{(1)}x,x' \rangle, \dots , \langle \mathcal{B}^{(h)} x, x' \rangle)$ and $\mathcal B^{(i)}$ are linearly independent and skew-symmetric matrices in $\R^{m\times m}$, for $i=1,\dots, h$.

For any $i=1,\dots, h$ and any $j,\ell=1,\dots,m$, denote by $(\mathcal{B}^{(i)})_{j\ell}\eqqcolon (b_{j\ell}^{(i)})$, and define $m+h$ linearly independent left-invariant vector fields by setting
\begin{equation*}
\begin{aligned}
X'_j (p) & \coloneqq \partial _{x_j}  -\frac{1}{2 } \sum_ {i=1 }^{h} \sum_ {\ell=1 }^{m} b_ {j\ell}^{(i)} x_\ell  \,\partial _{y_i},  \quad \mbox{ for } j=1,\dots ,m,\\
Y'_i(p)  & \coloneqq \partial _{y_i }, \,\qquad \qquad \qquad \qquad \qquad \mbox{ for } i=1,\dots , h.
\end{aligned}
\end{equation*}  
 The ordered set $(X'_1,\dots,X'_m,Y'_1,\dots,Y'_h)$ is an adapted basis of the Lie algebra $\mathfrak g$ of $\G$. Using the skew-symmetry of $\mathcal B$, it easy to see that
\begin{equation}\label{commutatoripasso2}
[X'_j, X'_\ell]= \sum _{i=1}^h b_{j\ell}^{(i)} Y'_i,  \quad \mbox{and} \quad  [X'_j , Y'_{i}] = 0,\quad \forall j,\ell=1,\dots, m \;\text{ and }\; \forall i=1,\dots,h.
\end{equation}

\begin{rem}
	When $\G$ is a free Carnot group of step $2$ with coordinate representation defined as in \cref{rem:coordinatestep2}, we denote the matrices of the beginning of \cref{carnotgroupgeneral} with $\mathcal B^{(i)}\eqqcolon\mathcal{B}^{(\ell,s)}$ with $1\leq s< \ell\leq m$. The composition law $\eqref{5.1.0}$ also tells us that $\mathcal{B}^{(\ell,s)}$ has entry $1$ in  position $(\ell,s)$, $-1$ in  position $(s,\ell)$ and $0$ elsewhere. 
	
	Since the space of skew-symmetric $m$-dimensional matrices has dimension $\frac{m(m-1)}{2}$, in any Carnot group $\G$ of step 2, the dimensions $m$ of the horizontal layer and $h$ of the vertical layer are related by the inequality 
	\[
	h \leq \frac{m(m-1)}{2},
	\]
	and $\G$ is free if and only if $h=\frac{m(m-1)}{2}$.
\end{rem}

 \textbf{From now on $\G$ is a Carnot group of rank $m$ and step $2$, with the coordinate representation previously discussed, and $\mathbb{F}$ is the free Carnot group of step 2 and rank $m$ with the coordinate representation as in \cref{rem:coordinatestep2} and \cref{remIMPORT}}. We denote by $(X_1,\dots, X_m)$ a basis of the first layer of the Lie algebra of $\mathbb F$. By definition of free Lie algebra, there exists a Lie group surjective homomorphism $\pi \colon \mathbb F \to \G$ such that $\pi _*(X_\ell)=X'_\ell$ for any $\ell=1,\dots ,m$ (see e.g., \cite[Section~6]{LDPS19}). 

If we consider on $\mathbb{F}$ and $\G$ the Carnot-Carath\'eodory metrics $d_{\mathbb{F}}$ and $d_{\G}$ respectively, the map $\pi$  preserves the length of horizontal curves, so it is Lipschitz with
${\rm Lip}(\pi) = 1$. The following lemma is well-known. We refer the reader to \cite[Lemma 6.1]{LDPS19} for a proof.
\begin{lem}\label{lem6.1}
	For any $p\in \mathbb{F}$ and any $ q' \in \G$, there exists $ p' \in \pi^{-1}(q')$ such that \[d_{\mathbb{F}} (p, p') = d_{\G} (\pi(p), q').\]
\end{lem}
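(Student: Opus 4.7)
The plan is to realise the optimal preimage $p'$ as the endpoint of a horizontal lift of a length-minimising horizontal curve in $\mathbb{G}$. Since $\pi$ is $1$-Lipschitz, the inequality $d_{\mathbb{F}}(p,p') \geq d_{\mathbb{G}}(\pi(p), \pi(p')) = d_{\mathbb{G}}(\pi(p), q')$ holds for every $p' \in \pi^{-1}(q')$, so the content of the lemma is only the matching upper bound, which I propose to obtain by producing one specific candidate $p'$.

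First I would invoke the existence of length-minimising sub-Riemannian geodesics: the Carnot group $\mathbb{G}$, endowed with $d_{\mathbb{G}}$, is a complete geodesic metric space (by Chow's theorem and Hopf--Rinow for sub-Riemannian manifolds), so there exists a horizontal curve $\gamma \colon [0,1] \to \mathbb{G}$ with $\gamma(0) = \pi(p)$, $\gamma(1) = q'$, and length $L_{\mathbb{G}}(\gamma) = d_{\mathbb{G}}(\pi(p), q')$. Writing $\dot\gamma(t) = \sum_{\ell=1}^{m} a_\ell(t)\, X'_{\ell}(\gamma(t))$ for measurable controls $a_\ell \in L^{\infty}$, I can lift this curve to $\mathbb{F}$ by solving the Cauchy problem
\[
\dot{\widetilde\gamma}(t) = \sum_{\ell=1}^{m} a_\ell(t)\, X_{\ell}(\widetilde\gamma(t)), \qquad \widetilde\gamma(0) = p,
\]
whose solution exists globally on $[0,1]$ because the drift is linear in the vertical coordinates and polynomial in the horizontal ones (cf.\ the triangular structure of left-invariant vector fields on Carnot groups; see the discussion before \cref{rem:ConnectingCurves}).

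Since $\pi_{*}(X_\ell) = X'_\ell$, the curve $\pi \circ \widetilde\gamma$ satisfies the same Cauchy problem as $\gamma$ in $\mathbb{G}$ with initial datum $\pi(p) = \gamma(0)$, so by uniqueness $\pi(\widetilde\gamma(t)) = \gamma(t)$ for every $t$. In particular, setting $p' \coloneqq \widetilde\gamma(1)$ gives $p' \in \pi^{-1}(q')$. Because $\widetilde\gamma$ is horizontal in $\mathbb{F}$ and its horizontal velocity coincides with that of $\gamma$ (same controls $a_\ell$ with respect to orthonormal horizontal frames), its sub-Riemannian length equals $L_{\mathbb{G}}(\gamma)$. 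Hence
\[
d_{\mathbb{F}}(p, p') \leq L_{\mathbb{F}}(\widetilde\gamma) = L_{\mathbb{G}}(\gamma) = d_{\mathbb{G}}(\pi(p), q'),
\]
which, combined with the $1$-Lipschitz estimate mentioned above, yields the desired equality.

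The only non-routine issue is the horizontal lift itself, and this is not really an obstacle: it is a standard ODE argument that uses nothing beyond the fact that $\pi_{*}$ restricts to a linear isomorphism between the horizontal layers and commutes with the bracket-generating structure. Everything else — existence of minimisers and the $1$-Lipschitz property — is already built into the setup of the paper.
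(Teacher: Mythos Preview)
Your argument is correct and is precisely the standard lifting-of-geodesics proof one expects here. The paper does not actually supply its own proof of this lemma; it merely cites \cite[Lemma~6.1]{LDPS19}, whose argument is essentially the one you have written: take a length-minimising horizontal curve in $\mathbb G$, lift it to $\mathbb F$ via the same controls using $\pi_*(X_\ell)=X'_\ell$, and compare lengths together with the $1$-Lipschitz bound on $\pi$.
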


Recall that the dimension of $\mathbb F$ is $n=m+\frac{m(m-1)}{2}$. From the definition of $\pi $, we notice that it preserves the horizontal coordinates, namely, for any $(x,y)\in \R^{n}$, there exists $y^*\in \R^h$ such that
\begin{equation}\label{proiezione}
\pi(x,y)= (x,y^*).
\end{equation}

We denote by $\W_\G$ and $\mathbb L_\G$ two complementary subgroups of $\G$ with $\mathbb L_\G$ horizontal and one-dimensional. Similarly to \cref{remIMPORT}, by means of exponential coordinates we identify them with $\R^{m+h-1}$ and $\R$ by setting
\begin{equation}\label{5.2.0.1}
\begin{aligned}
\mathbb L_\G & \coloneqq\{ (x_1,0\dots , 0) \,:\, x_1\in\R \},\\
\W_\G & \coloneqq\{ (0,x_2, \dots,x_m,y_1,\dots, y_h) \,:\, x_i, y_k \in\R \mbox{ for }i=2,\dots, m,\, k=1,\dots h \}.
\end{aligned}
\end{equation}

\begin{rem}
	Let $ \G$  be a Carnot group of step 2 and $\W _\G$, $\mathbb L_\G$ be the complementary subgroups of $\mathbb{G}$ defined as in \eqref{5.2.0.1}. Then, according to \cref{example:Step2} the projected vector fields relative to a continuous $\phi\colon U\subseteq \sW_\G\to\sL_\G$, with $U$ open, are given by
	\begin{equation}\label{operatoriproiettatiinG}
	\begin{aligned}
	D_{j}^\phi & = \partial _{x_j}-  \sum_{ i=1 }^{h  } \left(  b_{j1}^{(i)} \phi  +\frac 1 2 \sum_{ k=2 }^{m  } x_k b_{jk}^{(i)}\right) \partial _{y_i} =  {X'_j}_{\vert U}-  \sum_{i=1}^hb_{j1}^{(i)}  \phi  {Y'_{i}}_{\vert U},  \quad \mbox{ for } j=2,\dots, m,\\
	D^\phi_{i} & = \partial_{y_i}  =  {Y'_{i}}_{\vert U},  \qquad \mbox{ for }  i=1,\dots, h.
	\end{aligned}
	\end{equation}
\end{rem}

Now let $\W_{ \mathbb{F}}$ and $\mathbb{L}_{ \mathbb{F}}$ be the complementary subgroups of $\mathbb{F}$ defined as in \eqref{5.2.0}.  Then $\pi_{|_{ \mathbb L _{ \mathbb{F}} }} \colon\mathbb L _{ \mathbb{F}}  \to \mathbb L_\G$ is an isomorphism and more precisely, with our identification, we can assume it is the identity, see \eqref{proiezione}.  Moreover, by \eqref{proiezione}, it follows that $\pi_{|_{ \mathbb W_{ \mathbb{F}} }} \colon\mathbb W_{ \mathbb{F}}  \to \mathbb W_\G$ is onto.

Since $\pi$ is a Lie group homomorphism, its differential is a Lie algebra homomorphism. Hence, for any $1\leq s<\ell\leq m$, one also has
\begin{equation*}
\pi_* (Y_{\ell s})=\pi_*  ([X_\ell, X_s])=[\pi_* (X_\ell),\pi_*  (X_s)]=[X'_\ell, X'_s] = \sum _{i=1}^h b_{\ell s}^{(i)} Y'_i,
\end{equation*}
where we used \eqref{commutatorifree}, \eqref{commutatoripasso2} and $\pi_*(X_j)=X'_j$. We can therefore write the following formula
\begin{equation}\label{definizioneesplicitaP}
\begin{split}
& \pi(x_1,\dots , x_m, y_{21},\dots , y_{m(m-1)})  =(x_1,\dots , x_m, y^*_1,\dots , y^*_h), \quad \text{where}\\
& y^*_i  =  \sum _{1\leq s<\ell \leq m}  b_{\ell s}^{(i)} y_{\ell s}, \quad \forall i=1,\dots, h.\\
\end{split}
\end{equation}

\begin{prop}\label{broadin GimplicabroadinF}
	Let $\mathbb{G}$ be a Carnot group of step 2 and let $\mathbb W_\G$ and $\mathbb L_\G$ be two complementary subgroups of $\mathbb G$, with $\mathbb L _\G$ horizontal and one-dimensional and choose coordinates such that \eqref{5.2.0.1} is satisfied. Let $\mathbb F$ be the free Carnot group of step 2, rank $m$ and let $\mathbb W_{\mathbb F}$ and $\mathbb L_{\mathbb F}$ be the complementary subgroups of $\mathbb F$ satisfying the identification \eqref{5.2.0}. \\
	Let $ U\subseteq \sW _\G$   be an open set and denote  by ${V} \subseteq\mathbb W_{\mathbb F}$ the open set defined by $  V\coloneqq\pi^{-1}( U)$. Let ${\phi}\colon{U} \to\mathbb L_\G$ be a continuous map and let $ \psi \colon{V} \to\mathbb L_{\mathbb F} $ be the map defined as 
	\[
	{\psi} \coloneqq \pi^{-1} \circ {\phi}  \circ \pi _{|_{ V}}.
	\]
	Assume there exists $\omega\in C(U; \R^{m-1})$ such that $D^\phi \phi=\omega$ in the broad* sense.
	\\
	Then $\psi $ is a broad* solution in $V$ of the system $ D^{{\psi}} {\psi} = \omega \circ \pi$.
\end{prop}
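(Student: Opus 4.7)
The plan is to lift each broad* integral curve $E_j^\phi$ from $\mathbb{G}$ to $\mathbb{F}$ by prescribing it component by component, exploiting the $\pi$-relatedness of the projected vector fields and the very particular structure of $D^\psi_j$ given by \eqref{operatoriproiettatiinF}: all coefficients except the one multiplying $\partial_{y_{j1}}$ are independent of $\psi$, so the only nontrivial component is $y_{j1}$. Concretely, using \eqref{operatoriproiettatiinF}, \eqref{operatoriproiettatiinG}, $\pi_*(X_j)=X'_j$ and $\pi_*(Y_{j1})=\sum_i b^{(i)}_{j1}Y'_i$, one first checks the identity
\[
\pi_*\bigl(D^\psi_j|_v\bigr)=D^\phi_j|_{\pi(v)},\qquad \forall v\in V,\ j=2,\dots,m,
\]
which reflects the fact that $\psi=\phi\circ\pi$ on $V$ under the identification $\mathbb{L}_\mathbb{F}=\mathbb{L}_\mathbb{G}=\mathbb{R}$ coming from \eqref{proiezione}.

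Fix $a_0\in V$, set $q_0\coloneqq\pi(a_0)$ and let $0<\delta_2'<\delta_1'$ together with $E^\phi_j\colon \overline{B(q_0,\delta_2')}\times[-\delta_2',\delta_2']\to \overline{B(q_0,\delta_1')}$ be provided by \cref{defbroad*} applied to $\phi$ at $q_0$. Since $\pi$ is $1$-Lipschitz, for $v\in \overline{B(a_0,\delta_2')}$ one has $\pi(v)\in\overline{B(q_0,\delta_2')}$, so $E^\phi_j(\pi(v),\cdot)$ is defined on $[-\delta_2',\delta_2']$. Imitating \eqref{eq:sistemadiCauchyy}, I would then define $E^\psi_j(v,t)=(x(t),y(t))\in\mathbb{F}$ by setting $x_k(t)=x_k$ for $k\neq j$, $x_j(t)=x_j+t$, and
\[
y_{\ell s}(t) \coloneqq
\begin{cases}
y_{j1}-\int_0^t \phi\bigl(E^\phi_j(\pi(v),r)\bigr)\,\de\!r, &(\ell,s)=(j,1),\\
y_{\ell j}+\tfrac12 t\, x_\ell, & s=j,\ j<\ell\leq m,\\
y_{j s}-\tfrac12 t\, x_s, & \ell=j,\ 1<s<j,\\
y_{\ell s}, &\text{otherwise.}
\end{cases}
\]
A direct bound on $\|E^\psi_j(v,t)-v\|$ coming from the boundedness of $\phi$ on $\overline{B(q_0,\delta_1')}$ shows that, after possibly shrinking $\delta_2'$ to some $\delta_2>0$ and enlarging $\delta_1'$ to some $\delta_1>\delta_2$, the curves $E^\psi_j(v,\cdot)$ remain in $\overline{B(a_0,\delta_1)}$.

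The core verification is the identity $\pi\circ E^\psi_j(v,\cdot)=E^\phi_j(\pi(v),\cdot)$. The horizontal coordinates match trivially by \eqref{proiezione}. For the vertical ones, applying \eqref{definizioneesplicitaP} to $E^\psi_j(v,t)$, splitting the sum $\sum_{1\leq s<\ell\leq m}b^{(i)}_{\ell s}y_{\ell s}(t)$ according to whether $(\ell,s)=(j,1)$, $s=j$, $\ell=j$, or none, and using $b^{(i)}_{\ell j}=-b^{(i)}_{j\ell}$ and $b^{(i)}_{jj}=0$, gives
\[
\bigl(\pi E^\psi_j(v,t)\bigr)^*_i = y^*_i -\tfrac12 t \sum_{k=2}^{m} b^{(i)}_{jk}x_k - b^{(i)}_{j1}\int_0^t \phi\bigl(E^\phi_j(\pi(v),r)\bigr)\,\de\!r,
\]
which is precisely the coordinate expression of the curve $E^\phi_j(\pi(v),t)$, as read off from the ODE associated with \eqref{operatoriproiettatiinG}. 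Consequently $\psi\bigl(E^\psi_j(v,t)\bigr)=\phi\bigl(\pi E^\psi_j(v,t)\bigr)=\phi\bigl(E^\phi_j(\pi(v),t)\bigr)$, so differentiating in $t$ and using \eqref{operatoriproiettatiinF} shows that $E^\psi_j(v,\cdot)$ is a $C^1$ integral curve of $D^\psi_j$ starting at $v$. The integral identity defining a broad* solution then follows at once from the one for $\phi$:
\[
\psi\bigl(E^\psi_j(v,t)\bigr)-\psi(v)=\phi\bigl(E^\phi_j(\pi(v),t)\bigr)-\phi(\pi(v))=\int_0^t\omega_j\bigl(E^\phi_j(\pi(v),s)\bigr)\,\de\!s=\int_0^t(\omega\circ\pi)_j\bigl(E^\psi_j(v,s)\bigr)\,\de\!s.
\]
The only delicate step is the bookkeeping in the projection computation above, but it is essentially forced by the skew-symmetry of the structure matrices $\mathcal{B}^{(i)}$; everything else is routine.
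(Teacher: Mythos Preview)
Your proof is correct and follows essentially the same approach as the paper: both define the lifted curve $E^\psi_j$ (the paper calls it $\zeta_j$) by the identical explicit formula, verify $\pi\circ E^\psi_j(v,\cdot)=E^\phi_j(\pi(v),\cdot)$ via \eqref{definizioneesplicitaP} and the skew-symmetry of the matrices $\mathcal{B}^{(i)}$, and then read off the integral identity from that of $\phi$. Your treatment is in fact slightly more careful about the choice of $\delta_1,\delta_2$, which the paper handles by simply noting that the construction is ``patently local''.
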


\begin{proof} 
	Fix $j=2,\dots , m$. By \eqref{operatoriproiettatiinF}, we have
	\begin{equation}\label{formula3}
	\begin{aligned}
	D_{X_j}^\psi & = \partial _{x_j}-\psi \partial _{j1} + \frac 1 2 \sum_{  j<l\leq m } x_\ell\partial _{\ell j} -  \frac 1 2 \sum_{1<s <j} x_s \partial _{js},\\
	\end{aligned}
	\end{equation}
	and by \eqref{operatoriproiettatiinG}, it follows
	\begin{equation}\label{formula4}
	\begin{aligned}
	D_{X'_j}^\phi & = \partial _{x_j}-  \sum_{ i=1 }^{h  } \left(  b_{j1}^{(i)} \phi  +\frac 1 2 \sum_{ k=2 }^{m  } x_k b_{jk}^{(i)}\right) \partial _{y_i}.\\
	\end{aligned}
	\end{equation}

	Let $ a = (0, x_2,\dots , x_m, \eta_{21},\dots , \eta_{m(m-1)}) \in V$, and denote with $ b\coloneqq  \pi (a)$ the point in $\mathbb W_\G$ with coordinates $ b =(0, x_2,\dots,  x_m, y^*_1,\dots , y^*_h)$. Let $\delta >0$ and let $ \gamma_j \colon[-\delta , \delta ] \to U$ be an arbitrary integral curve of $D_{X'_j}^\phi$ starting from $ b$ such that 
	\begin{equation}\label{eqn:PhiBroadStep2}
	\phi (\gamma _j (t))-\phi ( b)=\int_0^t \omega_{j}(\gamma_j(s))\de\!s ,  \quad \forall t\in [- \delta, \delta ].
	\end{equation}
	Recall that by \eqref{definizioneesplicitaP}, there is an explicit relation between the coordinates of $ a$ and the coordinates of $b$. We are going to prove that we can lift $\gamma_j$ to an integral curve $\zeta_j$ of $D^{\psi}_{X_j}$ starting from $ a$, defined on $[-\delta,\delta]$ and with values in $V$, that satisfies
	 \begin{equation*}
	\psi (\zeta _j (t))-\psi ( a)=\int_0^t (\omega_{j} \circ \pi ) (\zeta _j (s))\,\de\!s ,  \quad \forall t\in [- \delta, \delta ].
	\end{equation*}
	
	Indeed, let $\zeta _j \colon[-\delta , \delta ] \to \mathbb W_{\mathbb F}$ be defined as
	\begin{equation}\label{definizione di eta}
	\begin{split}
	\zeta _j(t) &=(0, x_2,\dots , x_{j-1}, x_j+t,  x_{j+1},\dots , x_m,\eta_{21}(t),\dots, \eta_{m(m-1)}(t)), \quad \text{where}\\
	\eta_{\ell s}(t)& = \begin{cases}
	 \displaystyle \eta_{j1}- \int_{0}^t \phi (\gamma _j(r))  \de \! r,  & \mbox{for } (\ell,s)=(j,1),\\
	\displaystyle\frac{1}{2} t x_\ell  + \eta_{\ell j},   &\mbox{for }s=j\,\text{ and }\,  j<\ell\leq m,\\
	\displaystyle-\frac{1}{2}t x_s   + \eta_{js},  &\mbox{for } \ell=j\,\text{ and }\, 1<s <j, \\
	 \eta_{\ell s},& \mbox{otherwise,}
	\end{cases}
	\end{split}
	\end{equation}
	for $t\in [-\delta , \delta]$. By definition, one immediately gets $ \zeta _j(0) = a$. We are left to prove the following facts. 
	\begin{itemize}
		\item[(i)] $ \pi  \circ \zeta _j = \gamma _j$;
		\item[(ii)]  $\zeta _j \colon[-\delta , \delta ] \to V$ is  an integral curve of $ D_{X_j}^\psi$;
		\item[(iii)] For every $t\in [-\delta, \delta]$ one has \begin{equation*}
		\psi (\zeta _j (t))-\psi ( a)=\int_0^t (\omega_{j} \circ\pi) (\zeta _j (s))\de\!s.
		\end{equation*}
	\end{itemize}

	(i). By \eqref{formula4}, and the fact that $\gamma _j (0)= b=(0, x_2,\dots, x_m, y^*_1,\dots , y^*_h)$, we can explicitly write $ \gamma _j$, exploiting the fact that it is an integral curve of $D^{\phi}_{X_j'}$, and get
	\begin{equation}\label{defitildegammaj}
	\begin{split}
	\gamma _j(t) &=(0, x_2,\dots , x_{j-1}, x_j+t, x_{j+1},\dots ,  x_m, y^*_1(t),\dots, y^*_h(t)), \quad \text{where}\\
	y^*_{i}(t)& =  y^*_{i}  -\frac 1 2 t\left( \sum_{ k=2 }^{m  } x_k b_{jk}^{(i)} \right) -b_{j1}^{(i)}  \int_{0}^t \phi (\gamma _j(r)) \de \! r,  \qquad \forall i=1,\dots, h,\text{ and }\,\forall t\in[-\delta , \delta].\\
	\end{split}
	\end{equation}
	Using the explicit formula of $\pi$ given in \eqref{definizioneesplicitaP}, we obtain that
	\begin{equation}\label{defigamma.1}
	\begin{split}
	& \pi (\zeta _j(t )) =(0, x_2,\dots , x_{j-1}, x_j+t,  x_{j+1},\dots , x_m, \eta^*_{ 1}(t),\dots, \eta^*_{ h}(t)), \quad \text{where}\\
	& \eta^*_{i} (t) =  \sum _{1\leq s<\ell \leq m}  b_{\ell s}^{(i)} \eta_{\ell s}(t ), \qquad \forall i=1,\dots, h,\text{ and }\,\forall t\in[-\delta , \delta].\\
	\end{split}
	\end{equation}
	In particular, by \eqref{definizione di eta}, we have that
	\begin{equation*}
	\begin{split}
	\eta^*_{i} (t)&  = b_{j1}^{(i)}\left( \eta_{j1}- \int_{0}^t \phi (\gamma _j(r))\de \! r\right) +  \sum _{j<\ell\leq m } b_{\ell j}^{(i)}\left( \frac{1}{2} t x_\ell  +  \eta_{\ell j} \right)   \\
	&\quad +  \sum _{1<s <j } b_{js}^{(i)} \left( -\frac{1}{2}t x_s   + \eta_{j s}\right) +\sum _{\substack{1\leq s<\ell \leq m \\ s\ne j, \ell\ne j}}  b_{\ell s}^{(i)} \eta_{\ell s},
	\end{split}
	\end{equation*}
	for every $i=1,\dots, h$ and $t\in [-\delta , \delta]$.  Using again \eqref{definizioneesplicitaP}, we also notice that $ y^*_i  =  \sum _{1\leq s<\ell \leq m}  b_{\ell s}^{(i)}\eta_{\ell s}$, and, by the skew-symmetry of $\mathcal{B}^{(i)}$, we deduce that
	\begin{equation*}
	\begin{split}
	\eta^*_i (t) &  = y^*_{i}  -\frac 1 2 t\left( \sum_{ k=2 }^{m  } x_k b_{jk}^{(i)} \right) -b_{j1}^{(i)}  \int_{0}^t \phi (\gamma _j(r)) \de \! r, \qquad \forall i=1,\dots, h \text{ and }\,\forall t\in [-\delta , \delta].  \\
	\end{split}
	\end{equation*}  
	Comparing \eqref{defitildegammaj}, \eqref{defigamma.1} and the last equality, we get $\pi \circ \zeta _j = \gamma _j$, as desired. 
	
	(ii). We now check that $\zeta_j$ is an integral curve of $ D_{X_j}^\psi$. Recalling \eqref{formula3}, and by the definition of the components of $\zeta_j$ in \eqref{definizione di eta}, it suffices to check its $(j,1)$-coordinate, since for all the others is trivial. Observe that, from (i), $(\phi \circ \gamma _j) (t) = (\phi \circ \pi\circ \zeta_j )(t)= (\psi \circ \zeta _j)(t), $ for every $t\in [-\delta , \delta]$, and so 
	\[\eta_{j1} (t ) = \eta_{j1}- \int_{0}^t \phi (\gamma _j(r)) \de \! r= \eta_{j1}- \int_{0}^t \psi (\zeta _j(r)) \, \de \! r, \qquad \forall t\in [-\delta , \delta],\] as desired. Notice that we have used that $\pi_{|_{\mathbb L_{\mathbb F}}}$ identifies $\mathbb L_{\mathbb F}$ with $\mathbb L_{\mathbb G}$.
	
	(iii). We first notice that, since ${U} \supseteq \gamma _j ([-\delta , \delta ]) =( \pi  \circ \zeta _j)  ([-\delta , \delta ])$ and $ V=\pi^{-1}( U)$, then also  $\zeta _j ([-\delta , \delta ])\subseteq {V}$.\\
	Using  $\pi  \circ \zeta _j = \gamma _j$ and the fact that $\gamma_j$ satisfies \eqref{eqn:PhiBroadStep2}, we finally obtain
	\begin{equation}\label{equaintegrale}
	\psi (\zeta _j (t))-\psi (a)=\int_0^t (\omega_{j} \circ \pi ) (\zeta _j (s))\de\!s ,  \qquad  \forall t\in [- \delta, \delta ].
	\end{equation}

	We thus showed that every integral curve of $D^\phi_{X'_j}$ satisfying \eqref{eqn:PhiBroadStep2} can be lifted to an integral curve of $D^{\psi}_{X_j}$ satisfying \eqref{equaintegrale}, with a procedure that is patently local. Thus, by using \cref{defbroad*}, it follows that, if $D^{\phi}\phi=\omega$ holds in the broad* sense on $U$, then $D^{\psi}\psi=\omega\circ \pi$ holds in the broad* sense on $V$.\qedhere
\end{proof}

\begin{prop}\label{prop2}
	Let $\mathbb{G}$ be a Carnot group of step 2 and let $\mathbb W_\G$ and $\mathbb L_\G$ be two complementary subgroups of $\mathbb G$, with $\mathbb L _\G$ horizontal and one-dimensional and choose coordinates such that \eqref{5.2.0.1} is satisfied. Let $\mathbb F$ be the free Carnot group of step 2, rank $m$ and let $\mathbb W_{\mathbb F}$ and $\mathbb L_{\mathbb F}$ be the complementary subgroups of $\mathbb F$ satisfying the identification \eqref{5.2.0}. Let $ U\subseteq \sW _\G$   be an open set and denote  by ${V} \subseteq\mathbb W_{\mathbb F}$ the open set defined by $  V\coloneqq\pi^{-1}( U)$. Let ${\phi}\colon{U} \to\mathbb L_\G$ be a continuous map and let $ \psi \colon{V} \to\mathbb L_{\mathbb F} $ be the map defined as 
	\[
	{\psi} \coloneqq \pi^{-1} \circ {\phi}  \circ \pi _{|_{ V}}.
	\]
	
	\noindent Then, if ${\psi} $ is vertically broad* h\"older, also $\phi$ is vertically broad* h\"older. 
\end{prop}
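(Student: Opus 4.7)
The plan is to translate the statement into coordinates and then lift increments in the vertical variables of $\mathbb{G}$ to increments of a bounded number of single vertical coordinates of $\mathbb{F}$, each of which can be controlled by the vertically broad* h\"older hypothesis on $\psi$. Since $\mathbb{G}$ has step $2$ and, by \eqref{operatoriproiettatiinG}, the vector fields $D^{\phi}_{i}$ are just $\partial_{y_{i}}$, the condition to prove reads (see \cref{rem:OnlyVerticalCoordinates} and \eqref{eq:equazioneprimadella66}): for every $a_{0}\in U$ there is a neighbourhood $U_{a_{0}}\Subset U$ of $a_{0}$ such that, for every $i=1,\dots,h$,
\begin{equation*}
\lim_{\varrho\to 0}\sup\left\{\frac{|\phi(\xi,y)-\phi(\xi,y')|}{|y_{i}-y'_{i}|^{1/2}}\,:\, (\xi,y),(\xi,y')\in U_{a_{0}},\; y_{k}=y'_{k}\;\forall k\neq i,\;0<|y_{i}-y'_{i}|\leq\varrho\right\}=0.
\end{equation*}
The analogous statement on $\mathbb{F}$, which is what we assume on $\psi$, gives, locally around a fixed $b_{0}\in V$, a neighbourhood $V'\Subset V$ and a modulus $f$ with $f(\varrho)\to 0$ as $\varrho\to 0$, controlling single-coordinate increments of $\psi$ in the $y_{\ell s}$'s along any pair of points in $V'$.

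The next step is to construct a convenient continuous local section of $\pi$. By \eqref{proiezione} and \eqref{definizioneesplicitaP}, $\pi$ is the identity on the horizontal coordinates, while on the vertical coordinates it acts as a linear surjection $\pi_{*}\colon\mathbb{R}^{m(m-1)/2}\to\mathbb{R}^{h}$. Fix any linear right inverse $\sigma\colon\mathbb{R}^{h}\to\mathbb{R}^{m(m-1)/2}$ of $\pi_{*}$ and fix $b_{0}\in\pi^{-1}(a_{0})$ chosen as the base-point for the vertically broad* h\"older property of $\psi$. Since the fibres of $\pi_{*}$ are affine translates of $\ker\pi_{*}$, we can find a vertical vector $c\in\ker\pi_{*}$ such that the affine map
\begin{equation*}
s(\xi,y)\coloneqq(\xi,\sigma(y)+c)
\end{equation*}
satisfies $s(a_{0})=b_{0}$ and $\pi\circ s=\mathrm{id}_{U}$. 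By continuity, we can shrink the neighbourhood $U_{a_{0}}$ of $a_{0}$ so that $s(U_{a_{0}})\subseteq V'$.

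Now take $(\xi,y),(\xi,y')\in U_{a_{0}}$ with $y_{k}=y'_{k}$ for every $k\neq i$. Set $\tilde{a}\coloneqq s(\xi,y)=(\xi,\eta)$ and $\tilde{a}'\coloneqq(\xi,\eta+\delta\eta)$, where $\delta\eta\coloneqq\sigma(y'-y)$ is supported on one fixed linear combination of the $y_{\ell s}$ coordinates. Linearity gives $\pi(\tilde{a}')=(\xi,y')$ and a constant $C>0$, only depending on $\sigma$ and the dimension, such that $|\delta\eta_{\ell s}|\leq C|y_{i}-y'_{i}|$ for every pair $(\ell,s)$. Enumerate the pairs $(\ell_{1},s_{1}),\dots,(\ell_{N},s_{N})$ with $N=m(m-1)/2$, and define the chain of points $\tilde{a}=\tilde{a}_{0},\tilde{a}_{1},\dots,\tilde{a}_{N}=\tilde{a}'$ by adding, at the $k$-th step, only the $(\ell_{k},s_{k})$-component of $\delta\eta$. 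For $|y_{i}-y'_{i}|$ small enough (uniformly for $(\xi,y)\in U_{a_{0}}$) all the $\tilde{a}_{k}$ stay inside $V'$. Using $\phi\circ\pi=\pi\circ\psi$ (and the fact that $\pi$ is the identity on $\mathbb{L}_{\mathbb F}\cong\mathbb{L}_{\mathbb G}$), the triangle inequality, and the vertically broad* h\"older property of $\psi$ at each single-coordinate step, we obtain
\begin{equation*}
\frac{|\phi(\xi,y)-\phi(\xi,y')|}{|y_{i}-y'_{i}|^{1/2}}\leq\sum_{k=1}^{N}\frac{|\psi(\tilde{a}_{k-1})-\psi(\tilde{a}_{k})|}{|y_{i}-y'_{i}|^{1/2}}\leq N\,C^{1/2}\,f(C|y_{i}-y'_{i}|),
\end{equation*}
which tends to $0$ as $|y_{i}-y'_{i}|\to 0$, uniformly in $(\xi,y)\in U_{a_{0}}$. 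This is exactly the vertically broad* h\"older condition for $\phi$.

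The only delicate point in the argument is the uniform inclusion of the whole chain $\tilde{a}_{0},\dots,\tilde{a}_{N}$ in $V'$: it is handled by the continuity of the explicit linear section $s$ and by shrinking $U_{a_{0}}$ once and for all, so that the constants $C$ and $N$ and the modulus $f$ may be chosen uniformly on $U_{a_{0}}$.
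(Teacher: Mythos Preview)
Your proof is correct and follows the same overall skeleton as the paper's: lift the two points of $U$ to $V$, observe that the lifted vertical increment is controlled by the original one, split the lifted increment into a chain of single-coordinate steps, and apply the vertically broad* h\"older hypothesis on $\psi$ to each step.

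The genuine difference is in how the lift is performed. The paper invokes the metric lifting \cref{lem6.1} twice (first to lift $b$ near $a_{0}$, then to lift $\hat b$ near $a$) and then uses the equivalence between the Carnot--Carath\'eodory distances and the anisotropic norms on $\mathbb F$ and $\mathbb G$ to obtain the bound $|\eta^{*}-y^{*}|^{1/2}\leq C_{1}|\eta_{i}-y_{i}|^{1/2}$. You instead bypass all metric considerations by exploiting directly the linearity of $\pi$ on the vertical layer \eqref{definizioneesplicitaP}: choosing any linear right inverse $\sigma$ of $\pi_{*}$ and translating by a fixed element of $\ker\pi_{*}$ yields a global affine section $s$ with $s(a_{0})=b_{0}$, and the linear bound $|\sigma(y'-y)|\leq C|y_{i}-y'_{i}|$ is immediate. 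This is more elementary and makes the constant explicit (it is just the operator norm of $\sigma$), whereas the paper's route has the advantage of reusing a general tool (\cref{lem6.1}) that is natural in the lifting philosophy of the section. One small point worth tightening in your write-up: to guarantee that the whole chain $\tilde a_{0},\dots,\tilde a_{N}$ stays in $V'$ uniformly, you should take $U_{a_{0}}$ small enough that $s(U_{a_{0}})\Subset V'$ (compactly), not merely $\subseteq V'$; this is implicit in your last paragraph but deserves to be said precisely.
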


\begin{proof}  
	We observe that in the case we are dealing with, the vertically broad* h\"older condition is equivalent to the locally 1/2-little H\"older continuity along vertical coordinates, see the discussion before the statement of \cref{big3.3.12}, that holds verbatim for arbitrary Carnot groups of step 2. Fix $b_0\in U$ and let $a_0 \in\pi^{-1}(b_0)$. Since $\psi$ is vertically broad* h\"older, there exist two neighborhoods $V'$ and $V''$ of $a_0$ with $V'\Subset  V ''\Subset  V$ and an increasing map $\alpha \colon(0,+\infty )\to [0,+\infty )$ only depending on $V ''$,  such that $\lim_{\varrho \to0}\alpha(\varrho)=0$ and
	\begin{equation}\label{eq:disalfa}
	\frac{|\psi(\xi,\eta)-\psi(\xi,y)|}{|\eta_{\ell s}-y_{\ell s}|^{1/2}}\leq \alpha(\varrho)
	\end{equation}
	for every $(\ell,s)$ such that $1\leq s<\ell\leq m$, every sufficiently small $\varrho>0$ and every $(\xi,\eta), (\xi,y)\in V'$ with $\eta_{k\tau}=y_{k\tau}$ for every $(k,\tau)\neq (\ell,s)$ and $0<|\eta_{\ell s}-y_{\ell s}|\leq \varrho$.
	
	Set $U' \coloneqq\pi(V')$ so that $b_0\in U'$ and clearly $U'\Subset U.$
	We aim to prove that there exists an increasing function  $\beta \colon(0,+\infty )\to [0,+\infty )$ only depending on $U''\coloneqq \pi(V'')$ such that $\lim_{\varrho \to 0}\beta(\varrho )=0$ and 
	\begin{equation}\label{betafin.4}
	\frac{|\phi(\xi, \eta)-\phi(\xi, y)|}{|\eta_{i}-  y_{i}|^{1/2}}\leq \beta (\varrho ),
	\end{equation}
	for every $i=1,\dots , h$, every sufficiently small $\varrho>0$ and every $(\xi,\eta), (\xi, y)\in U'$ with $y_{k} =y_{k}$ for every $k \ne i$ and $0< |\eta_{i}- y_{i}|\leq \varrho$.
	Fix $i=1,\dots ,h$ and $\varrho>0$ sufficiently small and consider $b=(\xi,\eta)$ $\hat b=(\xi, y)$ in $U'$ such that $\eta_{k} = y_{k}$ for all $k \ne i$ and $0<|y_i-\eta_i|\leq \varrho$.
	
	Applying \cref{lem6.1} to the points $b_0$ and $b$ 
	 we find $a= (\xi,\eta^*)\in \pi^{-1}(b)$ such that $d_\G(b_0, b)=d_{\mathbb F}(a_0,a)$ and, since $\pi$ is continuous, we can also assume, up to possibly reducing $U'$, that $a\in V'$. 
	 Applying again \cref{lem6.1} to the points $b$ and $ \hat b$, we find $\hat a = (\xi,y^{*})\in \pi^{-1}(\hat b)\cap V'$ such that $d_\G(b, \hat b)=d_{\mathbb F}(a,\hat a)$. Since the horizontal components of the points $b$ and $\hat b$ are equal and the norm induced by the distance $d_\G$ is equivalent to the  anisotropic norm on $ \mathbb{G}$, we have that $d_\G (b, \hat b) $ is equivalent to $| \eta_i -y_i |^{1/2}$. Similarly, notice that $a, \hat a$ have the same horizontal components and by the fact that the norm induced by $d_{ \mathbb{F}} $ is equivalent to the anisotropic norm on $ \mathbb{F}$, it follows that $|\eta^* - y^* |^{1/2}$ is equivalent to $ d_{ \mathbb{F}} (a, \hat a) $. In particular, we can find a geometric constant $C_1>0$ such that $|\eta^* -y^* |^{1/2} \leq C_1| \eta_i - y_i |^{1/2}$. 
	
	We can then make the following estimates:
	\begin{equation*}
	\begin{aligned}
	& \frac{|\phi (\xi,\eta)- \phi  (\xi,y)|}{| \eta_i- y _{i}|^{1/2}  } 
	= \frac{|\psi (\xi,\eta^*)- \psi (\xi,y^* )|}{|\eta^* - y^*|^{1/2} }\frac{| \eta^* -  y^*|^{1/2} }{| \eta_i - y_i|^{1/2} }\\
	& \leq C_1\left(\frac{|\psi (\xi,\eta^*)- \psi (\xi, y^*_{21},\eta^*_{31}, \dots , \eta^*_{m(m-1)})|}{|\eta^*_{21}- y^*_{21}|^{1/2} } +\dots\right.\\&\left.\qquad\qquad\dots+ \frac{| \psi (\xi, y^*_{21},\dots , y^*_{m(m-2)} , \eta^*_{m(m-1)}) - \psi (\xi,y^*)|}{|\eta^*_{m(m-1)}- y^*_{m(m-1)}|^{1/2} } \right)\\
	& \leq C_1\left(\alpha(|\eta^*_{21} - y^*_{21}|) + \dots + \alpha(|\eta^*_{m(m-1)} -y^*_{m(m-1)}|) \right),
	\end{aligned}
	\end{equation*}
	where we used \eqref{eq:disalfa} and assumed without loss of generality that all the considered points in the chain belong to $V'$.
	Observe that, for any $(\ell,s)$ such that $1\leq s<l\leq m$, one has $|\eta^*_{\ell s} -y^*_{\ell s}| \leq |\eta^* -y^* |\leq C_1^2| \eta_i - y_i | \leq C_1^2\varrho$, and then we can define
	\begin{equation*}
	\begin{aligned}
	\beta(\varrho)\coloneqq C_1 \frac{m(m-1)}{2}\alpha (C_1^2\varrho).
	\end{aligned}
	\end{equation*}
	The previous computations have shown that
	\begin{equation*}
	\frac{|\phi (\xi,\eta)- \phi  (\xi,y)|}{| \eta_i- y _{i}|^{1/2} }  \leq \beta(\varrho),
	\end{equation*}
	and hence \eqref{betafin.4} holds, completing the proof.
\end{proof}

\begin{theorem}\label{bigtheoremstep2}
	Let $\W$ and $\mathbb{L}$ be complementary subgroups of a Carnot group  $\mathbb{G}$ of step 2 with $\mathbb{L}$ horizontal and one-dimensional and choose coordinates such that \eqref{5.2.0.1} is satisfied. Let $ U\subseteq \sW$ be an open set and let $ \phi\colon  U \to \mathbb L$ and $\omega\colon U\to \R^{m-1}$ be two continuous functions such that $D^\phi \phi=\omega$ in the broad* sense.
	Then $\phi$ is vertically broad* h\"older.
\end{theorem}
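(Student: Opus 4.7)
The plan is to reduce the statement to the already-proved free case by lifting everything to the free Carnot group of step $2$ and rank $m$. Working in the coordinates fixed in \eqref{5.2.0.1}, I set $\mathbb F$ to be the free Carnot group of step $2$ with rank $m$, with its splitting $\mathbb F = \mathbb W_{\mathbb F}\cdot \mathbb L_{\mathbb F}$ as in \eqref{5.2.0}, and I denote by $\pi\colon \mathbb F\to \mathbb G$ the canonical Lie group homomorphism satisfying $\pi_*(X_\ell)=X'_\ell$ for $\ell=1,\dots,m$. Since $\pi$ restricts to the identity on $\mathbb L_{\mathbb F}$ (under our identifications) and is surjective from $\mathbb W_{\mathbb F}$ onto $\mathbb W_{\mathbb G}$, the set $V\coloneqq \pi^{-1}(U)\subseteq \mathbb W_{\mathbb F}$ is open and the map
\[
\psi \coloneqq \pi^{-1}\circ \phi\circ \pi_{|_V}\colon V\to \mathbb L_{\mathbb F}
\]
is continuous.

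First, I would invoke \cref{broadin GimplicabroadinF}, which was proved by explicitly lifting each integral curve $\gamma_j$ of $D^{\phi}_{X'_j}$ in $\mathbb G$ to an integral curve $\zeta_j$ of $D^{\psi}_{X_j}$ in $\mathbb F$ starting above the given base point. Since by assumption $D^{\phi}\phi = \omega$ in the broad* sense on $U$, that proposition yields that $D^{\psi}\psi = \omega\circ \pi$ holds in the broad* sense on $V$. Note that $\omega\circ \pi\in C(V;\mathbb R^{m-1})$.

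Next, I apply \cref{big3.3.12} to $\psi$ on $V$ inside the free group $\mathbb F$: this is exactly the context of that theorem, and it produces the vertically broad* h\"older property of $\psi$. Concretely, around every $a_0\in V$ one obtains a neighborhood and moduli $\alpha_{\ell s}$ controlling the $1/2$-little H\"older quotients of $\psi$ along each vertical coordinate of $\mathbb F$.

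Finally, I apply \cref{prop2}, which transfers this information down to $\mathbb G$: given the vertical broad* h\"older regularity of $\psi$ on $V$, together with \cref{lem6.1} providing horizontal-preserving lifts of points in $\mathbb G$ to points in $\mathbb F$ that are close in the respective CC metrics, one decomposes an increment of $\phi$ along a single vertical coordinate $y_i$ of $\mathbb G$ into a chain of increments of $\psi$ along vertical coordinates of $\mathbb F$, each of which is controlled by the moduli $\alpha_{\ell s}$. This yields the desired vertical broad* h\"older modulus for $\phi$ on a neighborhood of any point of $U$, completing the proof. The genuinely hard analytic content (the broad*-to-H\"older propagation) has already been absorbed into \cref{big3.3.12}, so at this stage the argument is just an orchestration of the three lifting/descending lemmas.
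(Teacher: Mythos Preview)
Your proposal is correct and follows exactly the same approach as the paper: lift to the free group via \cref{broadin GimplicabroadinF}, apply \cref{big3.3.12} there, and descend with \cref{prop2}. The paper's proof is in fact just these three sentences, while you have additionally sketched the internal mechanics of each lemma, but the logical structure is identical.
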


\begin{proof}
	 Let $\mathbb F$ be the free Carnot group of step 2, rank $m$ and let $\mathbb W_{\mathbb F}$ and $\mathbb L_{\mathbb F}$ be the complementary subgroups of $\mathbb F$ satisfying the identification \eqref{5.2.0}. By \cref{broadin GimplicabroadinF}, we know that $\psi \coloneqq \pi^{-1}\circ \phi \circ \pi$ is a broad* solution of $ D^{{\psi}} {\psi} = \omega \circ \pi$ in $V=\pi^{-1}(U)$. Then, by \cref{big3.3.12}, $\psi$ is vertically broad* h\"older and finally, by using \cref{prop2}, we obtain the thesis.
\end{proof}

We state here some corollaries of the previous results.

\begin{coroll}\label{coroll:step2.2}
	Let $\W$ and $\mathbb{L}$ be complementary subgroups of a Carnot group  $\mathbb{G}$ of step 2 with $\mathbb{L}$ horizontal and one-dimensional and choose coordinates on $\G$ such that \eqref{5.2.0.1} is satisfied. Let $ U\subseteq \sW$ be an open set and let $ \phi\colon U \to \mathbb L$ and $\omega\colon U\to\R ^{m-1}$ be two continuous functions such that $D^\phi \phi=\omega$ in the broad* sense on $U$.
	Then $\phi\in {\rm UID}( U, \mathbb W; \mathbb L)$.
\end{coroll}

\begin{proof}
	 It is enough to combine \cref{prop:DPhiPhi=wBroadImpliesUid} and \cref{bigtheoremstep2}.
\end{proof}

\begin{coroll}
	Let $\W$ and $\mathbb{L}$ be complementary subgroups of a Carnot group  $\mathbb{G}$ of step 2 with $\mathbb{L}$ horizontal and one-dimensional and choose coordinates on $\G$ such that \eqref{5.2.0.1} is satisfied. Let $ U\subseteq \sW$ be an open set and let $ \phi\colon U\to \mathbb L$ and $\omega\colon U \to\R ^{m-1}$ be two continuous functions such that $D^\phi \phi=\omega$ in the broad* sense on $U$. Then, the intrinsic graph of $\phi$ is a surface of class $C^1_{\mathrm H}$.
\end{coroll}
\begin{proof}
 It is enough to combine \cref{coroll:Koz} and \cref{bigtheoremstep2}.
\end{proof}

\begin{coroll}\label{coroll:Broad*ImpliesDistributionallyStep2}
	Let $\W$ and $\mathbb{L}$ be complementary subgroups of a Carnot group  $\mathbb{G}$ of step 2 with $\mathbb{L}$ horizontal and one-dimensional and choose coordinates on $\G$ such that \eqref{5.2.0.1} is satisfied. Let $U\subseteq \mathbb W$ be an open set, and let $\phi\colon U\subseteq\mathbb W\to \mathbb L$ and $\omega\colon U\to\R^{m-1}$ be two continuous functions. Assume that $D^\phi \phi=\omega$ in the broad* sense on $U$. Then $D^\phi\phi=\omega$ in the sense of distributions on $U$.
\end{coroll}

\begin{proof}
	It is enough to combine \cref{bigtheoremstep2} and \cref{coroll:Broad*ImpliesDistributionally}. 
\end{proof}

\begin{rem}
	 The converse implication of \cref{coroll:Broad*ImpliesDistributionallyStep2} is, up to now, only known for Heisenberg groups, see \cite{BSC10a}. This implication in the general step-2 case will be a subject of further investigations by means of the techniques exploited in this section.
\end{rem}

\subsection{Main theorem in Carnot groups of step 2}\label{sub:Main:step2}
Now we are in a position to give the following theorem (stated in \cref{thm:MainTheorem.0Intro2}), which shows that the assumption on the vertically broad* h\"older regularity in \cref{thm:MainTheorem} can be dropped if we are inside a Carnot group of step 2 and $ \mathbb L$ is one-dimensional. We use the same conventions as in \cref{thm:MainTheorem}, following the notation of \cref{def:coordinateconletilde}.

\begin{theorem}\label{thm:MainTheorem.0}
	Let $\W$ and $\mathbb{L}$ be complementary subgroups of a Carnot group  $\mathbb{G}$ of step 2 with $\mathbb{L}$ horizontal and one-dimensional. Let $\widetilde U\subseteq \sW$ be an open set and let $\widetilde{\phi}\colon\widetilde{U} \to\mathbb L$ be a continuous function. Then the following conditions are equivalent:
	\begin{itemize}
		\item[(a)] $\widetilde\phi\in {\rm UID}(\widetilde U, \mathbb W; \mathbb L)$;
		\item[(b)] $\widetilde\phi\in {\rm ID}(\widetilde U, \mathbb W; \mathbb L)$ and $\de^{\phi}\!\phi$ is continuous on $\widetilde U$;
		\item[(c)] there exists $\omega\in C(U;  \R ^{m-1} )$ such that, for every $a\in U$, there exist $\delta>0$ and a family of functions $\{\phi_\eps\in C^1(B(a,\delta)):\eps\in (0,1)\}$ such that
		\[
		\lim_{\eps\to0}\phi_\eps=\phi \quad\text{and}\quad\lim_{\eps\to0}D_j^{\phi_\eps}\phi_\eps=\omega _j  \quad\text{in $L^\infty(B(a,\delta))$},
		\]
		for every  $j=2,\dots,m$;
		\item[(d)]  there exists $\omega\in C(U; \R ^{m-1})$ such that $D^\phi \phi=\omega$ in the broad sense on $U$;
		\item[(e)]  there exists $\omega \in C(U; \R ^{m-1} )$ such that $D^\phi \phi=\omega$ in the broad* sense on $U$ with the choice of coordinates of \eqref{5.2.0.1}.
	\end{itemize}
\end{theorem}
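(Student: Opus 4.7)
The plan is to close a diamond of implications around the five conditions. The implications (a)$\Rightarrow$(b), (a)$\Rightarrow$(c), (d)$\Rightarrow$(e), and (b)$\Rightarrow$(d) follow from results valid in any Carnot group, so the step-2 hypothesis enters only in (e)$\Rightarrow$(a). Concretely: (a)$\Rightarrow$(b) is immediate from the definitions together with \cref{prop:ContinuityOfDifferentialUid}, which guarantees that the intrinsic differential of a UID map is continuous. (b)$\Rightarrow$(d) is exactly \cref{prop:IdGradContImpliesBroad}, giving in addition $\omega = \nabla^\phi\phi$. (d)$\Rightarrow$(e) is trivial because every broad solution is a broad* solution. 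For (a)$\Rightarrow$(c), I would invoke item (b) of \cref{prop2.22}, which produces the local $C^1$-approximations $\phi_\eps$ with uniform convergence $\phi_\eps\to\phi$ and $D^{\phi_\eps}_j\phi_\eps\to\nabla^\phi_j\phi$, so one takes $\omega = \nabla^\phi\phi$. Finally (c)$\Rightarrow$(e) is \cref{prop4.12}.

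The crux is therefore (e)$\Rightarrow$(a), which strengthens the analogous implication (d)$\Rightarrow$(a) of \cref{thm:MainTheorem} by dropping the vertically broad* hölder hypothesis. This is precisely \cref{coroll:step2.2}, whose proof combines the step-2 propagation result \cref{bigtheoremstep2} with \cref{prop:DPhiPhi=wBroadImpliesUid}. The core mechanism of \cref{bigtheoremstep2} is a two-step lifting argument: given $D^\phi\phi=\omega$ in broad* on $\widetilde U\subseteq \sW_{\mathbb G}$, one lifts $\phi$ to $\psi=\pi^{-1}\circ\phi\circ\pi$ on the preimage inside the free group $\mathbb F$ of the same rank, via \cref{broadin GimplicabroadinF} this function $\psi$ also solves its broad* system; then \cref{big3.3.12} gives vertically broad* hölder regularity for $\psi$; finally \cref{prop2} transports this regularity back down to $\phi$ on $\widetilde U$. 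Once $\phi$ is vertically broad* hölder and solves $D^\phi\phi=\omega$ in broad*, \cref{prop:DPhiPhi=wBroadImpliesUid} delivers UID.

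The hard part, as expected, is (e)$\Rightarrow$(a), and within it the engine is \cref{big3.3.12}, which one does not need to reprove here but which is where the one-dimensional target and the step-2 assumption are actually used: its step-two argument extracts $1/2$-little Hölder continuity in the vertical direction $(j,1)$ affected by the nonlinearity of $D^\phi_j$, and its step-three argument propagates this to all remaining vertical directions $(\ell,s)$ by a commutator/concatenation trick exploiting $[X_\ell,X_s]=Y_{\ell s}$, an identity which has no analogue in step $\ge 3$ (as \cref{rem:VerticallyBroadHolderNonSiToglie} shows via the Engel counterexample). With these cited ingredients, the proof of \cref{thm:MainTheorem.0} reduces to organizing the short chain of implications above; the equality $\omega=\nabla^\phi\phi$ whenever it appears is automatic from \cref{thm:MainTheorem}.
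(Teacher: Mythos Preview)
Your proof is correct and follows essentially the same approach as the paper: the same cited ingredients (\cref{prop:ContinuityOfDifferentialUid}, \cref{prop:IdGradContImpliesBroad}, \cref{prop2.22}/\cref{thm:MainTheorem}, \cref{prop4.12}, and \cref{coroll:step2.2} via \cref{bigtheoremstep2}) are assembled into a short chain of implications, with the step-2 hypothesis used only in (e)$\Rightarrow$(a). The only cosmetic difference is that the paper proves (b)$\Rightarrow$(a) directly by combining \cref{prop:IdGradContImpliesBroad}, \cref{bigtheoremstep2} and \cref{coroll:IDContinuousImpliesUid}, whereas you route (b)$\Rightarrow$(d)$\Rightarrow$(e)$\Rightarrow$(a); these are equivalent.
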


\begin{proof}
	(a)$\Rightarrow$(b) is trivial, by item (b) of \cref{prop:ContinuityOfDifferentialUid} and (b)$\Rightarrow$(a) follows from \cref{prop:IdGradContImpliesBroad}, \cref{bigtheoremstep2} and \cref{coroll:IDContinuousImpliesUid}.
	
	(a)$\Rightarrow$(c) follows from (a)$\Rightarrow$(b) of \cref{thm:MainTheorem} and (c)$\Rightarrow$(a) follows by combining \cref{prop4.12} and \cref{coroll:step2.2}. 
	
	(d)$\Rightarrow$(e) is trivial, by \cref{defbroad*}. (e)$\Rightarrow$(a) follows from \cref{coroll:step2.2}. (a)$\Rightarrow$(d) follows from (a)$\Rightarrow$(c) of \cref{thm:MainTheorem}.
\end{proof}

\end{document}